\newtheorem{theorem}{Theorem}
\newtheorem{example}{Example}
\newenvironment{proof}[1][Proof]{\noindent\textbf{#1.} }{}
\begin{document}

\title{Universal Hyperbolic Geometry I:\\Trigonometry}
\author{N J Wildberger\\School of Mathematics and Statistics\\UNSW Sydney 2052 Australia}
\date{}
\maketitle

\begin{abstract}
Hyperbolic geometry is developed in a purely algebraic fashion from first
principles, without a prior development of differential geometry. The natural
connection with the geometry of Lorentz, Einstein and Minkowski comes from a
projective point of view, with trigonometric laws that extend to `points at
infinity', here called `null points', and beyond to `ideal points' associated
to a hyperboloid of one sheet. The theory works over a general field not of
characteristic two, and the main laws can be viewed as deformations of those
from planar rational trigonometry. There are many new features.

\end{abstract}

\section{Introduction}

Hyperbolic geometry is set out here in a new and completely algebraic way.
This view of the subject, called \textit{universal hyperbolic geometry,} is a
special case of the more general geometry described in \cite{Wild4}, and has
the following characteristics that generally distinguish it from the classical
hyperbolic geometry found in for example \cite{Artzy}, \cite{Beardon},
\cite{Coxeter}, \cite{Greenberg}, \cite{Katok}, \cite{Ramsey Richtmayer}, or
from other approaches to the subject, such as \cite{Klingenberg},
\cite{Szmielew} or \cite{Ungar}.

\begin{itemize}
\item a more direct and intimate connection with the geometry of Einstein's
Special Theory of Relativity in the framework of Lorentz and Minkowski. In
fact \textit{hyperbolic geometry is precisely projective relativistic
geometry. }This is a fundamental understanding. The connection with
relativistic geometry is also a key feature of \cite{Ungar}.

\item the basic set-up allows a consistent development of hyperbolic geometry
\textit{over the} \textit{rational numbers}. This is the simplest and purest
form of the subject.

\item there is a natural development of the subject \textit{over a finite
field}. This ties in with work of \cite{Angel}, \cite{Soto-Andrade},
\cite{Terras} and others.

\item a crucial \textit{duality }between points and lines that connects with,
and clarifies, the \textit{pole-polar} duality with respect to the unit circle
of projective geometry.

\item an unambiguous and concrete treatment of what are traditionally called
`points at infinity', here called \textit{null points,} together with what
former generations of projective geometers called `ideal points' (see for
example \cite{Sommerville}) which lie on \textit{null lines}. In terms of
relativistic geometry, we study the hyperboloid of two sheets with equation
$x^{2}+y^{2}-z^{2}=-1$, the null cone with equation $x^{2}+y^{2}-z^{2}=0$, and
the hyperboloid of one sheet with equation $x^{2}+y^{2}-z^{2}=1$
\textit{together}. The relevance of the latter is also discussed in
\cite{Thurston}.

\item the fundamental metrical measurements of \textit{quadrance} between
points and \textit{spread }between lines are similar to the corresponding
notions in planar rational trigonometry, and the basic laws of hyperbolic
trigonometry may be seen as deformations of those of planar rational
trigonometry (see \cite{Wild1}).

\item transcendental functions, such as $\log x,$ $\sinh x$ or $\cos x$ are
not needed. A prior development of the real number system is not needed.

\item the existence of a rich \textit{null trigonometry}:\ trigonometric
relations that involve null points and null lines.

\item \textit{parallels} play a more specialized role. Somewhat ironically,
\textit{Euclid's parallel postulate holds}.

\item the isometry group of the geometry \textit{does\ not act transitively}
on the space. The universal hyperbolic plane has aspects which appear in
negatively curved Riemannian geometry, other aspects which are Lorentzian, and
other aspects which are Euclidean.

\item the framework is \textit{algebraic geometry}, rather than
\textit{differential geometry}. But it is a form of algebraic geometry that
relates more to the historical approach prior to the twentieth century
direction. The focus is on\textit{\ metrical relations }and \textit{concrete
polynomial identities} which \textit{encode geometric realities}.
\end{itemize}

\subsection{Advantages of the new approach}

The advantages of universal hyperbolic geometry over the classical approach include:

\begin{itemize}
\item \textit{simplicity and elegance:} the subject is simple enough to be
accessible to beginning undergraduates and motivated high school students
without a prior understanding of calculus or real numbers. The elementary
aspects fit together pleasantly.

\item \textit{logical clarity:} traditional treatments of hyperbolic geometry
often have obscure foundations, or are rife with arguments that rely on
pictorial understanding. The purely algebraic framework frees us from logical
difficulties, and allows us to aspire to a complete and unambiguous treatment
of the subject from first principles.

\item \textit{accuracy:} the new theory achieves much greater accuracy in
concrete computations. Many problems can now be solved completely correctly,
whereas the classical theory provides only approximate solutions.

\item \textit{connections with number theory:} the links between geometrical
problems and number--theoretical questions become much more explicit.

\item \textit{new directions for special functions:} The remarkable
\textit{spread polynomials }of planar rational trigonometry also play a key
role in hyperbolic geometry. This family of (almost) orthogonal polynomials
replaces the \textit{Chebyshev polynomials of the first kind}.

\item \textit{extension of classical geometry:} many more traditional results
of Euclidean geometry can now be given their appropriate hyperbolic analogs.
Of the thousands of known results in Euclidean geometry, only a fraction
currently have analogs in the hyperbolic setting. This turns out to be a
consequence of the way we have, up to now, viewed the subject.
\textit{Hyperbolic geometry is a richer and ultimately more important theory
than Euclidean geometry}.

\item \textit{easier constructions:} these are often more direct and elegant
than in classical hyperbolic geometry. Over the rational numbers many are even
simpler than in Euclidean geometry, in that they require only a base circle
and a straightedge.

\item \textit{application to inversive geometry:} universal hyperbolic
geometry is the natural framework for a comprehensive and general approach to
\textit{inversive geometry}.

\item \textit{connections with chromogeometry:} universal hyperbolic geometry
relates naturally to a new three-fold symmetry in planar geometry that
connects Euclidean and relativistic geometries (see \cite{Wild3}).

\item \textit{new theorems:} the language and concepts of universal hyperbolic
geometry allow us to discover, formulate and prove many new and interesting results.

\item \textit{simpler proofs:} the purely algebraic framework allows many
proofs to be reduced to algebraic identities which can be easily verified by
computer. These identities are often quite interesting in their own right, and
warrant further study.

\item universal hyperbolic geometry has the same relation to relativistic
geometry as spherical or elliptic geometry has to solid Euclidean geometry.
Thus this new form of hyperbolic geometry contributes towards a \textit{new
geometrical language to study relativistic geometry}.
\end{itemize}

\subsection{Note to the reader}

This paper represents a major rethinking of this subject, and so careful
attention must be given to the basic definitions, some of which are novel, and
others which are variants of familiar ones. Some familiarity with rational
trigonometry in the Euclidean case is a helpful preliminary. The main
reference is \cite{Wild1}, see also \cite{Wild5}, and the series of YouTube
videos called `WildTrig'.

When we develop geometry seriously, physical intuition and pictorial arguments
ought to be separate from the logical structure. To stress the importance of
such a logically tight approach, we build up the theory \textit{over a general
field}, so there are no pictures. In particular we do not rely on a prior
understanding of the continuum, or equivalently the real number system, thus
avoiding a major difficulty finessed in most traditional treatments.

Over the rational numbers, the subject nevertheless has a highly visual
nature. Section 2.3 describes an extension of the Beltrami Klein model to
visualize hyperbolic points and lines over the rational numbers. The reader is
also invited to investigate \textit{The Geometer's Sketchpad} worksheets on
hyperbolic geometry to be posted at the author's UNSW website:

\begin{quote}
http://web.maths.unsw.edu.au/\symbol{126}norman/index.html
\end{quote}

Finite prime fields are also highly recommended, as many calculations become
simpler, and because this motivates us to think beyond the usual pictures, and
connects with interesting combinatorics and graph theory.

The reader will observe that I avoid the use of `infinite sets'. The reason is
simple---I no longer believe they exist. So in the few places where we refer
to a field $\mathbb{F},$ we have in mind the specification of a particular
\textit{type }of mathematical object, rather than the collection of `all those
of a certain type' into a completed whole. Please be reassured---this has
surprisingly little effect on the actual content of the theorems.

Some notational conventions: equality is represented by the symbol $=$ as
usual, while the symbol $\equiv$ denotes \textit{specification}, so that for
example $A\equiv\left[  3,4\right]  $ is the statement specifying---or
\textit{defining}---$A$ to be the ordered pair $\left[  3,4\right]  $. Proofs
end in square black boxes, examples end in diamonds. We use the phrase
`precisely when' instead of `if and only if'. Definitions are in
\textbf{bold}, while \textit{italics }are reserved for emphasis. Theorems have
names, always beginning with a capital letter.

\subsection{List of theorems}%

\begin{multicols}{2}%

1. Join of points

2. Meet of lines

3. Collinear points

4. Concurrent lines

5. Line through null points

6. Point on null lines

7. Perpendicular point

8. Perpendicular line

9. Opposite points

10. Opposite lines

11. Altitude line

12. Altitude point

13. Parallel line

14. Parallel point

15. Base point

16. Base line

17. Parametrizing a line

18. Parametrizing a point

19. Parametrizing a join

20. Parametrizing a meet

21. Parametrization of null points

22. Parametrization of null lines

23. Join of null points

24. Meet of null lines

25. Null diagonal point

26. Null diagonal line

27. Perpendicular null line

28. Perpendicular null point

29. Parametrizing a null line

30. Parametrizing a null point

31. Triangle trilateral duality

32. Quadrance

33. Zero quadrance

34. Spread

35. Zero spread

36. Quadrance spread duality

37. Quadrance cross ratio

38. Triple quad formula

39. Triple spread formula

40. Complementary quadrances spreads

41. Equal quadrances spreads

42. Pythagoras

43. Pythagoras' dual

44. Spread formula

45. Spread law

46. Spread dual law

47. Quadrea

48. Quadreal

49. Quadrea quadreal product

50. Cross law

51. Cross dual law

52. Triple product relation

53. Triple cross relation

54. Midpoints

55. Midlines

56. Triple quad mid

57. Pythagoras mid

58. Cross mid

59. Couple quadrance spread

60. Three equal quadrances

61. Recursive spreads

62. Thales

63. Thales' dual

64. Right parallax

65. Right parallax dual

66. Napier's rules

67. Napier's dual rules

68. Pons Asinorum

69. Isosceles right

70. Isosceles mid

71. Isosceles triangle

72. Isosceles parallax

73. Equilateral

74. Equilateral mid

75. Triangle proportions

76. Menelaus

77. Menelaus' dual

78. Ceva

79. Ceva's dual

80. Nil cross law

81. Doubly nil triangle

82. Triply nil quadreal

83. Triply nil balance

84. Triply nil orthocenter

85. Triply nil Cevian thinness

86. Triply nil Altitude thinness

87. Singly null singly nil Thales

88. Singly null singly nil orthocenter

89. Null perspective

90. Null subtended quadrance

91. Fully nil quadrangle diagonal

92. $48/64$%

\end{multicols}%

\subsection{Overview of contents}

Section 2 introduces the main definitions of (\textit{hyperbolic) points }and
(\textit{hyperbolic) lines}, \textit{duality} and \textit{perpendicularity},
and establishes elementary but fundamental theorems on \textit{joins} and
\textit{meets} and \textit{null points} and \textit{null lines}. The context
is in the projective plane associated to a field. Less familiar are the
notions of side and vertex, which are here given rather different
definitions---a \textit{side} is basically an unordered pair of points, while
a \textit{vertex} is an unordered pair of lines. The combination of a point
and a line is called a \textit{couple}. Various important constructions arise
from studying these combinations which are prior to the investigations of
\textit{triangles} (a set of three non-collinear points) and
\textit{trilaterals }(a set of three non-concurrent lines). Null points and
null lines are parametrized, and in terms of these parametrizations
descriptions of joins and meets of null points and null lines are given.

Section 3 introduces the main metrical notions of \textit{quadrance between
points} and \textit{spread between lines}. These are dual notions. Both can be
described purely in terms of projective geometry and cross ratios. The main
laws of hyperbolic trigonometry are established: the \textit{Triple quad
formula }and its dual the \textit{Triple spread formula}, \textit{Pythagoras'
theorem }and its dual, the \textit{Spread law}, which is essentially self
dual, and the \textit{Cross law }and its dual. The important notion of the
\textit{quadrea }of a triangle and dually the \textit{quadreal }of a
trilateral are closely related to the Cross law, which as in the planar case
is the most important of the trigonometric laws. Various theorems have simpler
formulations in the case when sides have midpoints or vertices have midlines
(which play the role of angle bisectors). Finally the spread polynomials are
briefly introduced.

Section 4 studies particular special types of triangles and trilaterals, first
right triangles with the parallax theorem and Napier's rules and its dual.
Isosceles and equilateral triangles are studied. Various triangle proportions
theorems are established, such as Menelaus' theorem and Ceva's theorem and
their duals.

Section 5 introduces the rich subject of null trigonometry, concerned with
special relationships involving null points and null lines. These results are
often new, and sometimes spectacular. In particular we establish two aspects
of the thinness of triangles, some results on subtended quadrances and
spreads, and finally the curious $48/64$ theorem on the diagonal spreads of a
completely nil quadrangle.

\subsection{Thanks}

I would like to thank Rupert McCallum for comments and suggestions on the
paper. I also acknowledge the support of the Faculty of Science and the School
of Mathematics and Statistics at UNSW.

\section{Hyperbolic points and lines}

This section discusses \textit{proportions}, and then introduces the main
objects of \textit{(hyperbolic) points }and \textit{(hyperbolic) lines }and
the relations of \textit{duality }and \textit{perpendicularity}. We briefly
describe a visual model, then establish fundamental facts about \textit{joins,
meets, collinearity }and\textit{\ concurrency}. Points and lines are
parametrized in two different ways, and then we introduce the important
notions of \textit{side, vertex} and \textit{couple}. These definitions will
likely be novel to the reader. Some canonical constructions that arise from
them are investigated. \textit{Null points }and \textit{null lines }are
parametrized and formulas for their meets and joins obtained. Finally the dual
notions of \textit{triangle} and \textit{trilateral} are introduced.

\subsection{Proportions}

We work over a field $\mathbb{F},$ not of characteristic two, whose elements
are called \textbf{numbers}. Those readers who are not comfortable with the
general notion of field may restrict themselves to the field of rational
numbers $%
\mathbb{Q}
$. In fact one of the remarkable consequences of this development is that
there is a \textit{complete theory of hyperbolic geometry over the rational
numbers.}

A $2$\textbf{-proportion }$x:y$ is an ordered pair of numbers $x$ and $y$,
\textit{not both zero}, with the convention that for any non-zero number
$\lambda$%
\[
x:y=\lambda x:\lambda y.
\]
This may be restated by saying that
\[
x_{1}:y_{1}=x_{2}:y_{2}%
\]
precisely when
\[
x_{1}y_{2}-x_{2}y_{1}=0.
\]

A $3$\textbf{-proportion }$x:y:z$\textbf{\ }is an ordered triple of numbers
$x,y$ and $z$, \textit{not all zero}, with the convention that for any
non-zero number $\lambda$%
\[
x:y:z=\lambda x:\lambda y:\lambda z.
\]
This may be restated by saying that
\[
x_{1}:y_{1}:z_{1}=x_{2}:y_{2}:z_{2}%
\]
precisely when the following three conditions hold:
\begin{equation}%
\begin{tabular}
[c]{lllll}%
$x_{1}y_{2}-x_{2}y_{1}=0$ &  & $y_{1}z_{2}-y_{2}z_{1}=0$ &  & $z_{1}%
x_{2}-z_{2}x_{1}=0.$%
\end{tabular}
\label{ProportionEqu}%
\end{equation}

If the context is clear, we just refer to \textit{proportions}, instead of $2
$-proportions or $3$-proportions.

\subsection{Points, lines, duality and perpendicularity}

A \textbf{(hyperbolic)} \textbf{point} is a $3$-proportion $a\equiv\left[
x:y:z\right]  $ enclosed in square brackets. A \textbf{(hyperbolic) line} is a
$3$-proportion $L\equiv\left(  l:m:n\right)  $ enclosed in round brackets.

The point $a\equiv\left[  x:y:z\right]  $ is \textbf{dual} to the line
$L\equiv\left(  l:m:n\right)  $ precisely when
\[
x:y:z=l:m:n.
\]
In this case we write $a^{\bot}=L$ or $L^{\bot}=a.$ Then
\[%
\begin{tabular}
[c]{lllll}%
$\left(  a^{\bot}\right)  ^{\bot}=a$ &  & \textrm{and} &  & $\left(  L^{\bot
}\right)  ^{\bot}=L.$%
\end{tabular}
\]
Each point is dual to exactly one line, and conversely. We will often, but not
always, use the notational convention of corresponding \textit{small letters}
for \textit{points} and \textit{capital letters }for \textit{dual lines}, for
example $a$ for a point and $A\equiv a^{\bot}$ for the dual line, or $L$ for a
line and $l\equiv L^{\bot}$ for the dual point.

The point $a\equiv\left[  x:y:z\right]  $ \textbf{lies on} the line
$L\equiv\left(  l:m:n\right)  $, or equivalently $L$ \textbf{passes through}
$a,$ precisely when%
\[
lx+my-nz=0.
\]
The point $a\equiv\left[  x:y:z\right]  $ is \textbf{null} precisely when it
lies on its dual line, in other words when%
\[
x^{2}+y^{2}-z^{2}=0.
\]
The line $L\equiv\left(  l:m:n\right)  $ is \textbf{null }precisely when it
passes through its dual point, in other words when
\[
l^{2}+m^{2}-n^{2}=0.
\]
The dual of a null point is a null line and conversely.

Points $a_{1}\equiv\left[  x_{1}:y_{1}:z_{1}\right]  $ and $a_{2}\equiv\left[
x_{2}:y_{2}:z_{2}\right]  $ are \textbf{perpendicular} precisely when
\[
x_{1}x_{2}+y_{1}y_{2}-z_{1}z_{2}=0.
\]
This is equivalent to the condition that $a_{1}$ lies on $a_{2}^{\bot},$ or
that $a_{2}$ lies on $a_{1}^{\bot}.$

Lines $L_{1}\equiv\left(  l_{1}:m_{1}:n_{1}\right)  $ and $L_{2}\equiv\left(
l_{2}:m_{2}:n_{2}\right)  $ are \textbf{perpendicular} precisely when
\[
l_{1}l_{2}+m_{1}m_{2}-n_{1}n_{2}=0.
\]
This is equivalent to the condition that $L_{1}$ passes through $L_{2}^{\bot
},$ or that $L_{2}$ passes through $L_{1}^{\bot}$.

\textit{There is a complete duality in the theory between points and lines.}
Any result thus has a corresponding dual result, obtained by interchanging the
roles of points and lines. We call this the \textit{duality principle}, and
use it often to eliminate repetition of statements and proofs of theorems. To
maintain this principle, \textit{we treat points and lines symmetrically},
especially initially. Later in the paper we leave the formulation of dual
statements to the reader.

We denote by $\mathbb{F}^{3}$ the three-dimensional space of \textbf{vectors}
$v\equiv\left(  x,y,z\right)  $. If $v\equiv\left(  x,y,z\right)  $ has
coordinates which are not all zero, then $\left[  v\right]  \equiv\left[
x:y:z\right]  $ denotes the corresponding (hyperbolic) point, and $\left(
v\right)  \equiv\left(  x:y:z\right)  $ denotes the corresponding (hyperbolic) line.

\subsection{Visualizing universal hyperbolic geometry}

Although not logically necessary, let's point out one way of visualizing the
subject. This is essentially the Beltrami Klein view, but extended beyond the
unit disk, and with underlying field the rational numbers $\mathbb{Q}.$ Think
of a point $a\equiv\left[  x:y:z\right]  $ as representing the central line
(one-dimensional subspace) in three-dimensional space $\mathbb{Q}^{3}$ through
the origin and the point $\left[  x,y,z\right]  $. Think of the line
$L\equiv\left(  l:m:n\right)  $ as being the central plane (two-dimensional
subspace) with equation $lx+my-nz=0.$ So the notion `$a$ lies on $L$' has the
usual interpretation.

Both points and lines are projective objects, and may be illustrated in
diagrams using their meets with the plane $z=1$ in the usual way. So the
(hyperbolic) point $a\equiv\left[  x:y:z\right]  $ becomes generically the
planar point
\[
\left[  X,Y\right]  \equiv\left[  \frac{x}{z},\frac{y}{z}\right]
\]
while the (hyperbolic) line $L\equiv\left(  l:m:n\right)  $ becomes
generically the planar line with equation%
\[
lX+mY=n.
\]
Otherwise in case $z=0,$ or $l=m=0,$ we have respectively points at infinity
and a line at infinity, represented in the usual way by directions in the
$z=1$ plane.

Null points become planar points on the unit circle $X^{2}+Y^{2}=1.$ (Over
other fields, there may also be two null points at infinity, but this does not
happen over the rational numbers). Null lines become tangent lines to this
unit circle. Both the interior and exterior of the unit circle are important,
the former corresponding to hyperboloids of two sheets, the latter to
hyperboloids of one sheet in the ambient three-dimensional relativistic space.
Neither has preference over the other. In fact \textit{the most important
objects are the null points and null lines}.

The duality between points and lines becomes the pole-polar duality of
projective geometry, since the planar point $\left[  a,b\right]  $ is the pole
of the planar line $aX+bY=1$ with respect to the unit circle $X^{2}+Y^{2}=1.$
Perpendicularity then becomes a straightedge construction, since the pole of a
line or the polar of a point may be so constructed, once the unit circle is
given. The metrical structure comes from the quadratic form $x^{2}+y^{2}%
-z^{2}$ in the ambient space.

For an interesting variant, represent the projective plane by intersecting the
three-dimensional space $\mathbb{Q}^{3}$ with the plane $x=1.$ This provides a
more `hyperbolic view'; less familiar, but also worthy of study.

Let's emphasize that the following development of the subject is logically
independent of any one visual interpretation of it.

\subsection{Joins and meets}

\begin{theorem}
[Join of points]If $a_{1}\equiv\left[  x_{1}:y_{1}:z_{1}\right]  $ and
$a_{2}\equiv\left[  x_{2}:y_{2}:z_{2}\right]  $ are distinct points, then
there is exactly one line $L$ which passes through them both, namely%
\[
L\equiv a_{1}a_{2}\equiv\left(  y_{1}z_{2}-y_{2}z_{1}:z_{1}x_{2}-z_{2}%
x_{1}:x_{2}y_{1}-x_{1}y_{2}\right)  .
\]

\end{theorem}

\begin{proof}
The $3$-proportion
\begin{equation}
y_{1}z_{2}-y_{2}z_{1}:z_{1}x_{2}-z_{2}x_{1}:x_{2}y_{1}-x_{1}y_{2}%
\label{JoinProp}%
\end{equation}
is well-defined, since if we multiply the coordinates of either $a_{1}$ or
$a_{2}$ by a non-zero number, then each term in (\ref{JoinProp}) is
correspondingly changed, and since $a_{1}$ and $a_{2}$ are distinct, at least
one of the three terms is non-zero from (\ref{ProportionEqu}).

The line
\[
L\equiv\left(  y_{1}z_{2}-y_{2}z_{1}:z_{1}x_{2}-z_{2}x_{1}:x_{2}y_{1}%
-x_{1}y_{2}\right)
\]
passes through both $a_{1}$ and $a_{2},$ since
\begin{align*}
\left(  y_{1}z_{2}-y_{2}z_{1}\right)  x_{1}+\left(  z_{1}x_{2}-z_{2}%
x_{1}\right)  y_{1}-\left(  x_{2}y_{1}-x_{1}y_{2}\right)  z_{1}  & =0\\
\left(  y_{1}z_{2}-y_{2}z_{1}\right)  x_{2}+\left(  z_{1}x_{2}-z_{2}%
x_{1}\right)  y_{2}-\left(  x_{2}y_{1}-x_{1}y_{2}\right)  z_{2}  & =0.
\end{align*}
The $3$-proportions $x_{1}:y_{1}:z_{1}$ and $x_{2}:y_{2}:z_{2}$ are by
assumption unequal, so the system of equations
\begin{align*}
lx_{1}+my_{1}-nz_{1} &  =0\\
lx_{2}+my_{2}-nz_{2} &  =0
\end{align*}
has up to a multiple exactly one solution, showing that $L$ is unique.$%
{\hspace{.1in} \rule{0.5em}{0.5em}}%
$
\end{proof}

The line $L\equiv a_{1}a_{2}$ is the \textbf{join} of the points $a_{1}$ and
$a_{2}$.

\begin{theorem}
[Meet of lines]If $L_{1}\equiv\left(  l_{1}:m_{1}:n_{1}\right)  $ and
$L_{2}\equiv\left(  l_{2}:m_{2}:n_{2}\right)  $ are distinct lines, then there
is exactly one point $a$ which lies on them both, namely%
\[
a\equiv L_{1}L_{2}\equiv\left[  m_{1}n_{2}-m_{2}n_{1}:n_{1}l_{2}-n_{2}%
l_{1}:l_{2}m_{1}-l_{1}m_{2}\right]  .
\]

\end{theorem}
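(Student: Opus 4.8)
The plan is to exploit the complete duality between points and lines, since the statement to be proved (Meet of lines) is exactly the dual of the already-established Theorem~1 (Join of points). Under the duality principle, points $\left[x:y:z\right]$ and lines $\left(l:m:n\right)$ play symmetric roles: the incidence relation $lx+my-nz=0$ is symmetric in the coordinates of the point and the line, so interchanging square and round brackets carries the Join theorem directly to the Meet theorem. Thus the shortest route is simply to invoke duality and observe that the formula for $a\equiv L_1L_2$ is obtained from the formula for $L\equiv a_1a_2$ by the substitutions $x_i\mapsto l_i$, $y_i\mapsto m_i$, $z_i\mapsto n_i$ and the interchange of bracket types.

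Should a self-contained verification be preferred instead, I would mirror the three steps of the proof of Theorem~1. First I would check that the $3$-proportion
\[
m_{1}n_{2}-m_{2}n_{1}:n_{1}l_{2}-n_{2}l_{1}:l_{2}m_{1}-l_{1}m_{2}
\]
is well-defined: scaling the coordinates of either $L_1$ or $L_2$ by a nonzero $\lambda$ scales each of the three entries by $\lambda$, and since $L_1\neq L_2$, condition~(\ref{ProportionEqu}) guarantees at least one entry is nonzero, so the triple is not $0:0:0$. Second I would confirm that this candidate point $a$ lies on both lines by substituting into the incidence equation $l_ix+m_iy-n_iz=0$ for $i=1,2$; each substitution yields a $3\times3$ determinant with a repeated row, hence vanishes identically. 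Third I would argue uniqueness: a point on both lines satisfies the homogeneous linear system $l_1x+m_1y-n_1z=0$, $l_2x+m_2y-n_2z=0$, whose coefficient matrix has rank $2$ because $L_1\neq L_2$ forces the rows to be linearly independent, so the solution space is one-dimensional and determines a unique proportion.

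The main obstacle, if any, is not computational but conceptual clarity about why $L_1\neq L_2$ yields rank $2$: one must note that equality of the $3$-proportions $\left(l_1:m_1:n_1\right)$ and $\left(l_2:m_2:n_2\right)$ is precisely the condition that all three $2\times2$ minors vanish, i.e. that the rows are proportional, so the negation gives a nonvanishing minor and hence independence. I expect the verification of the two incidence equations to be entirely routine—each is a determinant expansion with two identical rows—so the only care needed is in stating the uniqueness argument cleanly over a general field (not of characteristic two, though characteristic two is irrelevant here). Given that Theorem~1 is already in hand, I would present the dual argument by symmetry as the primary proof and relegate the direct computation to an optional remark.
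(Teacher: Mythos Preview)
Your proposal is correct and takes essentially the same approach as the paper, whose entire proof reads ``This is dual to the Join of points theorem.'' Your additional self-contained verification is sound but goes beyond what the paper provides.
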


\begin{proof}
This is dual to the Join of points theorem.$%
{\hspace{.1in} \rule{0.5em}{0.5em}}%
$
\end{proof}

The point $a\equiv L_{1}L_{2}$ is the\textbf{\ meet} of the lines $L_{1}$ and
$L_{2}.$ Note that for any distinct points $a_{1}$ and $a_{2}$
\[
\left(  a_{1}a_{2}\right)  ^{\bot}=a_{1}^{\bot}a_{2}^{\bot}.
\]
Similarly for any distinct lines $L_{1}$ and $L_{2}$
\[
\left(  L_{1}L_{2}\right)  ^{\bot}=L_{1}^{\bot}L_{2}^{\bot}.
\]

Both of the previous theorems involve implicitly the \textbf{hyperbolic cross
product function }%
\[
J\left(  x_{1},y_{1},z_{1};x_{2},y_{2},z_{2}\right)  \equiv\left(  y_{1}%
z_{2}-y_{2}z_{1},z_{1}x_{2}-z_{2}x_{1},x_{2}y_{1}-x_{1}y_{2}\right)  .
\]
This is a hyperbolic version of the more familiar \textit{Euclidean cross
product}, and it enjoys many of the same properties. It is ubiquitous in the
two-dimensional hyperbolic geometry developed in this paper, and many
computations amount essentially to repeated evaluations of this function.

\subsection{Collinear points and concurrent lines}

Three or more points which lie on a common line are \textbf{collinear}. Three
or more lines which pass through a common point are \textbf{concurrent}.

\begin{theorem}
[Collinear points]The points $a_{1}\equiv\left[  x_{1}:y_{1}:z_{1}\right]  $,
$a_{2}\equiv\left[  x_{2}:y_{2}:z_{2}\right]  $ and $a_{3}\equiv\left[
x_{3}:y_{3}:z_{3}\right]  $ are collinear precisely when
\[
\allowbreak x_{1}y_{2}z_{3}-x_{1}y_{3}z_{2}+x_{2}y_{3}z_{1}-x_{3}y_{2}%
z_{1}+x_{3}y_{1}z_{2}-x_{2}y_{1}z_{3}=0.
\]

\end{theorem}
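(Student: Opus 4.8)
The plan is to recognize the displayed alternating expression as the determinant
\[
D \equiv \det\begin{pmatrix} x_1 & y_1 & z_1 \\ x_2 & y_2 & z_2 \\ x_3 & y_3 & z_3 \end{pmatrix},
\]
and then to show that the three points are collinear precisely when $D=0$. The cleanest route leans on the Join of points theorem. First I would dispose of the degenerate case: if $a_1$ and $a_2$ are equal as $3$-proportions, then their coordinate triples are proportional, so two rows of the matrix are proportional and $D=0$; meanwhile the three points certainly lie on a common line (the join of $a_1$ and $a_3$, or any line through the common point if all three coincide), so they are collinear and the stated equivalence holds trivially on both sides.

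Assuming next that $a_1$ and $a_2$ are distinct, I would invoke the Join of points theorem to form the unique line $L\equiv a_1 a_2 \equiv \left( l:m:n\right)$ with $l=y_1 z_2 - y_2 z_1$, $m=z_1 x_2 - z_2 x_1$ and $n=x_2 y_1 - x_1 y_2$. Since $L$ is the only line through both $a_1$ and $a_2$, the three points are collinear exactly when $a_3$ also lies on $L$, that is, when $l x_3 + m y_3 - n z_3 = 0$. The final step is to expand this incidence condition and observe that it is term-for-term identical to $D=0$; this is the routine computation on which the equivalence rests.

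Alternatively, and more symmetrically, one may argue directly in linear algebra: the points are collinear precisely when the homogeneous system $l x_i + m y_i - n z_i = 0$ for $i=1,2,3$ admits a nonzero solution $\left( l:m:n\right)$, which is then the sought common line; over a field this happens exactly when the coefficient determinant vanishes, and that determinant equals $-D$, the sign coming from the minus on the $z$ column. This route avoids the case split but relies on the standard criterion for nontrivial solutions of a square homogeneous system rather than on the theorems already established.

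I do not expect a genuine obstacle here. The only points requiring attention are matching the six signed monomials between the incidence expansion and the displayed expression, and remembering that points are proportions, so that $a_1=a_2$ means proportional, not literally identical, coordinate triples --- which is precisely what forces $D=0$ in the degenerate case. The hypothesis that the characteristic is not two plays no role, so the argument is valid over any field.
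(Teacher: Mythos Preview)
Your proof is correct and follows essentially the same approach as the paper: reduce to the case where two points are distinct, invoke the Join of points theorem, and then check that the incidence condition for the third point coincides with the stated determinantal expression, handling the degenerate case separately. The paper's case split is phrased as ``at least two distinct'' versus ``all identical'' rather than your ``$a_1\neq a_2$'' versus ``$a_1=a_2$'', and it does not offer your alternative linear-algebra route, but the core argument is the same.
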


\begin{proof}
If two of the three points are distinct, say $a_{1}$ and $a_{2},$ then by the
Join of points theorem
\[
a_{1}a_{2}=\left(  y_{1}z_{2}-y_{2}z_{1}:z_{1}x_{2}-z_{2}x_{1}:x_{2}%
y_{1}-x_{1}y_{2}\right)
\]
and $a_{3}$ lies on $a_{1}a_{2}$ precisely when%
\[
\left(  y_{1}z_{2}-y_{2}z_{1}\right)  x_{3}+\left(  z_{1}x_{2}-z_{2}%
x_{1}\right)  y_{3}-\left(  x_{2}y_{1}-x_{1}y_{2}\right)  z_{3}=0.
\]
This is the condition of the theorem. If all points are identical, then they
are collinear, and the expression
\[
\allowbreak x_{1}y_{2}z_{3}-x_{1}y_{3}z_{2}+x_{2}y_{3}z_{1}-x_{3}y_{2}%
z_{1}+x_{3}y_{1}z_{2}-x_{2}y_{1}z_{3}%
\]
is by symmetry zero.$%
{\hspace{.1in} \rule{0.5em}{0.5em}}%
$
\end{proof}

\begin{theorem}
[Concurrent lines]The lines $L_{1}\equiv\left(  l_{1}:m_{1}:n_{1}\right)  $,
$L_{2}\equiv\left(  l_{2}:m_{2}:n_{2}\right)  $ and $L_{3}\equiv\left(
l_{3}:m_{3}:n_{3}\right)  $ are concurrent precisely when
\[
l_{1}m_{2}n_{3}-l_{1}m_{3}n_{2}+l_{2}m_{3}n_{1}-l_{3}m_{2}n_{1}+l_{3}%
m_{1}n_{2}-l_{2}m_{1}n_{3}=0.
\]

\end{theorem}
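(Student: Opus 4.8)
The plan is to deduce this directly from the duality principle, exactly as the Meet of lines theorem was deduced from the Join of points theorem. Under the point--line interchange $\left[x:y:z\right]\leftrightarrow\left(l:m:n\right)$, the Collinear points theorem---already established---dualizes into the statement that three lines all pass through a common point, which is precisely concurrency. The collinearity condition is the alternating sum $x_{1}y_{2}z_{3}-x_{1}y_{3}z_{2}+\cdots$, and replacing the coordinate triple of each point by the coordinate triple of the corresponding line yields verbatim the claimed expression in the $l_{i},m_{i},n_{i}$. So in principle a single sentence suffices: \emph{this is dual to the Collinear points theorem}.

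For readers who prefer the argument carried out explicitly, I would instead mirror the proof of Collinear points. First I would assume at least two of the three lines are distinct, say $L_{1}$ and $L_{2}$, and invoke the Meet of lines theorem to form their unique common point
\[
L_{1}L_{2}=\left[m_{1}n_{2}-m_{2}n_{1}:n_{1}l_{2}-n_{2}l_{1}:l_{2}m_{1}-l_{1}m_{2}\right].
\]
The three lines are concurrent precisely when $L_{3}$ also passes through this point. Using the incidence relation $lx+my-nz=0$ that defines ``lies on,'' this condition reads
\[
l_{3}\left(m_{1}n_{2}-m_{2}n_{1}\right)+m_{3}\left(n_{1}l_{2}-n_{2}l_{1}\right)-n_{3}\left(l_{2}m_{1}-l_{1}m_{2}\right)=0,
\]
and expanding into its six monomials produces exactly the alternating expression in the statement.

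The only real work is bookkeeping: I would expand the displayed condition and check, monomial by monomial, that it reproduces the stated polynomial, paying attention to the minus sign carried by the $n_{3}z$ term in the incidence relation, which is where a sign slip would most easily creep in. The remaining case---when all three lines coincide---is handled as in the collinear proof: such lines are trivially concurrent, and the expression vanishes identically because it is alternating under permuting the indices $1,2,3$ (two equal rows). I expect no genuine obstacle here; the content is wholly inherited from the already-proven dual theorem, and the direct route is a routine verification of a polynomial identity.
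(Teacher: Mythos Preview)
Your proposal is correct and matches the paper's approach exactly: the paper's proof consists of the single sentence ``This is dual to the Collinear points theorem.'' Your additional explicit mirroring of the collinear-points argument is a welcome expansion but not required.
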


\begin{proof}
This is dual to the Collinear points theorem.$%
{\hspace{.1in} \rule{0.5em}{0.5em}}%
$
\end{proof}

The formulas of the two previous theorems can be recast as the determinantal
equations$\allowbreak$%
\[%
\begin{tabular}
[c]{lllll}%
$%
\begin{vmatrix}
x_{1} & y_{1} & z_{1}\\
x_{2} & y_{2} & z_{2}\\
x_{3} & y_{3} & z_{3}%
\end{vmatrix}
=0$ &  & \textrm{and} &  & $%
\begin{vmatrix}
l_{1} & l_{2} & l_{3}\\
m_{1} & m_{2} & m_{3}\\
n_{1} & n_{2} & n_{3}%
\end{vmatrix}
=0.$%
\end{tabular}
\]

\begin{theorem}
[Line through null points]Any line $L$ passes through at most two null points.
\end{theorem}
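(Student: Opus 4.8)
The plan is to reduce the statement to counting the roots of a binary quadratic form, exploiting the fact that lying on a line is a \emph{linear} condition while being null is a \emph{quadratic} one. A null point $a\equiv\left[x:y:z\right]$ lying on $L\equiv\left(l:m:n\right)$ must satisfy simultaneously the linear equation $lx+my-nz=0$ and the quadratic equation $x^{2}+y^{2}-z^{2}=0$. The idea is to use the linear equation to eliminate one coordinate, substitute into the quadratic, and so obtain a single homogeneous quadratic in the two remaining coordinates. Since a nonzero homogeneous quadratic in two variables has at most two solutions up to proportion over any field, this bounds the number of null points on $L$ by two.

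First I would handle the bookkeeping by splitting on which coordinate of $L$ is nonzero; since $L$ is a $3$-proportion, at least one of $l,m,n$ is nonzero. Assuming for definiteness that $n\neq0$, I would solve the linear equation for $z=\left(lx+my\right)/n$ and substitute, clearing denominators, to reach
\[
\left(n^{2}-l^{2}\right)x^{2}-2lm\,xy+\left(n^{2}-m^{2}\right)y^{2}=0.
\]
Each solution $\left[x:y\right]$ then determines $z$, and hence the null point, uniquely, so the null points on $L$ are in bijection with the projective solutions $\left[x:y\right]$ of this quadratic. The remaining cases, where $n=0$ but $l\neq0$ or $m\neq0$, are treated identically by eliminating instead the corresponding coordinate, again producing a homogeneous binary quadratic whose projective solutions parametrize the null points.

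The only real obstacle is to verify that the resulting binary quadratic is \emph{not identically zero}, since a form with all coefficients vanishing could in principle admit more than two roots. In the displayed case this would force $n^{2}=l^{2}$, $lm=0$, and $n^{2}=m^{2}$; but $lm=0$ makes one of $l,m$ vanish, whence $n^{2}=0$ and so $n=0$, contradicting $n\neq0$. An analogous contradiction arises in each of the other cases. Conceptually this is the assertion that $L$ cannot be totally null, which reflects the nondegeneracy of the form $x^{2}+y^{2}-z^{2}$: a nondegenerate quadratic form in three variables admits no two-dimensional totally isotropic subspace, so no projective line consists entirely of null points. Once the form is known to be nonzero, the standard fact that a nonzero homogeneous quadratic in two variables over a field has at most two projective roots completes the argument.
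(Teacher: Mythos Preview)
Your proof is correct and follows essentially the same approach as the paper: substitute the linear incidence condition into the quadratic null condition to obtain a homogeneous binary quadratic, then use the fact that such a form has at most two projective roots. Your treatment is in fact more careful than the paper's, which states the substitution-and-count argument in two sentences without explicitly verifying that the resulting binary quadratic is not identically zero; your case analysis ruling this out is a worthwhile addition.
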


\begin{proof}
If $L\equiv\left(  l:m:n\right)  $ then any null point $\alpha\equiv\left[
x:y:z\right]  $ lying on $L$ satisfies%
\begin{equation}
lx+my-nz=0\label{LineNullpoints1}%
\end{equation}
and
\begin{equation}
x^{2}+y^{2}-z^{2}=0.\label{LineNullpoints2}%
\end{equation}
After substituting (\ref{LineNullpoints1}) into (\ref{LineNullpoints2}) you
get a homogeneous quadratic equation in two variables. This has at most two
solutions up to a factor, so there are at most two solutions $x:y:z$ to this
pair of equations.$%
{\hspace{.1in} \rule{0.5em}{0.5em}}%
$
\end{proof}

\begin{theorem}
[Point on null lines]Any point $a$ lies on at most two null lines.
\end{theorem}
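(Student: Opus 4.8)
The statement to prove is the "Point on null lines" theorem: any point $a$ lies on at most two null lines. This is the exact dual of the immediately preceding "Line through null points" theorem, so the cleanest strategy is to invoke the duality principle that the paper has already established. Since points and lines are treated symmetrically, with the dual of a null point being a null line and conversely, the entire argument transfers verbatim under the interchange of points and lines.

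The plan is to state that this is dual to the Line through null points theorem, exactly as the paper handles the Meet of lines and Concurrent lines theorems. To make the correspondence explicit, I would note that a null line $L\equiv\left(  l:m:n\right)  $ passing through a given point $a\equiv\left[  x:y:z\right]  $ must satisfy the two conditions $lx+my-nz=0$ (incidence of $a$ on $L$) and $l^{2}+m^{2}-n^{2}=0$ (the nullity of $L$, i.e. $L$ passes through its dual point). These are the same two equations as in the previous proof, with the roles of the coordinates $(x,y,z)$ of the point and $(l,m,n)$ of the line interchanged.

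Should a fully self-contained argument be wanted rather than an appeal to duality, I would substitute the linear incidence condition into the quadratic nullity condition. Solving $lx+my-nz=0$ for one of the coordinates (say $n$, assuming $z\neq0$, with the degenerate coordinate cases handled separately) and inserting into $l^{2}+m^{2}-n^{2}=0$ yields a homogeneous quadratic equation in the two remaining coordinates of $L$. A homogeneous quadratic in two variables over a field factors into at most two linear factors, hence has at most two solutions up to scaling, giving at most two proportions $l:m:n$ and therefore at most two null lines.

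The only genuine subtlety, and the point I would be most careful about, is the same one implicit in the dual theorem: the claim that a homogeneous quadratic in two variables has at most two roots up to a scalar factor. Over a general field not of characteristic two this is correct because such a form factors into at most two homogeneous linear factors, but one must treat the boundary cases where the substitution's pivot coordinate vanishes so that the parametrization of solutions is complete. Given that the paper is content to dispatch the dual statements in one line, the expected and appropriate proof here is simply to write that the result is dual to the Line through null points theorem.
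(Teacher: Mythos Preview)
Your proposal is correct and matches the paper's own proof, which consists of the single line ``This is dual to the Line through null points theorem.'' Your additional explicit unpacking of the duality and the self-contained substitution argument are accurate elaborations, but the paper is content with the one-line appeal to duality.
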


\begin{proof}
This is dual to the Line through null points theorem.$%
{\hspace{.1in} \rule{0.5em}{0.5em}}%
$
\end{proof}

The two previous theorems lead naturally to an important characterization of
non-null points and lines, in terms of their relations with null points and
lines. A non-null point $a$ is defined to be \textbf{internal} precisely when
it lies on no null lines, and is \textbf{external }precisely when it lies on
two null lines. A non-null line $L$ is defined to be \textbf{internal}
precisely when it passes through two null points, and is \textbf{external
}precisely when it passes through no null points. This way points and lines
are either \textit{internal},\textit{\ null }or \textit{external}.

These notions play a major role when we discuss \textit{isometries} in a
future paper, but they are not necessary for the development of basic
trigonometry, indeed the main theorems of trigonometry apply equally to both
internal and external points and lines. So we will not develop these concepts
further in this paper.

\subsection{Sides and vertices}

A \textbf{side} $\overline{a_{1}a_{2}}$ is a set $\left\{  a_{1}%
,a_{2}\right\}  $ of two points. A \textbf{vertex} $\overline{L_{1}L_{2}} $ is
a set $\left\{  L_{1},L_{2}\right\}  $ of two lines. Clearly
\[
\overline{a_{1}a_{2}}=\overline{a_{2}a_{1}}\qquad\mathrm{and}\qquad
\overline{L_{1}L_{2}}=\overline{L_{2}L_{1}}.
\]

If $\overline{a_{1}a_{2}}$ is a side, then $a_{1}a_{2}$ is the \textbf{line
}of the side. The side $\overline{a_{1}a_{2}}$ is a \textbf{null side
}precisely when $a_{1}a_{2}$ is a null line. The side $\overline{a_{1}a_{2}}$
is a \textbf{nil side} precisely when at least one of $a_{1}$ or $a_{2}$ is a
null point. In this case it is a \textbf{singly-nil side} precisely when
exactly one of $a_{1}$ or $a_{2}$ is a null point, and a \textbf{doubly-nil
side} precisely when both $a_{1}$ and $a_{2}$ are null points.

If $\overline{L_{1}L_{2}}$ is a vertex, then $L_{1}L_{2}$ is the \textbf{point
}of the vertex. The vertex $\overline{L_{1}L_{2}}$ is a \textbf{null vertex}
precisely when $L_{1}L_{2}$ is a null point. The vertex $\overline{L_{1}L_{2}%
}$ is a \textbf{nil vertex} precisely when at least one of $L_{1}$ or $L_{2}$
is a null line. In this case it is a \textbf{singly-nil vertex }precisely when
exactly one of $L_{1}$ or $L_{2}$ is a null line, and a \textbf{doubly-nil
vertex} precisely when both $L_{1}$ and $L_{2}$ are null lines.

The \textbf{dual }of the side $\overline{a_{1}a_{2}}$ is the vertex
$\overline{a_{1}^{\bot}a_{2}^{\bot}}$. The \textbf{dual }of the vertex
$\overline{L_{1}L_{2}}$ is the side $\overline{L_{1}^{\bot}L_{2}^{\bot}}.$

The side $\overline{a_{1}a_{2}}$ is a \textbf{right} \textbf{side }precisely
when $a_{1}$ is perpendicular to $a_{2}.$ The vertex $\overline{L_{1}L_{2}}$
is a \textbf{right} \textbf{vertex }precisely when $L_{1}$ is perpendicular to
$L_{2}.$

\begin{theorem}
[Perpendicular point]For any side $\overline{a_{1}a_{2}}$ there is a unique
point $p$ which is perpendicular to both $a_{1}$ and $a_{2},$ namely%
\[
p\equiv a_{1}^{\bot}a_{2}^{\bot}=\left(  a_{1}a_{2}\right)  ^{\bot}.
\]

\end{theorem}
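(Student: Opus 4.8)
The plan is to convert the two perpendicularity requirements into incidence requirements, so that the whole statement collapses onto the already-established Meet of lines theorem. The key observation is that perpendicularity of points has been defined to be equivalent to an incidence relation with dual lines, and this is precisely the bridge needed.

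First I would invoke the characterization stated with the definition of perpendicular points: a point $p$ is perpendicular to $a_{1}$ precisely when $p$ lies on the dual line $a_{1}^{\bot}$, and likewise $p$ is perpendicular to $a_{2}$ precisely when $p$ lies on $a_{2}^{\bot}$. Hence the points $p$ perpendicular to both $a_{1}$ and $a_{2}$ are exactly the points lying simultaneously on the two lines $a_{1}^{\bot}$ and $a_{2}^{\bot}$.

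Next I would verify that these two dual lines are distinct. Since $\overline{a_{1}a_{2}}$ is a side, the points $a_{1}$ and $a_{2}$ are distinct; and because each point is dual to exactly one line and conversely, the duality map is a bijection, so $a_{1}\neq a_{2}$ forces $a_{1}^{\bot}\neq a_{2}^{\bot}$. The Meet of lines theorem then applies to the distinct lines $a_{1}^{\bot}$ and $a_{2}^{\bot}$, producing exactly one point lying on both, namely their meet $p\equiv a_{1}^{\bot}a_{2}^{\bot}$. This delivers existence and uniqueness of $p$ in a single stroke. Finally I would rewrite this meet using the identity $\left(a_{1}a_{2}\right)^{\bot}=a_{1}^{\bot}a_{2}^{\bot}$ recorded just after the Meet of lines theorem, obtaining $p=\left(a_{1}a_{2}\right)^{\bot}$.

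There is essentially no obstacle here: the only step that requires any attention is the distinctness of the dual lines, which rests entirely on the bijectivity of duality together with the fact that a side consists of two distinct points. Everything else is a direct appeal to the Meet of lines theorem and the recorded duality identity for joins, so I would expect the entire argument to be short.
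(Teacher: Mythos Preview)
Your proposal is correct and follows essentially the same approach as the paper: translate perpendicularity into incidence with the dual lines, note that $a_{1}\neq a_{2}$ forces $a_{1}^{\bot}\neq a_{2}^{\bot}$, and apply the Meet of lines theorem together with the identity $\left(a_{1}a_{2}\right)^{\bot}=a_{1}^{\bot}a_{2}^{\bot}$. The paper's proof is just a more compressed version of what you wrote.
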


\begin{proof}
Any point $p$ which is perpendicular to both $a_{1}$ and $a_{2}$ must lie on
both $a_{1}^{\bot}$ and $a_{2}^{\bot},$ and since $a_{1}$ and $a_{2}$ are
distinct, the Meet of lines theorem asserts that there is exactly one such
point, namely $p\equiv a_{1}^{\bot}a_{2}^{\bot}=\left(  a_{1}a_{2}\right)
^{\bot}$.$%
{\hspace{.1in} \rule{0.5em}{0.5em}}%
$
\end{proof}

The point $p$ is the \textbf{perpendicular point }of $\overline{a_{1}a_{2}}$.
It may happen that $p$ lies on $a_{1}a_{2}$; this will occur precisely when
$\overline{a_{1}a_{2}}$ is a null side.

\begin{theorem}
[Perpendicular line]For any vertex $\overline{L_{1}L_{2}}$ there is a unique
line $P$ which is perpendicular to both $L_{1}$ and $L_{2},$ namely%
\[
P\equiv L_{1}^{\bot}L_{2}^{\bot}=\left(  L_{1}L_{2}\right)  ^{\bot}.
\]

\end{theorem}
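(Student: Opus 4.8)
The plan is to dualize the proof of the Perpendicular point theorem, interchanging the roles of points and lines throughout. The duality principle stated earlier guarantees that this is legitimate, so in principle the proof can be as short as the one given for Meet of lines, namely a one-line appeal to duality. Still, it is worth writing the argument out explicitly to see precisely which earlier results are being transported.

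First I would invoke the characterization of perpendicularity of lines given in the definitions: two lines are perpendicular precisely when each passes through the other's dual point. Hence a line $P$ is perpendicular to $L_{1}$ exactly when $P$ passes through $L_{1}^{\bot}$, and perpendicular to $L_{2}$ exactly when $P$ passes through $L_{2}^{\bot}$. Consequently, any line $P$ that is perpendicular to both $L_{1}$ and $L_{2}$ must pass through both of the points $L_{1}^{\bot}$ and $L_{2}^{\bot}$, and conversely any line through both of these points is perpendicular to both $L_{1}$ and $L_{2}$.

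Next I would observe that, since $L_{1}$ and $L_{2}$ are distinct and duality is a bijection between lines and points, the dual points $L_{1}^{\bot}$ and $L_{2}^{\bot}$ are themselves distinct. The Join of points theorem then produces exactly one line through this pair of distinct points, namely their join $L_{1}^{\bot}L_{2}^{\bot}$. This yields simultaneously the existence and the uniqueness of $P$, giving $P\equiv L_{1}^{\bot}L_{2}^{\bot}$. To finish, I would identify this join with $\left( L_{1}L_{2}\right)^{\bot}$ by appealing to the identity $\left( L_{1}L_{2}\right)^{\bot}=L_{1}^{\bot}L_{2}^{\bot}$ recorded immediately after the Meet of lines theorem, obtaining the stated formula.

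There is essentially no obstacle in this argument; its entire content is the mechanical transport of the Perpendicular point proof across the point-line duality. The only step demanding a moment's care is checking that the distinctness of $L_{1}$ and $L_{2}$ carries over to distinctness of their duals, since this is the hypothesis under which the Join of points theorem applies. As that follows at once from duality being a bijection, the proof closes cleanly.
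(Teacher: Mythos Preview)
Your argument is correct and is precisely the paper's approach: the paper simply invokes the duality principle, citing the Perpendicular point theorem, and your proposal is nothing more than that dual proof written out in full. The explicit check that $L_{1}\neq L_{2}$ implies $L_{1}^{\bot}\neq L_{2}^{\bot}$, and the appeal to the Join of points theorem together with the identity $\left(L_{1}L_{2}\right)^{\bot}=L_{1}^{\bot}L_{2}^{\bot}$, are exactly what the one-line dualization unpacks to.
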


\begin{proof}
This is dual to the Perpendicular point theorem.$%
{\hspace{.1in} \rule{0.5em}{0.5em}}%
$
\end{proof}

The line $P$ is the \textbf{perpendicular line }of $\overline{L_{1}L_{2}}$. It
may happen that $P$ passes through $L_{1}L_{2}$; this will occur precisely
when $\overline{L_{1}L_{2}}$ is a null vertex.

\begin{example}
Consider the distinct points $a_{1}\equiv\left[  x:0:1\right]  $ and
$a_{2}\equiv\left[  0:y:1\right]  $. Then $a_{1}a_{2}=\left(  y:x:xy\right)
\ $and so the perpendicular point of the side $\overline{a_{1}a_{2}}$ is
$p=\left[  y:x:xy\right]  $.$%
\hspace{.1in}\diamond
$
\end{example}

\begin{example}
Consider the distinct lines $L_{1}\equiv\left(  l_{1}:m_{1}:0\right)  $ and
$L_{2}\equiv\left(  l_{2}:m_{2}:0\right)  $. Then $L_{1}L_{2}=\left[
0:0:1\right]  $ and so the perpendicular line of the vertex $\overline
{L_{1}L_{2}}$ is $P=\left(  0:0:1\right)  $.$%
\hspace{.1in}\diamond
$
\end{example}

\begin{theorem}
[Opposite points]For any \textit{non-null }side $\overline{a_{1}a_{2}}$ there
is a unique point $o_{1}$ which lies on $a_{1}a_{2}$ and is perpendicular to
$a_{1},$ namely%
\[
o_{1}\equiv\left(  a_{1}a_{2}\right)  a_{1}^{\bot},
\]
and there is a unique point $o_{2}$ which lies on $a_{1}a_{2}$ and is
perpendicular to $a_{2},$ namely
\[%
\begin{tabular}
[c]{l}%
$o_{2}\equiv\left(  a_{1}a_{2}\right)  a_{2}^{\bot}.$%
\end{tabular}
\]
The points $o_{1}$ and $o_{2}$ are distinct. If $a_{1}$ is a null point, then
$o_{1}=a_{1},$ and if $a_{2}$ is a null point, then $o_{2}=a_{2}.$ If
$a_{1}\equiv\left[  v_{1}\right]  $ and $a_{2}\equiv\left[  v_{2}\right]  $
with vectors $v_{1}\equiv\left(  x_{1},y_{1},z_{1}\right)  $ and $v_{2}%
\equiv\left(  x_{2},y_{2},z_{2}\right)  $, then
\begin{align*}
o_{1}  & =\left[  \left(  x_{1}x_{2}+y_{1}y_{2}-z_{1}z_{2}\right)
v_{1}-\left(  x_{1}^{2}+y_{1}^{2}-z_{1}^{2}\right)  v_{2}\right] \\
o_{2}  & =\left[  \left(  x_{2}^{2}+y_{2}^{2}-z_{2}^{2}\right)  v_{1}-\left(
x_{1}x_{2}+y_{1}y_{2}-z_{1}z_{2}\right)  v_{2}\right]  .
\end{align*}

\end{theorem}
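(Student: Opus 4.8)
The plan is to recognize each $o_i$ as a meet of two lines, apply the Meet of lines theorem for existence and uniqueness, and then confirm the explicit coordinates by direct substitution. First I would translate the defining conditions: by the definition of perpendicularity, a point is perpendicular to $a_1$ precisely when it lies on the dual line $a_1^\bot$. Hence a point that lies on $a_1 a_2$ and is perpendicular to $a_1$ is exactly a common point of the two lines $a_1 a_2$ and $a_1^\bot$. So the entire existence-and-uniqueness claim for $o_1$ collapses, via the Meet of lines theorem, to the single fact that these two lines are \emph{distinct}; granting that, $o_1 = (a_1 a_2) a_1^\bot$ is forced, and dually $o_2 = (a_1 a_2) a_2^\bot$.

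For distinctness I would argue contrapositively. If $a_1^\bot = a_1 a_2$, then since $a_1$ always lies on $a_1 a_2$, the point $a_1$ lies on $a_1^\bot$, so $a_1$ is null; but the dual of a null point is a null line, so $a_1 a_2 = a_1^\bot$ would then be a null line, making $\overline{a_1 a_2}$ a null side. Thus the non-null hypothesis forces $a_1^\bot \neq a_1 a_2$, and the Meet of lines theorem delivers $o_1$. For the nil sub-case, if $a_1$ is null then $a_1$ lies on $a_1^\bot$ (nullness) and on $a_1 a_2$ (always), so $a_1$ is the unique common point of these two distinct lines, whence $o_1 = a_1$; the statement for $o_2$ and $a_2$ is identical.

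To see $o_1 \neq o_2$, suppose instead they coincide at a point $o$. Then $o$ lies on both $a_1^\bot$ and $a_2^\bot$, so $o$ is perpendicular to both $a_1$ and $a_2$, and by the Perpendicular point theorem $o = (a_1 a_2)^\bot$. But $o$ also lies on $a_1 a_2$, so the line $a_1 a_2$ passes through its own dual point and is therefore null, contradicting non-nullness. Hence $o_1$ and $o_2$ are distinct.

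For the explicit formulas I would abbreviate the symmetric bilinear form $\langle v_i, v_j \rangle \equiv x_i x_j + y_i y_j - z_i z_j$, so that $a_1 \bot a_2$ reads $\langle v_1, v_2\rangle = 0$, nullness of $a_1$ reads $\langle v_1, v_1\rangle = 0$, and $[w]$ lies on $a_1^\bot$ precisely when $\langle w, v_1\rangle = 0$. Writing $w_1 \equiv \langle v_1, v_2\rangle v_1 - \langle v_1, v_1\rangle v_2$ for the proposed coordinate vector, a one-line computation gives $\langle w_1, v_1\rangle = \langle v_1, v_2\rangle\langle v_1, v_1\rangle - \langle v_1, v_1\rangle\langle v_1, v_2\rangle = 0$, so $o_1 \bot a_1$; and since $w_1$ is a linear combination of $v_1$ and $v_2$, linearity of the incidence relation $lx + my - nz = 0$ together with the fact that $a_1, a_2$ both lie on $a_1 a_2$ shows $[w_1]$ lies on $a_1 a_2$, hence equals $o_1$ by uniqueness. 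The only genuinely delicate point is the bookkeeping of degeneracy: I must check $w_1 \neq 0$, but $w_1 = 0$ would force $\langle v_1, v_1\rangle = \langle v_1, v_2\rangle = 0$ (as $v_1, v_2$ are independent for distinct $a_1, a_2$), which is exactly the null-side configuration already excluded. This same condition is what would collapse the two lines, annihilate $w_1$, and characterize the null side, so I would make a point of tying all three together rather than treating them separately; the formula for $o_2$ is then verified identically, and it automatically reduces to $[v_2] = a_2$ when $a_2$ is null.
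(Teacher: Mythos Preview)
Your proof is correct and tracks the paper's argument almost exactly for the structural claims: the same contrapositive shows $a_1^\bot\neq a_1a_2$, the same use of the Perpendicular point theorem shows $o_1\neq o_2$, and the same observation handles the nil sub-cases.

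The one genuine difference is in how you verify the explicit coordinate formulas. The paper computes $(a_1a_2)a_1^\bot$ directly---it writes out the join, then the meet, and expands the resulting expression to see that it matches $\langle v_1,v_2\rangle v_1-\langle v_1,v_1\rangle v_2$ component by component. You instead take the claimed vector $w_1$, check in one line that $\langle w_1,v_1\rangle=0$ and that $w_1$ is a combination of $v_1,v_2$ (hence lies on $a_1a_2$), and then invoke the uniqueness you already proved. Your route is shorter and more conceptual; the paper's is more explicit and self-contained (it never needs to argue that $w_1\neq 0$ as a separate step, since the meet formula produces it automatically). Your handling of the $w_1=0$ degeneracy is fine: the two vanishing coefficients say $a_1$ is null and $a_1\perp a_2$, so both $a_1$ and $a_2$ lie on $a_1^\bot$, forcing $a_1a_2=a_1^\bot$ to be null---the excluded case.
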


\begin{proof}
If $a_{1}a_{2}=a_{1}^{\bot}$ then $a_{1}$ lies on its dual $a_{1}^{\bot},$ so
that $a_{1}$ is a null point, and so $a_{1}^{\bot}=a_{1}a_{2}$ is a null line.
Since we assume that $\overline{a_{1}a_{2}}$ is a non-null side, we conclude
that $a_{1}^{\bot}$ is distinct from $a_{1}a_{2},$ so that $o_{1}\equiv\left(
a_{1}a_{2}\right)  a_{1}^{\bot}$ is well-defined, and is the unique point
lying on $a_{1}a_{2}$ which is perpendicular to $a_{1}.$ Similarly
$o_{2}\equiv\left(  a_{1}a_{2}\right)  a_{2}^{\bot}$ is the unique point lying
on $a_{1}a_{2}$ which is perpendicular to $a_{2}.$

If $o_{1}=o_{2}$ then $o_{1}=a_{1}^{\bot}a_{2}^{\bot}=\left(  a_{1}%
a_{2}\right)  ^{\bot}$ lies on its dual $a_{1}a_{2},$ which would imply that
$a_{1}a_{2}$ is a null line, which contradicts the assumption that
$\overline{a_{1}a_{2}}$ is a non-null side. So $o_{1}$ and $o_{2}$ are distinct.

If $a_{1}$ is a null point, then $a_{1}$ lies on both $a_{1}^{\perp}$ and
$a_{1}a_{2},$ so $o_{1}=a_{1}.$ Similarly if $a_{2}$ is a null point then
$o_{2}=a_{2}.$

Now suppose that $a_{1}\equiv\left[  v_{1}\right]  $ and $a_{2}\equiv\left[
v_{2}\right]  $ where $v_{1}\equiv\left(  x_{1},y_{1},z_{1}\right)  $ and
$v_{2}\equiv\left(  x_{2},y_{2},z_{2}\right)  $ are vectors. Then from the
Join of points theorem and the Meet of lines theorem,
\[
a_{1}a_{2}=\left(  y_{1}z_{2}-y_{2}z_{1}:z_{1}x_{2}-z_{2}x_{1}:x_{2}%
y_{1}-x_{1}y_{2}\right)
\]
and
\begin{align*}
& \left(  a_{1}a_{2}\right)  a_{1}^{\bot}\\
& =\left[
\begin{array}
[c]{c}%
\left(  z_{1}x_{2}-z_{2}x_{1}\right)  z_{1}-y_{1}\left(  x_{2}y_{1}-x_{1}%
y_{2}\right)  :\left(  x_{2}y_{1}-x_{1}y_{2}\right)  x_{1}-z_{1}\left(
y_{1}z_{2}-y_{2}z_{1}\right) \\
:x_{1}\left(  z_{1}x_{2}-z_{2}x_{1}\right)  -\left(  y_{1}z_{2}-y_{2}%
z_{1}\right)  y_{1}%
\end{array}
\right] \\
& =\left[  x_{1}y_{1}y_{2}-x_{1}z_{1}z_{2}-x_{2}y_{1}^{2}+x_{2}z_{1}^{2}%
:x_{1}x_{2}y_{1}-y_{1}z_{1}z_{2}-x_{1}^{2}y_{2}+y_{2}z_{1}^{2}:x_{1}x_{2}%
z_{1}+y_{1}y_{2}z_{1}-x_{1}^{2}z_{2}-y_{1}^{2}z_{2}\right]  .
\end{align*}
On the other hand$\allowbreak$
\begin{align*}
& \left(  x_{1}x_{2}+y_{1}y_{2}-z_{1}z_{2}\right)  \left(  x_{1},y_{1}%
,z_{1}\right)  -\left(  x_{1}^{2}+y_{1}^{2}-z_{1}^{2}\right)  \left(
x_{2},y_{2},z_{2}\right) \\
& =\left(  x_{1}y_{1}y_{2}-x_{1}z_{1}z_{2}-x_{2}y_{1}^{2}+x_{2}z_{1}^{2}%
,x_{1}x_{2}y_{1}-y_{1}z_{1}z_{2}-x_{1}^{2}y_{2}+y_{2}z_{1}^{2},x_{1}x_{2}%
z_{1}+y_{1}y_{2}z_{1}-x_{1}^{2}z_{2}-y_{1}^{2}z_{2}\right)  .
\end{align*}
So
\[
o_{1}\equiv\left(  a_{1}a_{2}\right)  a_{1}^{\bot}=\left[  \left(  x_{1}%
x_{2}+y_{1}y_{2}-z_{1}z_{2}\right)  v_{1}-\left(  x_{1}^{2}+y_{1}^{2}%
-z_{1}^{2}\right)  v_{2}\right]  .
\]
Interchanging the indices gives
\[
o_{2}\equiv\left(  a_{1}a_{2}\right)  a_{2}^{\bot}=\left[  \left(  x_{2}%
^{2}+y_{2}^{2}-z_{2}^{2}\right)  v_{1}-\left(  x_{1}x_{2}+y_{1}y_{2}%
-z_{1}z_{2}\right)  v_{2}\right]  .%
{\hspace{.1in} \rule{0.5em}{0.5em}}%
\]

\end{proof}

The points $o_{1}$ and $o_{2}$ are the \textbf{opposite points }of the side
$\overline{a_{1}a_{2}}$. Since $o_{1}$ and $o_{2}$ are distinct, the side
$\overline{o_{1}o_{2}}$ is well-defined, and non-null since $o_{1}o_{2}%
=a_{1}a_{2}$. The side $\overline{o_{1}o_{2}}$ is \textbf{opposite} to the
side $\overline{a_{1}a_{2}}$. This relationship is involutory: $\overline
{a_{1}a_{2}}$ is also opposite to $\overline{o_{1}o_{2}}.$

\begin{theorem}
[Opposite lines]For any \textit{non-null} vertex $\overline{L_{1}L_{2}}$ there
is a unique line $O_{1}$ which passes through $L_{1}L_{2}$ and is
perpendicular to $L_{1},$ namely%
\[
O_{1}\equiv\left(  L_{1}L_{2}\right)  L_{1}^{\bot},
\]
and there is a unique line $O_{2}$ which passes through $L_{1}L_{2}$ and is
perpendicular to $L_{2},$ namely
\[%
\begin{tabular}
[c]{l}%
$O_{2}\equiv\left(  L_{1}L_{2}\right)  L_{2}^{\bot}.$%
\end{tabular}
\]
The lines $O_{1}$ and $O_{2}$ are distinct. If $L_{1}$ is a null line, then
$O_{1}=L_{1},$ and if $L_{2}$ is a null line, then $O_{2}=L_{2}.$ If
$L_{1}=\left(  v_{1}\right)  $ and $L_{2}=\left(  v_{2}\right)  $ with vectors
$v_{1}=\left(  l_{1},m_{1},n_{1}\right)  $ and $v_{2}=\left(  l_{2}%
,m_{2},n_{2}\right)  $, then
\begin{align*}
O_{1}  & =\left(  \left(  l_{1}l_{2}+m_{1}m_{2}-n_{1}n_{2}\right)
v_{1}-\left(  l_{1}^{2}+m_{1}^{2}-n_{1}^{2}\right)  v_{2}\right) \\
O_{2}  & =\left(  \left(  l_{2}^{2}+m_{2}^{2}-n_{2}^{2}\right)  v_{1}-\left(
l_{1}l_{2}+m_{1}m_{2}-n_{1}n_{2}\right)  v_{2}\right)  .
\end{align*}

\end{theorem}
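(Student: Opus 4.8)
The plan is to invoke the duality principle, exactly as the author does for the Meet of lines, Concurrent lines, Perpendicular line, and Point on null lines theorems. The Opposite lines theorem is the precise dual of the already-established Opposite points theorem: we interchange the roles of points and lines, replacing a non-null side $\overline{a_1a_2}$ by a non-null vertex $\overline{L_1L_2}$, the join $a_1a_2$ by the meet $L_1L_2$, each dual line $a_i^\bot$ by the dual point $L_i^\bot$, the meet $(a_1a_2)a_i^\bot$ by the join $(L_1L_2)L_i^\bot$, and the quadratic form $x^2+y^2-z^2$ in point coordinates by the identical form $l^2+m^2-n^2$ in line coordinates. Since the perpendicularity condition for lines ($l_1l_2+m_1m_2-n_1n_2=0$) and the null condition for lines ($l^2+m^2-n^2=0$) are formally identical to their point counterparts, every sentence of the Opposite points proof transcribes verbatim under this dictionary.

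Concretely, I would first establish existence and uniqueness of $O_1$ and $O_2$: a line perpendicular to $L_1$ passes through $L_1^\bot$, so any line through $L_1L_2$ perpendicular to $L_1$ must be the join of $L_1L_2$ with $L_1^\bot$. This join is well-defined provided $L_1L_2 \neq L_1^\bot$, which fails only when $L_1$ is null—dual to the argument that $a_1a_2 = a_1^\bot$ forces $a_1$ null. Next I would show $O_1 \neq O_2$: equality would force $O_1 = (L_1L_2)^\bot = L_1^\bot L_2^\bot$ to pass through its own dual point $L_1L_2$, making $L_1L_2$ a null point and hence $\overline{L_1L_2}$ a null vertex, contradicting the hypothesis. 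The degenerate cases (if $L_i$ is null then $O_i=L_i$, because $L_i$ then passes through both $L_i^\bot$ and $L_1L_2$) dualize identically.

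The only genuinely non-routine component is the explicit coordinate formula. Here I would note that the computation in the Opposite points proof—expanding $(a_1a_2)a_1^\bot$ via the Join of points and Meet of lines theorems and recognizing the result as $(x_1x_2+y_1y_2-z_1z_2)v_1 - (x_1^2+y_1^2-z_1^2)v_2$—is an algebraic identity in six indeterminates. Under the substitution $x_i \mapsto l_i$, $y_i \mapsto m_i$, $z_i \mapsto n_i$, the roles of Join and Meet swap but the two theorems have formally identical defining formulas (the hyperbolic cross product $J$ governs both), so the same polynomial identity yields
\[
(L_1L_2)L_1^\bot = \bigl(\,(l_1l_2+m_1m_2-n_1n_2)v_1 - (l_1^2+m_1^2-n_1^2)v_2\,\bigr),
\]
with $O_2$ obtained by interchanging the indices $1 \leftrightarrow 2$.

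I expect no real obstacle: the entire content rests on the observation that the defining equations for points-on-lines, perpendicularity, nullity, joins, and meets are all invariant (up to the systematic bracket-swap) under the point-line interchange, which is precisely the duality principle asserted in Section 2.2. The author's one-line proof ``This is dual to the Opposite points theorem'' is therefore fully justified, and the only thing worth verifying—that the coordinate formula really does dualize—follows because that formula was itself derived purely from the Join and Meet theorems, both of which are self-dual in form.
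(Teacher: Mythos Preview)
Your proposal is correct and takes essentially the same approach as the paper: the author's proof consists of the single sentence ``This is dual to the Opposite points theorem,'' and your detailed unpacking of how each clause and the coordinate formula dualize is exactly the justification underlying that one line.
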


\begin{proof}
This is dual to the Opposite points theorem.$%
{\hspace{.1in} \rule{0.5em}{0.5em}}%
$
\end{proof}

The lines $O_{1}$ and $O_{2}$ are the \textbf{opposite lines }of the vertex
$\overline{L_{1}L_{2}}$. Since $O_{1}$ and $O_{2}$ are distinct, the vertex
$\overline{O_{1}O_{2}}$ is well-defined, and non-null since $O_{1}O_{2}%
=A_{1}A_{2}$. The vertex $\overline{O_{1}O_{2}}$ is \textbf{opposite} to the
vertex $\overline{A_{1}A_{2}}$. This relationship is involutory:
$\overline{L_{1}L_{2}}$ is also opposite to $\overline{O_{1}O_{2}}.$

If $L_{1}=a_{1}^{\bot}$ and $L_{2}=a_{2}^{\bot},$ then the opposite points
$o_{1}$ and $o_{2}$ of $\overline{a_{1}a_{2}}$ are dual respectively to the
opposite lines $O_{1}$ and $O_{2}$ of $\overline{L_{1}L_{2}}$.

\begin{example}
Consider the distinct points $a_{1}\equiv\left[  x:0:1\right]  $ and
$a_{2}\equiv\left[  0:y:1\right]  $. Then the opposite points of
$\overline{a_{1}a_{2}}$ are%
\[%
\begin{tabular}
[c]{lllll}%
$o_{1}=\left[  x:y\left(  x^{2}-1\right)  :x^{2}\right]  $ &  & \textit{and} &
& $o_{2}=\left[  x\left(  y^{2}-1\right)  :y:y^{2}\right]  .%
\hspace{.1in}\diamond
$%
\end{tabular}
\]

\end{example}

\begin{example}
Consider the distinct lines $L_{1}\equiv\left(  l_{1}:m_{1}:0\right)  $ and
$L_{2}\equiv\left(  l_{2}:m_{2}:0\right)  $. Then the opposite lines of
$\overline{L_{1}L_{2}}$ are%
\[%
\begin{tabular}
[c]{lllll}%
$O_{1}=\left(  -m_{1}:l_{1}:0\right)  $ &  & \textit{and} &  & $O_{2}=\left(
-m_{2}:l_{2}:0\right)  .%
\hspace{.1in}\diamond
$%
\end{tabular}
\]

\end{example}

\subsection{Couples}

A \textbf{couple} $\overline{aL}$ is a set $\left\{  a,L\right\}  $ consisting
of a point $a$ and a line $L$ such that $a$ does not lie on $L$. The
\textbf{dual }of the couple $\overline{aL}$ is the couple $\overline{L^{\bot
}a^{\bot}}.$ The couple $\overline{aL}$ is dual to itself precisely when $a$
is dual to $L,$ in which case we say $\overline{aL}$ is a \textbf{dual
couple}. A couple $\overline{aL}$ is \textbf{null }precisely when $a$ is a
null point or $L$ is a null line (or both).

\begin{theorem}
[Altitude line]For any non-dual couple $\overline{aL}$ there is a unique line
$N$ which passes through $a$ and is perpendicular to $L,$ namely%
\[
N\equiv aL^{\bot}.
\]

\end{theorem}

\begin{proof}
Any line $N$ which passes through $a$ and is perpendicular to $L$ must also
pass through $L^{\perp},$ and since $a$ and $L^{\perp}$ are by assumption
distinct there is exactly one such line, namely $N\equiv aL^{\perp}.%
{\hspace{.1in} \rule{0.5em}{0.5em}}%
$
\end{proof}

The line $N\equiv aL^{\bot}$ is the \textbf{altitude line }of $\overline{aL},$
or the \textbf{altitude line to }$L$\textbf{\ through }$a.$

\begin{theorem}
[Altitude point]For any non-dual couple $\overline{aL}$ there is a unique
point $n$ which lies on $L$ and is perpendicular to $a,$ namely%
\[
n\equiv a^{\bot}L.
\]

\end{theorem}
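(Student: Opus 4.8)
The plan is to mirror the proof of the Altitude line theorem, exploiting the complete point-line duality that the paper has established. Indeed, the Altitude point statement is precisely the dual of the Altitude line statement: interchanging the roles of points and lines sends ``line through $a$ perpendicular to $L$'' to ``point on $L$ perpendicular to $a$'', and sends $aL^{\bot}$ to $a^{\bot}L$. So the one-line proof is simply to invoke the duality principle. For completeness, though, I would also give the direct argument, since it is short and clarifies exactly where the hypothesis is used.

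For the direct route, the first step is to translate the perpendicularity condition into an incidence condition. By the definition of perpendicularity between points, a point $n$ is perpendicular to $a$ precisely when $n$ lies on the dual line $a^{\bot}$. Thus any point $n$ satisfying the requirements of the theorem must lie simultaneously on $L$ and on $a^{\bot}$. This reduces the existence-and-uniqueness question to finding the meet of two lines.

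The second step is then to apply the Meet of lines theorem to $L$ and $a^{\bot}$, which yields a unique common point exactly when these two lines are distinct, and identifies that point as $a^{\bot}L$. The only thing left to check is that $L \neq a^{\bot}$, and this is precisely the content of the non-dual hypothesis: by definition $\overline{aL}$ is a dual couple when $a$ is dual to $L$, i.e.\ when $a^{\bot}=L$, so ``non-dual'' is exactly the statement that $a^{\bot}$ and $L$ are distinct lines.

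The main (indeed only) obstacle is this last bookkeeping point---recognizing that the non-dual assumption is what guarantees the two lines $L$ and $a^{\bot}$ are distinct, so that the Meet of lines theorem applies and delivers a genuine unique point rather than a degenerate coincidence. Once that identification is made, everything follows immediately, and no computation is required; the formula $n\equiv a^{\bot}L$ is read off directly from the Meet of lines theorem.
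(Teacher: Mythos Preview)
Your proposal is correct and takes essentially the same approach as the paper, which simply invokes the duality principle (``This is dual to the Altitude line theorem''). Your additional direct argument is a faithful unwinding of that duality and is also correct.
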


\begin{proof}
This is dual to the Altitude line theorem. $%
{\hspace{.1in} \rule{0.5em}{0.5em}}%
$
\end{proof}

The point $n\equiv a^{\bot}L$ is the \textbf{altitude point of }$\overline
{aL}$, or the \textbf{altitude point to }$a$\textbf{\ on }$L.$ The altitude
line $N$ and the altitude point $n$ of $\overline{aL}$ are dual.

If $a\equiv\left[  x:y:z\right]  $ and $L\equiv\left(  k:l:m\right)  $, then
the coefficients of $N$ and $n$ are both given by $J\left(
x,y,z;k,l,m\right)  $.

\begin{theorem}
[Parallel line]For any non-dual couple $\overline{aL}$ there is a unique line
$R$ which passes through $a$ and is perpendicular to the altitude line $N$ of
$\overline{aL}$, namely%
\[
R\equiv a\left(  a^{\bot}L\right)  .
\]

\end{theorem}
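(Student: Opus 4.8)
The plan is to convert the perpendicularity requirement on $R$ into an incidence requirement, and then recognize the resulting incidence conditions as forcing $R$ to be a specific join of two points. The key observation is that "$R$ is perpendicular to $N$" can be restated, using the perpendicularity criterion for lines given in the duality section, as "$R$ passes through $N^{\bot}$." So the two conditions on $R$ become: $R$ passes through $a$, and $R$ passes through $N^{\bot}$. Once I identify $N^{\bot}$ concretely, existence and uniqueness will follow immediately from the Join of points theorem.

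The heart of the argument is computing $N^{\bot}$. By the Altitude line theorem, $N\equiv aL^{\bot}$, which is the join of the point $a$ with the point $L^{\bot}$. Applying the duality relation for joins, $\left(a_{1}a_{2}\right)^{\bot}=a_{1}^{\bot}a_{2}^{\bot}$, together with $\left(L^{\bot}\right)^{\bot}=L$, I would compute
\[
N^{\bot}=\left(aL^{\bot}\right)^{\bot}=a^{\bot}\left(L^{\bot}\right)^{\bot}=a^{\bot}L.
\]
Thus $N^{\bot}$ is precisely the altitude point $n\equiv a^{\bot}L$ of $\overline{aL}$, which is well-defined because the couple is non-dual (so $a^{\bot}\neq L$ and the meet $a^{\bot}L$ exists by the Meet of lines theorem). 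This makes the appearance of $a\left(a^{\bot}L\right)$ in the statement transparent: $R$ must be the join of $a$ and $n$.

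It then remains to check that $a$ and $n=a^{\bot}L$ are distinct, so that the join $a\left(a^{\bot}L\right)$ is defined and, by the Join of points theorem, unique. Here I would use the couple condition: $n$ lies on $L$ by the Altitude point theorem, whereas $a$ does not lie on $L$ by definition of a couple, so $a\neq n$. Hence there is exactly one line through $a$ and $n$, namely $R\equiv a\left(a^{\bot}L\right)$, and this $R$ passes through $a$ and through $N^{\bot}$, i.e.\ is perpendicular to $N$, as required. I do not expect any serious obstacle; the only point demanding care is the bookkeeping of duals when simplifying $N^{\bot}$, and confirming that the non-dual and non-incidence hypotheses of the couple are exactly what guarantee the relevant meet and join are legitimate.
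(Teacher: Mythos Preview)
Your proof is correct and follows essentially the same approach as the paper: both arguments recognize that $R$ must be the join of $a$ with $a^{\bot}L$ (since perpendicularity to $N$ amounts to passing through $N^{\bot}=a^{\bot}L$), and both invoke the couple condition $a\notin L$ to ensure this join is well-defined. Your version simply spells out the computation $N^{\bot}=\left(aL^{\bot}\right)^{\bot}=a^{\bot}L$ and the distinctness argument more explicitly than the paper's terse proof.
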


\begin{proof}
The line $R\equiv a\left(  a^{\bot}L\right)  $ is well-defined since $a$ does
not lie on $L.$ Furthermore $R$ passes through $a$ and is perpendicular to
$N\equiv aL^{\perp},$ and any such line must be $a\left(  a^{\bot}L\right)
$.$%
{\hspace{.1in} \rule{0.5em}{0.5em}}%
$
\end{proof}

The line $R\equiv a\left(  a^{\bot}L\right)  $ is the \textbf{parallel line
}of\textbf{\ }$\overline{aL},$ or the \textbf{parallel line to }%
$L$\textbf{\ through }$a$. This is the only notion of parallel line that
appears in this treatment of the subject. Note that parallel lines are only
defined \textit{relative to a couple}, that is a point and a line, and
\textit{are unique}. Ironically, Euclid's parallel postulate is alive and well
in universal hyperbolic geometry!

\begin{theorem}
[Parallel point]For any non-dual couple $\overline{aL}$ there is a unique
point $r$ which lies on $a^{\perp}$ and is perpendicular to the altitude point
$n$ of $\overline{aL}$, namely%
\[
r\equiv a^{\bot}\left(  aL^{\bot}\right)  .
\]

\end{theorem}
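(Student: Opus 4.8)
The plan is to obtain this as the dual of the Parallel line theorem, exactly as the duality principle of Section 2.2 is designed to be used. First I would translate each ingredient of the Parallel line statement into its dual. Incidence is self-dual, so ``the line $R$ passes through the point $a$'' becomes ``the point $r$ lies on the line $a^{\bot}$''; perpendicularity of lines becomes perpendicularity of points; and the altitude line $N\equiv aL^{\bot}$ is the dual of the altitude point $n\equiv a^{\bot}L$, as already noted after the Altitude point theorem. Under this dictionary the hypothesis ``$R$ passes through $a$ and is perpendicular to $N$'' turns into ``$r$ lies on $a^{\bot}$ and is perpendicular to $n$,'' which is precisely the assertion of the present theorem.

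The one genuinely computational step is dualizing the explicit formula $R\equiv a\left(a^{\bot}L\right)$. Using $\left(a_{1}a_{2}\right)^{\bot}=a_{1}^{\bot}a_{2}^{\bot}$ to dualize the join, together with $\left(a^{\bot}L\right)^{\bot}=aL^{\bot}$ (the duality of altitude point and altitude line), I would obtain
\[
R^{\bot}=\left(a\left(a^{\bot}L\right)\right)^{\bot}=a^{\bot}\left(a^{\bot}L\right)^{\bot}=a^{\bot}\left(aL^{\bot}\right).
\]
Hence $r=R^{\bot}$ is exactly the claimed point: the parallel point of $\overline{aL}$ is nothing but the dual of its parallel line.

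As a cross-check I would also run the direct argument mirroring the Parallel line proof. The point $r\equiv a^{\bot}\left(aL^{\bot}\right)$ is the meet of the lines $a^{\bot}$ and $N\equiv aL^{\bot}$, so it lies on $a^{\bot}$; and since $n^{\bot}=\left(a^{\bot}L\right)^{\bot}=aL^{\bot}=N$, lying on $N$ is the same as being perpendicular to $n$, which gives the stated property. Uniqueness then follows from the Meet of lines theorem once the two lines are known to be distinct. The main obstacle is checking this distinctness: I would argue that $N$ passes through $L^{\bot}$ while $a^{\bot}$ does not, because $L^{\bot}$ lying on $a^{\bot}$ would mean $a$ lies on $L$, contradicting the defining property of a couple. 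Thus $a^{\bot}\neq N$ and $r$ is well-defined and unique. Beyond this point the work is pure bookkeeping: the real care lies in sending every join, meet, incidence and perpendicularity to its correct dual so that $a\left(a^{\bot}L\right)$ maps cleanly to $a^{\bot}\left(aL^{\bot}\right)$.
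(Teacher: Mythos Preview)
Your proposal is correct and takes the same approach as the paper, which simply writes ``This is dual to the Parallel line theorem.'' You have spelled out the duality dictionary and the well-definedness check in considerably more detail than the paper bothers with, but the underlying argument is identical.
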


\begin{proof}
This is dual to the Parallel line theorem.$%
{\hspace{.1in} \rule{0.5em}{0.5em}}%
$
\end{proof}

The point $r\equiv a^{\bot}\left(  aL^{\bot}\right)  $ is the \textbf{parallel
point }of\textbf{\ }$\overline{aL},$ or the \textbf{parallel point to }%
$a$\textbf{\ on }$L$. The parallel line $R$ and the parallel point $r$ of
$\overline{aL}$ are dual.

\begin{theorem}
[Base point]For any non-dual couple $\overline{aL}$ there is a unique point
$b$ which lies on both $L$ and the altitude line $N$ of $\overline{aL}$,
namely%
\[
b\equiv\left(  aL^{\bot}\right)  L.
\]

\end{theorem}

\begin{proof}
The point $b\equiv\left(  aL^{\bot}\right)  L$ is well-defined since $L$ does
not pass through $a.$ Furthermore $b$ lies on both $L$ and $N\equiv aL^{\perp
}$, and any such point must be $\left(  aL^{\bot}\right)  L$.$%
{\hspace{.1in} \rule{0.5em}{0.5em}}%
$
\end{proof}

The point $b\equiv\left(  aL^{\bot}\right)  L$ is the \textbf{base point
}of\textbf{\ }$\overline{aL},$ or the \textbf{base point of the altitude line
to }$L$\textbf{\ through }$a.$

\begin{theorem}
[Base line]For any non-dual couple $\overline{aL}$ there is a unique line $B$
which passes through both $L^{\bot}$ and the altitude point $n$ of
$\overline{aL}$, namely
\[
B\equiv\left(  a^{\bot}L\right)  L^{\bot}.
\]

\end{theorem}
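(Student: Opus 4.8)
The plan is to recognize this statement as the exact dual of the Base point theorem and exploit the duality principle, while spelling out the dual argument explicitly enough that the reader can check that every hypothesis transfers correctly.

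First I would fix the dual dictionary: under the interchange of points and lines, the base point $b$ of $\overline{aL}$ corresponds to the base line $B$, the altitude line $N\equiv aL^{\bot}$ corresponds to the altitude point $n\equiv a^{\bot}L$, the line $L$ corresponds to its dual point $L^{\bot}$, and the point $a$ corresponds to its dual line $a^{\bot}$. The Base point theorem constructs $b$ as the meet $\left(aL^{\bot}\right)L$ of the altitude line $N$ with $L$; dualizing, $B$ should be the join $\left(a^{\bot}L\right)L^{\bot}$ of the altitude point $n$ with the dual point $L^{\bot}$, which is precisely the claimed formula.

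Next I would verify that this join is well-defined by the Join of points theorem, which requires $n$ and $L^{\bot}$ to be distinct points. The altitude point $n\equiv a^{\bot}L$ is perpendicular to $a$, so $n$ lies on $a^{\bot}$. On the other hand, $L^{\bot}$ lies on $a^{\bot}$ precisely when $a$ lies on $L$, since both conditions read $lx+my-nz=0$; because $\overline{aL}$ is a couple, $a$ does not lie on $L$, so $L^{\bot}$ does not lie on $a^{\bot}$. Hence $n\neq L^{\bot}$, and the join is well-defined. The non-dual hypothesis enters one step earlier, guaranteeing via the Altitude point theorem that $n\equiv a^{\bot}L$ exists at all, i.e.\ that $a^{\bot}\neq L$. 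Finally, $B\equiv\left(a^{\bot}L\right)L^{\bot}$ manifestly passes through both $n$ and $L^{\bot}$, and by the Join of points theorem any line through these two distinct points is their unique join, which gives uniqueness.

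The main obstacle I anticipate is precisely the distinctness check $n\neq L^{\bot}$: this is the one place where the couple condition (that $a$ does not lie on $L$) must be translated into a statement about $a^{\bot}$, and it is the dual analog of the step in the Base point proof where $N\neq L$ was deduced from the fact that $N$ passes through $a$ while $L$ does not. Everything else is formal bookkeeping under the duality principle, so in the finished paper I would likely record only the one-line remark that the result is dual to the Base point theorem.
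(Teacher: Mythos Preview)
Your proposal is correct and takes essentially the same approach as the paper, which records only the one-line remark that the result is dual to the Base point theorem. Your explicit unpacking of the duality---in particular the distinctness check $n\neq L^{\bot}$ via the observation that $n$ lies on $a^{\bot}$ while $L^{\bot}$ does not---is exactly the dual of the paper's reasoning that $N\neq L$ because $N$ passes through $a$ while $L$ does not, and your closing sentence anticipates precisely how the paper presents it.
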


\begin{proof}
This is dual to the Base point theorem. $%
{\hspace{.1in} \rule{0.5em}{0.5em}}%
$
\end{proof}

The line $B\equiv\left(  a^{\bot}L\right)  L^{\bot}$ is the \textbf{base line
}of\textbf{\ }$\overline{aL},$ or the \textbf{base line of the altitude point
to }$a$\textbf{\ on }$L.$ The base line $B$ and the base point $b$ of
$\overline{aL}$ are dual.

Can one continue in this fashion to create more and more points and lines
associated to a couple $\overline{aL}?$ If one restricts to using only the
notions introduced so far---essentially perpendicularity---the answer is
generally \textit{no---the altitudes, parallels and bases} of a couple exhaust
such constructions. It is a good exercise to confirm this.

\subsection{Parametrizing points and lines}

\begin{theorem}
[Parametrizing a line]If a point $a$ lies on a line $L\equiv\left(
l:m:n\right)  $, then there are numbers $p,r$ and $s$ such that%
\[
a=\left[  np-ms:ls+nr:lp+mr\right]  .
\]

\end{theorem}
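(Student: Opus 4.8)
The plan is to read the proposed formula as the output of a map and show that this map reaches \emph{every} point of $L$. First I would verify that for \emph{all} choices of $p,r,s$ the bracketed triple really does lie on $L$; then I would show that as $p,r,s$ range over $\mathbb{F}$ the triple ranges over \emph{all} the points of $L$, so that the given $a$ is necessarily one of them.

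For the first part I would substitute straight into the incidence condition $lx+my-nz=0$. Writing $x=np-ms$, $y=ls+nr$, $z=lp+mr$, the computation
\[
l(np-ms)+m(ls+nr)-n(lp+mr)=lnp-lms+mls+mnr-nlp-nmr=0
\]
shows the cross terms cancel in pairs, so every such triple indeed lies on $L$ (and it is a legitimate $3$-proportion whenever it is not all zero).

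For the second part I would regard $(p,r,s)\mapsto(np-ms,\,ls+nr,\,lp+mr)$ as the linear map $\mathbb{F}^{3}\to\mathbb{F}^{3}$ with matrix
\[
M=\begin{pmatrix} n & 0 & -m\\ 0 & n & l\\ l & m & 0\end{pmatrix}
\]
(columns indexed by $p,r,s$). By the first part the image of $M$ lies inside the two-dimensional subspace $\{lx+my-nz=0\}$, which is exactly the set of vectors representing points on $L$. A direct expansion gives $\det M=0$, so $\operatorname{rank}M\le 2$. To obtain equality I would exhibit a nonzero $2\times2$ minor: the minors on rows $\{1,2\}$ and columns $\{1,2\}$, on rows $\{1,3\}$ and columns $\{2,3\}$, and on rows $\{2,3\}$ and columns $\{1,3\}$ equal $n^{2}$, $m^{2}$ and $-l^{2}$ respectively. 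Since $(l:m:n)$ is a genuine $3$-proportion, not all of $l,m,n$ vanish, so one of $l^{2},m^{2},n^{2}$ is nonzero and $\operatorname{rank}M=2$. Hence the image of $M$ is the \emph{entire} plane of $L$; in particular the vector representing $a$ lies in it, and any preimage $(p,r,s)$ supplies the required numbers.

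The only genuine obstacle is this surjectivity step, i.e.\ ruling out $\operatorname{rank}M=1$; once one notices that $l^{2},m^{2},n^{2}$ each occur among the $2\times2$ minors it is immediate. As a concrete alternative avoiding the rank argument one can solve directly: when $n\neq 0$ the choice $s=0$, $p=x/n$, $r=y/n$ works, since then $np-ms=x$, $ls+nr=y$, and $lp+mr=(lx+my)/n=z$ by the incidence relation; the degenerate case $n=0$ (where some coordinate among $l,m$ is nonzero) is handled in the same spirit by solving for $s$ first.
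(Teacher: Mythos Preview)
Your proof is correct and follows essentially the same idea as the paper: both use the three vectors $(n,0,l)$, $(0,n,m)$, $(-m,l,0)$ to span the solution plane $lx+my-nz=0$. The paper establishes rank two by a case split (at least two of $l,m,n$ nonzero versus exactly one), whereas your minor computation---observing that $n^{2}$, $m^{2}$, $-l^{2}$ all occur among the $2\times 2$ minors---handles all cases uniformly and is arguably cleaner.
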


\begin{proof}
Suppose first that at least two of $l,m$ and $n$ are non-zero. The points
$\left[  n:0:l\right]  $, $\left[  0:n:m\right]  $ and $\left[  -m:l:0\right]
$ are then distinct, and all lie on $L.$ So the equation
\begin{equation}
lx+my-nz=0\label{ParaEqn}%
\end{equation}
has non-zero solutions $\left(  n,0,l\right)  $, $\left(  0,n,m\right)  $ and
$\left(  -m,l,0\right)  $. Since
\[%
\begin{pmatrix}
n & 0 & l\\
0 & n & m\\
-m & l & 0
\end{pmatrix}
\]
is a rank two matrix, every non-zero solution to (\ref{ParaEqn}) has the form%
\begin{align*}
\left(  x,y,z\right)   & =p\left(  n,0,l\right)  +r\left(  0,n,m\right)
+s\left(  -m,l,0\right) \\
& =\left(  np-ms,ls+nr,lp+mr\right)
\end{align*}
for some numbers $p,r$ and $s$. So if $a\equiv\left[  x:y:z\right]  $ lies on
$L,$ it can be written as $a=\left[  np-ms:ls+nr:lp+mr\right]  $.

If only one of $l,m$ and $n$ are non-zero, say $n\neq0,$ then every point $a $
lying on $L$ has the form $\left[  x:y:0\right]  $, which can be written as
$\left[  np:nr:0\right]  $, and this is also of the required form. $%
{\hspace{.1in} \rule{0.5em}{0.5em}}%
$
\end{proof}

\begin{theorem}
[Parametrizing a point]If a line $L$ passes through a point $a\equiv\left[
x:y:z\right]  $, then there are numbers $p,r$ and $s$ such that%
\[
L=\left(  zp-ys:xs+zr:xp+yr\right)  .
\]

\end{theorem}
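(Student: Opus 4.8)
The plan is to deduce this from the Parametrizing a line theorem by the duality principle, which is the natural route given the symmetric treatment of points and lines. Duality interchanges a point $\left[x:y:z\right]$ with the line $\left(x:y:z\right)$ carrying the same coordinates, and simultaneously swaps the relation ``$a$ lies on $L$'' with ``$L$ passes through $a$''. Under this correspondence the hypothesis ``$L$ passes through $a\equiv\left[x:y:z\right]$'' is exactly the dual of the hypothesis ``$a$ lies on $L\equiv\left(l:m:n\right)$'' of the earlier theorem, after the symbol replacement $l\mapsto x$, $m\mapsto y$, $n\mapsto z$. The free parameters $p,r,s$ are scalars and are unaffected by dualization.

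To see that the conclusion dualizes correctly, I would apply the same replacement to the parametrized point $\left[np-ms:ls+nr:lp+mr\right]$ produced by the Parametrizing a line theorem. Term by term this sends $np-ms$ to $zp-ys$, sends $ls+nr$ to $xs+zr$, and sends $lp+mr$ to $xp+yr$, yielding the line $\left(zp-ys:xs+zr:xp+yr\right)$, which is precisely the asserted form. There is no genuine obstacle here; the only thing to watch is that the coordinate pairing under duality is matched correctly, so that the three coefficients land in the stated order.

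For a self-contained alternative, I would imitate the proof of the preceding theorem directly. Fixing $a\equiv\left[x:y:z\right]$, the incidence condition $lx+my-nz=0$ cuts out a two-dimensional space of coefficient vectors $\left(l,m,n\right)$, and the three vectors $\left(z,0,x\right)$, $\left(0,z,y\right)$, $\left(-y,x,0\right)$ all satisfy it. When at least two of $x,y,z$ are non-zero the matrix with these rows has rank two, so they span the solution space and every line through $a$ has the form $p\left(z,0,x\right)+r\left(0,z,y\right)+s\left(-y,x,0\right)=\left(zp-ys:xs+zr:xp+yr\right)$. The remaining case, in which only one coordinate is non-zero, is disposed of exactly as in the Parametrizing a line theorem. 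I expect the duality argument to be the cleanest, reducing the whole matter to the bookkeeping of one symbol substitution.
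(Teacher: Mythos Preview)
Your proposal is correct and takes essentially the same approach as the paper, which simply states that the result is dual to the Parametrizing a line theorem. Your write-up is actually more detailed than the paper's one-line appeal to duality, spelling out the symbol substitution and even sketching the direct argument as a backup.
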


\begin{proof}
This is dual to the Parametrizing a line theorem.$%
{\hspace{.1in} \rule{0.5em}{0.5em}}%
$
\end{proof}

If we know that a line is a join of two particular points, then it is often
more convenient to parametrize it in a different fashion. We frame this is in
the language of vectors.

\begin{theorem}
[Parametrizing a join]Suppose that $v_{1}$ and $v_{2}$ are linearly
independent vectors, with $a_{1}\equiv\left[  v_{1}\right]  $ and $a_{2}%
\equiv\left[  v_{2}\right]  $ the corresponding points. Then for any point $a$
lying on $a_{1}a_{2}$, there is a unique proportion $t:u$ such that
\[
a=\left[  tv_{1}+uv_{2}\right]  .
\]

\end{theorem}

\begin{proof}
Suppose that $v_{1}\equiv\left(  x_{1},y_{1},z_{1}\right)  $ and $v_{2}%
\equiv\left(  x_{2},y_{2},z_{2}\right)  $. Since $v_{1}$ and $v_{2}$ are
linearly independent, $a_{1}\equiv\left[  v_{1}\right]  $ and $a_{2}%
\equiv\left[  v_{2}\right]  $ are distinct points. By the Join of points
theorem,
\[
a_{1}a_{2}=\left(  y_{1}z_{2}-y_{2}z_{1}:z_{1}x_{2}-z_{2}x_{1}:x_{2}%
y_{1}-x_{1}y_{2}\right)
\]
so any point $a\equiv\left[  x:y:z\right]  $ lying on $a_{1}a_{2}$ satisfies%
\[
\left(  y_{1}z_{2}-y_{2}z_{1}\right)  x+\left(  z_{1}x_{2}-z_{2}x_{1}\right)
y-\left(  x_{2}y_{1}-x_{1}y_{2}\right)  z=0.
\]
As an equation for $v\equiv\left(  x,y,z\right)  $, this has independent
solutions $v_{1}$ and $v_{2},$ so any non-zero solution is a non-zero linear
combination $v=tv_{1}+uv_{2}$. Then
\[
a=\left[  v\right]  =\left[  tv_{1}+uv_{2}\right]
\]
where the proportion $t:u$ is unique.$%
{\hspace{.1in} \rule{0.5em}{0.5em}}%
$
\end{proof}

\begin{theorem}
[Parametrizing a meet]Suppose that $v_{1}$ and $v_{2}$ are linearly
independent vectors, with $L_{1}\equiv\left(  v_{1}\right)  $ and $L_{2}%
\equiv\left(  v_{2}\right)  $ the corresponding lines. Then for any line $L$
passing through $L_{1}L_{2}$, there is a unique proportion $t:u$ such that
\[
L=\left(  tv_{1}+uv_{2}\right)  .
\]

\end{theorem}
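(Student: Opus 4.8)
The plan is to prove this as the exact dual of the Parametrizing a join theorem, invoking the duality principle stated in the section on points, lines, duality and perpendicularity. Under the duality $\left[v\right]\leftrightarrow\left(v\right)$, the incidence relation ``$a$ lies on $L$'' corresponds to ``$L^{\bot}$ passes through $a^{\bot}$,'' and by the remarks following the Meet of lines theorem the join operation dualizes to the meet, $\left(a_{1}a_{2}\right)^{\bot}=a_{1}^{\bot}a_{2}^{\bot}$. The hypothesis that $v_{1},v_{2}$ are linearly independent is itself self-dual, since it refers only to the ambient vectors in $\mathbb{F}^{3}$ and not to their bracket or paren interpretation. Thus the present statement is obtained from Parametrizing a join word for word by interchanging ``point'' with ``line,'' ``lies on'' with ``passes through,'' and ``join'' with ``meet,'' while replacing each bracket by a paren. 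The duality principle then delivers the conclusion with no further work.

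If one prefers a self-contained argument, I would mirror the proof of Parametrizing a join directly. Writing $v_{1}\equiv\left(l_{1},m_{1},n_{1}\right)$ and $v_{2}\equiv\left(l_{2},m_{2},n_{2}\right)$, linear independence makes $L_{1}\equiv\left(v_{1}\right)$ and $L_{2}\equiv\left(v_{2}\right)$ distinct lines, so by the Meet of lines theorem $L_{1}L_{2}=\left[m_{1}n_{2}-m_{2}n_{1}:n_{1}l_{2}-n_{2}l_{1}:l_{2}m_{1}-l_{1}m_{2}\right]$. A line $L\equiv\left(l:m:n\right)$ passes through this point precisely when the single linear equation $l\left(m_{1}n_{2}-m_{2}n_{1}\right)+m\left(n_{1}l_{2}-n_{2}l_{1}\right)-n\left(l_{2}m_{1}-l_{1}m_{2}\right)=0$ in $\left(l,m,n\right)$ holds; its coefficient vector is nonzero because the meet is a well-defined point. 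Substituting $v_{1}$ and then $v_{2}$ shows each is a solution, after which, the solution space of one nonzero linear equation in three unknowns being two-dimensional and $v_{1},v_{2}$ being independent, they span it. Hence every such $L$ equals $\left(tv_{1}+uv_{2}\right)$ for a proportion $t:u$ rendered unique by independence.

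Neither route presents a genuine obstacle; the content is linear algebra already exploited in the join case. In the duality approach the only point needing care is confirming that the translation of each clause is faithful, in particular that the uniqueness of $t:u$ is preserved, which it is, since that uniqueness rests solely on the linear independence of $v_{1},v_{2}$, a condition untouched by duality. In the direct approach the sole mild labor is the verification that $v_{1}$ and $v_{2}$ satisfy the incidence equation, which reduces to the antisymmetric cancellation of the six scalar terms in pairs, exactly as in the Join of points computation.
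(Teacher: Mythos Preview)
Your proposal is correct and takes essentially the same approach as the paper, which simply states ``This is dual to the Parametrizing a join theorem.'' Your additional self-contained argument is a faithful transcription of the join proof and is also fine, though the paper does not bother with it.
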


\begin{proof}
This is dual to the Parametrizing a join theorem.$%
{\hspace{.1in} \rule{0.5em}{0.5em}}%
$
\end{proof}

\subsection{Null points and null lines}

Null points and null lines are of central importance in universal hyperbolic
geometry. We first parametrize them, using Pythagorean triples.

\begin{theorem}
[Parametrization of null points]Any null point $\alpha$ is of the form
\[
\alpha=\alpha\left(  t:u\right)  \equiv\left[  t^{2}-u^{2}:2tu:t^{2}%
+u^{2}\right]
\]
for some unique proportion $t:u$.
\end{theorem}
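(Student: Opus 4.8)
The plan is to prove both surjectivity (every null point has this form) and the uniqueness of the proportion $t:u$, by exhibiting an explicit inverse to the map $t:u \mapsto \alpha(t:u)$. First I would record the key identity that makes everything work: if $\alpha = [x:y:z]$ is null, then $x^2 + y^2 - z^2 = 0$ rearranges to $y^2 = z^2 - x^2 = (z+x)(z-x)$. This factorization is the heart of the matter, and it suggests that the correct proportion should be $t:u = (z+x):y = y:(z-x)$, the two expressions agreeing precisely because of the null condition.

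For existence, given a null point $[x:y:z]$ I would set $t:u \equiv (z+x):y$ whenever this is a legitimate proportion (that is, whenever $z+x$ and $y$ are not both zero), and substitute the representative $t = z+x$, $u = y$ into $[t^2-u^2:2tu:t^2+u^2]$. Using $y^2 = (z+x)(z-x)$ to simplify, each coordinate acquires a common factor of $2(z+x)$, leaving $[2x(z+x):2y(z+x):2z(z+x)] = [x:y:z]$; here characteristic not two is used to cancel the $2$. The only null point for which $(z+x):y$ degenerates is $[-1:0:1]$ (forced since $z+x=0$ gives $y^2=0$, hence $y=0$, and then $z\neq 0$), and this is covered by the alternate choice $t:u = y:(z-x) = 0:1$, which gives $\alpha(0:1) = [-1:0:1]$. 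Since $(z+x,y)$ and $(y,z-x)$ cannot both vanish without forcing $x=y=z=0$, at least one of the two choices is always a valid proportion.

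For uniqueness I would run the same computation in reverse: if $[x:y:z] = \alpha(t:u)$ then the quantities $z+x$, $y$ and $z-x$ are proportional to $2t^2$, $2tu$ and $2u^2$ respectively. Hence $(z+x):y = t:u$ whenever $t\neq 0$, and $y:(z-x) = t:u$ whenever $u\neq 0$, and since $t,u$ are not both zero at least one of these recovers the proportion $t:u$ intrinsically from the point. This simultaneously shows that the map is well-defined on proportions (scaling $t,u$ by $\lambda$ scales each coordinate by $\lambda^2$, preserving the $3$-proportion) and that distinct proportions yield distinct null points.

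The main obstacle is bookkeeping rather than conceptual: organizing the case split so that the degenerate representative $z+x=0$ (equivalently the point $[-1:0:1]$, and dually $u=0$ giving $[1:0:1]$) is handled without circularity, and confirming that the two candidate proportions $(z+x):y$ and $y:(z-x)$ coincide wherever both are defined, which is exactly the content of the null equation. I would also note that the argument is valid over an arbitrary field of characteristic not two, so it automatically accommodates the null points at infinity such as $[1:i:0]$ (with $i^2 = -1$) that arise over fields where $-1$ is a square.
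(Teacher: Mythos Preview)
Your proof is correct. The existence half is identical to the paper's: set $t\equiv x+z$, $u\equiv y$ when $x+z\neq 0$, compute, and handle $[-1:0:1]$ separately as $\alpha(0:1)$.

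Your uniqueness argument, however, takes a cleaner route than the paper's. You observe that from $[x:y:z]=\alpha(t:u)$ one has $z+x:y:z-x = t^2:tu:u^2$ up to scale, so the proportion $t:u$ is read off directly as $(z+x):y$ or $y:(z-x)$, whichever is defined; this exhibits an explicit inverse to the parametrization. The paper instead assumes $\alpha(t:u)=\alpha(r:s)$, writes out the three cross-product conditions for equality of $3$-proportions, factors each as $(ts-ru)$ times a second factor, and then argues by contradiction that if $ts-ru\neq 0$ the remaining system $tr+us=0$, $ru+st=0$, $us-tr=0$ is inconsistent. Your approach is shorter and more conceptual, avoiding the case analysis at the end; the paper's computation has the minor virtue of displaying the three factored identities explicitly, which echoes the style of algebraic verification used throughout.
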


\begin{proof}
The identity
\[
\left(  t^{2}-u^{2}\right)  ^{2}+\left(  2tu\right)  ^{2}-\left(  t^{2}%
+u^{2}\right)  ^{2}=0
\]
ensures that every point of the form $\alpha\left(  t:u\right)  $ is null.

Suppose that $\alpha\equiv\left[  x:y:z\right]  $ is a null point, so that
$x^{2}+y^{2}-z^{2}=0\allowbreak.$ If $x+z\neq0$, then set
\[%
\begin{tabular}
[c]{lllll}%
$t\equiv x+z$ &  & and &  & $u\equiv y$%
\end{tabular}
\]
so that
\begin{align*}
\left[  t^{2}-u^{2}:2tu:t^{2}+u^{2}\right]   & =\left[  x^{2}+2xz+z^{2}%
-y^{2}:2xy+2yz:x^{2}+2xz+z^{2}+y^{2}\right]  \allowbreak\\
& =\left[  2xz+2x^{2}:2xy+2yz:2xz+2z^{2}\right] \\
& =\left[  2x\left(  x+z\right)  :2y\left(  x+z\right)  :2z\left(  x+z\right)
\right] \\
& =\left[  x:y:z\right]  .
\end{align*}
This shows that $\alpha=\alpha\left(  t:u\right)  $. Note that our assumption
on the field not having characteristic two is used here. If$\allowbreak$
$x+z=0$, then necessarily $y=0,$ so that $\left[  x:y:z\right]  =\left[
-1:0:1\right]  $. In this case $\alpha=\alpha\left(  0:1\right)  $.

To show uniqueness, suppose that we have two proportions $t:u$ and $r:s$ with%
\[
\left[  t^{2}-u^{2}:2tu:t^{2}+u^{2}\right]  =\left[  r^{2}-s^{2}%
:2rs:r^{2}+s^{2}\right]  .
\]
Then using the condition for equality of $3$-proportions in
(\ref{ProportionEqu}),
\begin{align*}
\left(  t^{2}-u^{2}\right)  \left(  2rs\right)  -\left(  r^{2}-s^{2}\right)
\left(  2tu\right)   & =2\left(  ts-ru\right)  \left(  tr+us\right)  =0\\
\left(  t^{2}-u^{2}\right)  \left(  r^{2}+s^{2}\right)  -\left(  r^{2}%
-s^{2}\right)  \left(  t^{2}+u^{2}\right)   & =\allowbreak2\left(
ts-ru\right)  \left(  ru+ts\right)  =0\\
\left(  2tu\right)  \left(  r^{2}+s^{2}\right)  -\left(  2rs\right)  \left(
t^{2}+u^{2}\right)   & =2\left(  ts-ru\right)  \left(  us-tr\right)  =0.
\end{align*}
If $ts-ru\neq0$ then it follows that%
\begin{align*}
tr+us  & =0\\
ru+st  & =0\\
us-tr  & =0.
\end{align*}
Thus $tr=us=0$ from the first and third equations, so at least one of $r,t$
must be zero and at least one of $s,u$ must be zero, but since $r:s$ and $t:u
$ are both proportions, either $r=u=0$ or $t=s=0,$ but not both. But this
contradicts the second equation, so we conclude that $ts-ru=0,$ and the two
proportions $t:u$ and $r:s$ are indeed equal.$%
{\hspace{.1in} \rule{0.5em}{0.5em}}%
$
\end{proof}

\begin{theorem}
[Parametrization of null lines]Any null line $\Lambda$ is of the form
\[
\Lambda=\Lambda\left(  t:u\right)  \equiv\left(  t^{2}-u^{2}:2tu:t^{2}%
+u^{2}\right)
\]
for some unique proportion $t:u$.
\end{theorem}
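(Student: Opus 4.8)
The plan is to exploit the duality principle, since this statement is precisely the dual of the just-proved Parametrization of null points theorem. Recall from the setup that a point $\left[x:y:z\right]$ is dual to the line $\left(x:y:z\right)$, that duality is involutory, and crucially that \emph{the dual of a null point is a null line and conversely} — because the defining condition $x^{2}+y^{2}-z^{2}=0$ for a null point has exactly the same form as the condition $l^{2}+m^{2}-n^{2}=0$ for a null line, and the duality map preserves the underlying $3$-proportion verbatim.

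Concretely, I would start from an arbitrary null line $\Lambda\equiv\left(l:m:n\right)$, so $l^{2}+m^{2}-n^{2}=0$. Its dual $\Lambda^{\bot}\equiv\left[l:m:n\right]$ then satisfies $l^{2}+m^{2}-n^{2}=0$ as well, hence is a null point. Applying the Parametrization of null points theorem to $\Lambda^{\bot}$ yields a unique proportion $t:u$ with
\[
\Lambda^{\bot}=\left[t^{2}-u^{2}:2tu:t^{2}+u^{2}\right].
\]
Dualizing both sides, and using $\left(\Lambda^{\bot}\right)^{\bot}=\Lambda$ together with the fact that the duality map sends the point $\left[t^{2}-u^{2}:2tu:t^{2}+u^{2}\right]$ to the line $\left(t^{2}-u^{2}:2tu:t^{2}+u^{2}\right)$, gives $\Lambda=\Lambda\left(t:u\right)$ as claimed. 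Uniqueness of $t:u$ transfers directly, since duality is a bijection and the proportion $t:u$ was already unique for the dual null point.

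Alternatively, for a self-contained argument one can simply transcribe the point-case proof with $\left(l,m,n\right)$ in place of $\left(x,y,z\right)$: the verification identity $\left(t^{2}-u^{2}\right)^{2}+\left(2tu\right)^{2}-\left(t^{2}+u^{2}\right)^{2}=0$ shows every $\Lambda\left(t:u\right)$ is null, the substitution $t\equiv l+n$, $u\equiv m$ (with the exceptional case $l+n=0$ forcing $\left[l:m:n\right]=\left(-1:0:1\right)^{\bot}$ handled separately, again using characteristic not two) recovers $\Lambda$, and the uniqueness computation is identical. I do not anticipate a genuine obstacle here: the entire content is that the null-line condition is formally identical to the null-point condition under duality, so no new algebraic work is required beyond what was done for points. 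The only point warranting a moment's care is confirming that duality maps the explicit parametrizing point to the explicit parametrizing line with the \emph{same} coordinates, which is immediate from the definition of the dual.
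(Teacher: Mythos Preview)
Your proposal is correct and takes essentially the same approach as the paper, which simply invokes the duality principle to deduce the result from the Parametrization of null points theorem. Your write-up is in fact more detailed than the paper's one-line proof, spelling out explicitly why duality transfers both existence and uniqueness.
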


\begin{proof}
This is dual to the Parametrization of null points theorem.$%
{\hspace{.1in} \rule{0.5em}{0.5em}}%
$
\end{proof}

It is worth pointing out that the three quadratic forms $t^{2}-u^{2},2tu$ and
$t^{2}+u^{2}$ play a key role in the new theory of\textit{\ chromogeometry}
(\cite{Wild3}). The corresponding bilinear forms appear also in the next result.

\begin{theorem}
[Join of null points]The join of the distinct null points $\alpha_{1}%
\equiv\alpha\left(  t_{1}:u_{1}\right)  $ and $\alpha_{2}\equiv\alpha\left(
t_{2}:u_{2}\right)  $ is the non-null line
\[
\alpha_{1}\alpha_{2}=L\left(  t_{1}:u_{1}|t_{2}:u_{2}\right)  \equiv\left(
t_{1}t_{2}-u_{1}u_{2}:t_{1}u_{2}+t_{2}u_{1}:t_{1}t_{2}+u_{1}u_{2}\right)  .
\]

\end{theorem}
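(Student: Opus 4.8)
The plan is to compute the join directly from the Join of points theorem and then test the resulting coordinates against the null-line condition $l^{2}+m^{2}-n^{2}=0$. First I would record the coordinates supplied by the Parametrization of null points theorem, writing $\alpha_{1}\equiv\left[  x_{1}:y_{1}:z_{1}\right]  $ with $x_{1}=t_{1}^{2}-u_{1}^{2}$, $y_{1}=2t_{1}u_{1}$, $z_{1}=t_{1}^{2}+u_{1}^{2}$, and similarly for $\alpha_{2}$. Since the parametrizing proportion is unique, the points $\alpha_{1}$ and $\alpha_{2}$ are distinct precisely when $t_{1}:u_{1}\neq t_{2}:u_{2}$, which by the equality criterion for $2$-proportions means $\Delta\equiv t_{1}u_{2}-t_{2}u_{1}\neq0$. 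I would flag this at the outset, as it is needed both to form the join and to verify non-nullity.

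Next I would substitute these coordinates into the formula $\alpha_{1}\alpha_{2}=\left(  y_{1}z_{2}-y_{2}z_{1}:z_{1}x_{2}-z_{2}x_{1}:x_{2}y_{1}-x_{1}y_{2}\right)  $ and simplify each of the three entries. The key observation driving the whole calculation is that every entry carries a common factor of $-2\Delta$: one finds that the first entry equals $-2\Delta\left(  t_{1}t_{2}-u_{1}u_{2}\right)  $, the second equals $-2\Delta\left(  t_{1}u_{2}+t_{2}u_{1}\right)  $, and the third equals $-2\Delta\left(  t_{1}t_{2}+u_{1}u_{2}\right)  $. Because the field is not of characteristic two and $\Delta\neq0$, the scalar $-2\Delta$ is a non-zero number and may be cancelled from the $3$-proportion, leaving exactly
\[
\alpha_{1}\alpha_{2}=\left(  t_{1}t_{2}-u_{1}u_{2}:t_{1}u_{2}+t_{2}u_{1}:t_{1}t_{2}+u_{1}u_{2}\right)  ,
\]
as claimed.

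Finally, to show the join is non-null, I would evaluate $l^{2}+m^{2}-n^{2}$ on these coordinates. A short expansion gives $l^{2}-n^{2}=\left(  t_{1}t_{2}-u_{1}u_{2}\right)  ^{2}-\left(  t_{1}t_{2}+u_{1}u_{2}\right)  ^{2}=-4t_{1}t_{2}u_{1}u_{2}$, and adding $m^{2}=\left(  t_{1}u_{2}+t_{2}u_{1}\right)  ^{2}$ collapses the expression to the perfect square $\left(  t_{1}u_{2}-t_{2}u_{1}\right)  ^{2}=\Delta^{2}$. Since $\Delta\neq0$, we have $l^{2}+m^{2}-n^{2}=\Delta^{2}\neq0$, so the line is non-null.

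The argument is entirely computational, so there is no conceptual obstacle; the only thing requiring care is the bookkeeping in the middle step, namely recognizing and correctly extracting the common factor $-2\Delta$ from each cross-product entry so that the cancellation is justified. It is worth noting that the characteristic-two hypothesis is genuinely in play here, since both the parametrization itself and the cancellation of the factor $-2\Delta$ rely on $2$ being invertible.
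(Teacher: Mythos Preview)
Your proof is correct and follows essentially the same approach as the paper: both compute the join via the Join of points theorem, extract the common nonzero factor $2\left(t_{2}u_{1}-t_{1}u_{2}\right)=-2\Delta$ from each coordinate, and then verify non-nullity through the identity $\left(t_{1}t_{2}-u_{1}u_{2}\right)^{2}+\left(t_{1}u_{2}+t_{2}u_{1}\right)^{2}-\left(t_{1}t_{2}+u_{1}u_{2}\right)^{2}=\left(t_{1}u_{2}-t_{2}u_{1}\right)^{2}$.
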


\begin{proof}
Since
\begin{align*}
\alpha_{1}  & =\left[  t_{1}^{2}-u_{1}^{2}:2t_{1}u_{1}:t_{1}^{2}+u_{1}%
^{2}\right] \\
\alpha_{2}  & =\left[  t_{2}^{2}-u_{2}^{2}:2t_{2}u_{2}:t_{2}^{2}+u_{2}%
^{2}\right]
\end{align*}
the Join of points theorem shows that
\begin{align*}
\alpha_{1}\alpha_{2}  & =\left(
\begin{array}
[c]{c}%
2t_{1}u_{1}\left(  t_{2}^{2}+u_{2}^{2}\right)  -2t_{2}u_{2}\left(  t_{1}%
^{2}+u_{1}^{2}\right)  :\left(  t_{1}^{2}+u_{1}^{2}\right)  \left(  t_{2}%
^{2}-u_{2}^{2}\right)  -\left(  t_{2}^{2}+u_{2}^{2}\right)  \left(  t_{1}%
^{2}-u_{1}^{2}\right) \\
:2t_{1}u_{1}\left(  t_{2}^{2}-u_{2}^{2}\right)  -2t_{2}u_{2}\left(  t_{1}%
^{2}-u_{1}^{2}\right)
\end{array}
\right) \\
& =\left(  2\left(  t_{2}u_{1}-t_{1}u_{2}\right)  \left(  t_{1}t_{2}%
-u_{1}u_{2}\right)  :2\left(  t_{2}u_{1}-t_{1}u_{2}\right)  \left(  t_{1}%
u_{2}+t_{2}u_{1}\right)  :2\left(  t_{2}u_{1}-t_{1}u_{2}\right)  \left(
t_{1}t_{2}+u_{1}u_{2}\right)  \right) \\
& =\left(  t_{1}t_{2}-u_{1}u_{2}:t_{1}u_{2}+t_{2}u_{1}:t_{1}t_{2}+u_{1}%
u_{2}\right)  .
\end{align*}
We have divided by the factor $2\left(  t_{2}u_{1}-t_{1}u_{2}\right)  ,$ which
is non-zero since the field does not have characteristic two, and since
$t_{1}:u_{1}$ and $t_{2}:u_{2}$ are distinct proportions. The identity
\begin{equation}
\left(  t_{1}t_{2}-u_{1}u_{2}\right)  ^{2}+\left(  t_{1}u_{2}+t_{2}%
u_{1}\right)  ^{2}-\left(  t_{1}t_{2}+u_{1}u_{2}\right)  ^{2}=\allowbreak
\left(  t_{1}u_{2}-t_{2}u_{1}\right)  ^{2}.\label{Null line product}%
\end{equation}
shows that $L\left(  t_{1}:u_{1}|t_{2}:u_{2}\right)  $ is a non-null line.$%
{\hspace{.1in} \rule{0.5em}{0.5em}}%
$
\end{proof}

\begin{theorem}
[Meet of null lines]The meet of the distinct null lines $\Lambda_{1}%
\equiv\Lambda\left(  t_{1}:u_{1}\right)  $ and $\Lambda_{2}\equiv
\Lambda\left(  t_{2}:u_{2}\right)  $ is the non-null point%
\[
\Lambda_{1}\Lambda_{2}=a\left(  t_{1}:u_{1}|t_{2}:u_{2}\right)  \equiv\left[
t_{1}t_{2}-u_{1}u_{2}:t_{1}u_{2}+t_{2}u_{1}:t_{1}t_{2}+u_{1}u_{2}\right]  .
\]

\end{theorem}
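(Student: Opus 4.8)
The plan is to obtain this result as the exact dual of the \emph{Join of null points} theorem, so that essentially no new computation is required. Under the duality principle a point $[x:y:z]$ corresponds to the line $(x:y:z)$ with identical coordinates, a null point corresponds to a null line (as noted in the text, the dual of a null point is a null line), and the join of two points corresponds to the meet of the two dual lines via $(a_1a_2)^\bot = a_1^\bot a_2^\bot$. The \emph{Parametrization of null lines} theorem writes each null line as $\Lambda(t:u) = (t^2-u^2:2tu:t^2+u^2)$, which is literally the dual of the parametrization $\alpha(t:u) = [t^2-u^2:2tu:t^2+u^2]$ used in the Join of null points theorem, so the parameter labels $t_i:u_i$ transport unchanged from points to lines.

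First I would record that $\Lambda_1 = \alpha_1^\bot$ and $\Lambda_2 = \alpha_2^\bot$ where $\alpha_i \equiv \alpha(t_i:u_i)$, since applying $\bot$ preserves coordinates and hence the parametrization. Distinctness of $\Lambda_1$ and $\Lambda_2$ is then equivalent to distinctness of $\alpha_1$ and $\alpha_2$, which by the uniqueness clause of the \emph{Parametrization of null points} theorem is equivalent to $t_1:u_1 \neq t_2:u_2$. Next, $\Lambda_1\Lambda_2 = (\alpha_1\alpha_2)^\bot$, and the Join of null points theorem computes $\alpha_1\alpha_2 = L(t_1:u_1|t_2:u_2)$ with the stated coordinates; dualizing this line back to a point yields exactly $a(t_1:u_1|t_2:u_2) = [t_1t_2-u_1u_2:t_1u_2+t_2u_1:t_1t_2+u_1u_2]$.

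For the non-null claim, the same polynomial identity (\ref{Null line product}) that certifies $L(t_1:u_1|t_2:u_2)$ as a non-null line shows here that the coordinates of $a(t_1:u_1|t_2:u_2)$ satisfy $x^2+y^2-z^2 = (t_1u_2-t_2u_1)^2$, which is non-zero precisely because the proportions $t_1:u_1$ and $t_2:u_2$ are distinct. Hence the meet is a non-null point.

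The only subtlety, and the step I would be most careful about, is confirming that duality transports every ingredient faithfully: that the parametrizing proportion $t:u$ is unaffected by $\bot$ (so the output is coordinate-identical, not merely analogous), and that ``non-null line'' dualizes to ``non-null point'' under the same form $x^2+y^2-z^2$. Once these correspondences are fixed, the theorem follows by a one-line appeal to duality, exactly as the Meet of lines theorem followed from the Join of points theorem. Alternatively one could bypass duality and substitute the null-line parametrizations directly into the Meet of lines theorem, factoring out the common scalar $2(t_2u_1-t_1u_2)$, non-zero by distinctness and the characteristic hypothesis; this merely reproduces the Join of null points computation verbatim, which is why the duality shortcut is preferable.
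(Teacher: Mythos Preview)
Your proposal is correct and takes exactly the same approach as the paper, which proves this theorem in a single line: ``This is dual to the Join of null points theorem.'' Your careful verification that duality transports the parametrization, the meet/join operation, and the non-null property is more explicit than the paper's terse appeal to the duality principle, but the underlying argument is identical.
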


\begin{proof}
This is dual to the Join of null points theorem.$%
{\hspace{.1in} \rule{0.5em}{0.5em}}%
$
\end{proof}

\begin{theorem}
[Null diagonal point]The meet of the disjoint lines $L\left(  t_{1}%
:u_{1}|t_{2}:u_{2}\right)  $ and $L\left(  t_{3}:u_{3}|t_{4}:u_{4}\right)  $
is the point
\[
a\left(  t_{1}:u_{1}|t_{2}:u_{2}||t_{3}:u_{3}|t_{4}:u_{4}\right)
\equiv\left[  x:y:z\right]
\]
where
\begin{align*}
x  & \equiv\left(  t_{1}u_{2}+t_{2}u_{1}\right)  \left(  t_{3}t_{4}+u_{3}%
u_{4}\right)  -\left(  t_{3}u_{4}+t_{4}u_{3}\right)  \left(  t_{1}t_{2}%
+u_{1}u_{2}\right) \\
y  & \equiv\left(  t_{1}t_{2}+u_{1}u_{2}\right)  \left(  t_{3}t_{4}-u_{3}%
u_{4}\right)  -\left(  t_{3}t_{4}+u_{3}u_{4}\right)  \left(  t_{1}t_{2}%
-u_{1}u_{2}\right) \\
z  & \equiv\left(  t_{1}u_{2}+t_{2}u_{1}\right)  \left(  t_{3}t_{4}-u_{3}%
u_{4}\right)  -\left(  t_{3}u_{4}+t_{4}u_{3}\right)  \left(  t_{1}t_{2}%
-u_{1}u_{2}\right)  .
\end{align*}
$\allowbreak$
\end{theorem}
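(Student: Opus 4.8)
The plan is to apply the Meet of lines theorem to the two joins, which by the Join of null points theorem are explicit non-null lines. First I would write $L_1 \equiv L\left(t_1:u_1|t_2:u_2\right) = \left(l_1:m_1:n_1\right)$ and $L_2 \equiv L\left(t_3:u_3|t_4:u_4\right) = \left(l_2:m_2:n_2\right)$, where by the Join of null points theorem
\[
\begin{tabular}
[c]{lll}
$l_1 = t_1t_2 - u_1u_2$ & $m_1 = t_1u_2 + t_2u_1$ & $n_1 = t_1t_2 + u_1u_2$\\
$l_2 = t_3t_4 - u_3u_4$ & $m_2 = t_3u_4 + t_4u_3$ & $n_2 = t_3t_4 + u_3u_4.$
\end{tabular}
\]
Then the Meet of lines theorem gives the meet directly as $\left[m_1n_2 - m_2n_1 : n_1l_2 - n_2l_1 : l_2m_1 - l_1m_2\right]$, and I would verify that substituting the expressions above into these three coordinates reproduces exactly the stated $x$, $y$, and $z$.

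The verification is purely mechanical: the first coordinate $m_1n_2 - m_2n_1$ becomes $\left(t_1u_2 + t_2u_1\right)\left(t_3t_4 + u_3u_4\right) - \left(t_3u_4 + t_4u_3\right)\left(t_1t_2 + u_1u_2\right)$, which is literally the claimed $x$; the third coordinate $l_2m_1 - l_1m_2$ becomes $\left(t_3t_4 - u_3u_4\right)\left(t_1u_2 + t_2u_1\right) - \left(t_1t_2 - u_1u_2\right)\left(t_3u_4 + t_4u_3\right)$, matching $z$. The only coordinate requiring a sign check is the middle one: the theorem's convention gives $n_1l_2 - n_2l_1 = \left(t_1t_2 + u_1u_2\right)\left(t_3t_4 - u_3u_4\right) - \left(t_3t_4 + u_3u_4\right)\left(t_1t_2 - u_1u_2\right)$, which is exactly the stated $y$. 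So the three coordinates match term by term with no algebraic simplification needed.

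For this to be a genuine meet I must confirm the two lines are distinct, i.e.\ that the hypothesis ``disjoint lines'' guarantees $L_1 \neq L_2$ so that the Meet of lines theorem applies and produces a well-defined point. This is where a small amount of care is warranted: I would note that each $L_i$ is non-null by the identity~(\ref{Null line product}) in the Join of null points theorem, and that the word ``disjoint'' here should be read as asserting the lines do not coincide (equivalently, that the underlying unordered pairs of null points $\left\{\alpha\left(t_1:u_1\right), \alpha\left(t_2:u_2\right)\right\}$ and $\left\{\alpha\left(t_3:u_3\right), \alpha\left(t_4:u_4\right)\right\}$ are not equal), so that their meet is a single point.

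The hard part is essentially nonexistent here, since the result is the composition of two theorems already proved; the only subtlety is bookkeeping the index and sign conventions of the Meet of lines formula so that the middle coordinate comes out with the orientation displayed in the statement rather than its negative. I would organize the proof as: (i) record the coefficients of $L_1$ and $L_2$ from the Join of null points theorem, (ii) invoke the Meet of lines theorem, and (iii) read off that each of $m_1n_2 - m_2n_1$, $n_1l_2 - n_2l_1$, and $l_2m_1 - l_1m_2$ coincides verbatim with $x$, $y$, $z$ respectively, completing the identification.
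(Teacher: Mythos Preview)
Your proposal is correct and takes essentially the same approach as the paper. The paper's proof is a one-liner stating that the coordinates come from computing the hyperbolic cross product $J\left(t_{1}t_{2}-u_{1}u_{2},t_{1}u_{2}+t_{2}u_{1},t_{1}t_{2}+u_{1}u_{2};\,t_{3}t_{4}-u_{3}u_{4},t_{3}u_{4}+t_{4}u_{3},t_{3}t_{4}+u_{3}u_{4}\right)$, which is exactly the Meet of lines formula you spell out coordinate by coordinate; your version simply makes the bookkeeping explicit.
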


\begin{proof}
Finding the coordinates $x,y$ and $z$ is essentially the computation of%
\[
J\left(  t_{1}t_{2}-u_{1}u_{2},t_{1}u_{2}+t_{2}u_{1},t_{1}t_{2}+u_{1}%
u_{2};t_{3}t_{4}-u_{3}u_{4},t_{3}u_{4}+t_{4}u_{3},t_{3}t_{4}+u_{3}%
u_{4}\right)  .%
{\hspace{.1in} \rule{0.5em}{0.5em}}%
\]

\end{proof}

Note the pleasant identity
\begin{align*}
& \left(  \left(  t_{1}u_{2}+t_{2}u_{1}\right)  \left(  t_{3}t_{4}+u_{3}%
u_{4}\right)  -\left(  t_{3}u_{4}+t_{4}u_{3}\right)  \left(  t_{1}t_{2}%
+u_{1}u_{2}\right)  \right)  ^{2}\\
& +\left(  \left(  t_{1}t_{2}+u_{1}u_{2}\right)  \left(  t_{3}t_{4}-u_{3}%
u_{4}\right)  -\left(  t_{3}t_{4}+u_{3}u_{4}\right)  \left(  t_{1}t_{2}%
-u_{1}u_{2}\right)  \right)  ^{2}\\
& -\left(  \left(  t_{1}u_{2}+t_{2}u_{1}\right)  \left(  t_{3}t_{4}-u_{3}%
u_{4}\right)  -\left(  t_{3}u_{4}+t_{4}u_{3}\right)  \left(  t_{1}t_{2}%
-u_{1}u_{2}\right)  \right)  ^{2}\\
& =\allowbreak4\left(  t_{3}u_{2}-t_{2}u_{3}\right)  \left(  u_{1}t_{3}%
-t_{1}u_{3}\right)  \left(  u_{1}t_{4}-t_{1}u_{4}\right)  \allowbreak\left(
u_{2}t_{4}-t_{2}u_{4}\right)  .
\end{align*}

\begin{theorem}
[Null diagonal line]The join of the disjoint points $a\left(  t_{1}%
:u_{1}|t_{2}:u_{2}\right)  $ and $a\left(  t_{3}:u_{3}|t_{4}:u_{4}\right)  $
is the line%
\[
L\left(  t_{1}:u_{1}|t_{2}:u_{2}||t_{3}:u_{3}|t_{4}:u_{4}\right)
\equiv\left(  x:y:z\right)
\]
where $x,y$ and $z$ are as in the Null diagonal point theorem.
\end{theorem}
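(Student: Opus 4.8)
The plan is to recognize this statement as the exact dual of the Null diagonal point theorem, exploiting the fact that the point $a\left(t_1:u_1|t_2:u_2\right)$ and the line $L\left(t_1:u_1|t_2:u_2\right)$ carry the identical coordinate triple. Indeed, by the Meet of null lines theorem and the Join of null points theorem,
\[
a\left(t_1:u_1|t_2:u_2\right)\equiv\left[t_1t_2-u_1u_2:t_1u_2+t_2u_1:t_1t_2+u_1u_2\right]
\]
and
\[
L\left(t_1:u_1|t_2:u_2\right)\equiv\left(t_1t_2-u_1u_2:t_1u_2+t_2u_1:t_1t_2+u_1u_2\right),
\]
so by the definition of duality $a\left(t_1:u_1|t_2:u_2\right)=L\left(t_1:u_1|t_2:u_2\right)^{\bot}$, and likewise with indices $3,4$.

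First I would invoke the identity $\left(L_1L_2\right)^{\bot}=L_1^{\bot}L_2^{\bot}$ recorded just after the Meet of lines theorem. Taking $L_1\equiv L\left(t_1:u_1|t_2:u_2\right)$ and $L_2\equiv L\left(t_3:u_3|t_4:u_4\right)$, the join of the two points becomes
\[
a\left(t_1:u_1|t_2:u_2\right)a\left(t_3:u_3|t_4:u_4\right)=L_1^{\bot}L_2^{\bot}=\left(L_1L_2\right)^{\bot}.
\]
By definition the meet $L_1L_2$ is the Null diagonal point $a\left(t_1:u_1|t_2:u_2||t_3:u_3|t_4:u_4\right)\equiv\left[x:y:z\right]$ of the previous theorem, and the dual of $\left[x:y:z\right]$ is the line $\left(x:y:z\right)$. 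This is precisely the asserted line, with the very same $x,y,z$.

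An even more transparent route avoids duality altogether: both the Join of points theorem and the Meet of lines theorem evaluate the same hyperbolic cross product $J$. In the proof of the Null diagonal point theorem the coordinates were obtained as $J$ of the coordinate triples of $L_1$ and $L_2$; since the points $a\left(\cdots\right)$ carry exactly those same triples, the Join of points theorem returns the identical triple $\left[x:y:z\right]$, now read as the line coordinates $\left(x:y:z\right)$. The pleasant quartic identity displayed after the Null diagonal point theorem therefore transfers verbatim.

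The only hypothesis needing attention is that the two points be distinct (disjoint), which is exactly what guarantees the join is defined; this corresponds to $L_1$ and $L_2$ being distinct so their meet exists in the prior theorem. I expect no genuine obstacle here, since all the algebraic labor has already been discharged by the earlier results and the argument reduces to bookkeeping about the point--line duality.
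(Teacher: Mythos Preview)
Your proposal is correct and takes essentially the same approach as the paper, which simply states that the result is dual to the Null diagonal point theorem. You have spelled out the duality argument in more detail than the paper does, but the underlying idea is identical.
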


\begin{proof}
This is dual to the Null diagonal point theorem.$%
{\hspace{.1in} \rule{0.5em}{0.5em}}%
$
\end{proof}

\begin{theorem}
[Perpendicular null line]If $a_{1}\equiv\left[  x_{1}:y_{1}:z_{1}\right]  $
and $a_{2}\equiv\left[  x_{2}:y_{2}:z_{2}\right]  $ are distinct points with
$a_{1}$ null, then $a_{1}a_{2}$ is a null line precisely when $a_{1}$ and
$a_{2}$ are perpendicular, in which case $a_{1}a_{2}=a_{1}^{\bot}.$
\end{theorem}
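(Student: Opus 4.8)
The plan is to reduce the whole statement to a single Lagrange-type identity for the hyperbolic cross product, combined with the uniqueness of the join. Throughout, write $(l,m,n) \equiv J\left(x_{1},y_{1},z_{1};x_{2},y_{2},z_{2}\right)$, so that by the Join of points theorem the line $a_{1}a_{2}$ has coefficients $\left(l:m:n\right)$, and recall that $\left(l:m:n\right)$ is a null line precisely when $l^{2}+m^{2}-n^{2}=0$.

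First I would record the key identity: for any vectors $v_{1}\equiv\left(x_{1},y_{1},z_{1}\right)$ and $v_{2}\equiv\left(x_{2},y_{2},z_{2}\right)$,
\[
l^{2}+m^{2}-n^{2}=\left(x_{1}x_{2}+y_{1}y_{2}-z_{1}z_{2}\right)^{2}-\left(x_{1}^{2}+y_{1}^{2}-z_{1}^{2}\right)\left(x_{2}^{2}+y_{2}^{2}-z_{2}^{2}\right).
\]
This is the hyperbolic analogue of the classical relation $\left\vert v_{1}\times v_{2}\right\vert ^{2}=\left\vert v_{1}\right\vert ^{2}\left\vert v_{2}\right\vert ^{2}-\left(v_{1}\cdot v_{2}\right)^{2}$, the signs being dictated by the Lorentzian form $x^{2}+y^{2}-z^{2}$, and it is proved by a direct expansion of both sides. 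I expect this expansion to be the one genuine, though entirely routine, piece of computation; it can be checked by hand or by computer in the spirit of the paper's other \textit{pleasant identities}.

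Granting the identity, the equivalence follows at once. Since $a_{1}$ is null we have $x_{1}^{2}+y_{1}^{2}-z_{1}^{2}=0$, so the right-hand side collapses to $\left(x_{1}x_{2}+y_{1}y_{2}-z_{1}z_{2}\right)^{2}$. Hence $a_{1}a_{2}$ is a null line precisely when $\left(x_{1}x_{2}+y_{1}y_{2}-z_{1}z_{2}\right)^{2}=0$, and since a field has no zero divisors this holds precisely when $x_{1}x_{2}+y_{1}y_{2}-z_{1}z_{2}=0$, which is exactly the condition that $a_{1}$ and $a_{2}$ are perpendicular.

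Finally I would identify the line in the perpendicular case with no further computation. Because $a_{1}$ is null it lies on its own dual $a_{1}^{\bot}$, and because $a_{1}$ and $a_{2}$ are perpendicular the point $a_{2}$ also lies on $a_{1}^{\bot}$. Thus $a_{1}^{\bot}$ is a line through the two distinct points $a_{1}$ and $a_{2}$, so by the uniqueness in the Join of points theorem it must equal their join, giving $a_{1}a_{2}=a_{1}^{\bot}$; this is consistent, as $a_{1}^{\bot}$ is indeed null because its dual point $a_{1}$ is null. One could instead argue geometrically that a null line carries only its dual point as a null point, forcing $a_{1}=\left(a_{1}a_{2}\right)^{\bot}$, but establishing that uniqueness over an arbitrary field is more delicate than invoking the one-line identity above, so I would prefer the computational route.
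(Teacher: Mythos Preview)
Your proof is correct and is essentially the same as the paper's: both hinge on the Lagrange-type identity
\[
\left(y_{1}z_{2}-y_{2}z_{1}\right)^{2}+\left(z_{1}x_{2}-z_{2}x_{1}\right)^{2}-\left(x_{2}y_{1}-x_{1}y_{2}\right)^{2}=\left(x_{1}x_{2}+y_{1}y_{2}-z_{1}z_{2}\right)^{2}-\left(x_{1}^{2}+y_{1}^{2}-z_{1}^{2}\right)\left(x_{2}^{2}+y_{2}^{2}-z_{2}^{2}\right),
\]
specialized to the case $x_{1}^{2}+y_{1}^{2}-z_{1}^{2}=0$, and both conclude $a_{1}a_{2}=a_{1}^{\bot}$ by observing that $a_{1}$ and $a_{2}$ each lie on $a_{1}^{\bot}$ and invoking uniqueness of the join.
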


\begin{proof}
From the Join of points theorem
\[
a_{1}a_{2}\equiv\left(  y_{1}z_{2}-y_{2}z_{1}:z_{1}x_{2}-z_{2}x_{1}:x_{2}%
y_{1}-x_{1}y_{2}\right)  .
\]
The identity%
\begin{align*}
& \left(  y_{1}z_{2}-y_{2}z_{1}\right)  ^{2}+\left(  z_{1}x_{2}-z_{2}%
x_{1}\right)  ^{2}-\left(  x_{2}y_{1}-x_{1}y_{2}\right)  ^{2}\\
& =\left(  x_{1}x_{2}+y_{1}y_{2}-z_{1}z_{2}\right)  ^{2}-\allowbreak\left(
x_{1}^{2}+y_{1}^{2}-z_{1}^{2}\right)  \left(  x_{2}^{2}+y_{2}^{2}-z_{2}%
^{2}\right)
\end{align*}
shows that if $a_{1}$ is a null point, meaning that $x_{1}^{2}+y_{1}^{2}%
-z_{1}^{2}=0,$ then $a_{1}a_{2}$ is a null line precisely when $a_{1}$ and
$a_{2}$ are perpendicular. In this case both $a_{1}$ and $a_{2}$ are
perpendicular to $a_{1},$ so that $a_{1}a_{2}=a_{1}^{\bot}$.$%
{\hspace{.1in} \rule{0.5em}{0.5em}}%
$
\end{proof}

\begin{theorem}
[Perpendicular null point]If $L_{1}\equiv\left(  l_{1}:m_{1}:n_{1}\right)  $
and $L_{2}\equiv\left(  l_{2}:m_{2}:n_{2}\right)  $ are distinct lines with
$L_{1}$ null, then $L_{1}L_{2}$ is a null point precisely when $L_{1}$ and
$L_{2}$ are perpendicular, in which case $L_{1}L_{2}=L_{1}^{\bot}.$
\end{theorem}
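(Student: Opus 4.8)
The plan is to invoke the duality principle. The statement is the exact formal dual of the Perpendicular null line theorem: one interchanges points with lines, square brackets with round brackets, the join $a_{1}a_{2}$ with the meet $L_{1}L_{2}$, and the incidence relation ``lies on'' with ``passes through''. Since the definitions of null object, of perpendicularity, and of meet are all dual to those of null object, perpendicularity, and join---and since the quadratic form $x^{2}+y^{2}-z^{2}$ governing points has the identical shape $l^{2}+m^{2}-n^{2}$ for lines---the primal argument transforms term by term into a proof of this statement. Thus the single line ``This is dual to the Perpendicular null line theorem'' suffices, consistent with the paper's convention.

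For completeness I would also sketch the direct verification, which retraces the primal argument. First I would apply the Meet of lines theorem to write
\[
L_{1}L_{2}\equiv\left[  m_{1}n_{2}-m_{2}n_{1}:n_{1}l_{2}-n_{2}l_{1}:l_{2}m_{1}-l_{1}m_{2}\right]  .
\]
The key step is the algebraic identity dual to the one used in the primal proof, namely
\begin{align*}
& \left(  m_{1}n_{2}-m_{2}n_{1}\right)  ^{2}+\left(  n_{1}l_{2}-n_{2}l_{1}\right)  ^{2}-\left(  l_{2}m_{1}-l_{1}m_{2}\right)  ^{2}\\
& =\left(  l_{1}l_{2}+m_{1}m_{2}-n_{1}n_{2}\right)  ^{2}-\left(  l_{1}^{2}+m_{1}^{2}-n_{1}^{2}\right)  \left(  l_{2}^{2}+m_{2}^{2}-n_{2}^{2}\right)  .
\end{align*}
With $L_{1}$ null the second term on the right vanishes, so $L_{1}L_{2}$ is a null point precisely when $l_{1}l_{2}+m_{1}m_{2}-n_{1}n_{2}=0$, which is exactly the condition that $L_{1}$ and $L_{2}$ are perpendicular.

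Finally, to identify the meet, I would observe that when $L_{1}$ is null it passes through its dual point $L_{1}^{\bot}$, and when $L_{1}$ is perpendicular to $L_{2}$ the line $L_{2}$ also passes through $L_{1}^{\bot}$; hence $L_{1}^{\bot}$ lies on both $L_{1}$ and $L_{2}$, and by uniqueness of the meet $L_{1}L_{2}=L_{1}^{\bot}$. I expect no genuine obstacle here: the only point requiring care is confirming that the duality is strict enough for the primal identity to dualize verbatim, which is guaranteed precisely by the matching symmetric form of the metric $x^{2}+y^{2}-z^{2}$ for points and $l^{2}+m^{2}-n^{2}$ for lines.
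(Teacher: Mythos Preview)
Your proposal is correct and matches the paper's own proof, which consists of the single line ``This is dual to the Perpendicular null line theorem.'' Your supplementary direct verification is also accurate and simply recapitulates, with dualized variables, the argument the paper gives for the primal statement.
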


\begin{proof}
This is dual to the Perpendicular null line theorem.$%
{\hspace{.1in} \rule{0.5em}{0.5em}}%
$
\end{proof}

\begin{theorem}
[Parametrizing a null line]Suppose that $\Lambda=\Lambda\left(  t:u\right)  $
is a null line. If $t^{2}+u^{2}\neq0,$ then any point $a$ lying on $\Lambda$
is of the form%
\[
\left[  r\left(  t^{2}-u^{2}\right)  -2stu:2rtu+s\left(  t^{2}-u^{2}\right)
:r\left(  t^{2}+u^{2}\right)  \right]
\]
for a unique proportion $r:s,$ while if $t^{2}+u^{2}=0,$ then every point $a$
lying on $\Lambda$ is of the form
\[
\left[  rt:ru:s\right]
\]
for a unique proportion $r:s.$
\end{theorem}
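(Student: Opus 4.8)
The plan is to realize $\Lambda=\Lambda(t:u)$ as a join of two explicitly chosen points lying on it, and then invoke the Parametrizing a join theorem, which delivers both the stated form and the uniqueness of the proportion $r:s$ in one stroke. Recall that $\Lambda(t:u)\equiv\left(t^{2}-u^{2}:2tu:t^{2}+u^{2}\right)$, so a point $a\equiv\left[x:y:z\right]$ lies on $\Lambda$ precisely when $\left(t^{2}-u^{2}\right)x+2tu\,y-\left(t^{2}+u^{2}\right)z=0$.

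First I would treat the case $t^{2}+u^{2}\neq0$. Here I take the two points $a_{1}\equiv\left[t^{2}-u^{2}:2tu:t^{2}+u^{2}\right]$, which is the null point $\alpha(t:u)$ dual to $\Lambda$ and hence lies on $\Lambda$ by the Parametrization of null points theorem and the identity $\left(t^{2}-u^{2}\right)^{2}+\left(2tu\right)^{2}=\left(t^{2}+u^{2}\right)^{2}$, together with $a_{2}\equiv\left[-2tu:t^{2}-u^{2}:0\right]$, for which the two leading terms cancel directly and the $z$-term vanishes. Writing $v_{1}\equiv\left(t^{2}-u^{2},2tu,t^{2}+u^{2}\right)$ and $v_{2}\equiv\left(-2tu,t^{2}-u^{2},0\right)$, the vectors are linearly independent since $v_{1}$ has nonzero third coordinate $t^{2}+u^{2}$ while $v_{2}$ is a nonzero vector with zero third coordinate; hence $a_{1}$ and $a_{2}$ are distinct and, by the Join of points theorem together with its uniqueness clause, $a_{1}a_{2}=\Lambda$. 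The Parametrizing a join theorem then produces a unique proportion $r:s$ with $a=\left[rv_{1}+sv_{2}\right]$, and since $rv_{1}+sv_{2}=\left(r(t^{2}-u^{2})-2stu,\ 2rtu+s(t^{2}-u^{2}),\ r(t^{2}+u^{2})\right)$, this is exactly the claimed form.

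For the case $t^{2}+u^{2}=0$ I would use instead the points $a_{1}\equiv\left[t:u:0\right]$ and $a_{2}\equiv\left[0:0:1\right]$. Both lie on $\Lambda$: for $a_{1}$ one computes $\left(t^{2}-u^{2}\right)t+2tu\,u=t\left(t^{2}+u^{2}\right)=0$, while $a_{2}$ satisfies the equation trivially because the $z$-coefficient $t^{2}+u^{2}$ of $\Lambda$ now vanishes. The corresponding vectors $\left(t,u,0\right)$ and $\left(0,0,1\right)$ are linearly independent, the first being nonzero because $t:u$ is a genuine proportion and only the second having nonzero third coordinate, so again $a_{1}a_{2}=\Lambda$, and Parametrizing a join yields the unique proportion $r:s$ with $a=\left[rt:ru:s\right]$.

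The routine verifications—membership on $\Lambda$ and linear independence of the chosen vectors—form the substance but present no real difficulty. The one place requiring genuine care is the selection of the second basis point so that the combination $rv_{1}+sv_{2}$ reproduces the stated coordinates verbatim rather than some scalar rearrangement, together with the observation that the natural auxiliary point degenerates exactly when $t^{2}+u^{2}=0$; this degeneration is precisely what forces the theorem into its two separate cases.
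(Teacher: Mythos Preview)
Your proof is correct and follows essentially the same approach as the paper: in each case you pick the very same pair of points on $\Lambda$ (namely $[t^{2}-u^{2}:2tu:t^{2}+u^{2}]$ and $[-2tu:t^{2}-u^{2}:0]$ when $t^{2}+u^{2}\neq0$, and $[t:u:0]$, $[0:0:1]$ when $t^{2}+u^{2}=0$) and invoke the Parametrizing a join theorem. Your write-up simply spells out the membership and independence verifications that the paper leaves implicit.
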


\begin{proof}
Two points lying on $\Lambda\left(  t:u\right)  \equiv\left(  t^{2}%
-u^{2}:2tu:t^{2}+u^{2}\right)  $ are $\left[  t^{2}-u^{2}:2tu:t^{2}%
+u^{2}\right]  $ and $\left[  -2tu:t^{2}-u^{2}:0\right]  $. If $t^{2}%
+u^{2}\neq0,$ these are distinct, and the Parametrization of a join theorem
shows that for any point $a$ lying on $\Lambda$ there is a unique proportion
$r:s$ such that
\[
a=\left[  r\left(  t^{2}-u^{2}\right)  -2stu:2rtu+s\left(  t^{2}-u^{2}\right)
:r\left(  t^{2}+u^{2}\right)  \right]  .
\]
On the other hand if $t^{2}+u^{2}=0$ then $\left[  t:u:0\right]  $ and
$\left[  0:0:1\right]  $ are two distinct points lying on $\Lambda,$ so again
by the Parametrizing a join theorem, for any point $a$ lying on $\Lambda$
there is a unique proportion $r:s$ such that
\[
a=\left[  rt:ru:s\right]  .%
{\hspace{.1in} \rule{0.5em}{0.5em}}%
\]

\end{proof}

\begin{theorem}
[Parametrizing a null point]Suppose that $\alpha=\alpha\left(  t:u\right)  $
is a null point. If $t^{2}+u^{2}\neq0,$ then any line $L$ passing through
$\alpha$ is of the form%
\[
\left(  r\left(  t^{2}-u^{2}\right)  -2stu:2rtu+s\left(  t^{2}-u^{2}\right)
:r\left(  t^{2}+u^{2}\right)  \right)
\]
for a unique proportion $r:s,$ while if $t^{2}+u^{2}=0,$ then every line $L$
passing through $\alpha$ is of the form
\[
\left(  rt:ru:s\right)
\]
for a unique proportion $r:s.$
\end{theorem}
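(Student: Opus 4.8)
The plan is to invoke the duality principle: this statement is precisely the dual of the Parametrizing a null line theorem, obtained by interchanging the roles of points and lines throughout. Since the null point $\alpha(t:u)\equiv\left[t^2-u^2:2tu:t^2+u^2\right]$ is dual to the null line $\Lambda(t:u)\equiv\left(t^2-u^2:2tu:t^2+u^2\right)$, and the incidence relation `a point lies on a line' dualizes to `a line passes through a point', every sentence of the proof of the Parametrizing a null line theorem transforms into the corresponding sentence here. Thus the one-line proof `This is dual to the Parametrizing a null line theorem' suffices, matching the style used for the other dual results in this section.

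For readers who prefer a direct argument, I would mirror the original proof step by step. First, in the case $t^2+u^2\neq0$, I would exhibit two distinct lines passing through $\alpha(t:u)$, namely $\left(t^2-u^2:2tu:t^2+u^2\right)$ (the dual line $\alpha^{\bot}$) and $\left(-2tu:t^2-u^2:0\right)$; each satisfies the incidence equation $l\left(t^2-u^2\right)+m(2tu)-n\left(t^2+u^2\right)=0$, and they are distinct exactly because $t^2+u^2\neq0$. Then I would apply the Parametrizing a meet theorem to the corresponding linearly independent vectors $v_1\equiv\left(t^2-u^2,2tu,t^2+u^2\right)$ and $v_2\equiv\left(-2tu,t^2-u^2,0\right)$, which gives that every line $L$ through $\alpha$ has the form $\left(rv_1+sv_2\right)$ for a unique proportion $r:s$; expanding $rv_1+sv_2$ recovers the claimed coordinates verbatim.

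In the degenerate case $t^2+u^2=0$ I would instead take the two distinct lines $\left(t:u:0\right)$ and $\left(0:0:1\right)$ through $\alpha$, a short check confirming both satisfy the incidence equation once $t^2+u^2=0$ is used, and again apply the Parametrizing a meet theorem to obtain the form $\left(rt:ru:s\right)$ with unique $r:s$.

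The main obstacle is bookkeeping rather than conceptual: one must correctly pick a second line through $\alpha$ that is independent of $\alpha^{\bot}$ and handle the case split on whether $t^2+u^2$ vanishes, exactly as in the null line version. Since all the real work is delegated to the Parametrizing a meet theorem, the only genuine verifications are the two incidence checks and the distinctness of the chosen lines.
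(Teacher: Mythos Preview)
Your proposal is correct and matches the paper's approach exactly: the paper's entire proof is the single line ``This is dual to the Parametrizing a null line theorem.'' Your additional unpacking of the dual argument via the Parametrizing a meet theorem is accurate and faithfully mirrors the original proof, though the paper itself omits this detail.
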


\begin{proof}
This is dual to the Parametrizing a null line theorem.$%
{\hspace{.1in} \rule{0.5em}{0.5em}}%
$
\end{proof}

\subsection{Triangles and trilaterals}

A \textbf{triangle }$\overline{a_{1}a_{2}a_{3}}$ is a set $\left\{
a_{1},a_{2},a_{3}\right\}  $ of three non-collinear points. A
\textbf{trilateral }$\overline{L_{1}L_{2}L_{3}}$ is a set $\left\{
L_{1},L_{2},L_{3}\right\}  $ of three non-concurrent lines.

The triangle $\triangle\equiv\overline{a_{1}a_{2}a_{3}}$ has an
\textbf{associated trilateral }$\widetilde{\triangle}\equiv\overline
{L_{1}L_{2}L_{3}}$ consisting of the three \textbf{lines }of the triangle,
namely
\[%
\begin{tabular}
[c]{lllll}%
$L_{1}\equiv a_{2}a_{3}$ &  & $L_{2}\equiv a_{1}a_{3}$ &  & $L_{3}\equiv
a_{1}a_{2}.$%
\end{tabular}
\]
It also has a \textbf{dual trilateral} $\triangle^{\perp}\equiv\overline
{a_{1}^{\bot}a_{2}^{\bot}a_{3}^{\bot}}$ consisting of the three \textbf{dual
lines }of the triangle, namely\textbf{\ }$a_{1}^{\bot}$, $a_{2}^{\bot} $ and
$a_{3}^{\bot}$.

A trilateral $\triangledown\equiv\overline{L_{1}L_{2}L_{3}}$ has an
\textbf{associated triangle }$\widetilde{\triangledown}\equiv\overline
{a_{1}a_{2}a_{3}}$ consisting of the three \textbf{points }of the trilateral,
namely
\[%
\begin{tabular}
[c]{lllll}%
$a_{1}\equiv L_{2}L_{3}$ &  & $a_{2}\equiv L_{1}L_{3}$ &  & $a_{3}\equiv
L_{1}L_{2}.$%
\end{tabular}
\]
It also has a \textbf{dual triangle} $\triangledown^{\bot}\equiv
\overline{L_{1}^{\bot}L_{2}^{\bot}L_{3}^{\bot}}$ consisting of the three
\textbf{dual points }of the trilateral, namely $L_{1}^{\bot}$,\textbf{\ }%
$L_{2}^{\bot}$\textbf{\ }and $L_{3}^{\bot}$.

A triangle $\triangle\equiv\overline{a_{1}a_{2}a_{3}}$ and its associated
trilateral $\widetilde{\triangle}\equiv\overline{L_{1}L_{2}L_{3}}$ both have
\textbf{sides} $\overline{a_{1}a_{2}},$ $\overline{a_{2}a_{3}}$ and
$\overline{a_{1}a_{3}},$ \textbf{vertices} $\overline{L_{1}L_{2}},$
$\overline{L_{2}L_{3}}$ and $\overline{L_{1}L_{3}}$, and \textbf{couples}
$\overline{a_{1}L_{1}},$ $\overline{a_{2}L_{2}}$ and $\overline{a_{3}L_{3}}$.

\begin{theorem}
[Triangle trilateral duality]If the triangle $\triangle\equiv\overline
{a_{1}a_{2}a_{3}}$ is dual to the trilateral $\triangledown\equiv
\overline{A_{1}A_{2}A_{3}},$ then the \textit{points, lines, sides, vertices
}and\textit{\ couples }of $\overline{a_{1}a_{2}a_{3}}$ are dual respectively
to the \textit{lines, points, vertices, sides }and \textit{couples }of
$\overline{A_{1}A_{2}A_{3}}$.
\end{theorem}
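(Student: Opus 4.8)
The plan is to verify the Triangle trilateral duality theorem by tracing through each of the five named correspondences (points, lines, sides, vertices, couples) and checking that the duality operation $\bot$ carries each object of $\triangle$ to the claimed object of $\triangledown$. The setup is that $\triangle \equiv \overline{a_1a_2a_3}$ is dual to $\triangledown \equiv \overline{A_1A_2A_3}$, which by the definition of the dual trilateral means $A_i = a_i^{\bot}$ for each $i$. Since $(\cdot)^{\bot}$ is an involution (as established in the duality section, $(a^{\bot})^{\bot}=a$), this also gives $a_i = A_i^{\bot}$, so the correspondence runs cleanly in both directions.

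**First I would** handle the two easy cases. The \emph{points} of $\triangle$ are $a_1,a_2,a_3$, and their duals are $a_1^{\bot}=A_1$, $a_2^{\bot}=A_2$, $a_3^{\bot}=A_3$, which are exactly the \emph{lines} of the trilateral $\triangledown$; this is immediate from the definition of duality of the trilateral. Dually, the \emph{lines} of $\triangle$ are $L_1\equiv a_2a_3$, $L_2\equiv a_1a_3$, $L_3\equiv a_1a_2$, and I would invoke the Meet of lines identity $(a_ia_j)^{\bot}=a_i^{\bot}a_j^{\bot}$ (proved right after the Meet of lines theorem) to compute $L_3^{\bot}=(a_1a_2)^{\bot}=a_1^{\bot}a_2^{\bot}=A_1A_2$, and similarly for $L_1^{\bot}$ and $L_2^{\bot}$. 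These meets $A_iA_j$ are precisely the \emph{points} of the trilateral $\triangledown$, so the lines of $\triangle$ are dual to the points of $\triangledown$.

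**Next I would** treat sides and vertices together. A \emph{side} of $\triangle$ is $\overline{a_ia_j}$, and by the definition in the Sides and vertices section its dual is the vertex $\overline{a_i^{\bot}a_j^{\bot}}=\overline{A_iA_j}$, which is a \emph{vertex} of $\triangledown$. The reverse direction is identical by involutivity: the dual of the vertex $\overline{A_iA_j}$ is the side $\overline{A_i^{\bot}A_j^{\bot}}=\overline{a_ia_j}$. Thus the three sides of $\triangle$ correspond bijectively to the three vertices of $\triangledown$, and dually. Finally, for the \emph{couples}, the couple $\overline{a_iL_i}$ of $\triangle$ has dual $\overline{L_i^{\bot}a_i^{\bot}}$ by definition; here $a_i^{\bot}=A_i$ and, using the line computation above, $L_i^{\bot}$ is the corresponding point of $\triangledown$, so this dual couple is exactly the couple of $\triangledown$ built from the same index $i$. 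Hence couples correspond to couples, completing all five cases.

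**The main obstacle** is essentially bookkeeping rather than mathematics: one must be careful that the index-matching convention for the lines (the line $L_3 = a_1a_2$ is opposite vertex $a_3$) is threaded consistently through the side/vertex and couple correspondences, so that the claimed object of $\triangledown$ lands on the right index. Each individual step reduces to either the definition of the dual of a point, line, side, vertex, or couple, or to the single nontrivial identity $(a_ia_j)^{\bot}=a_i^{\bot}a_j^{\bot}$ already in hand; there is no genuine computation to perform, only the verification that the involution $\bot$ respects the five constructions simultaneously. The statement is therefore largely a consequence of the completeness of the duality principle announced earlier, and I would conclude by noting that each correspondence is its own inverse, so the duality between the triangle's data and the trilateral's data is a genuine bijection in each of the five categories.
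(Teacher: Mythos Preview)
Your proposal is correct and takes essentially the same approach as the paper: the paper's own proof is the single sentence ``These statements all follow directly from the definitions,'' and what you have written is simply a careful unpacking of that sentence, tracing each of the five correspondences through the relevant definitions and the identity $(a_ia_j)^{\bot}=a_i^{\bot}a_j^{\bot}$.
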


\begin{proof}
These statements all follow directly from the definitions.$%
{\hspace{.1in} \rule{0.5em}{0.5em}}%
$
\end{proof}

A triangle $\triangle\equiv\overline{a_{1}a_{2}a_{3}}$ is \textbf{null}
precisely when one or more of its lines $a_{1}a_{2},a_{2}a_{3}$ or $a_{1}%
a_{3}$ is null. More specifically, $\triangle$ is \textbf{singly-null
}precisely when exactly one of its lines is null, \textbf{doubly-null}
precisely when exactly two of its lines are null, and \textbf{triply-null} or
\textbf{fully-null }precisely when all three of its lines are null.

\begin{example}
Given a proportion $t:u,$ consider points $a_{1}\equiv\left[  t^{2}%
+u^{2}:0:t^{2}-u^{2}\right]  $, $a_{2}\equiv\left[  0:t^{2}+u^{2}:2tu\right]
\allowbreak$ and $a_{3}\equiv\left[  2tu:t^{2}-u^{2}:0\right]  $. Then
$\overline{a_{1}a_{2}a_{3}}$ is a fully-null triangle.$%
\hspace{.1in}\diamond
$
\end{example}

A trilateral $\triangledown\equiv\overline{L_{1}L_{2}L_{3}}$ is \textbf{null
}precisely when one or more of its points $L_{1}L_{2},L_{2}L_{3}$ or
$L_{1}L_{3}$ is null. More specifically, $\triangledown$ is
\textbf{singly-null }precisely when exactly one of its points is null,
\textbf{doubly-null} precisely when exactly two of its points are null, and
\textbf{triply-null} or \textbf{fully-null }precisely when all three of its
points are null.

A triangle $\triangle\equiv\overline{a_{1}a_{2}a_{3}}$ is \textbf{nil
}precisely when one or more of its points $a_{1},a_{2}$ or $a_{3}$ is null.
More specifically, $\triangle$ is \textbf{singly-nil }precisely when exactly
one of its points is null, \textbf{doubly-nil} precisely when exactly two of
its points are null, and \textbf{triply-nil} or \textbf{fully-nil }precisely
when all three of its points are null.

A trilateral $\triangledown\equiv\overline{L_{1}L_{2}L_{3}}$ is \textbf{nil
}precisely when one or more of its lines $L_{1},L_{2}$ or $L_{3}$ is null.
More specifically, $\triangledown$ is \textbf{singly-nil }precisely when
exactly one of its lines is null, \textbf{doubly-nil} precisely when exactly
two of its lines are null, and \textbf{triply-nil} or \textbf{fully-nil
}precisely when all three of its lines are null.

A triangle $\triangle\equiv\overline{a_{1}a_{2}a_{3}}$ is \textbf{right}
precisely when one or more of its vertices is right. More specifically,
$\triangle$ is \textbf{singly-right }precisely when it has exactly one right
vertex, \textbf{doubly-right} precisely when it has exactly two right
vertices, and \textbf{triply-right} or \textbf{fully-right }precisely when it
has three right vertices.

A trilateral $\triangledown\equiv\overline{L_{1}L_{2}L_{3}}$ is \textbf{right}
precisely when one or more of its sides is right. More specifically,
$\triangledown$ is \textbf{singly-right }precisely when it has exactly one
right side, \textbf{doubly-right} precisely when it has exactly two right
sides, and \textbf{triply-right} or \textbf{fully-right }precisely when it has
three right sides.

\section{Hyperbolic trigonometry}

In this section, which constitutes the heart of the paper, we introduce the
basic measurements of universal hyperbolic geometry: the \textit{quadrance}
$q\left(  a_{1},a_{2}\right)  $ between points $a_{1}$ and $a_{2},$ and the
\textit{spread }$S\left(  L_{1},L_{2}\right)  $ between lines $L_{1}$ and
$L_{2}$. These are \textit{dual notions} in universal hyperbolic geometry.
Recall that we work over a general field not of characteristic two: the values
of the quadrance and spread will be \textit{numbers in this field}.

After briefly describing how quadrance and spread may be interpreted using
cross ratios and perpendicularity, we introduce the main trigonometric laws:
the \textit{Triple quad formula}, the \textit{Triple spread formula},
\textit{Pythagoras' theorem}, the \textit{Spread law }and the \textit{Cross
law}, along with their duals. Along the way we also define the \textit{quadrea
}$\mathcal{A}$ of a triangle, and the \textit{quadreal} $\mathcal{L}$ of a
trilateral, and the secondary concepts of \textit{product} and \textit{cross
}between points and lines respectively. Then we derive some variants of the
main laws that assume the existence of \textit{midpoints} and
\textit{midlines}.

The notions of quadrance and spread in this hyperbolic setting are closely
related to the corresponding notions in planar Euclidean geometry, and the
main laws can be seen as deformations of the Euclidean ones. In the special
case of classical hyperbolic geometry in the Beltrami Klein model over the
real numbers, the quadrance $q\left(  a_{1},a_{2}\right)  $ between points
$a_{1}$ and $a_{2}$ is related to the usual hyperbolic distance $d\left(
a_{1},a_{2}\right)  $ between them by the relation%
\begin{equation}
q\left(  a_{1},a_{2}\right)  =-\sinh^{2}\left(  d\left(  a_{1},a_{2}\right)
\right)  .\label{QuadranceDistance}%
\end{equation}
The spread $S\left(  L_{1},L_{2}\right)  $ between lines $L_{1}$ and $L_{2}$
is related to the usual hyperbolic angle $\theta\left(  L_{1},L_{2}\right)  $
between them by the relation%
\begin{equation}
S\left(  L_{1},L_{2}\right)  =\sin^{2}\left(  \theta\left(  L_{1}%
,L_{2}\right)  \right)  .\label{SpreadAngle}%
\end{equation}
However our derivation of hyperbolic trigonometry is \textit{completely
independent of the classical treatment}, much more elementary, and extends far
beyond it. So we can take any formula from universal hyperbolic geometry, make
the substitutions (\ref{QuadranceDistance}) and (\ref{SpreadAngle}), restrict
to the interior of the null cone or unit disk, and obtain a valid formula for
classical hyperbolic geometry over the real numbers.

\subsection{Quadrance between points}

The \textbf{quadrance} between points $a_{1}\equiv\left[  x_{1}:y_{1}%
:z_{1}\right]  $ and $a_{2}\equiv\left[  x_{2}:y_{2}:z_{2}\right]  $ is the
number%
\begin{equation}
q\left(  a_{1},a_{2}\right)  \equiv1-\frac{\left(  x_{1}x_{2}+y_{1}y_{2}%
-z_{1}z_{2}\right)  ^{2}}{\left(  x_{1}^{2}+y_{1}^{2}-z_{1}^{2}\right)
\left(  x_{2}^{2}+y_{2}^{2}-z_{2}^{2}\right)  }.\label{QuadranceFormula}%
\end{equation}

This number is undefined if either of $a_{1},a_{2}$ are null points, and has
the value $1$ precisely when $a_{1}$ and $a_{2}$ are perpendicular. If
$a_{1}=a_{2}$ then $q\left(  a_{1},a_{2}\right)  =0,$ but we shall shortly see
that there are also other ways in which a quadrance of zero can occur.

\begin{example}
The quadrance between $a_{1}\equiv\left[  0:0:1\right]  $ and $a\equiv\left[
x:y:1\right]  $ is
\[
q\left(  a_{1},a\right)  =\allowbreak\frac{x^{2}+y^{2}}{x^{2}+y^{2}-1}.%
\hspace{.1in}\diamond
\]

\end{example}

\begin{example}
The quadrance between $a_{2}\equiv\left[  1:0:0\right]  $ and $a\equiv\left[
x:y:1\right]  $ is
\[
q\left(  a_{2},a\right)  =\allowbreak\frac{y^{2}-1}{x^{2}+y^{2}-1}.%
\hspace{.1in}\diamond
\]

\end{example}

\begin{example}
The quadrance between $c_{1}\equiv\left[  x_{1}:0:1\right]  $ and $c_{2}%
\equiv\left[  x_{2}:0:1\right]  $ is%
\[
q\left(  c_{1},c_{2}\right)  =\frac{-\left(  x_{1}-x_{2}\right)  ^{2}}{\left(
x_{2}^{2}-1\right)  \left(  x_{1}^{2}-1\right)  }.%
\hspace{.1in}\diamond
\]

\end{example}

\begin{theorem}
[Quadrance]For points $a_{1}\equiv\left[  x_{1}:y_{1}:z_{1}\right]  $ and
$a_{2}\equiv\left[  x_{2}:y_{2}:z_{2}\right]  $,%
\begin{equation}
q\left(  a_{1},a_{2}\right)  =-\frac{\left(  y_{1}z_{2}-y_{2}z_{1}\right)
^{2}+\left(  z_{1}x_{2}-z_{2}x_{1}\right)  ^{2}-\left(  x_{1}y_{2}-y_{1}%
x_{2}\right)  ^{2}}{\left(  x_{1}^{2}+y_{1}^{2}-z_{1}^{2}\right)  \left(
x_{2}^{2}+y_{2}^{2}-z_{2}^{2}\right)  }.\label{QuadranceForm2}%
\end{equation}

\end{theorem}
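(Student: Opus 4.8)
The plan is to reduce the claim to the polynomial identity already recorded in the proof of the Perpendicular null line theorem. First I would rewrite the defining formula (\ref{QuadranceFormula}) as a single fraction over the common denominator $D\equiv\left(x_{1}^{2}+y_{1}^{2}-z_{1}^{2}\right)\left(x_{2}^{2}+y_{2}^{2}-z_{2}^{2}\right)$, obtaining
\[
q\left(a_{1},a_{2}\right)=\frac{D-\left(x_{1}x_{2}+y_{1}y_{2}-z_{1}z_{2}\right)^{2}}{D}.
\]

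Next I would invoke the identity established earlier, namely
\[
\left(y_{1}z_{2}-y_{2}z_{1}\right)^{2}+\left(z_{1}x_{2}-z_{2}x_{1}\right)^{2}-\left(x_{2}y_{1}-x_{1}y_{2}\right)^{2}=\left(x_{1}x_{2}+y_{1}y_{2}-z_{1}z_{2}\right)^{2}-D.
\]
Rearranging this identity shows that the numerator $D-\left(x_{1}x_{2}+y_{1}y_{2}-z_{1}z_{2}\right)^{2}$ equals the negative of $\left(y_{1}z_{2}-y_{2}z_{1}\right)^{2}+\left(z_{1}x_{2}-z_{2}x_{1}\right)^{2}-\left(x_{2}y_{1}-x_{1}y_{2}\right)^{2}$. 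Substituting back into the fraction yields (\ref{QuadranceForm2}) directly, after noting that $\left(x_{1}y_{2}-y_{1}x_{2}\right)^{2}=\left(x_{2}y_{1}-x_{1}y_{2}\right)^{2}$, which reconciles the sign convention of the theorem statement with that of the identity.

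The only point requiring care---and the sole genuine obstacle, were the identity not already on hand---is the verification of that quadratic identity itself. Both sides are homogeneous of degree two in each of the two triples $\left(x_{1},y_{1},z_{1}\right)$ and $\left(x_{2},y_{2},z_{2}\right)$, so the equality could in principle be confirmed by expanding all six squared terms and cancelling, a routine but tedious computation (it is in fact the hyperbolic analogue of the Lagrange identity relating a cross product to a dot product). Since this identity was proved in the course of the Perpendicular null line theorem, however, the present proof is essentially immediate and introduces no new calculation: the content of the theorem is precisely the observation that the hyperbolic cross product coordinates appearing in the Join of points theorem reassemble the quadrance.
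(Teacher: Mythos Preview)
Your proof is correct and essentially identical to the paper's: both reduce the claim to the same polynomial identity (a hyperbolic Lagrange/Fibonacci identity) relating $(x_{1}x_{2}+y_{1}y_{2}-z_{1}z_{2})^{2}$, the product $(x_{1}^{2}+y_{1}^{2}-z_{1}^{2})(x_{2}^{2}+y_{2}^{2}-z_{2}^{2})$, and the three cross-product squares. The only cosmetic difference is that the paper restates this identity fresh (calling it an ``extension of Fibonacci's identity''), whereas you cite it from its earlier appearance in the Perpendicular null line theorem---which is arguably cleaner, since that theorem does precede this one in the paper.
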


\begin{proof}
This is a consequence of the following extension of Fibonacci's identity:%
\begin{align*}
& \left(  x_{2}x_{3}+y_{2}y_{3}-z_{2}z_{3}\right)  ^{2}-\left(  y_{2}%
z_{3}-y_{3}z_{2}\right)  ^{2}-\left(  z_{2}x_{3}-z_{3}x_{2}\right)
^{2}+\left(  x_{3}y_{2}-x_{2}y_{3}\right)  ^{2}\\
& =\allowbreak\left(  x_{2}^{2}+y_{2}^{2}-z_{2}^{2}\right)  \left(  x_{3}%
^{2}+y_{3}^{2}-z_{3}^{2}\right)  .%
{\hspace{.1in} \rule{0.5em}{0.5em}}%
\end{align*}

\end{proof}

\textit{Note carefully the minus sign} that begins the right hand side of
(\ref{QuadranceForm2}). This has the consequence that for points corresponding
to classical hyperbolic geometry, \textit{the quadrance between them is
negative, }in agreement with (\ref{QuadranceDistance}).

\begin{theorem}
[Zero quadrance]If $a_{1}$ and $a_{2}$ are distinct points, then $q\left(
a_{1},a_{2}\right)  =0$ precisely when $a_{1}a_{2}$ is a null line.
\end{theorem}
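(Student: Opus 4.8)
The plan is to exploit the alternative formula for quadrance established in the Quadrance theorem, namely
\[
q\left(  a_{1},a_{2}\right)  =-\frac{\left(  y_{1}z_{2}-y_{2}z_{1}\right)
^{2}+\left(  z_{1}x_{2}-z_{2}x_{1}\right)  ^{2}-\left(  x_{1}y_{2}-y_{1}
x_{2}\right)  ^{2}}{\left(  x_{1}^{2}+y_{1}^{2}-z_{1}^{2}\right)  \left(
x_{2}^{2}+y_{2}^{2}-z_{2}^{2}\right)  }.
\]
Since $a_{1}$ and $a_{2}$ are assumed distinct (and non-null, so that the denominator is non-zero and $q\left(  a_{1},a_{2}\right)  $ is defined), the numerator alone controls whether the quadrance vanishes. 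First I would observe that the three expressions appearing in that numerator are precisely the coordinates of the join $a_{1}a_{2}$ furnished by the Join of points theorem, namely
\[
a_{1}a_{2}=\left(  y_{1}z_{2}-y_{2}z_{1}:z_{1}x_{2}-z_{2}x_{1}:x_{2}y_{1}
-x_{1}y_{2}\right)  .
\]

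The key step is then to recognize the numerator as the null condition for this line. Writing $L\equiv a_{1}a_{2}\equiv\left(  l:m:n\right)  $ with $l=y_{1}z_{2}-y_{2}z_{1}$, $m=z_{1}x_{2}-z_{2}x_{1}$ and $n=x_{2}y_{1}-x_{1}y_{2}$, the numerator is exactly $l^{2}+m^{2}-n^{2}$ (noting that $\left(  x_{1}y_{2}-y_{1}x_{2}\right)  ^{2}=\left(  x_{2}y_{1}-x_{1}y_{2}\right)  ^{2}=n^{2}$, so the sign discrepancy in the last term is harmless). By the definition of a null line given in Section 2.2, the line $L$ is null precisely when $l^{2}+m^{2}-n^{2}=0$. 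Therefore $q\left(  a_{1},a_{2}\right)  =0$ precisely when $l^{2}+m^{2}-n^{2}=0$, which is precisely when $a_{1}a_{2}$ is a null line.

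To finish I would assemble these observations into the biconditional. Since the denominator $\left(  x_{1}^{2}+y_{1}^{2}-z_{1}^{2}\right)  \left(  x_{2}^{2}+y_{2}^{2}-z_{2}^{2}\right)  $ is a non-zero number (neither $a_{1}$ nor $a_{2}$ is null, else $q$ would be undefined), the fraction is zero exactly when its numerator is zero; and the distinctness of $a_{1}$ and $a_{2}$ guarantees via the Join of points theorem that the join $a_{1}a_{2}$ is a genuinely defined line, so that speaking of it being null is meaningful. I anticipate no real obstacle here: the entire content rests on matching the numerator of the second quadrance formula against the null-line condition, an identification that is immediate once the Join of points coordinates are written down. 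The only point requiring a moment's care is the sign bookkeeping on the third coordinate, which is resolved by squaring.
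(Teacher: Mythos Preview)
Your proposal is correct and follows essentially the same approach as the paper: invoke the Quadrance theorem to express $q(a_1,a_2)$ with numerator $(y_1z_2-y_2z_1)^2+(z_1x_2-z_2x_1)^2-(x_1y_2-y_1x_2)^2$, then use the Join of points theorem to identify this as the null-line condition for $a_1a_2$. Your version is slightly more explicit about the denominator being nonzero and the sign bookkeeping on the third coordinate, but the argument is the same.
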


\begin{proof}
This follows from the Quadrance theorem together with the Join of points
theorem, since if $a_{1}\equiv\left[  x_{1}:y_{1}:z_{1}\right]  $ and
$a_{2}\equiv\left[  x_{2}:y_{2}:z_{2}\right]  $ then
\[
a_{1}a_{2}\equiv\left(  y_{1}z_{2}-y_{2}z_{1}:z_{1}x_{2}-z_{2}x_{1}:x_{2}%
y_{1}-x_{1}y_{2}\right)
\]
and this is a null line precisely when
\[
\left(  y_{1}z_{2}-y_{2}z_{1}\right)  ^{2}+\left(  z_{1}x_{2}-z_{2}%
x_{1}\right)  ^{2}-\left(  x_{2}y_{1}-x_{1}y_{2}\right)  ^{2}=0.%
{\hspace{.1in} \rule{0.5em}{0.5em}}%
\]

\end{proof}

\subsection{Spread between lines}

The \textbf{spread} between lines $L_{1}\equiv\left(  l_{1}:m_{1}%
:n_{1}\right)  $ and $L_{2}\equiv\left(  l_{2}:m_{2}:n_{2}\right)  $ is the
number%
\begin{equation}
S\left(  L_{1},L_{2}\right)  \equiv1-\frac{\left(  l_{1}l_{2}+m_{1}m_{2}%
-n_{1}n_{2}\right)  ^{2}}{\left(  l_{1}^{2}+m_{1}^{2}-n_{1}^{2}\right)
\allowbreak\left(  l_{2}^{2}+m_{2}^{2}-n_{2}^{2}\right)  }%
.\label{Spread formula}%
\end{equation}

This number is undefined if either of $L_{1},L_{2}$ are null lines, and has
the value $1$ precisely when the lines $L_{1}$ and $L_{2}$ are perpendicular.
If $L_{1}=L_{2}$ then $S\left(  L_{1},L_{2}\right)  =0,$ but again there are
also other ways in which a spread of zero can occur.

\begin{theorem}
[Spread]For lines $L_{1}\equiv\left(  l_{1}:m_{1}:n_{1}\right)  $ and
$L_{2}\equiv\left(  l_{2}:m_{2}:n_{2}\right)  $,
\begin{equation}
S\left(  L_{1},L_{2}\right)  =-\frac{\left(  m_{1}n_{2}-m_{2}n_{1}\right)
^{2}+\left(  n_{1}l_{2}-n_{2}l_{1}\right)  ^{2}-\left(  l_{1}m_{2}-l_{2}%
m_{1}\right)  ^{2}}{\left(  l_{1}^{2}+m_{1}^{2}-n_{1}^{2}\right)  \left(
l_{2}^{2}+m_{2}^{2}-n_{2}^{2}\right)  \allowbreak}.\label{SpreadForm1}%
\end{equation}

\end{theorem}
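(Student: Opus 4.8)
The plan is to recognize that the Spread theorem is precisely the dual of the Quadrance theorem, and so in principle one could simply invoke the duality principle: the defining formula (\ref{Spread formula}) for the spread is formally identical to the defining formula (\ref{QuadranceFormula}) for the quadrance, with the line coordinates $l,m,n$ playing exactly the role of the point coordinates $x,y,z$. Since the Quadrance theorem is at bottom a polynomial identity in $x_1,y_1,z_1,x_2,y_2,z_2$, relabelling these as $l_1,m_1,n_1,l_2,m_2,n_2$ immediately yields the claimed expression for $S(L_1,L_2)$. Thus no genuinely new computation is required beyond what was already done for the quadrance.

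If one prefers a self-contained direct argument, I would start from the definition
\[
S(L_1,L_2) = 1 - \frac{(l_1 l_2 + m_1 m_2 - n_1 n_2)^2}{(l_1^2 + m_1^2 - n_1^2)(l_2^2 + m_2^2 - n_2^2)}
\]
and apply the same extension of Fibonacci's identity used to prove the Quadrance theorem, now under the substitution $(x_2,y_2,z_2)\mapsto(l_1,m_1,n_1)$ and $(x_3,y_3,z_3)\mapsto(l_2,m_2,n_2)$. This produces
\[
(l_1 l_2 + m_1 m_2 - n_1 n_2)^2 = (l_1^2 + m_1^2 - n_1^2)(l_2^2 + m_2^2 - n_2^2) + (m_1 n_2 - m_2 n_1)^2 + (n_1 l_2 - n_2 l_1)^2 - (l_1 m_2 - l_2 m_1)^2.
\]
Dividing by the product $(l_1^2 + m_1^2 - n_1^2)(l_2^2 + m_2^2 - n_2^2)$ and substituting into the definition cancels the two leading $1$'s and leaves exactly the negative of the remaining fraction, which is the stated formula.

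The only point needing any care is the sign of the final squared term: the identity naturally produces $-(l_1 m_2 - l_2 m_1)^2$, which agrees with the numerator as written since the expression inside the square differs only by an overall sign. I do not expect any real obstacle here — the entire content is carried by the extended Fibonacci identity, which is a routine (if mildly tedious) polynomial expansion already verified in the proof of the Quadrance theorem, so I would cite that identity rather than re-expand it.
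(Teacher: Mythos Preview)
Your proposal is correct and takes essentially the same approach as the paper, which proves the result in one line by invoking duality with the Quadrance theorem. Your additional self-contained expansion via the extended Fibonacci identity is sound and simply makes explicit what the duality argument packages implicitly.
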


\begin{proof}
This is dual to the Quadrance theorem.$%
{\hspace{.1in} \rule{0.5em}{0.5em}}%
$
\end{proof}

\begin{example}
The spread between the non-null lines $L_{1}\equiv\left(  l_{1}:m_{1}%
:0\right)  $ and $L_{2}\equiv\left(  l_{2}:m_{2}:0\right)  $ passing through
the point $\left[  0:0:1\right]  $ is%
\[
S\left(  L_{1},L_{2}\right)  =\frac{\left(  l_{1}m_{2}-l_{2}m_{1}\right)
^{2}}{\left(  l_{1}^{2}+m_{1}^{2}\right)  \left(  l_{2}^{2}+m_{2}^{2}\right)
}.
\]
This is the same as the Euclidean planar spread (as studied in \cite{Wild1})
between the lines $\left\langle l_{1}:m_{1}:0\right\rangle $ and $\left\langle
l_{2}:m_{2}:0\right\rangle $ with respective equations $l_{1}X+m_{1}Y=0$ and
$l_{2}X+m_{1}Y=0$.$%
\hspace{.1in}\diamond
$
\end{example}

\begin{example}
The spread between the non-null lines $M_{1}\equiv\left(  l_{1}:0:1\right)  $
and $M_{2}\equiv\left(  l_{2}:0:1\right)  $ passing through the point $\left[
0:1:0\right]  $ is
\[
S\left(  M_{1},M_{2}\right)  =\allowbreak-\frac{\left(  l_{2}-l_{1}\right)
^{2}}{\left(  l_{1}^{2}-1\right)  \left(  l_{2}^{2}-1\right)  }.%
\hspace{.1in}\diamond
\]

\end{example}

\begin{theorem}
[Zero spread]If $L_{1}$ and $L_{2}$ are distinct non-null lines, then
$S\left(  L_{1},L_{2}\right)  =0$ precisely when $L_{1}L_{2}$ is a null point.
\end{theorem}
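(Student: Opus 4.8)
The plan is to mirror the proof of the Zero quadrance theorem, exploiting the complete point-line duality of the theory. The cleanest route is to invoke the duality principle directly: the Zero spread statement is obtained from the Zero quadrance statement by interchanging points with lines, joins with meets, and null lines with null points. Since all the supporting results used for Zero quadrance---namely the Quadrance theorem and the Join of points theorem---have exact duals already in hand (the Spread theorem and the Meet of lines theorem), the dual proof transfers verbatim, and one could legitimately write only ``this is dual to the Zero quadrance theorem.''

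For completeness I would unwind the argument explicitly. First I would apply the Spread theorem, which writes
\[
S\left(L_{1},L_{2}\right)=-\frac{\left(m_{1}n_{2}-m_{2}n_{1}\right)^{2}+\left(n_{1}l_{2}-n_{2}l_{1}\right)^{2}-\left(l_{1}m_{2}-l_{2}m_{1}\right)^{2}}{\left(l_{1}^{2}+m_{1}^{2}-n_{1}^{2}\right)\left(l_{2}^{2}+m_{2}^{2}-n_{2}^{2}\right)}.
\]
Because $L_{1}$ and $L_{2}$ are non-null, both factors in the denominator are nonzero, so $S\left(L_{1},L_{2}\right)$ is a well-defined number that vanishes exactly when its numerator vanishes. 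Next I would invoke the Meet of lines theorem, which gives
\[
L_{1}L_{2}\equiv\left[m_{1}n_{2}-m_{2}n_{1}:n_{1}l_{2}-n_{2}l_{1}:l_{2}m_{1}-l_{1}m_{2}\right].
\]
By the definition of a null point this meet is null precisely when $\left(m_{1}n_{2}-m_{2}n_{1}\right)^{2}+\left(n_{1}l_{2}-n_{2}l_{1}\right)^{2}-\left(l_{2}m_{1}-l_{1}m_{2}\right)^{2}=0$, which is identical to the numerator above once one notes that squaring makes the sign of $l_{1}m_{2}-l_{2}m_{1}$ immaterial. Combining the two observations completes the argument.

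There is essentially no obstacle here beyond bookkeeping, because all the analytic content is already packaged inside the Spread theorem, whose proof rested on the extended Fibonacci identity. The only point demanding genuine care is the non-nullity hypothesis on $L_{1}$ and $L_{2}$: it is exactly what guarantees the denominator is nonzero, so that ``$S=0$'' is equivalent to ``numerator $=0$'' rather than an indeterminate $0/0$. The distinctness hypothesis is what lets the Meet of lines theorem produce a single well-defined point $L_{1}L_{2}$ in the first place.
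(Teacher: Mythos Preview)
Your proposal is correct and matches the paper's own proof exactly: the paper simply states ``This is dual to the Zero quadrance theorem'' and leaves it at that. Your explicit unwinding via the Spread theorem and the Meet of lines theorem is a faithful expansion of what that one-line appeal to duality means.
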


\begin{proof}
This is dual to the Zero quadrance theorem.$%
{\hspace{.1in} \rule{0.5em}{0.5em}}%
$
\end{proof}

\begin{theorem}
[Quadrance spread duality]If $a_{1}$ and $a_{2}$ are non-null points with
$A_{1}\equiv a_{1}^{\bot}$ and $A_{2}\equiv a_{2}^{\bot}$ the dual lines, then%
\[
q\left(  a_{1},a_{2}\right)  =S\left(  A_{1},A_{2}\right)  .
\]

\end{theorem}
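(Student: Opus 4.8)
The statement asserts that the quadrance between two non-null points equals the spread between their dual lines. The plan is to exploit the definition of duality, which says that if $a_i \equiv [x_i:y_i:z_i]$ then $A_i \equiv a_i^\bot = (x_i:y_i:z_i)$ — that is, the dual line has \emph{exactly the same coordinates} as the point. This is the key observation that makes the result almost immediate.

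First I would write out the defining formula for the spread from equation (\ref{Spread formula}), applied to the lines $A_1 \equiv (l_1:m_1:n_1)$ and $A_2 \equiv (l_2:m_2:n_2)$. Then I would substitute the duality relation, namely $l_i = x_i$, $m_i = y_i$, and $n_i = z_i$ for $i=1,2$, which holds because $A_i = a_i^\bot$ has coordinates identical to those of $a_i$. Making this substitution into the spread formula yields
\[
S\left(A_1,A_2\right) = 1 - \frac{\left(x_1 x_2 + y_1 y_2 - z_1 z_2\right)^2}{\left(x_1^2 + y_1^2 - z_1^2\right)\left(x_2^2 + y_2^2 - z_2^2\right)}.
\]
Comparing this directly with the definition of quadrance in equation (\ref{QuadranceFormula}), one sees the two expressions are term-for-term identical, so $S\left(A_1,A_2\right) = q\left(a_1,a_2\right)$.

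There is essentially no obstacle here: the entire content of the theorem is the algebraic coincidence that the quadrance form $x^2+y^2-z^2$ on points and the spread form $l^2+m^2-n^2$ on lines are the same quadratic form, and that duality identifies point-coordinates with line-coordinates. The one point requiring care is the non-degeneracy hypothesis: since $a_1$ and $a_2$ are non-null, the denominators $x_i^2+y_i^2-z_i^2$ are nonzero, and because $A_i = a_i^\bot$ is the dual of a non-null point it is a non-null line (as noted after the definition of null), so the corresponding denominators $l_i^2+m_i^2-n_i^2$ are also nonzero. Thus both the quadrance and the spread are well-defined, and the equality of the two defining expressions establishes the claim. This is really just the quantitative shadow of the \emph{duality principle} stated earlier, now at the metrical level.
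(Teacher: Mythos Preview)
Your proof is correct and takes essentially the same approach as the paper, which simply says ``This is obvious from the definitions.'' You have just spelled out explicitly what that obviousness amounts to: duality preserves coordinates, and the quadrance and spread formulas are literally the same rational expression in those coordinates.
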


\begin{proof}
This is obvious from the definitions.$%
{\hspace{.1in} \rule{0.5em}{0.5em}}%
$
\end{proof}

\subsection{Cross ratios}

We now show that the quadrance between points and the spread between lines may
be framed entirely within projective geometry with the additional notion of
\textit{perpendicularity} arising from the quadratic form $x^{2}+y^{2}-z^{2}%
$.\textit{\ }The crucial link is provided by the \textit{cross-ratio}. This
naturally extends to other quadratic forms, see for example \cite{Wild4}.
Since we are particularly interested here in explicit formulas in the
hyperbolic setting, this generalization is not treated here, but nevertheless
its existence should be kept in mind.

Note also that the following treatment relating quadrance to a certain cross
ratio is\textit{\ quite different} from the classical one over the real
numbers expressing a hyperbolic distance $d\left(  a_{1},a_{2}\right)  $ as
one half of the log of a different cross-ratio---which requires the line
$a_{1}a_{2}$ to pass through two null points. Here that assumption is
unnecessary. Furthermore our approach dualizes immediately to spreads between
lines also.

To connect with projective geometry, we work in the vector space
$\mathbb{F}^{3}$, with typical vector $v\equiv\left(  a,b,c\right)  $. Then
$\left[  v\right]  =\left[  a:b:c\right]  $ is the associated (hyperbolic)
point, as previously.

The cross-ratio may be viewed as an \textit{affine quantity} that extends to
be \textit{projectively invariant}. Suppose that four non-zero vectors
$v_{1},v_{2},u_{1},u_{2}$ lie in a two-dimensional subspace spanned by vectors
$p$ and $q.$ Then we can write $v_{1}=x_{1}p+y_{1}q,$ $v_{2}=x_{2}p+y_{2}q$,
$u_{1}=z_{1}p+w_{1}q,$ and $u_{2}=z_{2}p+w_{2}q$ uniquely. The
\textbf{cross-ratio} is the ratio of ratios:%
\[
\left(  v_{1},v_{2}:u_{1},u_{2}\right)  \equiv\frac{\left(  x_{1}w_{1}%
-y_{1}z_{1}\right)  }{\left(  x_{1}w_{2}-y_{1}z_{2}\right)  }/\frac{\left(
x_{2}w_{1}-y_{2}z_{1}\right)  }{\left(  x_{2}w_{2}-y_{2}z_{2}\right)  }=\frac{%
\begin{vmatrix}
x_{1} & y_{1}\\
z_{1} & w_{1}%
\end{vmatrix}
}{%
\begin{vmatrix}
x_{1} & y_{1}\\
z_{2} & w_{2}%
\end{vmatrix}
}/\frac{%
\begin{vmatrix}
x_{2} & y_{2}\\
z_{1} & w_{1}%
\end{vmatrix}
}{%
\begin{vmatrix}
x_{2} & y_{2}\\
z_{2} & w_{2}%
\end{vmatrix}
}.
\]

If we choose a new basis $p^{\prime}$ and $q^{\prime}$ with
\begin{align*}
p  & =ap^{\prime}+bq^{\prime}\\
q  & =cp^{\prime}+dq^{\prime}%
\end{align*}
then the cross-ratio is unchanged, as each of the $2\times2$ determinants is
multiplied by the determinant $ad-bc$ of the change of basis matrix. This
ensures that the cross-ratio is indeed well-defined.

Furthermore the cross-ratio only depends on the central lines or (hyperbolic)
points $\left[  v_{1}\right]  ,\left[  v_{2}\right]  ,\left[  u_{1}\right]  $
and $\left[  u_{2}\right]  $, since for example if we multiply $x_{1} $ and
$y_{1}$ by a non-zero factor $\lambda,$ then the cross-ratio is unchanged. So
we write
\[
\left(  \left[  v_{1}\right]  ,\left[  v_{2}\right]  :\left[  u_{1}\right]
,\left[  u_{2}\right]  \right)  \equiv\left(  v_{1},v_{2}:u_{1},u_{2}\right)
.
\]

A useful identity is
\begin{equation}
\left(  \left[  v_{1}\right]  ,\left[  v_{2}\right]  :\left[  u_{1}\right]
,\left[  u_{2}\right]  \right)  +\left(  \left[  v_{1}\right]  ,\left[
u_{1}\right]  :\left[  v_{2}\right]  ,\left[  u_{2}\right]  \right)
=1.\label{CrossRatioIdentity}%
\end{equation}

Now consider the special case when $p=v_{1}$ and $q=v_{2},\ $so that
$u_{1}=z_{1}v_{1}+w_{1}v_{2},$ and $u_{2}=z_{2}v_{1}+w_{2}v_{2}$. Then the
cross ratio becomes simply%
\begin{equation}
\left(  v_{1},v_{2}:u_{1},u_{2}\right)  \equiv\frac{%
\begin{vmatrix}
1 & 0\\
z_{1} & w_{1}%
\end{vmatrix}
}{%
\begin{vmatrix}
1 & 0\\
z_{2} & w_{2}%
\end{vmatrix}
}/\frac{%
\begin{vmatrix}
0 & 1\\
z_{1} & w_{1}%
\end{vmatrix}
}{%
\begin{vmatrix}
0 & 1\\
z_{2} & w_{2}%
\end{vmatrix}
}=\frac{w_{1}}{w_{2}}/\frac{z_{1}}{z_{2}}=\frac{w_{1}z_{2}}{w_{2}z_{1}}%
=\frac{w_{1}}{z_{1}}/\frac{w_{2}}{z_{2}}.\label{CrossRatioRatio}%
\end{equation}

\begin{theorem}
[Quadrance cross ratio]Suppose that $\overline{a_{1}a_{2}}$ is a non-null,
non-nil side with opposite points%
\[%
\begin{tabular}
[c]{lllll}%
$o_{1}\equiv\left(  a_{1}a_{2}\right)  a_{1}^{\bot}$ &  & \textit{and} &  &
$o_{2}\equiv\left(  a_{1}a_{2}\right)  a_{2}^{\bot}.$%
\end{tabular}
\]
Then%
\[
q\left(  a_{1},a_{2}\right)  =\left(  a_{1},o_{2}:a_{2},o_{1}\right)  .
\]

\end{theorem}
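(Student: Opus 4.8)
The plan is to evaluate the cross ratio directly inside the two-dimensional subspace spanned by the vectors representing $a_1$ and $a_2$, feeding in the explicit coordinates of the opposite points supplied by the Opposite points theorem. Write $a_1\equiv\left[v_1\right]$ and $a_2\equiv\left[v_2\right]$ with $v_1\equiv\left(x_1,y_1,z_1\right)$ and $v_2\equiv\left(x_2,y_2,z_2\right)$, and abbreviate $d\equiv x_1x_2+y_1y_2-z_1z_2$, $q_1\equiv x_1^2+y_1^2-z_1^2$ and $q_2\equiv x_2^2+y_2^2-z_2^2$. Since $\overline{a_1a_2}$ is non-nil we have $q_1\neq0$ and $q_2\neq0$, so the quadrance is defined and equals $1-d^2/\left(q_1q_2\right)$; since $\overline{a_1a_2}$ is non-null, the Zero quadrance theorem gives $q\left(a_1,a_2\right)\neq0$, which is what will guarantee the cross ratio is well-defined.

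First I would record, from the Opposite points theorem, that
\[
o_1=\left[d\,v_1-q_1v_2\right]\qquad\text{and}\qquad o_2=\left[q_2v_1-d\,v_2\right].
\]
All four of $a_1,o_2,a_2,o_1$ lie on the line $a_1a_2$ and hence in the subspace spanned by the linearly independent vectors $v_1,v_2$; moreover $q_1,q_2\neq0$ ensures the representing vectors of $o_1$ and $o_2$ are nonzero. This puts us exactly in the situation of the specialized cross ratio formula \eqref{CrossRatioRatio}, with $v_1,v_2$ themselves chosen as the spanning basis $p,q$ of the subspace, so that the coordinate rows of the opposite points are handed to us directly.

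Next I would read off the coordinates in the basis $\left(v_1,v_2\right)$: the pair $a_1,o_2$ has rows $\left(1,0\right)$ and $\left(q_2,-d\right)$, while the pair $a_2,o_1$ has rows $\left(0,1\right)$ and $\left(d,-q_1\right)$. Feeding these into the $2\times2$ determinant definition of $\left(a_1,o_2:a_2,o_1\right)$ produces the four minors $1$, $-q_1$, $q_2$ and $d^2-q_1q_2$, so that
\[
\left(a_1,o_2:a_2,o_1\right)=\frac{1}{-q_1}\Big/\frac{q_2}{d^2-q_1q_2}=\frac{q_1q_2-d^2}{q_1q_2}=1-\frac{d^2}{q_1q_2}=q\left(a_1,a_2\right),
\]
as required. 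The denominator $d^2-q_1q_2=-q_1q_2\,q\left(a_1,a_2\right)$ is nonzero precisely because the side is non-null, confirming the cross ratio is defined.

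There is no serious obstacle here; the computation is short once the opposite points are expressed in the $\left(v_1,v_2\right)$ basis. The only step requiring genuine care is the bookkeeping: correctly matching the ordered quadruple in $\left(a_1,o_2:a_2,o_1\right)$ to the roles of the ``first pair'' and ``second pair'' in the cross ratio definition, and deliberately choosing $v_1,v_2$ (rather than $o_1,o_2$) as the basis so that the Opposite points theorem supplies the remaining coordinates for free.
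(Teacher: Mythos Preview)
Your proof is correct and follows essentially the same approach as the paper: both express $o_1,o_2$ in the $(v_1,v_2)$ basis via the Opposite points theorem and then read off the cross ratio. The only tactical difference is that the paper first computes the simpler cross ratio $(a_1,a_2:o_2,o_1)=d^2/(q_1q_2)$ using the specialized formula \eqref{CrossRatioRatio} and then invokes the complementary identity \eqref{CrossRatioIdentity} to pass to $(a_1,o_2:a_2,o_1)$, whereas you compute $(a_1,o_2:a_2,o_1)$ directly from the four $2\times2$ determinants; this is a cosmetic variation, not a different argument.
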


\begin{proof}
Write $a_{1}\equiv\left[  v_{1}\right]  $ and $a_{2}\equiv\left[
v_{2}\right]  $ where $v_{1}\equiv\left(  x_{1},y_{1},z_{1}\right)  $ and
$v_{2}\equiv\left(  x_{2},y_{2},z_{2}\right)  $. Then $v_{1}$ and $v_{2}$ are
linearly independent, since $a_{1}$ and $a_{2}$ are distinct. As
$\overline{a_{1}a_{2}}$ is a non-null side, the Opposite points theorem states
that the opposite points $o_{1}=\left(  a_{1}a_{2}\right)  a_{1}^{\bot}$ and
$o_{2}=\left(  a_{1}a_{2}\right)  a_{2}^{\bot}$ can be expressed as
\begin{align*}
o_{1}  & =\left[  \left(  x_{1}x_{2}+y_{1}y_{2}-z_{1}z_{2}\right)
v_{1}-\left(  x_{1}^{2}+y_{1}^{2}-z_{1}^{2}\right)  v_{2}\right] \\
o_{2}  & =\left[  \left(  x_{2}^{2}+y_{2}^{2}-z_{2}^{2}\right)  v_{1}-\left(
x_{1}x_{2}+y_{1}y_{2}-z_{1}z_{2}\right)  v_{2}\right]  .
\end{align*}
So $v_{1},v_{2},o_{1}$ and $o_{2}$ lie in the two-dimensional subspace spanned
by $v_{1}$ and $v_{2}.$ Using $v_{1}$ and $v_{2}$ as a basis,
(\ref{CrossRatioRatio}) shows that
\begin{align*}
\left(  a_{1},a_{2}:o_{2},o_{1}\right)   & =\frac{-\left(  x_{1}x_{2}%
+y_{1}y_{2}-z_{1}z_{2}\right)  }{\left(  x_{2}^{2}+y_{2}^{2}-z_{2}^{2}\right)
}/\frac{-\left(  x_{1}^{2}+y_{1}^{2}-z_{1}^{2}\right)  }{\left(  x_{1}%
x_{2}+y_{1}y_{2}-z_{1}z_{2}\right)  }\\
& =\frac{\left(  x_{1}x_{2}+y_{1}y_{2}-z_{1}z_{2}\right)  ^{2}}{\left(
x_{1}^{2}+y_{1}^{2}-z_{1}^{2}\right)  \left(  x_{2}^{2}+y_{2}^{2}-z_{2}%
^{2}\right)  }.
\end{align*}
So using (\ref{CrossRatioIdentity})
\[
\left(  a_{1},o_{2}:a_{2},o_{1}\right)  =1-\left(  a_{1},a_{2}:o_{2}%
,o_{1}\right)  =q\left(  a_{1},a_{2}\right)  .%
{\hspace{.1in} \rule{0.5em}{0.5em}}%
\]

\end{proof}

It is also possible to discuss cross-ratios of lines, and there is a dual
result relating the spread between lines to such a cross-ratio.

\subsection{Triple quad formula}

We now come to the first two of the \textit{main\ laws of hyperbolic
trigonometry}: the \textit{Triple quad formula} and the \textit{Triple spread
formula}. Both involve the \textbf{Triple spread function}%
\begin{align}
S\left(  a,b,c\right)   & \equiv\allowbreak\left(  a+b+c\right)  ^{2}-2\left(
a^{2}+b^{2}+c^{2}\right)  -4abc\label{Triple Spread Fn}\\
& =\allowbreak2ab+2ac+2bc-a^{2}-b^{2}-c^{2}-4abc\\
& =-%
\begin{vmatrix}
0 & a & b & 1\\
a & 0 & c & 1\\
b & c & 0 & 1\\
1 & 1 & 1 & 2
\end{vmatrix}
\\
& =4\left(  1-a\right)  \left(  1-b\right)  \left(  1-c\right)  -\left(
a+b+c-2\right)  ^{2}\label{Triple Sp4}%
\end{align}
which also plays a major role in planar rational trigonometry (see
\cite{Wild1}). It should be noted that the Euclidean Triple quad formula%
\begin{equation}
\left(  Q_{1}+Q_{2}+Q_{3}\right)  ^{2}=2\left(  Q_{1}^{2}+Q_{2}^{2}+Q_{3}%
^{2}\right) \label{PlanarTripleQuad}%
\end{equation}
is the relation between the three Euclidean quadrances formed by three
collinear points. The hyperbolic version we now establish is a deformation of
(\ref{PlanarTripleQuad}).

\begin{theorem}
[Triple quad formula]If the points $a_{1},a_{2}$ and $a_{3}$ are collinear,
and $q_{1}\equiv q\left(  a_{2},a_{3}\right)  $, $q_{2}\equiv q\left(
a_{1},a_{3}\right)  $ and $q_{3}\equiv q\left(  a_{1},a_{2}\right)  $, then%
\[
\left(  q_{1}+q_{2}+q_{3}\right)  ^{2}=2\left(  q_{1}^{2}+q_{2}^{2}+q_{3}%
^{2}\right)  +4q_{1}\allowbreak q_{2}q_{3}.
\]

\end{theorem}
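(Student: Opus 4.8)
The plan is to reduce the Triple quad formula to a polynomial identity by parametrizing the three collinear points. Since $a_1,a_2,a_3$ lie on a common line, by the Parametrizing a join theorem I may choose linearly independent vectors $v_1,v_2$ with $a_1\equiv\left[v_1\right]$, $a_2\equiv\left[v_2\right]$, and write the third point as $a_3\equiv\left[tv_1+uv_2\right]$ for a suitable proportion $t:u$. This collapses the problem to two free vectors and one scalar ratio, which is the right amount of freedom for a clean calculation.

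\textbf{Encoding the quadrances.}
First I would introduce the bilinear form $b(v,w)\equiv x_vx_w+y_vy_w-z_vz_w$ associated to the quadratic form $x^2+y^2-z^2$, so that the quadrance formula (\ref{QuadranceFormula}) reads $q(a,a')=1-b(v,v')^2/\bigl(b(v,v)\,b(v',v')\bigr)$. Abbreviating $\alpha\equiv b(v_1,v_1)$, $\beta\equiv b(v_2,v_2)$, $\gamma\equiv b(v_1,v_2)$, I can expand each of the three quadrances directly. By bilinearity, $b(tv_1+uv_2,\,tv_1+uv_2)=t^2\alpha+2tu\gamma+u^2\beta$ and $b(v_1,\,tv_1+uv_2)=t\alpha+u\gamma$, with the analogous expression for $b(v_2,\,tv_1+uv_2)$. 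Substituting these into (\ref{QuadranceFormula}) expresses $q_1,q_2,q_3$ as explicit rational functions of the four quantities $\alpha,\beta,\gamma,t,u$.

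\textbf{The main step.}
The heart of the proof is then the verification that, with these three rational expressions substituted in, the quantity $\left(q_1+q_2+q_3\right)^2-2\left(q_1^2+q_2^2+q_3^2\right)-4q_1q_2q_3$ vanishes identically. After clearing the common denominators — each $q_i$ has a denominator that is a product of two of the three self-quadrances $b(v_i,v_i)$ — this becomes a single polynomial identity in $\alpha,\beta,\gamma,t,u$. I expect this algebraic cancellation to be the genuine obstacle: the degrees are high enough that doing it by hand is tedious, though the paper's philosophy is precisely that such an identity can be checked mechanically. A useful internal consistency check is that $q_3=q(a_1,a_2)$ does not involve $t,u$ at all, so the $t,u$-dependence must cancel between the remaining terms; tracking that cancellation is where the structure of the identity reveals itself.

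\textbf{Alternative route.}
Alternatively, noticing that the planar identity (\ref{PlanarTripleQuad}) is the special case obtained when all three self-quadrances are equal (or in the Euclidean degeneration), I would try to exhibit the hyperbolic formula as a deformation: write each $q_i=1-c_i$ where $c_i$ is the squared-cosine-type ratio, and use the form (\ref{Triple Sp4}) of the triple spread function, namely $S(q_1,q_2,q_3)=4(1-q_1)(1-q_2)(1-q_3)-(q_1+q_2+q_3-2)^2$. Since the claimed relation is exactly $S(q_1,q_2,q_3)=0$, this recasts the goal as showing that $4c_1c_2c_3=(1-c_1-c_2-c_3+\cdots)^2$ for the appropriate products of bilinear-form ratios; here the Gram-determinant of $v_1,v_2,v_3$ relative to $b$, which must vanish because the three vectors span only a two-dimensional space, is the likely source of the identity, and recognizing it as a $3\times3$ Gram determinant would turn the brute-force expansion into a conceptual one-line argument.
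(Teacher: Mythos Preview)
Your proposal is correct, and in fact your ``alternative route'' is essentially the paper's own argument. The paper works symmetrically with all three points in coordinates, writes the claimed relation in the equivalent form $S(q_1,q_2,q_3)=4(1-q_1)(1-q_2)(1-q_3)-(q_1+q_2+q_3-2)^2=0$, and observes that with $p_i\equiv 1-q_i$ this is $(p_1+p_2+p_3-1)^2-4p_1p_2p_3$, a difference of squares (since $p_1p_2p_3$ is a perfect square as a rational function). One factor, namely $p_1+p_2+p_3-1-2\mathcal P$ with $\mathcal P$ the triple product, is then identified with the quantity
\[
\frac{\bigl(x_1y_2z_3-x_1y_3z_2+x_2y_3z_1-x_3y_2z_1+x_3y_1z_2-x_2y_1z_3\bigr)^2}{(x_1^2+y_1^2-z_1^2)(x_2^2+y_2^2-z_2^2)(x_3^2+y_3^2-z_3^2)},
\]
whose numerator is exactly the square of the collinearity determinant from the Collinear points theorem, hence vanishes. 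This is precisely your Gram-determinant observation in coordinate dress: the displayed quantity is $-\det G/\bigl(b(v_1,v_1)b(v_2,v_2)b(v_3,v_3)\bigr)$ for the $3\times 3$ Gram matrix $G$ of $v_1,v_2,v_3$ relative to $b$, and $\det G=0$ because the three vectors span a plane.

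Your primary route---parametrizing $a_3=[tv_1+uv_2]$ and reducing to a polynomial identity in $\alpha,\beta,\gamma,t,u$---is a genuinely different but equally valid strategy. It trades the paper's manifest three-fold symmetry for a smaller variable count, and the resulting identity does hold (indeed it follows immediately from the Gram-determinant argument, since the $3\times 3$ Gram matrix of $v_1,v_2,tv_1+uv_2$ is visibly singular). What the paper's symmetric approach buys is that the same computation simultaneously yields the general formula $(\mathcal A-q_1-q_2-q_3+2)^2=4(1-q_1)(1-q_2)(1-q_3)$ for non-collinear points, where $\mathcal A$ is the quadrea; this reappears later as the Cross law. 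Your parametrized approach would need a separate computation to recover that.
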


\begin{proof}
Suppose that the points are $a_{1}\equiv\left[  x_{1}:y_{1}:z_{1}\right]  $,
$a_{2}\equiv\left[  x_{2}:y_{2}:z_{2}\right]  $ and $a_{3}\equiv\left[
x_{3}:y_{3}:z_{3}\right]  $. Then
\begin{align*}
q_{1}  & \equiv q\left(  a_{2},a_{3}\right)  =1-\frac{\left(  x_{2}x_{3}%
+y_{2}y_{3}-z_{2}z_{3}\right)  ^{2}}{\left(  x_{2}^{2}+y_{2}^{2}-z_{2}%
^{2}\right)  \left(  x_{3}^{2}+y_{3}^{2}-z_{3}^{2}\right)  }\\
q_{2}  & \equiv q\left(  a_{1},a_{3}\right)  =1-\frac{\left(  x_{1}x_{3}%
+y_{1}y_{3}-z_{1}z_{3}\right)  ^{2}}{\left(  x_{1}^{2}+y_{1}^{2}-z_{1}%
^{2}\right)  \left(  x_{3}^{2}+y_{3}^{2}-z_{3}^{2}\right)  }\\
q_{3}  & \equiv q\left(  a_{1},a_{2}\right)  =1-\frac{\left(  x_{1}x_{2}%
+y_{1}y_{2}-z_{1}z_{2}\right)  ^{2}}{\left(  x_{1}^{2}+y_{1}^{2}-z_{1}%
^{2}\right)  \left(  x_{2}^{2}+y_{2}^{2}-z_{2}^{2}\right)  }.
\end{align*}

The expression%
\[
\left(  \left(  1-q_{1}\right)  +\left(  1-q_{2}\right)  +\left(
1-q_{3}\right)  -1\right)  ^{2}-4\left(  1-q_{1}\right)  \left(
1-q_{2}\right)  \left(  1-q_{3}\right)
\]
is a difference of squares. One of the factors is%
\begin{align*}
& \frac{\left(  x_{2}x_{3}+y_{2}y_{3}-z_{2}z_{3}\right)  ^{2}}{\left(
x_{2}^{2}+y_{2}^{2}-z_{2}^{2}\right)  \left(  x_{3}^{2}+y_{3}^{2}-z_{3}%
^{2}\right)  }+\frac{\left(  x_{1}x_{3}+y_{1}y_{3}-z_{1}z_{3}\right)  ^{2}%
}{\left(  x_{1}^{2}+y_{1}^{2}-z_{1}^{2}\right)  \left(  x_{3}^{2}+y_{3}%
^{2}-z_{3}^{2}\right)  }+\frac{\left(  x_{1}x_{2}+y_{1}y_{2}-z_{1}%
z_{2}\right)  ^{2}}{\left(  x_{1}^{2}+y_{1}^{2}-z_{1}^{2}\right)  \left(
x_{2}^{2}+y_{2}^{2}-z_{2}^{2}\right)  }-1\\
& -\frac{2\left(  x_{2}x_{3}+y_{2}y_{3}-z_{2}z_{3}\right)  \left(  x_{1}%
x_{3}+y_{1}y_{3}-z_{1}z_{3}\right)  \left(  x_{1}x_{2}+y_{1}y_{2}-z_{1}%
z_{2}\right)  }{\left(  x_{1}^{2}+y_{1}^{2}-z_{1}^{2}\right)  \left(
x_{2}^{2}+y_{2}^{2}-z_{2}^{2}\right)  \left(  x_{3}^{2}+y_{3}^{2}-z_{3}%
^{2}\right)  }.
\end{align*}
Somewhat remarkably, this expression is identically equally to%
\begin{equation}
\frac{\left(  x_{1}y_{2}z_{3}-x_{1}y_{3}z_{2}+x_{2}y_{3}z_{1}-x_{3}y_{2}%
z_{1}+x_{3}y_{1}z_{2}-x_{2}y_{1}z_{3}\right)  ^{2}}{\left(  x_{1}^{2}%
+y_{1}^{2}-z_{1}^{2}\right)  \left(  x_{2}^{2}+y_{2}^{2}-z_{2}^{2}\right)
\left(  x_{3}^{2}+y_{3}^{2}-z_{3}^{2}\right)  }.\label{Quadrea first}%
\end{equation}
If $a_{1},a_{2}$ and $a_{3}$ are collinear, then from the Collinear points
theorem the numerator of (\ref{Quadrea first}) is zero, so that
\begin{equation}
\left(  2-q_{1}-q_{2}-q_{3}\right)  ^{2}=4\left(  1-q_{1}\right)  \left(
1-q_{2}\right)  \left(  1-q_{3}\right)  .\label{TripleQuadAlt}%
\end{equation}
Comparing (\ref{Triple Spread Fn}) and (\ref{Triple Sp4}), this can be
rewritten as
\[
\left(  q_{1}+q_{2}+q_{3}\right)  ^{2}=2\left(  q_{1}^{2}+q_{2}^{2}+q_{3}%
^{2}\right)  +4q_{1}q_{2}q_{3}.%
{\hspace{.1in} \rule{0.5em}{0.5em}}%
\]
The Triple quad formula can be rewritten more compactly, using the Triple
spread function, as
\[
S\left(  q_{1},q_{2},q_{3}\right)  =0.
\]

\end{proof}

\begin{example}
Consider the case when $a_{1}\equiv\left[  0:0:1\right]  $, $a_{2}%
\equiv\left[  x:y:1\right]  $ and $a_{3}\equiv\left[  x:-y:1\right]  $. The
corresponding lines are
\[
L_{1}=\left(  1:0:x\right)  \qquad L_{2}=\left(  y:x:0\right)  \qquad
L_{3}=\left(  -y:x:0\right)  .
\]
Then%
\[
q_{1}=\frac{4\left(  x^{2}-1\right)  y^{2}}{\left(  x^{2}+y^{2}-1\right)
^{2}}\qquad q_{2}=\frac{x^{2}+y^{2}}{x^{2}+y^{2}-1}\qquad q_{3}=\frac
{x^{2}+y^{2}}{x^{2}+y^{2}-1}%
\]
\newline and a computation shows that
\[
S\left(  q_{1},q_{2},q_{3}\right)  =\allowbreak\frac{16\left(  1+y^{2}\right)
\left(  1-x^{2}\right)  \allowbreak x^{2}y^{2}}{\left(  x^{2}+y^{2}-1\right)
^{4}}.
\]
This is zero if $x=0$ or $y=0,$ and also if $x^{2}=1$, in which case
$L_{1}\equiv a_{2}a_{3}$ is a null line, and if $y\neq0$ then the three points
$a_{1},a_{2}$ and $a_{3}$ are not collinear. But there is another
possibility---if we work in the field obtained by adjoining $i$ to the
rational numbers, where $i^{2}=-1$, then setting $y=i$ we find an example
involving \textit{non-null lines }for which the converse of the Triple quad
formula also \textit{does not hold}.$%
\hspace{.1in}\diamond
$
\end{example}

\subsection{Triple spread formula}

\begin{theorem}
[Triple spread formula]If the lines $L_{1},L_{2}$ and $L_{3}$ are concurrent,
and $S_{1}\equiv S\left(  L_{2},L_{3}\right)  $, $S_{2}\equiv S\left(
L_{1},L_{3}\right)  $ and $S_{3}\equiv S\left(  L_{1},L_{2}\right)  $, then
\[
\left(  S_{1}+S_{2}+S_{3}\right)  ^{2}=2\left(  S_{1}^{2}+S_{2}^{2}+S_{3}%
^{2}\right)  +4S_{1}S_{2}S_{3}.
\]

\end{theorem}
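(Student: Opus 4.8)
The plan is to invoke the duality principle, since this statement is the exact dual of the Triple quad formula. First I would observe that the spread between lines, defined in (\ref{Spread formula}), is formally identical to the quadrance between points in (\ref{QuadranceFormula}) under the substitution $(x_i,y_i,z_i)\mapsto(l_i,m_i,n_i)$; likewise the concurrency condition furnished by the Concurrent lines theorem is obtained from the collinearity condition of the Collinear points theorem by that same substitution. Because the entire proof of the Triple quad formula is built from exactly these two ingredients together with the polynomial identity recorded in the Quadrance theorem (the extension of Fibonacci's identity), and that identity is symmetric in precisely the way required, the argument transfers verbatim.

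Concretely, I would set $L_i\equiv\left(l_i:m_i:n_i\right)$, write out $S_1,S_2,S_3$ via (\ref{Spread formula}), and form the difference of squares
\[
\bigl((1-S_1)+(1-S_2)+(1-S_3)-1\bigr)^2-4(1-S_1)(1-S_2)(1-S_3).
\]
The same algebraic collapse that occurred for quadrances shows this equals the square of the concurrency expression divided by the product $\left(l_1^2+m_1^2-n_1^2\right)\left(l_2^2+m_2^2-n_2^2\right)\left(l_3^2+m_3^2-n_3^2\right)$. Concurrency makes the numerator vanish, giving $(2-S_1-S_2-S_3)^2=4(1-S_1)(1-S_2)(1-S_3)$, which rearranges via (\ref{Triple Spread Fn})--(\ref{Triple Sp4}) into the claimed identity $S\left(S_1,S_2,S_3\right)=0$.

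Given the author's consistent reliance on duality, I expect the one-line proof ``This is dual to the Triple quad formula'' to suffice. The only point deserving care is confirming that the duality is genuinely exact here: the quadratic forms $x^2+y^2-z^2$ and $l^2+m^2-n^2$ share the same signature, so no stray sign is introduced in passing from points to lines, and the Quadrance spread duality theorem already certifies that $q$ and $S$ agree as functions of their respective coordinate triples. There is thus no real obstacle---the content is entirely carried by the point-line symmetry established earlier in the excerpt.
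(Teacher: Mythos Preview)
Your proposal is correct and matches the paper's approach exactly: the paper's proof is the single line ``This is dual to the Triple quad formula.'' Your additional discussion of why the duality transfers cleanly (identical quadratic forms, the Quadrance spread duality theorem) is sound and more explicit than what the paper records, but the underlying argument is the same.
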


\begin{proof}
This is dual to the Triple quad formula.$%
{\hspace{.1in} \rule{0.5em}{0.5em}}%
$
\end{proof}

The Triple spread formula can also be rewritten as
\[
S\left(  S_{1},S_{2},S_{3}\right)  =0.
\]

Here are some secondary results that involve the Triple spread function, and
apply to both collinear points and concurrent lines.

\begin{theorem}
[Complementary quadrances spreads]Suppose that $a,b$ and $c$ are numbers which
satisfy $S\left(  a,b,c\right)  =0 $. If $c=1$ then
\[
a+b=1.
\]

\end{theorem}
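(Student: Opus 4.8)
The plan is to substitute $c=1$ directly into a convenient closed form of the Triple spread function and recognize the result as a perfect square. The cleanest starting point is the factored expression (\ref{Triple Sp4}), namely
\[
S\left(  a,b,c\right)  =4\left(  1-a\right)  \left(  1-b\right)  \left(
1-c\right)  -\left(  a+b+c-2\right)  ^{2}.
\]
Setting $c=1$ kills the factor $\left(  1-c\right)$, so the entire product term $4\left(  1-a\right)  \left(  1-b\right)  \left(  1-c\right)$ vanishes, leaving
\[
S\left(  a,b,1\right)  =-\left(  a+b+1-2\right)  ^{2}=-\left(  a+b-1\right)
^{2}.
\]
The hypothesis $S\left(  a,b,c\right)  =0$ then forces $\left(  a+b-1\right)  ^{2}=0$, and since we work over a field, this yields $a+b-1=0$, that is $a+b=1$.

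As an alternative to invoking the factored form, one could begin from the defining polynomial (\ref{Triple Spread Fn}) and verify by a short direct expansion that $S\left(  a,b,1\right)  =-\left(  a+b-1\right)  ^{2}$; substituting $c=1$ and collecting the quadratic, linear, and constant terms in $a+b$ produces exactly $-\bigl(\left(  a+b\right)  ^{2}-2\left(  a+b\right)  +1\bigr)$, which is the negative of a perfect square. Either route reduces the claim to the single algebraic observation that, once the third argument is fixed at $1$, the Triple spread function degenerates into $-\left(  a+b-1\right)  ^{2}$.

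There is no real obstacle here. The only point requiring any care is the final inference from $\left(  a+b-1\right)  ^{2}=0$ to $a+b=1$, which uses that a field has no zero divisors; notably this step does not even require the standing assumption that the characteristic is not two. The substance of the theorem is entirely captured by the perfect-square identity, so I expect the proof to be a few lines, with the factored form (\ref{Triple Sp4}) doing essentially all the work.
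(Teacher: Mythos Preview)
Your proof is correct and follows essentially the same route as the paper: substitute $c=1$ into an identity for $S(a,b,c)$ in which one term carries a factor of $(1-c)$, leaving $-(a+b-1)^{2}$. The only cosmetic difference is that you invoke the already-established form (\ref{Triple Sp4}), whereas the paper records the slightly different factorization
\[
S\left(  a,b,c\right)  =\left(  1-c\right)  \left(  c+\left(  1-2a\right)  \left(  1-2b\right)  \right)  -\left(  a+b-1\right)  ^{2},
\]
which has the minor advantage of making the subsequent remark about the failure of the converse (the alternative $c=-\left(1-2a\right)\left(1-2b\right)$) immediately visible.
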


\begin{proof}
This is a consequence of the identity
\[
S\left(  a,b,c\right)  =\left(  1-c\right)  \left(  c+\left(  1-2a\right)
\left(  1-2b\right)  \right)  -\left(  a+b-1\right)  ^{2}.%
{\hspace{.1in} \rule{0.5em}{0.5em}}%
\]

\end{proof}

The proof suggests that the converse of this result does not hold, since if
$S\left(  a,b,c\right)  =0$ and $a+b=1,$ then it is also possible that
$c=-\left(  1-2a\right)  \left(  1-2b\right)  $. In fact we will see an
example of just this situation in the Singly null singly nil theorem.

\begin{theorem}
[Equal quadrances spreads]Suppose that $a,b$ and $c$ are numbers which satisfy
$S\left(  a,b,c\right)  =0 $. If $a=b$ then either%
\[
c=0\qquad\mathit{or}\qquad c=4a\left(  1-a\right)  .
\]

\end{theorem}

\begin{proof}
This is a consequence of the identity
\[
S\left(  a,b,c\right)  =\allowbreak\left(  a-b\right)  \left(
b-a-2c+4ac\right)  -c\left(  c-4a+4a^{2}\right)  .%
{\hspace{.1in} \rule{0.5em}{0.5em}}%
\]

\end{proof}

The polynomial%
\[
S_{2}\left(  x\right)  =4x\left(  1-x\right)
\]
which appears in this theorem is the \textbf{second spread polynomial}. It
plays a major role in hyperbolic geometry, and is also well-known in chaos
theory as the \textit{logistic map}.

\subsection{Pythagoras' theorem}

Pythagoras' theorem is of course very important. The planar Euclidean version
involves the formula
\[
Q_{3}=Q_{1}+Q_{2}.
\]
The hyperbolic version involves a deformation of this.

\begin{theorem}
[Pythagoras]Suppose that $a_{1},a_{2}$ and $a_{3}$ are distinct points with
quadrances $q_{1}\equiv q\left(  a_{2},a_{3}\right)  $, $q_{2}\equiv q\left(
a_{1},a_{3}\right)  $ and $q_{3}\equiv q\left(  a_{1},a_{2}\right)  $. If the
lines $a_{1}a_{3}$ and $a_{2}a_{3}$ are perpendicular, then%
\[
q_{3}=q_{1}+q_{2}-q_{1}q_{2}.
\]

\end{theorem}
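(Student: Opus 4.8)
The plan is to reduce everything to the symmetric bilinear form $\left\langle a_{i},a_{j}\right\rangle \equiv x_{i}x_{j}+y_{i}y_{j}-z_{i}z_{j}$ attached to the quadratic form $x^{2}+y^{2}-z^{2}$. In this notation the Quadrance definition (\ref{QuadranceFormula}) reads $q\left(a_{i},a_{j}\right)=1-\left\langle a_{i},a_{j}\right\rangle^{2}/\left(\left\langle a_{i},a_{i}\right\rangle\left\langle a_{j},a_{j}\right\rangle\right)$, so each $1-q_{k}$ is a ratio of such forms. The first observation is that the desired conclusion $q_{3}=q_{1}+q_{2}-q_{1}q_{2}$ is equivalent to the factored form $1-q_{3}=\left(1-q_{1}\right)\left(1-q_{2}\right)$, that is, to the identity
\[
\frac{\left\langle a_{1},a_{2}\right\rangle^{2}}{\left\langle a_{1},a_{1}\right\rangle\left\langle a_{2},a_{2}\right\rangle}=\frac{\left\langle a_{1},a_{3}\right\rangle^{2}}{\left\langle a_{1},a_{1}\right\rangle\left\langle a_{3},a_{3}\right\rangle}\cdot\frac{\left\langle a_{2},a_{3}\right\rangle^{2}}{\left\langle a_{2},a_{2}\right\rangle\left\langle a_{3},a_{3}\right\rangle}.
\]
After clearing denominators (each $\left\langle a_{i},a_{i}\right\rangle$ is nonzero, since the quadrances are defined and so the points are non-null), this is precisely
\[
\left\langle a_{1},a_{2}\right\rangle^{2}\left\langle a_{3},a_{3}\right\rangle^{2}=\left\langle a_{1},a_{3}\right\rangle^{2}\left\langle a_{2},a_{3}\right\rangle^{2}.
\]
So the whole theorem comes down to proving this single polynomial relation from the perpendicularity hypothesis.

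Next I would bring in the hypothesis. By the Join of points theorem the lines of the side are $a_{1}a_{3}=J\left(a_{1},a_{3}\right)$ and $a_{2}a_{3}=J\left(a_{2},a_{3}\right)$ in terms of the hyperbolic cross product, and perpendicularity of two lines is exactly the vanishing of the bilinear form applied to their coefficient triples. Thus the hypothesis that $a_{1}a_{3}$ and $a_{2}a_{3}$ are perpendicular says $\left\langle J\left(a_{1},a_{3}\right),J\left(a_{2},a_{3}\right)\right\rangle=0$.

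The key step, which I expect to be the main obstacle, is a Lagrange-type identity expressing this mixed product of cross products back in terms of the $\left\langle a_{i},a_{j}\right\rangle$:
\[
\left\langle J\left(a_{1},a_{3}\right),J\left(a_{2},a_{3}\right)\right\rangle=\left\langle a_{1},a_{3}\right\rangle\left\langle a_{2},a_{3}\right\rangle-\left\langle a_{1},a_{2}\right\rangle\left\langle a_{3},a_{3}\right\rangle.
\]
This is the full bilinear (polarized) version of the extended Fibonacci identity already used in the proof of the Quadrance theorem, which is its diagonal case $a_{1}=a_{2}$. I would obtain it either by polarizing that identity or by a direct expansion of both sides, keeping careful track of the sign produced by the $-z^{2}$ term of the form. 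Granting it, the perpendicularity hypothesis immediately yields $\left\langle a_{1},a_{3}\right\rangle\left\langle a_{2},a_{3}\right\rangle=\left\langle a_{1},a_{2}\right\rangle\left\langle a_{3},a_{3}\right\rangle$; squaring this gives exactly the required relation $\left\langle a_{1},a_{2}\right\rangle^{2}\left\langle a_{3},a_{3}\right\rangle^{2}=\left\langle a_{1},a_{3}\right\rangle^{2}\left\langle a_{2},a_{3}\right\rangle^{2}$, and retracing the equivalences completes the proof. The only genuine computation is verifying the polarized identity; everything else is bookkeeping, and notably the argument never requires the three points to be non-collinear, only non-null.
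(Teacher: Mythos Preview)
Your proof is correct and follows essentially the same route as the paper's. The paper also rewrites the perpendicularity of $a_{1}a_{3}$ and $a_{2}a_{3}$ as the relation $\left\langle a_{3},a_{3}\right\rangle\left\langle a_{1},a_{2}\right\rangle=\left\langle a_{1},a_{3}\right\rangle\left\langle a_{2},a_{3}\right\rangle$ (your Lagrange-type identity, which is indeed the polarization of the Fibonacci identity used in the Quadrance theorem), and then observes that the numerator of $\left(1-q_{3}\right)-\left(1-q_{1}\right)\left(1-q_{2}\right)$ is the difference of squares $\left\langle a_{3},a_{3}\right\rangle^{2}\left\langle a_{1},a_{2}\right\rangle^{2}-\left\langle a_{1},a_{3}\right\rangle^{2}\left\langle a_{2},a_{3}\right\rangle^{2}$, one of whose factors is exactly that relation; your version simply squares the relation directly rather than factoring the difference, but the content is identical.
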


\begin{proof}
Suppose that $a_{1}\equiv\left[  x_{1}:y_{1}:z_{1}\right]  $, $a_{2}%
\equiv\left[  x_{2}:y_{2}:z_{2}\right]  $ and $a_{3}\equiv\left[  x_{3}%
:y_{3}:z_{3}\right]  $. The lines $a_{1}a_{3}$ and $a_{2}a_{3}$ are
perpendicular precisely when$\allowbreak$%
\[
\left(  y_{2}z_{3}-y_{3}z_{2}\right)  \left(  y_{3}z_{1}-y_{1}z_{3}\right)
+\left(  z_{2}x_{3}-z_{3}x_{2}\right)  \left(  z_{3}x_{1}-z_{1}x_{3}\right)
-\left(  x_{3}y_{2}-x_{2}y_{3}\right)  \left(  x_{1}y_{3}-x_{3}y_{1}\right)
=0.
\]
This condition can be rewritten as%
\begin{equation}
\left(  x_{3}^{2}+y_{3}^{2}-z_{3}^{2}\right)  \left(  x_{1}x_{2}+y_{1}%
y_{2}-z_{1}z_{2}\right)  -\left(  x_{2}x_{3}+y_{2}y_{3}-z_{2}z_{3}\right)
\left(  x_{1}x_{3}+y_{1}y_{3}-z_{1}z_{3}\right)  =0.\label{Pythagoras3}%
\end{equation}
Now from the definition of quadrance,
\begin{align}
& \left(  1-q_{3}\right)  -\left(  1-q_{1}\right)  \left(  1-q_{2}\right)
\nonumber\\
& =\frac{\left(  x_{1}x_{2}+y_{1}y_{2}-z_{1}z_{2}\right)  ^{2}}{\left(
x_{1}^{2}+y_{1}^{2}-z_{1}^{2}\right)  \left(  x_{2}^{2}+y_{2}^{2}-z_{2}%
^{2}\right)  }-\frac{\left(  x_{2}x_{3}+y_{2}y_{3}-z_{2}z_{3}\right)  ^{2}%
}{\left(  x_{2}^{2}+y_{2}^{2}-z_{2}^{2}\right)  \left(  x_{3}^{2}+y_{3}%
^{2}-z_{3}^{2}\right)  }\frac{\left(  x_{1}x_{3}+y_{1}y_{3}-z_{1}z_{3}\right)
^{2}}{\left(  x_{1}^{2}+y_{1}^{2}-z_{1}^{2}\right)  \left(  x_{3}^{2}%
+y_{3}^{2}-z_{3}^{2}\right)  }\nonumber\\
& =\frac{\left(  x_{3}^{2}+y_{3}^{2}-z_{3}^{2}\right)  ^{2}\left(  x_{1}%
x_{2}+y_{1}y_{2}-z_{1}z_{2}\right)  ^{2}-\left(  x_{2}x_{3}+y_{2}y_{3}%
-z_{2}z_{3}\right)  ^{2}\left(  x_{1}x_{3}+y_{1}y_{3}-z_{1}z_{3}\right)  ^{2}%
}{\left(  x_{1}^{2}+y_{1}^{2}-z_{1}^{2}\right)  \left(  x_{2}^{2}+y_{2}%
^{2}-z_{2}^{2}\right)  \allowbreak\left(  x_{3}^{2}+y_{3}^{2}-z_{3}%
^{2}\right)  ^{2}}.\label{Pythagoras2}%
\end{align}
The numerator is a difference of squares, and one of the factors is the left
hand side of (\ref{Pythagoras3}), so if $a_{1}a_{3}$ and $a_{2}a_{3}$ are
perpendicular, then%
\[
1-q_{3}=\left(  1-q_{1}\right)  \left(  1-q_{2}\right)  .
\]
Now rewrite this as%
\[
q_{3}=q_{1}+q_{2}-q_{1}q_{2}.%
{\hspace{.1in} \rule{0.5em}{0.5em}}%
\]

\end{proof}

Note that the converse does not follow from this argument, as the
\textit{other} factor of the numerator of (\ref{Pythagoras2}) might be zero.

\begin{theorem}
[Pythagoras' dual]Suppose that $L_{1},L_{2}$ and $L_{3}$ are distinct lines
with spreads $S_{1}\equiv S\left(  L_{2},L_{3}\right)  $, $S_{2}\equiv
S\left(  L_{1},L_{3}\right)  $ and $S_{3}\equiv S\left(  L_{1},L_{2}\right)
$. If the points $L_{1}L_{3}$ and $L_{2}L_{3}$ are perpendicular, then%
\[
S_{3}=S_{1}+S_{2}-S_{1}S_{2}.
\]

\end{theorem}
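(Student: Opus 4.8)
The plan is to invoke the \emph{duality principle} established earlier rather than repeat the computation of the Pythagoras theorem. The author's consistent practice throughout the paper is to dispose of a dual statement in a single line by appeal to duality, and Pythagoras' dual is manifestly the point-line dual of the Pythagoras theorem just proved. So the whole task reduces to confirming that the hypotheses and conclusion of the two statements correspond correctly under the interchange of points and lines.

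First I would track the dictionary. Replace each point $a_i$ by the line $L_i \equiv a_i^{\bot}$. The join $a_i a_j$ then corresponds to the meet $L_i L_j$, since $\left(a_i a_j\right)^{\bot} = a_i^{\bot} a_j^{\bot} = L_i L_j$. The hypothesis that the lines $a_1 a_3$ and $a_2 a_3$ are perpendicular becomes the hypothesis that the points $L_1 L_3$ and $L_2 L_3$ are perpendicular: perpendicularity of two lines $\left(l:m:n\right)$ and $\left(l':m':n'\right)$ is the condition $l l' + m m' - n n' = 0$, which is identical to the condition for perpendicularity of the two dual points, so perpendicularity of lines is dual to perpendicularity of the dual points. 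Finally, by the Quadrance spread duality theorem each quadrance $q_k = q\left(a_i, a_j\right)$ corresponds to the spread $S_k = S\left(a_i^{\bot}, a_j^{\bot}\right) = S\left(L_i, L_j\right)$, and the index patterns in the two statements line up exactly.

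Since the conclusion $q_3 = q_1 + q_2 - q_1 q_2$ is a purely algebraic identity among the three measurements, and the dual substitution carries each $q_k$ to the corresponding $S_k$ while preserving the defining perpendicularity hypothesis, the identity transfers verbatim to $S_3 = S_1 + S_2 - S_1 S_2$. The statement then follows at once.

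I do not expect a genuine obstacle here, since duality does all the work; the one point deserving care is verifying that the dual of the hypothesis ``$a_1 a_3$ and $a_2 a_3$ perpendicular'' is precisely ``$L_1 L_3$ and $L_2 L_3$ perpendicular'', which I would confirm by tracking the $\bot$ through the Meet of lines theorem as above. Should one instead want a self-contained computational proof, one would mirror the Pythagoras argument exactly: use the Spread formula to write each $1 - S_k$ as a ratio, use the Meet of lines theorem to express the perpendicularity of $L_1 L_3$ and $L_2 L_3$ as a bilinear condition, and apply the same difference-of-squares factorization to deduce $1 - S_3 = \left(1 - S_1\right)\left(1 - S_2\right)$.
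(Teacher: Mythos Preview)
Your proposal is correct and takes exactly the same approach as the paper, which disposes of the result in a single line: ``This is dual to Pythagoras' theorem.'' Your additional verification that the hypotheses and conclusion transfer correctly under the duality dictionary is more careful than the paper itself, but the underlying argument is identical.
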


\begin{proof}
This is dual to Pythagoras' theorem.$%
{\hspace{.1in} \rule{0.5em}{0.5em}}%
$
\end{proof}

\begin{example}
Consider the triangle $\overline{a_{1}a_{2}a_{3}}$ where $a_{1}\equiv\left[
x:0:1\right]  $, $a_{2}\equiv\left[  0:y:1\right]  $ and $a_{3}\equiv\left[
0:0:1\right]  $ as in Example 3. The lines are
\[
L_{1}\equiv a_{2}a_{3}=\left(  1:0:0\right)  \qquad L_{2}\equiv a_{1}%
a_{3}=\left(  0:1:0\right)  \qquad L_{3}\equiv a_{1}a_{2}=\left(
y:x:xy\right)  .
\]
The quadrances are%
\[
q_{1}=-\frac{y^{2}}{1-y^{2}}\qquad q_{2}=-\frac{x^{2}}{1-x^{2}}\qquad
q_{3}=\frac{x^{2}y^{2}-x^{2}-y^{2}}{\left(  1-x^{2}\right)  \left(
1-y^{2}\right)  }%
\]
and the spreads are%
\[
S_{1}=\frac{\left(  1-x^{2}\right)  y^{2}}{x^{2}+y^{2}-x^{2}y^{2}}\qquad
S_{2}=\frac{x^{2}\left(  1-y^{2}\right)  }{x^{2}+y^{2}-x^{2}y^{2}}\qquad
S_{3}=1.
\]
Since $S_{3}=1,$ the lines $L_{1}$ and $L_{2}$ are perpendicular, and
Pythagoras' theorem asserts that $q_{3}=q_{1}+q_{2}-q_{1}q_{2},$ which can be
verified directly.$%
\hspace{.1in}\diamond
$
\end{example}

\begin{example}
Consider the triangle $\overline{a_{1}a_{2}a_{3}}$ where $a_{1}\equiv\left[
0:0:1\right]  $, $a_{2}\equiv\left[  x:y:1\right]  $ and $a_{3}\equiv\left[
x:-y:1\right]  $ as in Example 11. The spread $S_{1}\equiv S\left(  a_{1}%
a_{2},a_{1}a_{3}\right)  $ is
\[
S_{1}=\frac{4x^{2}y^{2}}{\left(  x^{2}+y^{2}\right)  ^{2}}%
\]
\newline Using the quadrances computed in Example 11,
\[
q_{1}-q_{2}-q_{3}+q_{2}q_{3}=-\frac{\left(  x^{2}-y^{2}-2\right)  \left(
x^{2}-y^{2}\right)  }{\left(  x^{2}+y^{2}-1\right)  ^{2}}%
\]
while%
\[
1-S_{1}=1-\frac{4x^{2}y^{2}}{\left(  x^{2}+y^{2}\right)  ^{2}}=\frac{\left(
x^{2}-y^{2}\right)  ^{2}}{\left(  x^{2}+y^{2}\right)  ^{2}}.
\]
So we see that $S_{1}=1$ does imply $q_{1}=q_{2}+q_{3}-q_{1}q_{2}$, verifying
again Pythagoras' theorem. But if $x^{2}-y^{2}=2$ and $x\neq\pm y, $ then
$q_{1}=q_{2}+q_{3}-q_{1}q_{2}$ while $S_{1}\neq1$. This shows that the
\textit{converse of Pythagoras' theorem does not hold}.$%
\hspace{.1in}\diamond
$
\end{example}

\subsection{Product and cross}

In planar rational trigonometry, if $s\equiv s\left(  l_{1},l_{2}\right)  $ is
the spread between two lines $l_{1}$ and $l_{2}$, then $c\left(  l_{1}%
,l_{2}\right)  \equiv1-s$ is called the \textit{cross} between the lines. This
is a useful concept in its own right, and here we consider the hyperbolic
version, together with the corresponding notion between points, called
\textit{product}. The latter has no analogue in Euclidean geometry. Here are
the definitions.

The \textbf{product} between points $a_{1}\equiv\left[  x_{1}:y_{1}%
:z_{1}\right]  $ and $a_{2}\equiv\left[  x_{2}:y_{2}:z_{2}\right]  $ is the
number%
\[
p\left(  a_{1},a_{2}\right)  \equiv\frac{\left(  x_{1}x_{2}+y_{1}y_{2}%
-z_{1}z_{2}\right)  ^{2}}{\left(  x_{1}^{2}+y_{1}^{2}-z_{1}^{2}\right)
\left(  x_{2}^{2}+y_{2}^{2}-z_{2}^{2}\right)  }.
\]
The \textbf{cross} between lines $L_{1}\equiv\left(  l_{1}:m_{1}:n_{1}\right)
$ and $L_{2}\equiv\left(  l_{2}:m_{2}:n_{2}\right)  $ is the number%
\[
C\left(  L_{1},L_{2}\right)  \equiv\frac{\left(  l_{1}l_{2}+m_{1}m_{2}%
-n_{1}n_{2}\right)  ^{2}}{\left(  l_{1}^{2}+m_{1}^{2}-n_{1}^{2}\right)
\allowbreak\left(  l_{2}^{2}+m_{2}^{2}-n_{2}^{2}\right)  }.
\]
Then clearly
\[
q\left(  a_{1},a_{2}\right)  +p\left(  a_{1},a_{2}\right)  =1
\]
and%
\[
S\left(  L_{1},L_{2}\right)  +C\left(  L_{1},L_{2}\right)  =1.
\]

Pythagoras' theorem can be rewritten in terms of products as
\[
p_{3}=p_{1}p_{2}%
\]
while Pythagoras' dual theorem can be rewritten as
\[
C_{3}=C_{1}C_{2}.
\]
The Triple quad formula can be rewritten as
\[
\left(  p_{1}+p_{2}+p_{3}-1\right)  ^{2}=4p_{1}p_{2}p_{3}%
\]
while the Triple spread formula can be rewritten as%
\[
\left(  C_{1}+C_{2}+C_{3}-1\right)  ^{2}=4C_{1}C_{2}C_{3}.
\]

\subsection{The Spread law}

The Spread law is the analog in universal hyperbolic geometry of the
\textit{Sine law}. First we establish a useful, but somewhat lengthy, formula.

\begin{theorem}
[Spread formula]Suppose that $a_{1}\equiv\left[  x_{1}:y_{1}:z_{1}\right]  $,
$a_{2}\equiv\left[  x_{2}:y_{2}:z_{2}\right]  $ and $a_{3}\equiv\left[
x_{3}:y_{3}:z_{3}\right]  $ are distinct points with $a_{1}a_{2}$ and
$a_{1}a_{3}$ both non-null lines. Then $S_{1}\equiv S\left(  a_{1}a_{2}%
,a_{1}a_{3}\right)  $ is given by the expression%
\begin{equation}
S_{1}=-\frac{\left(  x_{1}^{2}+y_{1}^{2}-z_{1}^{2}\right)  \left(  x_{1}%
y_{2}z_{3}-x_{1}y_{3}z_{2}+x_{2}y_{3}z_{1}-x_{3}y_{2}z_{1}+x_{3}y_{1}%
z_{2}-x_{2}y_{1}z_{3}\right)  ^{2}\allowbreak}{\left(  l_{2}^{2}+m_{2}%
^{2}-n_{2}^{2}\right)  \allowbreak\left(  l_{3}^{2}+m_{3}^{2}-n_{3}%
^{2}\right)  }\label{SpreadFormula}%
\end{equation}
where%
\begin{align*}
&
\begin{tabular}
[c]{lllll}%
$l_{2}\equiv y_{3}z_{1}-y_{1}z_{3}$ &  & $m_{2}\equiv z_{3}x_{1}-z_{1}x_{3}$ &
& $n_{2}\equiv x_{1}y_{3}-x_{3}y_{1}$%
\end{tabular}
\\
&
\begin{tabular}
[c]{lllll}%
$l_{3}\equiv y_{1}z_{2}-y_{2}z_{1}$ &  & $m_{3}\equiv z_{1}x_{2}-z_{2}x_{1}$ &
& $n_{3}\equiv x_{2}y_{1}-x_{1}y_{2}.$%
\end{tabular}
\end{align*}

\end{theorem}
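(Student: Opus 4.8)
The plan is to compute the spread $S_1 \equiv S(a_1a_2, a_1a_3)$ directly from the Spread formula (\ref{SpreadForm1}), using the explicit coordinates of the two lines $L_3 \equiv a_1a_2$ and $L_2 \equiv a_1a_3$ supplied by the Join of points theorem. Writing $L_3 \equiv (l_3:m_3:n_3)$ and $L_2 \equiv (l_2:m_2:n_2)$ with the $l_i,m_i,n_i$ exactly as defined in the statement, the denominators $l_2^2+m_2^2-n_2^2$ and $l_3^2+m_3^2-n_3^2$ already appear, so the whole task reduces to showing that the numerator of (\ref{SpreadForm1}) for this pair of lines equals
\[
\left(x_1^2+y_1^2-z_1^2\right)\left(x_1y_2z_3-x_1y_3z_2+x_2y_3z_1-x_3y_2z_1+x_3y_1z_2-x_2y_1z_3\right)^2.
\]

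First I would apply formula (\ref{SpreadForm1}) to $L_2$ and $L_3$, which expresses $S_1$ as $-1$ times a fraction whose numerator is $(m_3n_2-m_2n_3)^2+(n_3l_2-n_2l_3)^2-(l_3m_2-l_2m_3)^2$ and whose denominator is the product of the two quadratic forms. The key observation is that the three quantities $m_3n_2-m_2n_3$, $n_3l_2-n_2l_3$, $l_3m_2-l_2m_3$ are the components of the hyperbolic cross product $J(l_3,m_3,n_3;l_2,m_2,n_2)$ — equivalently the meet $L_3L_2 = a_1$ — so by the Meet of lines theorem each of these three components is a scalar multiple of the corresponding coordinate of $a_1 \equiv [x_1:y_1:z_1]$. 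The scalar is a common factor, which I expect to work out to be (up to sign) the signed collinearity determinant $D \equiv x_1y_2z_3-x_1y_3z_2+x_2y_3z_1-x_3y_2z_1+x_3y_1z_2-x_2y_1z_3$ of the three points, since $a_1$ is the common point of the two joins and the triple product of the three bracket vectors governs how the cross products scale.

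Granting $ (m_3n_2-m_2n_3, n_3l_2-n_2l_3, l_3m_2-l_2m_3) = \pm D\,(x_1,y_1,z_1)$, the numerator of (\ref{SpreadForm1}) becomes $D^2\bigl(x_1^2+y_1^2-z_1^2\bigr)$ after factoring out $D^2$ and recognizing the remaining $x_1^2+y_1^2-z_1^2$ from the signature of the quadratic form. Substituting back and absorbing the overall minus sign then yields precisely (\ref{SpreadFormula}).

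The main obstacle will be verifying that the common scalar extracted from the cross product $J(L_3;L_2)$ is exactly $D$ (not merely proportional to $a_1$ with some other, possibly messier, factor). I would confirm this by a clean application of the standard vector identity for iterated cross products: since $L_3 = a_1 \times a_2$ and $L_2 = a_3 \times a_1$ (in the hyperbolic $J$-sense), the meet $L_3 \times L_2$ equals $(a_1 \times a_2)\times(a_3\times a_1)$, which expands via the $bac$–$cab$ rule to $\bigl[\,a_1,a_2,a_3\,\bigr]\,a_1$ minus a term that vanishes, leaving the triple-product scalar $\bigl[\,a_1,a_2,a_3\,\bigr] = D$ times $a_1$. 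One must check that the hyperbolic $J$ obeys the same expansion rule as the Euclidean cross product — which it does, since the sign change in the third coordinate enters the scalar triple product and the vector identity symmetrically — so this step is an algebraic identity, readily confirmed by direct expansion if the vector-identity route is deemed insufficiently rigorous.
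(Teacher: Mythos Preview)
Your approach is correct and follows the same overall route as the paper: write $L_3=a_1a_2=(l_3:m_3:n_3)$ and $L_2=a_1a_3=(l_2:m_2:n_2)$ via the Join of points theorem, apply the Spread theorem (\ref{SpreadForm1}), and simplify the numerator to $(x_1^2+y_1^2-z_1^2)D^2$ with $D$ the $3\times3$ determinant. The only difference is in how that numerator identity is established. The paper simply asserts the polynomial identity
\[
(m_2n_3-m_3n_2)^2+(n_2l_3-n_3l_2)^2-(l_2m_3-l_3m_2)^2=(x_1^2+y_1^2-z_1^2)\,D^2
\]
as something to be checked by expansion, whereas you explain \emph{why} it holds: the triple $(m_3n_2-m_2n_3,\,n_3l_2-n_2l_3,\,l_2m_3-l_3m_2)$ is the hyperbolic cross product $J(L_3;L_2)$, which by the Meet of lines theorem represents the common point $a_1$, and the iterated cross-product identity pins down the scalar as exactly $D$. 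Your caution about whether $J$ obeys the bac--cab rule is well placed; it does (one way to see it: $J(u;v)=M(u\times v)$ with $M=\mathrm{diag}(1,1,-1)$, and since $(Mu)\times(Mv)=-M(u\times v)$ one gets $J(J(v_1;v_2);J(v_3;v_1))=-(v_1\times v_2)\times(v_3\times v_1)=D\,v_1$), but as you note a direct expansion also suffices. Your argument thus gives a more conceptual proof of the same identity the paper invokes.
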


\begin{proof}
By the Join of points theorem%
\begin{align*}
a_{1}a_{3}  & =\left(  l_{2}:m_{2}:n_{2}\right) \\
a_{1}a_{2}  & =\left(  l_{3}:m_{3}:n_{3}\right)
\end{align*}
where $l_{2},m_{2},n_{2},l_{3},m_{3}$ and $n_{3}$ are as in the statement of
the theorem.

From the Spread theorem
\begin{equation}
S_{1}=-\frac{\left(  m_{2}n_{3}-m_{3}n_{2}\right)  ^{2}+\left(  n_{2}%
l_{3}-n_{3}l_{2}\right)  ^{2}-\left(  l_{2}m_{3}-l_{3}m_{2}\right)  ^{2}%
}{\left(  l_{2}^{2}+m_{2}^{2}-n_{2}^{2}\right)  \left(  l_{3}^{2}+m_{3}%
^{2}-n_{3}^{2}\right)  \allowbreak}.\label{SpreadForm3}%
\end{equation}
Now the polynomial identity%
\begin{align*}
& \left(  \left(  z_{3}x_{1}-z_{1}x_{3}\right)  \left(  x_{2}y_{1}-x_{1}%
y_{2}\right)  -\left(  z_{1}x_{2}-z_{2}x_{1}\right)  \left(  x_{1}y_{3}%
-x_{3}y_{1}\right)  \right)  ^{2}\\
& +\left(  \left(  x_{1}y_{3}-x_{3}y_{1}\right)  \left(  y_{1}z_{2}-y_{2}%
z_{1}\right)  -\left(  x_{2}y_{1}-x_{1}y_{2}\right)  \left(  y_{3}z_{1}%
-y_{1}z_{3}\right)  \right)  ^{2}\\
& -\left(  \left(  y_{3}z_{1}-y_{1}z_{3}\right)  \left(  z_{1}x_{2}-z_{2}%
x_{1}\right)  -\left(  y_{1}z_{2}-y_{2}z_{1}\right)  \left(  z_{3}x_{1}%
-z_{1}x_{3}\right)  \right)  ^{2}\\
& =\left(  x_{1}^{2}+y_{1}^{2}-z_{1}^{2}\right)  \left(  x_{1}y_{3}z_{2}%
-x_{1}y_{2}z_{3}+x_{2}y_{1}z_{3}-x_{2}z_{1}y_{3}-y_{1}x_{3}z_{2}+x_{3}%
y_{2}z_{1}\right)  ^{2}\allowbreak
\end{align*}
applied to the numerator of the right hand side of (\ref{SpreadForm3}) gives
\begin{equation}
S_{1}=-\frac{\left(  x_{1}^{2}+y_{1}^{2}-z_{1}^{2}\right)  \left(  x_{1}%
y_{2}z_{3}-x_{1}y_{3}z_{2}+x_{2}y_{3}z_{1}-x_{3}y_{2}z_{1}+x_{3}y_{1}%
z_{2}-x_{2}y_{1}z_{3}\right)  ^{2}\allowbreak}{\left(  l_{2}^{2}+m_{2}%
^{2}-n_{2}^{2}\right)  \allowbreak\left(  l_{3}^{2}+m_{3}^{2}-n_{3}%
^{2}\right)  }.%
{\hspace{.1in} \rule{0.5em}{0.5em}}%
\label{SpreadForm2}%
\end{equation}

\end{proof}

The hyperbolic Spread law is essentially the same as the Euclidean one, which
has the form%
\[
\frac{s_{1}}{Q_{1}}=\frac{s_{2}}{Q_{2}}=\frac{s_{3}}{Q_{3}}.
\]

\begin{theorem}
[Spread law]Suppose that $a_{1},a_{2}$ and $a_{3}$ are distinct points with
quadrances $q_{1}\equiv q\left(  a_{2},a_{3}\right)  $, $q_{2}\equiv q\left(
a_{1},a_{3}\right)  $ and $q_{3}\equiv q\left(  a_{1},a_{2}\right)  $, and
spreads $S_{1}\equiv S\left(  a_{1}a_{2},a_{1}a_{3}\right)  $, $S_{2}\equiv
S\left(  a_{1}a_{2},a_{2}a_{3}\right)  $ and $S_{3}\equiv S\left(  a_{1}%
a_{3},a_{2}a_{3}\right)  $. Then
\[
\frac{S_{1}}{q_{1}}=\frac{S_{2}}{q_{2}}=\frac{S_{3}}{q_{3}}.
\]

\end{theorem}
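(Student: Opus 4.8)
The plan is to reduce each ratio $S_i/q_i$ to a single closed-form expression that is manifestly symmetric in the three indices, so that the three ratios are visibly equal. The engine is the Spread formula together with the Quadrance theorem. Throughout write $G_i\equiv x_i^2+y_i^2-z_i^2$, and let $D$ denote the collinearity determinant
\[
D\equiv x_1y_2z_3-x_1y_3z_2+x_2y_3z_1-x_3y_2z_1+x_3y_1z_2-x_2y_1z_3
\]
that appears in the Collinear points theorem.

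First I would apply the Spread formula to $S_1\equiv S\left(a_1a_2,a_1a_3\right)$. In the notation of that theorem, with $a_1a_3=\left(l_2:m_2:n_2\right)$ and $a_1a_2=\left(l_3:m_3:n_3\right)$, it gives
\[
S_1=-\frac{G_1\,D^2}{N_2 N_3},\qquad N_2\equiv l_2^2+m_2^2-n_2^2,\quad N_3\equiv l_3^2+m_3^2-n_3^2.
\]
The next step is to recognize the two denominators as quadrances. By the Join of points theorem the coefficients of $a_1a_3$ and $a_1a_2$ are exactly the $2\times2$ minors occurring in the Quadrance theorem, so
\[
q_2=q\left(a_1,a_3\right)=-\frac{N_2}{G_1 G_3},\qquad q_3=q\left(a_1,a_2\right)=-\frac{N_3}{G_1 G_2},
\]
i.e. $N_2=-q_2 G_1 G_3$ and $N_3=-q_3 G_1 G_2$ (the overall sign discrepancy between the orderings in the Join formula and in the Spread formula is irrelevant, since it disappears upon squaring).

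Substituting and cancelling one factor of $G_1$ yields
\[
S_1=-\frac{D^2}{q_2 q_3\,G_1 G_2 G_3},\qquad\text{hence}\qquad \frac{S_1}{q_1}=-\frac{D^2}{q_1 q_2 q_3\,G_1 G_2 G_3}.
\]
The right-hand side is symmetric under every permutation of $\left\{1,2,3\right\}$, since $D^2$, the product $q_1 q_2 q_3$, and the product $G_1 G_2 G_3$ are each symmetric. Running the identical argument with $a_2$ as the common vertex expresses $S_2/q_2$ as the same quantity (now $N_1,N_3$ pair with $q_1,q_3$), and with $a_3$ as vertex expresses $S_3/q_3$ likewise; therefore all three ratios coincide.

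I expect the main obstacle to be purely organizational rather than conceptual: keeping the index-and-sign bookkeeping straight between the coordinate ordering of the Join of points theorem and that of the Spread formula, and confirming that the stated hypotheses—distinct points with non-null joins $a_1a_2,a_1a_3$, so that each $q_i$ and each $G_i$ is nonzero—are precisely what make every division above legitimate. Once the symmetric form is reached, the conclusion is immediate.
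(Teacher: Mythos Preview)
Your proposal is correct and follows essentially the same route as the paper: both combine the Spread formula with the Quadrance theorem to rewrite $S_1/q_1$ as a manifestly symmetric expression in the three indices. The paper leaves the symmetric form as $G_1G_2G_3\,D^2/(N_1N_2N_3)$ whereas you substitute $N_i=-q_jq_k^{-1}\cdots$ style to obtain $-D^2/(q_1q_2q_3\,G_1G_2G_3)$, but these are the same quantity and the argument is the same; your only small slip is that the hypotheses needed are that all three joins (not just $a_1a_2,a_1a_3$) are non-null, which is exactly what the existence of $S_1,S_2,S_3$ guarantees.
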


\begin{proof}
If the spreads $S_{1},S_{2}$ and $S_{3}$ are defined, then each of the lines
$a_{1}a_{2},$ $a_{2}a_{3}$ and $a_{1}a_{3}$ is non-null and so by the Zero
quadrance theorem each of the quadrances $q_{1},q_{2}$ and $q_{3}$ is
non-zero. Using the notation of the previous theorem, as well as
\[%
\begin{tabular}
[c]{lllll}%
$l_{1}\equiv y_{2}z_{3}-y_{3}z_{2}$ &  & $m_{1}\equiv z_{2}x_{3}-z_{3}x_{2}$ &
& $n_{1}\equiv x_{3}y_{2}-x_{2}y_{3}$%
\end{tabular}
\]
the Quadrance theorem gives%
\begin{equation}
q_{1}=-\frac{l_{1}^{2}+m_{1}^{2}-n_{1}^{2}}{\left(  x_{2}^{2}+y_{2}^{2}%
-z_{2}^{2}\right)  \left(  x_{3}^{2}+y_{3}^{2}-z_{3}^{2}\right)
}.\label{Quadform2}%
\end{equation}
Combine the Spread formula (\ref{SpreadFormula}) with (\ref{Quadform2}) to get%
\begin{align}
\frac{S_{1}}{q_{1}}  & =\frac{\left(  x_{1}^{2}+y_{1}^{2}-z_{1}^{2}\right)
\left(  x_{2}^{2}+y_{2}^{2}-z_{2}^{2}\right)  \left(  x_{3}^{2}+y_{3}%
^{2}-z_{3}^{2}\right)  }{\left(  l_{1}^{2}+m_{1}^{2}-n_{1}^{2}\right)  \left(
l_{2}^{2}+m_{2}^{2}-n_{2}^{2}\right)  \allowbreak\left(  l_{3}^{2}+m_{3}%
^{2}-n_{3}^{2}\right)  }\nonumber\\
& \times\left(  x_{1}y_{2}z_{3}-x_{1}y_{3}z_{2}+x_{2}y_{3}z_{1}-x_{3}%
y_{2}z_{1}+x_{3}y_{1}z_{2}-x_{2}y_{1}z_{3}\right)  ^{2}\allowbreak
.\label{S over q}%
\end{align}
Since this a \textit{symmetric} rational expression in the three indices, we
conclude that
\[
\frac{S_{1}}{q_{1}}=\frac{S_{2}}{q_{2}}=\frac{S_{3}}{q_{3}}.%
{\hspace{.1in} \rule{0.5em}{0.5em}}%
\]

\end{proof}

The dual of the Spread law is essentially the same, but to maintain the
duality principle we state it separately.

\begin{theorem}
[Spread dual law]Suppose that $L_{1},L_{2}$ and $L_{3}$ are distinct lines
with spreads $S_{1}\equiv S\left(  L_{2},L_{3}\right)  $, $S_{2}\equiv
S\left(  L_{1},L_{3}\right)  $ and $S_{3}\equiv S\left(  L_{1},L_{2}\right)
$, and quadrances $q_{1}\equiv q\left(  L_{1}L_{2},L_{1}L_{3}\right)  $,
$q_{2}\equiv q\left(  L_{1}L_{2},L_{2}L_{3}\right)  $ and $q_{3}\equiv
q\left(  L_{1}L_{3},L_{2}L_{3}\right)  $. Then%
\[
\frac{q_{1}}{S_{1}}=\frac{q_{2}}{S_{2}}=\frac{q_{3}}{S_{3}}.
\]

\end{theorem}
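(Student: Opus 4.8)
The plan is to establish the Spread dual law by direct appeal to the duality principle, exactly as the paper does for its other dual statements. The entire trigonometric apparatus---quadrance, spread, the Quadrance theorem, the Spread theorem, the Spread formula, and finally the Spread law itself---has been built symmetrically under the interchange of points and lines. So the Spread dual law should follow from the Spread law by reading every point as a line and every line as a point.

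Concretely, I would first check that the hypotheses dualize correctly. In the Spread law we start with three distinct points $a_1,a_2,a_3$ and form their joins $a_1a_2$, $a_2a_3$, $a_1a_3$; the quadrances $q_i$ are between the points and the spreads $S_i$ are between the joining lines. In the Spread dual law we start with three distinct lines $L_1,L_2,L_3$ and form their meets $L_1L_2$, $L_2L_3$, $L_1L_3$; now the spreads $S_i$ are between the lines and the quadrances $q_i$ are between the meeting points. Under duality, a point $a_i$ corresponds to the line $a_i^\bot$, the join $a_1a_2$ corresponds to the meet $a_1^\bot a_2^\bot$ by the identity $(a_1a_2)^\bot = a_1^\bot a_2^\bot$ recorded after the Meet of lines theorem, and by Quadrance spread duality the quadrance between two points equals the spread between their dual lines. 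Thus the statement $S_1/q_1 = S_2/q_2 = S_3/q_3$ for points transforms precisely into $q_1/S_1 = q_2/S_2 = q_3/S_3$ for lines, with the roles of spread and quadrance interchanged. The proof is therefore simply: \emph{This is dual to the Spread law.}

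The only genuine content to verify---and the step I expect to require the most care---is that the indexing of the $S_i$ and $q_i$ in the dual statement matches what duality actually produces, so that the two relations are term-by-term duals rather than merely structurally similar. The Spread law pairs $S_1$ (the spread at vertex $a_1$, i.e. between $a_1a_2$ and $a_1a_3$) with $q_1 \equiv q(a_2,a_3)$ (the quadrance of the opposite side), and similarly for the other indices. Dualizing, $S_1 \equiv S(L_2,L_3)$ should pair with $q_1 \equiv q(L_1L_2, L_1L_3)$, which is exactly the pairing declared in the Spread dual law's hypotheses. Once this bookkeeping is confirmed, no computation remains: the symmetric rational expression $S_1/q_1$ displayed in equation~(\ref{S over q}) dualizes to the corresponding symmetric expression for $q_1/S_1$, and the common value gives the three equal ratios. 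Hence the one-line dual proof is fully justified.
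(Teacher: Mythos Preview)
Your proposal is correct and takes exactly the same approach as the paper, whose entire proof reads ``This is dual to the Spread law.'' Your additional verification that the indexing of the $S_i$ and $q_i$ matches term-by-term under duality is sound and makes explicit what the paper's one-line appeal leaves implicit.
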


\begin{proof}
This is dual to the Spread law.$%
{\hspace{.1in} \rule{0.5em}{0.5em}}%
$
\end{proof}

\subsection{Quadrea and quadreal}

If $a_{1}\equiv\left[  x_{1}:y_{1}:z_{1}\right]  $, $a_{2}\equiv\left[
x_{2}:y_{2}:z_{2}\right]  $ and $a_{3}\equiv\left[  x_{3}:y_{3}:z_{3}\right]
$ are non-null points, then the \textbf{quadrea }of $\left\{  a_{1}%
,a_{2},a_{3}\right\}  $ is the number%
\begin{equation}
\mathcal{A\equiv A}\left(  a_{1},a_{2},a_{3}\right)  \equiv-\frac{\left(
x_{1}y_{2}z_{3}-x_{1}y_{3}z_{2}+x_{2}y_{3}z_{1}-x_{3}y_{2}z_{1}+x_{3}%
y_{1}z_{2}-x_{2}y_{1}z_{3}\right)  ^{2}\allowbreak}{\left(  x_{1}^{2}%
+y_{1}^{2}-z_{1}^{2}\right)  \left(  x_{2}^{2}+y_{2}^{2}-z_{2}^{2}\right)
\left(  x_{3}^{2}+y_{3}^{2}-z_{3}^{2}\right)  }.\label{QuadreaFormula}%
\end{equation}
This is well-defined, and if the points are non-collinear and so form a
triangle $\overline{a_{1}a_{2}a_{3}},$ then we write $\mathcal{A\equiv
A}\left(  \overline{a_{1}a_{2}a_{3}}\right)  $. Note the minus sign. The
quadrea defined here in this essentially projective setting and the one
defined in the planar situation of \cite{Wild1} are analogous, with the latter
being $16 $ times the square of the area of the triangle.

\begin{theorem}
[Quadrea]Suppose that $a_{1},a_{2}$ and $a_{3}$ are distinct points with
quadrances $q_{2}\equiv q\left(  a_{1},a_{3}\right)  $ and $q_{3}\equiv
q\left(  a_{1},a_{2}\right)  $, spread $S_{1}\equiv S\left(  a_{1}a_{2}%
,a_{1}a_{3}\right)  $ and quadrea $\mathcal{A}$. Then%
\[
q_{2}q_{3}S_{1}=\mathcal{A}.
\]

\end{theorem}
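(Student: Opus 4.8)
The plan is to combine the Spread formula with the Quadrance theorem and watch nearly everything cancel; the hard polynomial identity is already hidden inside the Spread formula, so almost no computation remains. First I would record, via the Join of points theorem, that the two lines through $a_{1}$ are $a_{1}a_{3}=\left(l_{2}:m_{2}:n_{2}\right)$ and $a_{1}a_{2}=\left(l_{3}:m_{3}:n_{3}\right)$, with the coordinates $l_{i},m_{i},n_{i}$ exactly as displayed in the Spread formula theorem. Writing $D$ for the common determinantal numerator
\[
D\equiv x_{1}y_{2}z_{3}-x_{1}y_{3}z_{2}+x_{2}y_{3}z_{1}-x_{3}y_{2}z_{1}+x_{3}y_{1}z_{2}-x_{2}y_{1}z_{3},
\]
the Spread formula (\ref{SpreadFormula}) reads
\[
S_{1}=-\frac{\left(x_{1}^{2}+y_{1}^{2}-z_{1}^{2}\right)D^{2}}{\left(l_{2}^{2}+m_{2}^{2}-n_{2}^{2}\right)\left(l_{3}^{2}+m_{3}^{2}-n_{3}^{2}\right)}.
\]

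Next I would apply the Quadrance theorem to $q_{2}\equiv q\left(a_{1},a_{3}\right)$ and $q_{3}\equiv q\left(a_{1},a_{2}\right)$. The key observation is that the bracketed numerator produced by the Quadrance theorem for $q\left(a_{1},a_{3}\right)$ is precisely $l_{2}^{2}+m_{2}^{2}-n_{2}^{2}$, and that for $q\left(a_{1},a_{2}\right)$ is precisely $l_{3}^{2}+m_{3}^{2}-n_{3}^{2}$ (this is the same rewriting used to pass to (\ref{Quadform2}) in the Spread law proof). Hence
\[
q_{2}=-\frac{l_{2}^{2}+m_{2}^{2}-n_{2}^{2}}{\left(x_{1}^{2}+y_{1}^{2}-z_{1}^{2}\right)\left(x_{3}^{2}+y_{3}^{2}-z_{3}^{2}\right)},\qquad q_{3}=-\frac{l_{3}^{2}+m_{3}^{2}-n_{3}^{2}}{\left(x_{1}^{2}+y_{1}^{2}-z_{1}^{2}\right)\left(x_{2}^{2}+y_{2}^{2}-z_{2}^{2}\right)}.
\]

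Finally I would multiply the three expressions together. The factors $l_{2}^{2}+m_{2}^{2}-n_{2}^{2}$ and $l_{3}^{2}+m_{3}^{2}-n_{3}^{2}$ cancel between $q_{2}q_{3}$ and $S_{1}$, one power of $x_{1}^{2}+y_{1}^{2}-z_{1}^{2}$ cancels, and the two minus signs from $q_{2},q_{3}$ combine with the minus sign from $S_{1}$ to leave a single overall minus. What survives is exactly
\[
q_{2}q_{3}S_{1}=-\frac{D^{2}}{\left(x_{1}^{2}+y_{1}^{2}-z_{1}^{2}\right)\left(x_{2}^{2}+y_{2}^{2}-z_{2}^{2}\right)\left(x_{3}^{2}+y_{3}^{2}-z_{3}^{2}\right)}=\mathcal{A}
\]
by the defining formula (\ref{QuadreaFormula}) of the quadrea.

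There is no genuine analytic obstacle here; the only thing demanding care is bookkeeping. I must match each quadrance to the correct join ($q_{2}$ to $a_{1}a_{3}$, $q_{3}$ to $a_{1}a_{2}$) so that the right $l^{2}+m^{2}-n^{2}$ factors appear, and I must track the sign count so that the two minus signs from the quadrances and the one from the spread collapse to the single minus sign demanded by the quadrea. Once those are verified, the identity is immediate.
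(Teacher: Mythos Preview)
Your proof is correct and follows essentially the same approach as the paper: both combine the Spread formula with the Quadrance theorem expressions for $q_{2}$ and $q_{3}$ (written in terms of the $l_{i},m_{i},n_{i}$ coordinates of the joins), then multiply and cancel to recover the quadrea. Your version spells out the cancellation and sign-tracking more explicitly, but the logic is identical.
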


\begin{proof}
Since the two quadrances $q_{2}$ and $q_{3}$ are defined, the points
$a_{1},a_{2}$ and $a_{3}$ are non-null. Using the notation of the Spread
formula,
\begin{align*}
q_{2}  & =-\frac{l_{2}^{2}+m_{2}^{2}-n_{2}^{2}}{\left(  x_{1}^{2}+y_{1}%
^{2}-z_{1}^{2}\right)  \left(  x_{3}^{2}+y_{3}^{2}-z_{3}^{2}\right)  }\\
q_{3}  & =-\frac{l_{3}^{2}+m_{3}^{2}-n_{3}^{2}}{\left(  x_{1}^{2}+y_{1}%
^{2}-z_{1}^{2}\right)  \left(  x_{2}^{2}+y_{2}^{2}-z_{2}^{2}\right)  }.
\end{align*}
Combining this with the Spread formula gives
\[
q_{2}q_{3}S_{1}=\mathcal{A}.%
{\hspace{.1in} \rule{0.5em}{0.5em}}%
\]

\end{proof}

This argument works even if $q_{1}=q\left(  a_{2},a_{3}\right)  =0$, in which
case $S_{2}$ and $S_{3}$ are not defined. In case all three quadrances
$q_{1},q_{2}$ and $q_{3}$ are non-zero, and all three spreads $S_{1},S_{2}$
and $S_{3}$ are defined, symmetry gives
\[
q_{2}q_{3}S_{1}=q_{1}q_{3}S_{2}=q_{1}q_{2}S_{3}=\mathcal{A}\text{.}%
\]
It follows that we can rewrite the Spread law as
\[
\frac{S_{1}}{q_{1}}=\frac{S_{2}}{q_{2}}=\frac{S_{3}}{q_{3}}=\frac{\mathcal{A}%
}{q_{1}q_{2}q_{3}}.
\]

To compare, the Quadrea theorem in the planar situation has the form
\[
\mathcal{A}=4q_{2}q_{3}S_{1}.
\]

If $L_{1}\equiv\left(  l_{1}:m_{1}:n_{1}\right)  $, $L_{2}\equiv\left(
l_{2}:m_{2}:n_{2}\right)  $ and $L_{3}\equiv\left(  l_{3}:m_{3}:n_{3}\right)
$ are non-null lines, then the \textbf{quadreal} of $\left\{  L_{1}%
,L_{2},L_{3}\right\}  $ is the number
\begin{equation}
\mathcal{L}\equiv\mathcal{L}\left(  L_{1},L_{2},L_{3}\right)  \equiv
-\frac{\left(  l_{1}m_{2}n_{3}-l_{1}m_{3}n_{2}+l_{2}m_{3}n_{1}-l_{3}m_{2}%
n_{1}+l_{3}m_{1}n_{2}-l_{2}m_{1}n_{3}\right)  ^{2}\allowbreak}{\left(
l_{1}^{2}+m_{1}^{2}-n_{1}^{2}\right)  \left(  l_{2}^{2}+m_{2}^{2}-n_{2}%
^{2}\right)  \left(  l_{3}^{2}+m_{3}^{2}-n_{3}^{2}\right)  }%
.\label{QuadrealDef}%
\end{equation}
This is well-defined, and if the lines are non-concurrent and so form a
trilateral $\overline{L_{1}L_{2}L_{3}},$ then we write $\mathcal{L=L}\left(
\overline{L_{1}L_{2}L_{3}}\right)  $.

\begin{theorem}
[Quadreal]Suppose that $L_{1},L_{2}$ and $L_{3}$ are distinct lines with
spreads $S_{2}\equiv S\left(  L_{1},L_{3}\right)  $ and $S_{3}\equiv S\left(
L_{1},L_{2}\right)  $, quadrance $q_{1}\equiv q\left(  L_{1}L_{2},L_{1}%
L_{3}\right)  $ and quadreal $\mathcal{L}$. Then
\[
S_{2}S_{3}q_{1}=\mathcal{L}.
\]

\end{theorem}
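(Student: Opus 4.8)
The plan is to prove this by the duality principle, recognizing the statement as the exact dual of the Quadrea theorem $q_2 q_3 S_1 = \mathcal{A}$; no fresh computation should be needed. First I would fix the dualization dictionary: under the interchange of points and lines, each point $a_i$ corresponds to a line $L_i$, the join $a_i a_j$ of points corresponds to the meet $L_i L_j$ of lines, the quadrance $q(\cdot,\cdot)$ between points corresponds to the spread $S(\cdot,\cdot)$ between lines, the spread $S(a_i a_j, a_i a_k)$ between joins corresponds to the quadrance $q(L_i L_j, L_i L_k)$ between meets, and the quadrea $\mathcal{A}$ corresponds to the quadreal $\mathcal{L}$. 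That last correspondence I would justify by comparing the defining formulas (\ref{QuadreaFormula}) and (\ref{QuadrealDef}), which differ only by the systematic replacement of point coordinates $(x_i,y_i,z_i)$ with line coordinates $(l_i,m_i,n_i)$.

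Next I would carry out the substitution term by term. In the Quadrea theorem $q_2 = q(a_1,a_3)$ becomes $S(L_1,L_3) = S_2$, $q_3 = q(a_1,a_2)$ becomes $S(L_1,L_2) = S_3$, and $S_1 = S(a_1 a_2, a_1 a_3)$ becomes $q(L_1 L_2, L_1 L_3) = q_1$; on the right, $\mathcal{A}$ becomes $\mathcal{L}$. Thus the identity $q_2 q_3 S_1 = \mathcal{A}$ transforms into $S_2 S_3 q_1 = \mathcal{L}$, which is exactly the conclusion sought, and the index labels match the hypotheses as stated.

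The step I expect to require the most care is confirming that the machinery underlying the Quadrea theorem's proof dualizes uniformly, so that the appeal to duality is legitimate rather than merely formal. That proof combined the Spread formula with the coordinate expressions for $q_2$ and $q_3$ coming from the Quadrance theorem applied to the lines $a_1 a_3$ and $a_1 a_2$. The dual argument must instead use the dual of the Spread formula to express $q_1$ via the quadreal numerator, together with the dual Quadrance (that is, Spread) expressions for $S_2$ and $S_3$ in terms of the meet-point coordinates supplied by the Meet of lines theorem. Since each of these ingredients is itself established by duality from a result already in hand, the duality principle applies cleanly and the product $S_2 S_3 q_1$ collapses, after cancelling the shared quadratic-form factors, to $\mathcal{L}$. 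Were I to prefer an explicit check, I would simply mirror the Quadrea computation with $L_i \equiv (l_i:m_i:n_i)$ throughout.
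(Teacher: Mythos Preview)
Your proposal is correct and matches the paper's approach exactly: the paper's proof is simply ``This is dual to the Quadrea theorem,'' and you have spelled out the dualization dictionary that justifies that one-line appeal.
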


\begin{proof}
This is dual to the Quadrea theorem.$%
{\hspace{.1in} \rule{0.5em}{0.5em}}%
$
\end{proof}

It is convenient to also define the \textbf{quadreal }$\mathcal{L\equiv
L}\left(  \overline{a_{1}a_{2}a_{3}}\right)  $ \textbf{of a triangle} to be
the quadreal of its associated trilateral, and the \textbf{quadrea
}$\mathcal{A=A}\left(  \overline{L_{1}L_{2}L_{3}}\right)  $ \textbf{of a
trilateral} to be the quadrea of its associated triangle.

\begin{theorem}
[Quadrea quadreal product]Suppose that $a_{1},a_{2}$ and $a_{3}$ are distinct
points with quadrances $q_{1}\equiv q\left(  a_{2},a_{3}\right)  $,
$q_{2}\equiv q\left(  a_{1},a_{3}\right)  $ and $q_{3}\equiv q\left(
a_{1},a_{2}\right)  $, spreads $S_{1}\equiv S\left(  a_{1}a_{2},a_{1}%
a_{3}\right)  $, $S_{2}\equiv S\left(  a_{1}a_{2},a_{2}a_{3}\right)  $ and
$S_{3}\equiv S\left(  a_{1}a_{3},a_{2}a_{3}\right)  $, quadrea
$\mathcal{A\equiv A}\left(  a_{1},a_{2},a_{3}\right)  $ and quadreal
$\mathcal{L\equiv L}\left(  L_{1},L_{2},L_{3}\right)  $. Then%
\[
\mathcal{AL}=q_{1}q_{2}q_{3}S_{1}S_{2}S_{3}.
\]

\end{theorem}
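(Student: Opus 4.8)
The plan is to obtain the identity by multiplying together the Quadrea theorem and the Quadreal theorem, once the indices of the associated trilateral have been correctly aligned with those of the triangle. First I would recall that the quadreal $\mathcal{L}$ of $\overline{a_{1}a_{2}a_{3}}$ is, by definition, the quadreal of its associated trilateral $\overline{L_{1}L_{2}L_{3}}$ with
\[
L_{1}\equiv a_{2}a_{3}\qquad L_{2}\equiv a_{1}a_{3}\qquad L_{3}\equiv a_{1}a_{2}.
\]
The first task is purely a matter of bookkeeping: by the Meet of lines theorem the meets recover the vertices, namely $L_{1}L_{2}=a_{3}$, $L_{1}L_{3}=a_{2}$ and $L_{2}L_{3}=a_{1}$, so that the spreads and quadrances attached to the trilateral in the Quadreal theorem coincide with $S_{1},S_{2},S_{3}$ and $q_{1},q_{2},q_{3}$ of the triangle. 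Explicitly, $S_{1}=S\left(a_{1}a_{2},a_{1}a_{3}\right)=S\left(L_{2},L_{3}\right)$, with the analogous identifications for $S_{2}$ and $S_{3}$, while $q_{1}=q\left(a_{2},a_{3}\right)=q\left(L_{1}L_{2},L_{1}L_{3}\right)$, and similarly for $q_{2}$ and $q_{3}$.

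With this dictionary in place, I would apply the Quadrea theorem in the form
\[
\mathcal{A}=q_{2}q_{3}S_{1}
\]
and the Quadreal theorem in the form
\[
\mathcal{L}=S_{2}S_{3}q_{1}.
\]
Both are legitimate, since the hypotheses guarantee that all three points are non-null and all three lines are non-null, so that every quadrance, spread, quadrea and quadreal appearing is defined. Multiplying these two relations then gives
\[
\mathcal{AL}=\left(q_{2}q_{3}S_{1}\right)\left(q_{1}S_{2}S_{3}\right)=q_{1}q_{2}q_{3}S_{1}S_{2}S_{3},
\]
which is precisely the claim.

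The only genuine point requiring care is the index-matching in the first paragraph; after that the argument is a single multiplication. I expect the main (mild) obstacle to be checking cleanly that the quadrance $q_{1}=q\left(L_{1}L_{2},L_{1}L_{3}\right)$ used in the Quadreal theorem really is the triangle quadrance $q\left(a_{2},a_{3}\right)$, and likewise that the trilateral spreads are the triangle spreads, so that no spurious reindexing creeps in. Since the Quadrea theorem already records the symmetric chain $\mathcal{A}=q_{2}q_{3}S_{1}=q_{1}q_{3}S_{2}=q_{1}q_{2}S_{3}$, and the Quadreal theorem its dual chain $\mathcal{L}=S_{2}S_{3}q_{1}=S_{1}S_{3}q_{2}=S_{1}S_{2}q_{3}$, one is free to pair any convenient factorizations; the resulting product $q_{1}q_{2}q_{3}S_{1}S_{2}S_{3}$ is manifestly independent of the choice.
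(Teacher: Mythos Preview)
Your proof is correct and follows exactly the paper's approach: the paper's entire proof reads ``Combine the Quadrea theorem and the Quadreal theorem,'' which is precisely what you do, with the added (and welcome) care of making the index-matching between the triangle and its associated trilateral explicit.
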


\begin{proof}
Combine the Quadrea theorem and the Quadreal theorem.$%
{\hspace{.1in} \rule{0.5em}{0.5em}}%
$
\end{proof}

\begin{example}
Consider the case when $a_{1}\equiv\left[  0:0:1\right]  $, $a_{2}%
\equiv\left[  x:y:1\right]  $ and $a_{3}\equiv\left[  x:-y:1\right]  $ as in
Example 12, with quadrances%
\[
q_{1}=\frac{4\left(  x^{2}-1\right)  y^{2}}{\left(  x^{2}+y^{2}-1\right)
^{2}}\qquad q_{2}=\frac{x^{2}+y^{2}}{x^{2}+y^{2}-1}\qquad q_{3}=\frac
{x^{2}+y^{2}}{x^{2}+y^{2}-1}%
\]
and spreads%
\[
S_{1}=\frac{4x^{2}y^{2}}{\left(  x^{2}+y^{2}\right)  ^{2}}\qquad S_{2}%
=\frac{\left(  x^{2}+y^{2}-1\right)  \allowbreak x^{2}}{\left(  x^{2}%
-1\right)  \left(  x^{2}+y^{2}\right)  }\qquad S_{3}=\frac{\left(  x^{2}%
+y^{2}-1\right)  \allowbreak x^{2}}{\left(  x^{2}-1\right)  \left(
x^{2}+y^{2}\right)  }.
\]
The quadrea and quadreal are
\[
\mathcal{A}=\frac{4x^{2}y^{2}}{\left(  x^{2}+y^{2}-1\right)  ^{2}}%
\qquad\text{\textit{and}}\qquad\mathcal{L}=\frac{4x^{4}y^{2}}{\left(
x^{2}-1\right)  \left(  x^{2}+y^{2}\right)  ^{2}}%
\]
and you may verify the Quadrea quadreal product theorem directly.$%
\hspace{.1in}\diamond
$
\end{example}

\subsection{The Cross law}

The Cross law is the analog of the \textit{Cosine law}. In planar rational
trigonometry the Cross law has the form
\begin{equation}
\left(  Q_{1}-Q_{2}-Q_{3}\right)  ^{2}=4Q_{2}Q_{3}\left(  1-s_{1}\right)
\label{Planar cross}%
\end{equation}
involving three quadrances and one spread, and it is hard to overstate the
importance of this most powerful formula. In the hyperbolic setting, the Cross
law is more complicated, but still very fundamental.

\begin{theorem}
[Cross law]Suppose that $a_{1},a_{2}$ and $a_{3}$ are distinct points with
quadrances $q_{1}\equiv q\left(  a_{2},a_{3}\right)  $, $q_{2}\equiv q\left(
a_{1},a_{3}\right)  $ and $q_{3}\equiv q\left(  a_{1},a_{2}\right)  $, and
spread $S_{1}\equiv S\left(  a_{1}a_{2},a_{1}a_{3}\right)  $. Then
\begin{equation}
\left(  q_{2}q_{3}S_{1}-q_{1}-q_{2}-q_{3}+2\right)  ^{2}=4\left(
1-q_{1}\right)  \left(  1-q_{2}\right)  \left(  1-q_{3}\right)
.\label{CrossLaw}%
\end{equation}

\end{theorem}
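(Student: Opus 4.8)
The plan is to reduce the Cross law to the two key algebraic facts already established in the excerpt. First I would recall the identity isolated in the proof of the Triple quad formula, equation (\ref{TripleQuadAlt}): for any three points, the quantity $\left(2-q_1-q_2-q_3\right)^2-4\left(1-q_1\right)\left(1-q_2\right)\left(1-q_3\right)$ equals the quadrea expression (\ref{Quadrea first}), namely the square of the collinearity determinant divided by the product of the three norms. By the definition of quadrea (\ref{QuadreaFormula}), that ratio is exactly $-\mathcal{A}$, so I get the clean relation
\begin{equation}
\left(2-q_1-q_2-q_3\right)^2-4\left(1-q_1\right)\left(1-q_2\right)\left(1-q_3\right)=-\mathcal{A}.\label{CrossPlan1}
\end{equation}

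Next I would bring in the Quadrea theorem, which gives $\mathcal{A}=q_2q_3S_1$. Substituting this into (\ref{CrossPlan1}) replaces the quadrea by the trigonometric quantity appearing in the statement, yielding
\begin{equation}
\left(2-q_1-q_2-q_3\right)^2-4\left(1-q_1\right)\left(1-q_2\right)\left(1-q_3\right)=-q_2q_3S_1.\label{CrossPlan2}
\end{equation}
From here the proof is purely formal. I would move the $-q_2q_3S_1$ term to the left side and recognize that $\left(2-q_1-q_2-q_3\right)^2+q_2q_3S_1$ should be rearranged into $\left(q_2q_3S_1-q_1-q_2-q_3+2\right)^2$ after accounting for the cross terms; more precisely, the target is to show (\ref{CrossLaw}), so I would compute the difference between $\left(q_2q_3S_1-q_1-q_2-q_3+2\right)^2$ and $\left(2-q_1-q_2-q_3\right)^2$, which is $q_2q_3S_1\left(q_2q_3S_1-2q_1-2q_2-2q_3+4\right)=q_2q_3S_1\left(q_2q_3S_1+2\left(2-q_1-q_2-q_3\right)\right)$, and check that adding this to the left side of (\ref{CrossPlan2}) produces exactly the right side of (\ref{CrossLaw}).

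The main obstacle is the bookkeeping in this last algebraic rearrangement: one must verify that substituting $\mathcal{A}=q_2q_3S_1$ into (\ref{CrossPlan1}) and completing the square produces precisely the asserted form $\left(q_2q_3S_1-q_1-q_2-q_3+2\right)^2=4\left(1-q_1\right)\left(1-q_2\right)\left(1-q_3\right)$ rather than a variant. I expect this amounts to the elementary identity $\left(X+c\right)^2=\left(c\right)^2+X\left(X+2c\right)$ with $X=q_2q_3S_1$ and $c=2-q_1-q_2-q_3$, so that $\left(q_2q_3S_1+c\right)^2=c^2+q_2q_3S_1\left(q_2q_3S_1+2c\right)$; using (\ref{CrossPlan2}) to write $c^2=4\left(1-q_1\right)\left(1-q_2\right)\left(1-q_3\right)-q_2q_3S_1$ and cancelling should close the argument cleanly. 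An entirely equivalent and perhaps cleaner route, avoiding any completion of squares, is to expand both sides of (\ref{CrossLaw}) directly in terms of the $q_i$ and $S_1$, substitute $q_2q_3S_1=\mathcal{A}$ only at the end, and invoke (\ref{CrossPlan1}); either way the computation is short once the quadrea identity from the Triple quad formula is in hand.
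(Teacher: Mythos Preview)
There is a genuine gap: your equation (\ref{CrossPlan1}) is false. Reread the Triple quad formula proof carefully. It says that $(2-q_1-q_2-q_3)^2-4(1-q_1)(1-q_2)(1-q_3)$ is a difference of squares in the underlying coordinates, and that \emph{one of its two factors} equals the expression (\ref{Quadrea first}), i.e.\ $-\mathcal{A}$. It does not say the entire quantity equals $-\mathcal{A}$. In fact, with $c\equiv 2-q_1-q_2-q_3$, the correct relation is quadratic in $\mathcal{A}$:
\[
c^{2}-4(1-q_1)(1-q_2)(1-q_3)=-\mathcal{A}\bigl(\mathcal{A}+2c\bigr).
\]
Your own bookkeeping exposes the error: from your (\ref{CrossPlan2}) one gets $(X+c)^2 = 4(1-q_1)(1-q_2)(1-q_3) + X(X+2c-1)$ with $X=q_2q_3S_1$, and the residual term $X(X+2c-1)$ does \emph{not} vanish identically, so the ``cancellation'' you anticipate never occurs.

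The paper's argument avoids this by working one level down in the coordinates. It squares the polynomial identity (\ref{QuadreaRelation}) (equivalently, the Triple product relation $\mathcal{A}+p_1+p_2+p_3-1=2\mathcal{P}$) and divides by the product of the three squared norms, obtaining directly
\[
\bigl(\mathcal{A}-q_1-q_2-q_3+2\bigr)^{2}=4(1-q_1)(1-q_2)(1-q_3),
\]
with $\mathcal{A}$ already \emph{inside} the square. The Quadrea theorem then substitutes $\mathcal{A}=q_2q_3S_1$ and the proof is done; no completion of squares is needed. Your plan is easily repaired along these lines: the difference of squares in the Triple quad proof is $c^2-(2\mathcal{P})^2=(c-2\mathcal{P})(c+2\mathcal{P})$, the first factor is $-\mathcal{A}$, hence the second is $\mathcal{A}+2c$, and therefore $(\mathcal{A}+c)^2=(2\mathcal{P})^2=4(1-q_1)(1-q_2)(1-q_3)$, which is exactly the form required.
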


\begin{proof}
Suppose that $a_{1}\equiv\left[  x_{1}:y_{1}:z_{1}\right]  $, $a_{2}%
\equiv\left[  x_{2}:y_{2}:z_{2}\right]  $ and $a_{3}\equiv\left[  x_{3}%
:y_{3}:z_{3}\right]  $. The assumption that all three quadrances are defined
implies that the three points are non-null. Square both sides of the
polynomial identity%
\begin{align}
& -\left(  x_{1}y_{2}z_{3}-x_{1}y_{3}z_{2}+x_{2}y_{3}z_{1}-x_{3}y_{2}%
z_{1}+x_{3}y_{1}z_{2}-x_{2}y_{1}z_{3}\right)  ^{2}\nonumber\\
& +\left(  x_{1}^{2}+y_{1}^{2}-z_{1}^{2}\right)  \left(  x_{2}x_{3}+y_{2}%
y_{3}-z_{2}z_{3}\right)  ^{2}+\left(  x_{2}^{2}+y_{2}^{2}-z_{2}^{2}\right)
\left(  x_{1}x_{3}+y_{1}y_{3}-z_{1}z_{3}\right)  ^{2}\nonumber\\
& +\left(  x_{3}^{2}+y_{3}^{2}-z_{3}^{2}\right)  \left(  x_{1}x_{2}+y_{1}%
y_{2}-z_{1}z_{2}\right)  ^{2}\nonumber\\
& -\left(  x_{1}^{2}+y_{1}^{2}-z_{1}^{2}\right)  \left(  x_{2}^{2}+y_{2}%
^{2}-z_{2}^{2}\right)  \left(  x_{3}^{2}+y_{3}^{2}-z_{3}^{2}\right)
\label{QuadreaRelation}\\
& =2\left(  x_{2}x_{3}+y_{2}y_{3}-z_{2}z_{3}\right)  \left(  x_{1}x_{3}%
+y_{1}y_{3}-z_{1}z_{3}\right)  \left(  x_{1}x_{2}+y_{1}y_{2}-z_{1}z_{2}\right)
\nonumber
\end{align}
and divide by
\[
\left(  x_{1}^{2}+y_{1}^{2}-z_{1}^{2}\right)  ^{2}\left(  x_{2}^{2}+y_{2}%
^{2}-z_{2}^{2}\right)  ^{2}\left(  x_{3}^{2}+y_{3}^{2}-z_{3}^{2}\right)  ^{2}%
\]
to deduce that if $\mathcal{A}\equiv\mathcal{A}\left(  a_{1},a_{2}%
,a_{3}\right)  $ then
\[
\left(  \mathcal{A}+\left(  1-q_{1}\right)  +\left(  1-q_{2}\right)  +\left(
1-q_{3}\right)  -1\right)  ^{2}=4\left(  1-q_{1}\right)  \left(
1-q_{2}\right)  \left(  1-q_{3}\right)  .
\]
Rewrite this as
\begin{equation}
\left(  \mathcal{A}-q_{1}-q_{2}-q_{3}+2\right)  ^{2}=4\left(  1-q_{1}\right)
\left(  1-q_{2}\right)  \left(  1-q_{3}\right) \label{Crosslaw2}%
\end{equation}
and use the Quadrea theorem to replace $\mathcal{A}$ by $q_{2}q_{3}S_{1}$.$%
{\hspace{.1in} \rule{0.5em}{0.5em}}%
$
\end{proof}

The Cross law gives a quadratic equation for the spreads of a triangle given
the quadrances. So the three quadrances of a triangle do\textit{\ }not quite
determine its spreads. As a quadratic equation in $\mathcal{A},$
(\ref{Crosslaw2}) can be rewritten using the Triple spread function as
\[
\mathcal{A}^{2}-2\left(  q_{1}+q_{2}+q_{3}-2\right)  \mathcal{A}=S\left(
q_{1},q_{2},q_{3}\right)  .
\]

Motivated by the Cross law, we define the\textbf{\ Cross function}
\begin{equation}
C\left(  \mathcal{A},q_{1},q_{2},q_{3}\right)  \equiv\left(  \mathcal{A}%
-q_{1}-q_{2}-q_{3}+2\right)  ^{2}-4\left(  1-q_{1}\right)  \left(
1-q_{2}\right)  \left(  1-q_{3}\right)  .\label{CrossFunction}%
\end{equation}

\begin{example}
Suppose that a triangle $\overline{a_{1}a_{2}a_{3}}$ has equal quadrances
$q_{1}=q_{2}=q_{3}\equiv-3.$ Then
\[
C\left(  \mathcal{A},-3,-3,-3\right)  \allowbreak=\left(  \mathcal{A}%
+11\right)  ^{2}-256=0
\]
has solutions $\mathcal{A}=-27$ and $\mathcal{A}=5,$ and from the Quadrea
theorem we deduce that
\[
S_{1}=S_{2}=S_{3}=-3\qquad\mathit{or}\qquad S_{1}=S_{2}=S_{3}=\frac{5}{9}.
\]
Two triangles $\overline{a_{1}a_{2}a_{3}}$ that have these quadrances and
spreads can be found over the respective fields $\mathbb{Q}\left(  \sqrt
{2},\sqrt{3}\right)  $ and $\mathbb{Q}\left(  \sqrt{2},\sqrt{3},\sqrt
{5}\right)  $, with
\[%
\begin{tabular}
[c]{lllll}%
$a_{1}\equiv\left[  \sqrt{2}:0:1\right]  $ &  & $a_{2}\equiv\left[
-1:\sqrt{3}:\sqrt{2}\right]  $ &  & $a_{3}\equiv\left[  -1:-\sqrt{3}:\sqrt
{2}\right]  $%
\end{tabular}
\]
and
\[%
\begin{tabular}
[c]{lllll}%
$a_{1}\equiv\left[  \sqrt{2}:0:\sqrt{5}\right]  $ &  & $a_{2}\equiv\left[
-1:\sqrt{3}:\sqrt{10}\right]  $ &  & $a_{3}\equiv\left[  -1:-\sqrt{3}%
:\sqrt{10}\right]  .%
\hspace{.1in}\diamond
$%
\end{tabular}
\]

\end{example}

It is an instructive exercise to verify that both of the classical hyperbolic
Cosine laws%
\begin{equation}
\cosh d_{1}=\cosh d_{2}\cosh d_{3}-\sinh d_{2}\sinh d_{3}\cos\theta
_{1}\label{Cos1}%
\end{equation}
and%
\begin{equation}
\cosh d_{1}=\frac{\cos\theta_{2}\cos\theta_{3}+\cos\theta_{1}}{\sin\theta
_{2}\sin\theta_{3}}\label{Cos2}%
\end{equation}
relating lengths $d_{1},d_{2},d_{3}$ and angles $\theta_{1},\theta_{2}%
,\theta_{3}$ in a classical hyperbolic triangle can be manipulated using
(\ref{QuadranceDistance}) and (\ref{SpreadAngle}) to obtain the Cross law.

\begin{theorem}
[Cross dual law]Suppose that $L_{1},L_{2}$ and $L_{3}$ are distinct lines with
spreads $S_{1}\equiv S\left(  L_{2},L_{3}\right)  $, $S_{2}\equiv S\left(
L_{1},L_{3}\right)  $ and $S_{3}\equiv S\left(  L_{1},L_{2}\right)  $, and
quadrance $q_{1}\equiv q\left(  L_{1}L_{2},L_{1}L_{3}\right)  $. Then
\[
\left(  S_{2}S_{3}q_{1}-S_{1}-S_{2}-S_{3}+2\right)  ^{2}=4\left(
1-S_{1}\right)  \left(  1-S_{2}\right)  \left(  1-S_{3}\right)  .
\]

\end{theorem}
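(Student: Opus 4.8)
The plan is to invoke the duality principle, which the paper has already established as a structural feature of the theory: interchanging the roles of points and lines (and hence quadrances and spreads, joins and meets) carries any valid statement to a valid dual statement. The Cross law is a statement about a triangle $\overline{a_1a_2a_3}$, and the Cross dual law is precisely its image under this interchange. So the task reduces to checking that every ingredient of the Cross law has already been supplied with a dual counterpart and that the index bookkeeping matches.

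Concretely, I would start from three distinct lines $L_1,L_2,L_3$ and pass to the dual points $a_i\equiv L_i^{\bot}$, which are distinct since $\bot$ is an involution. Two earlier results translate the data. First, the Quadrance spread duality theorem gives $S\left(L_i,L_j\right)=q\left(L_i^{\bot},L_j^{\bot}\right)=q\left(a_i,a_j\right)$, so that $S_1,S_2,S_3$ become the three quadrances $q\left(a_2,a_3\right),q\left(a_1,a_3\right),q\left(a_1,a_2\right)$ of the triangle $\overline{a_1a_2a_3}$. Second, the meet–join duality $\left(a_ia_j\right)^{\bot}=a_i^{\bot}a_j^{\bot}=L_iL_j$ identifies $L_iL_j$ with $\left(a_ia_j\right)^{\bot}$, whence a second application of Quadrance spread duality yields $q\left(L_1L_2,L_1L_3\right)=S\left(\left(L_1L_2\right)^{\bot},\left(L_1L_3\right)^{\bot}\right)=S\left(a_1a_2,a_1a_3\right)$. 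Thus $q_1$ becomes exactly the vertex spread of $\overline{a_1a_2a_3}$ at $a_1$.

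With this dictionary in hand I would apply the already-proven Cross law to the triangle $\overline{a_1a_2a_3}$. In Cross-law notation its three quadrances are $S_1,S_2,S_3$ and its vertex spread is $q_1$, so the Cross law reads $\left(S_2S_3q_1-S_1-S_2-S_3+2\right)^2=4\left(1-S_1\right)\left(1-S_2\right)\left(1-S_3\right)$, which is the desired identity. I would also record that the hypotheses transport correctly: $L_i$ is non-null precisely when $a_i=L_i^{\bot}$ is non-null, so ``the spreads $S_1,S_2,S_3$ are defined'' matches ``the quadrances of $\overline{a_1a_2a_3}$ are defined'', and the definedness of $q_1$ matches that of the vertex spread.

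There is no real analytic obstacle here; the only thing requiring care is the index matching, in particular confirming that $q_2q_3S_1$ in the Cross law maps term-by-term to $S_2S_3q_1$ rather than to a mis-indexed product, and that each definedness hypothesis has a genuine dual. As an independent check, or if one prefers a self-contained derivation, I could instead mirror the proof of the Cross law itself: square the dual of the polynomial identity used there, now written in the line-coordinates $l,m,n$, divide by the appropriate product of the factors $\left(l_i^2+m_i^2-n_i^2\right)^2$, and invoke the Quadreal theorem in place of the Quadrea theorem to replace the quadreal $\mathcal{L}$ by $S_2S_3q_1$. The main labor of that direct route is verifying the dual polynomial identity, which is routine but lengthy.
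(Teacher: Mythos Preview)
Your proposal is correct and takes essentially the same approach as the paper, which proves the Cross dual law in a single line: ``This is dual to the Cross law.'' You have simply unpacked what that sentence means, verifying via the Quadrance spread duality theorem and the relation $\left(L_iL_j\right)^{\bot}=L_i^{\bot}L_j^{\bot}$ that the dictionary $S_i\leftrightarrow q_i$, $q_1\leftrightarrow S_1$ carries the Cross law verbatim to the stated identity, with the definedness hypotheses matching.
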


\begin{proof}
This is dual to the Cross law.$%
{\hspace{.1in} \rule{0.5em}{0.5em}}%
$
\end{proof}

This can also be restated in terms of the quadreal $\mathcal{L\equiv L}\left(
L_{1},L_{2},L_{3}\right)  $ as%
\[
\left(  \mathcal{L}-S_{1}-S_{2}-S_{3}+2\right)  ^{2}=4\left(  1-S_{1}\right)
\left(  1-S_{2}\right)  \left(  1-S_{3}\right)  .
\]

\subsection{Alternate formulations}

As in the Euclidean case, the most powerful of the trigonometric laws is the
Cross law
\begin{equation}
\left(  q_{2}q_{3}S_{1}-q_{1}-q_{2}-q_{3}+2\right)  ^{2}=4\left(
1-q_{1}\right)  \left(  1-q_{2}\right)  \left(  1-q_{3}\right)
.\label{CrossLawAlternate}%
\end{equation}
In the special case $S_{1}=0,$ we get%
\[
\left(  q_{1}+q_{2}+q_{3}-2\right)  ^{2}=4\left(  1-q_{1}\right)  \left(
1-q_{2}\right)  \left(  1-q_{3}\right)
\]
which is equivalent to the Triple quad formula%
\[
\left(  q_{1}+q_{2}+q_{3}\right)  ^{2}=2\left(  q_{1}^{2}+q_{2}^{2}+q_{3}%
^{2}\right)  +4q_{1}q_{2}q_{3}.
\]
If we rewrite (\ref{CrossLawAlternate}) in the form%
\begin{equation}
\left(  q_{1}-q_{2}-q_{3}+q_{2}q_{3}S_{1}\right)  ^{2}=4q_{2}q_{3}\left(
1-q_{1}\right)  \left(  1-S_{1}\right) \label{CrossLawAlt}%
\end{equation}
then in the special case $S_{1}=1$ we recover Pythagoras' theorem in the form%
\[
q_{1}=q_{2}+q_{3}-q_{2}q_{3}.
\]
Also (\ref{CrossLawAlt}) may be viewed as a deformation of the planar Cross
law (\ref{Planar cross}).

\subsection{Triple product and triple cross}

There are also secondary invariants associated to three points or three lines,
besides the quadrea and the quadreal. The \textbf{triple product }of the three
non-null points $a_{1},a_{2}$ and $a_{3}$ is the number%
\[
\mathcal{P\equiv P}\left(  a_{1},a_{2},a_{3}\right)  \equiv\frac{\left(
x_{2}x_{3}+y_{2}y_{3}-z_{2}z_{3}\right)  \left(  x_{1}x_{3}+y_{1}y_{3}%
-z_{1}z_{3}\right)  \left(  x_{1}x_{2}+y_{1}y_{2}-z_{1}z_{2}\right)  }{\left(
x_{1}^{2}+y_{1}^{2}-z_{1}^{2}\right)  \left(  x_{2}^{2}+y_{2}^{2}-z_{2}%
^{2}\right)  \left(  x_{3}^{2}+y_{3}^{2}-z_{3}^{2}\right)  }.
\]
The \textbf{triple cross }of the three non-null lines $L_{1},L_{2}$ and
$L_{3}$ is the number%
\[
\mathcal{C=C}\left(  L_{1},L_{2},L_{3}\right)  \equiv\frac{\left(  l_{2}%
l_{3}+m_{2}m_{3}-n_{2}n_{3}\right)  \left(  l_{1}l_{3}+m_{1}m_{3}-n_{1}%
n_{3}\right)  \left(  l_{1}l_{2}+m_{1}m_{2}-n_{1}n_{2}\right)  }{\left(
l_{1}^{2}+m_{1}^{2}-n_{1}^{2}\right)  \left(  l_{2}^{2}+m_{2}^{2}-n_{2}%
^{2}\right)  \left(  l_{3}^{2}+m_{3}^{2}-n_{3}^{2}\right)  }.
\]

\begin{theorem}
[Triple product relation]Suppose that the three points $a_{1},a_{2},a_{3}$
have quadrea $\mathcal{A}\equiv\mathcal{A}\left(  a_{1},a_{2},a_{3}\right)  $,
triple product $\mathcal{P\equiv P}\left(  a_{1},a_{2},a_{3}\right)  $, and
products $p_{1}\equiv p\left(  a_{2},a_{3}\right)  $, $p_{2}\equiv p\left(
a_{1},a_{3}\right)  $ and $p_{3}\equiv p\left(  a_{1},a_{2}\right)  $. Then
\[
\mathcal{A}+p_{1}+p_{2}+p_{3}-1=2\mathcal{P}.
\]

\end{theorem}

\begin{proof}
This is a consequence of the algebraic identity (\ref{QuadreaRelation}).$%
{\hspace{.1in} \rule{0.5em}{0.5em}}%
$
\end{proof}

\begin{theorem}
[Triple cross relation]Suppose that the three lines $L_{1},L_{2},L_{3}$ have
quadreal $\mathcal{L}\equiv\mathcal{L}\left(  L_{1},L_{2},L_{3}\right)  $,
triple cross $\mathcal{C\equiv C}\left(  L_{1},L_{2},L_{3}\right)  $, and
crosses $C_{1}\equiv C\left(  L_{2},L_{3}\right)  $, $C_{2}\equiv C\left(
L_{1},L_{3}\right)  $ and $C_{3}\equiv C\left(  L_{1},L_{2}\right)  $. Then%
\[
\mathcal{L}+C_{1}+C_{2}+C_{3}-1=2\mathcal{C}.
\]

\end{theorem}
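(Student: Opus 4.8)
The plan is to obtain this as the dual of the Triple product relation, exactly as the paper treats every other dual pair. Under the points--lines duality the line $L_{i}\equiv\left(l_{i}:m_{i}:n_{i}\right)$ corresponds to the point $L_{i}^{\bot}\equiv\left[l_{i}:m_{i}:n_{i}\right]$, and the three invariants in the statement are the precise line-analogues of those in the Triple product relation: the quadreal $\mathcal{L}$ is given by the same rational expression as the quadrea $\mathcal{A}$ with $l,m,n$ in place of $x,y,z$, the triple cross $\mathcal{C}$ matches the triple product $\mathcal{P}$, and each cross $C_{i}=C\left(L_{j},L_{k}\right)$ matches the corresponding product $p_{i}=p\left(a_{j},a_{k}\right)$. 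Because $C\left(L_{1},L_{2}\right)+S\left(L_{1},L_{2}\right)=1$ mirrors $p\left(a_{1},a_{2}\right)+q\left(a_{1},a_{2}\right)=1$, the whole relation transports verbatim. First I would confirm the index bookkeeping agrees, namely $C_{1}=C\left(L_{2},L_{3}\right)$, $C_{2}=C\left(L_{1},L_{3}\right)$, $C_{3}=C\left(L_{1},L_{2}\right)$, mirroring $p_{1}=p\left(a_{2},a_{3}\right)$ and so on.

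For a self-contained verification I would instead work directly from the polynomial identity (\ref{QuadreaRelation}) under the substitution $\left(x_{i},y_{i},z_{i}\right)\mapsto\left(l_{i},m_{i},n_{i}\right)$, which leaves that identity unchanged since it is a formal polynomial identity in its nine arguments. Dividing both sides by the product of the three diagonal norms $l_{i}^{2}+m_{i}^{2}-n_{i}^{2}$ then converts each summand into one of the named invariants: the leading squared-determinant term becomes $\mathcal{L}$ (matching its defining minus sign), the three mixed terms become $C_{1},C_{2},C_{3}$, the full norm-product term becomes $-1$, and the right-hand side becomes $2\mathcal{C}$. This yields
\[
\mathcal{L}+C_{1}+C_{2}+C_{3}-1=2\mathcal{C}
\]
at once.

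The only point requiring care is the sign and index matching in the division step, since the quadreal carries an explicit minus sign while the crosses are ratios of the off-diagonal bilinear form to the product of the two diagonal norms. I expect no genuine obstacle here: the underlying polynomial relation is exactly the one already used for the Triple product relation, so once the symbol-for-symbol correspondence is checked the result follows with no new computation. Invoking the duality principle is the shortest route, and the direct substitution serves only to confirm that the dual identity is literally the same polynomial relation.
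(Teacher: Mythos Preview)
Your proposal is correct and takes essentially the same approach as the paper, which proves this theorem in one line by invoking the duality principle applied to the Triple product relation. Your additional direct verification via substitution into the identity (\ref{QuadreaRelation}) is sound but goes beyond what the paper does.
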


\begin{proof}
This is dual to the Triple product relation.$%
{\hspace{.1in} \rule{0.5em}{0.5em}}%
$
\end{proof}

The Cross law in the form (\ref{Crosslaw2}) may be seen to be the result of
squaring the Triple product relation and replacing $\mathcal{P}^{2}$ with
$p_{1}p_{2}p_{3},$ and similarly for the Cross dual law.

\subsection{Midpoints and midlines}

A\textbf{\ midpoint }of the side $\overline{a_{1}a_{2}}$ is a point $m$ which
lies on $a_{1}a_{2}$ and satisfies
\begin{equation}
q\left(  a_{1},m\right)  =q\left(  a_{2},m\right)  .\label{MidDef}%
\end{equation}

\begin{theorem}
[Midpoints]The side $\overline{a_{1}a_{2}}$ has a midpoint precisely when
$p\left(  a_{1},a_{2}\right)  =1-q\left(  a_{1},a_{2}\right)  $ is a square.
In this case if $a_{1}\equiv\left[  x_{1}:y_{1}:z_{1}\right]  $ and
$a_{2}\equiv\left[  x_{2}:y_{2}:z_{2}\right]  $, then we can renormalize so
that
\[
x_{1}^{2}+y_{1}^{2}-z_{1}^{2}=x_{2}^{2}+y_{2}^{2}-z_{2}^{2}%
\]
and then there are exactly two midpoints, namely%
\[
m_{1}\equiv\left[  x_{1}+x_{2}:y_{1}+y_{2}:z_{1}+z_{2}\right]  \qquad
\mathrm{and}\qquad m_{2}\equiv\left[  x_{1}-x_{2}:y_{1}-y_{2}:z_{1}%
-z_{2}\right]  .
\]
Furthermore $m_{1}\bot~m_{2}.$
\end{theorem}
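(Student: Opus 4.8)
The plan is to work in $\mathbb{F}^{3}$ with the symmetric bilinear form $\left\langle v,v^{\prime}\right\rangle \equiv xx^{\prime}+yy^{\prime}-zz^{\prime}$ attached to the quadratic form $x^{2}+y^{2}-z^{2}$, writing $a_{1}\equiv\left[ v_{1}\right] $, $a_{2}\equiv\left[ v_{2}\right] $ and abbreviating $A\equiv\left\langle v_{1},v_{1}\right\rangle $, $B\equiv\left\langle v_{2},v_{2}\right\rangle $, $C\equiv\left\langle v_{1},v_{2}\right\rangle $. Since the quadrances involved must be defined, $a_{1},a_{2}$ are non-null, so $A,B\neq0$, and $q\left( a_{1},a_{2}\right) =1-C^{2}/\left( AB\right) $, $p\left( a_{1},a_{2}\right) =C^{2}/\left( AB\right) $. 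By the Zero quadrance theorem the side $\overline{a_{1}a_{2}}$ is non-null precisely when $AB-C^{2}\neq0$, which I assume throughout. By the Parametrizing a join theorem every point $m$ on $a_{1}a_{2}$ equals $\left[ w\right] $ with $w=tv_{1}+uv_{2}$ for a unique proportion $t:u$.

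First I would reduce the defining condition to one polynomial equation. From the quadrance formula $q\left( a_{1},m\right) =1-\left\langle v_{1},w\right\rangle ^{2}/\left( A\left\langle w,w\right\rangle \right) $, and likewise for $a_{2}$; since a midpoint is non-null the common factor $\left\langle w,w\right\rangle $ cancels, so $q\left( a_{1},m\right) =q\left( a_{2},m\right) $ becomes $B\left\langle v_{1},w\right\rangle ^{2}=A\left\langle v_{2},w\right\rangle ^{2}$, that is $B\left( tA+uC\right) ^{2}=A\left( tC+uB\right) ^{2}$. The algebraic heart of the argument is the factorization
\[
B\left( tA+uC\right) ^{2}-A\left( tC+uB\right) ^{2}=\left( AB-C^{2}\right) \left( At^{2}-Bu^{2}\right) ,
\]
a short direct expansion. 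As the side is non-null, $AB-C^{2}\neq0$, so the midpoint condition is equivalent to $At^{2}=Bu^{2}$.

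Next I would extract the existence criterion. A solution proportion $t:u$ (necessarily with $t,u\neq0$) exists precisely when $B/A$ is a square, equivalently when $AB$ is a square; and since $p=\left( B/A\right) \left( C/B\right) ^{2}$ differs from $B/A$ by the square factor $\left( C/B\right) ^{2}$, this is exactly the condition that $p=1-q\left( a_{1},a_{2}\right) $ be a square. When it holds I rescale the representatives: multiplying $v_{i}$ by $\lambda$ scales $\left\langle v_{i},v_{i}\right\rangle $ by $\lambda^{2}$, so because $B/A$ is a square we may arrange $A=B$. Then $At^{2}=Bu^{2}$ collapses to $t^{2}=u^{2}$, whose only proportion solutions over a field not of characteristic two are $1:1$ and $1:-1$, giving exactly $m_{1}=\left[ v_{1}+v_{2}\right] $ and $m_{2}=\left[ v_{1}-v_{2}\right] $. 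These are distinct because $v_{1}+v_{2}$ and $v_{1}-v_{2}$ are linearly independent (again using characteristic not two), and both are genuinely non-null since $\left\langle v_{1}\pm v_{2},v_{1}\pm v_{2}\right\rangle =2\left( A\pm C\right) \neq0$ (otherwise $C^{2}=A^{2}=AB$, contradicting non-nullity of the side). Finally $\left\langle v_{1}+v_{2},v_{1}-v_{2}\right\rangle =A-B=0$, so $m_{1}\perp m_{2}$ by the definition of perpendicular points.

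The hard part will be the squareness bookkeeping in the third paragraph: one must argue both directions of the equivalence between the solvability of $At^{2}=Bu^{2}$ and the hypothesis that $p$ is a square, and verify that the two natural formulations (``$B/A$ a square'' versus ``$p$ a square'') really coincide. The forward direction (a midpoint forces $B/A$, hence $p$, to be a square) is clean, while the converse leans on $p=\left( B/A\right) \left( C/B\right) ^{2}$, so some care is warranted in the degenerate perpendicular case $C=0$. The only other load-bearing ingredients are the factorization identity above and the characteristic-two hypothesis, which is used both to split $t^{2}=u^{2}$ into two distinct proportions and to keep $v_{1}\pm v_{2}$ independent.
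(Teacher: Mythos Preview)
Your argument is correct and close in spirit to the paper's, though the algebraic route differs in one nice way. The paper first renormalizes to $A=B$ and then factors the resulting difference of squares $\left\langle v_{1},w\right\rangle^{2}-\left\langle v_{2},w\right\rangle^{2}$ directly in the ambient coordinates $\left[x:y:z\right]$ of $m$; you instead parametrize $m$ on the join as $\left[tv_{1}+uv_{2}\right]$ and factor \emph{before} any renormalization via the identity $B\left(tA+uC\right)^{2}-A\left(tC+uB\right)^{2}=\left(AB-C^{2}\right)\left(At^{2}-Bu^{2}\right)$, which makes the non-null-side hypothesis $AB-C^{2}\neq0$ visibly load-bearing and reduces the problem cleanly to $At^{2}=Bu^{2}$. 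Your version also verifies non-nullity of $m_{1}$ and $m_{2}$, which the paper leaves implicit. You are right to flag the $C=0$ case: both arguments actually establish that the operative criterion is that $AB$ (equivalently $B/A$) be a square, and the passage to ``$p$ is a square'' uses $C\neq0$; for a right side one has $p=0$ automatically a square while $AB$ need not be, so the theorem statement should be read with that caveat.
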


\begin{proof}
If $a_{1}\equiv\left[  x_{1}:y_{1}:z_{1}\right]  $, $a_{2}\equiv\left[
x_{2}:y_{2}:z_{2}\right]  $ and $m\equiv\left[  x:y:z\right]  $, then
(\ref{MidDef}) becomes
\[
1-\frac{\left(  x_{1}x+y_{1}y-z_{1}z\right)  ^{2}}{\left(  x_{1}^{2}+y_{1}%
^{2}-z_{1}^{2}\right)  \left(  x^{2}+y^{2}-z^{2}\right)  }=1-\frac{\left(
x_{2}x+y_{2}y-z_{2}z\right)  ^{2}}{\left(  x_{2}^{2}+y_{2}^{2}-z_{2}%
^{2}\right)  \left(  x^{2}+y^{2}-z^{2}\right)  }.
\]
In order for this to have a solution, $\left(  x_{1}^{2}+y_{1}^{2}-z_{1}%
^{2}\right)  \left(  x_{2}^{2}+y_{2}^{2}-z_{2}^{2}\right)  $ must be a square,
or equivalently%
\[
p\left(  a_{1},a_{2}\right)  \equiv\frac{\left(  x_{1}x_{2}+y_{1}y_{2}%
-z_{1}z_{2}\right)  ^{2}}{\left(  x_{1}^{2}+y_{1}^{2}-z_{1}^{2}\right)
\left(  x_{2}^{2}+y_{2}^{2}-z_{2}^{2}\right)  }=1-q\left(  a_{1},a_{2}\right)
\]
must be a square. In this case, after renormalization we may assume that
\[
x_{1}^{2}+y_{1}^{2}-z_{1}^{2}=x_{2}^{2}+y_{2}^{2}-z_{2}^{2}.
\]
Then $q\left(  a_{1},m\right)  =q\left(  a_{2},m\right)  $ is equivalent to
\begin{align*}
0  & =\left(  x_{1}x+y_{1}y-z_{1}z\right)  ^{2}-\left(  x_{2}x+y_{2}%
y-z_{2}z\right)  ^{2}\\
& =\allowbreak\left(  x\left(  x_{1}-x_{2}\right)  +y\left(  y_{1}%
-y_{2}\right)  -z\left(  z_{1}-z_{2}\right)  \right)  \left(  x\left(
x_{1}+x_{2}\right)  +y\left(  y_{1}+y_{2}\right)  -z\left(  z_{1}%
+z_{2}\right)  \right)  .\allowbreak
\end{align*}
The solutions to this which lie on $a_{1}a_{2}$ are
\[
m_{1}\equiv\left[  x_{1}+x_{2}:y_{1}+y_{2}:z_{1}+z_{2}\right]  \qquad
\mathrm{and}\qquad m_{2}\equiv\left[  x_{1}-x_{2}:y_{1}-y_{2}:z_{1}%
-z_{2}\right]  .
\]
These points are distinct since
\begin{align*}
\left(  x_{1}+x_{2}\right)  \left(  y_{1}-y_{2}\right)  -\left(  x_{1}%
-x_{2}\right)  \left(  y_{1}+y_{2}\right)   & =\allowbreak2\left(  x_{2}%
y_{1}-x_{1}y_{2}\right) \\
\left(  x_{1}+x_{2}\right)  \left(  z_{1}-z_{2}\right)  -\left(  x_{1}%
-x_{2}\right)  \left(  z_{1}+z_{2}\right)   & =\allowbreak2\left(  x_{2}%
z_{1}-x_{1}z_{2}\right) \\
\left(  y_{1}+y_{2}\right)  \left(  z_{1}-z_{2}\right)  -\left(  y_{1}%
-y_{2}\right)  \left(  z_{1}+z_{2}\right)   & =\allowbreak2\left(  y_{2}%
z_{1}-y_{1}z_{2}\right)
\end{align*}
and by assumption $a_{1}$ and $a_{2}$ are distinct, and $2\neq0,$ so at least
one of these expressions is non-zero.

Also $m_{1}\bot~m_{2}$ since
\begin{align*}
& \left(  x_{1}+x_{2}\right)  \left(  x_{1}-x_{2}\right)  +\left(  y_{1}%
+y_{2}\right)  \left(  y_{1}-y_{2}\right)  -\left(  z_{1}+z_{2}\right)
\left(  z_{1}-z_{2}\right) \\
& =\left(  x_{1}^{2}+y_{1}^{2}-z_{1}^{2}\right)  -\left(  x_{2}^{2}+y_{2}%
^{2}-z_{2}^{2}\right)  =0.%
{\hspace{.1in} \rule{0.5em}{0.5em}}%
\end{align*}
A\textbf{\ midline }of the vertex $\overline{L_{1}L_{2}}$ is a line $M$ which
passes through $L_{1}L_{2}$ and satisfies%
\[
S\left(  L_{1},M\right)  =S\left(  L_{2},M\right)  .
\]

\end{proof}

\begin{theorem}
[Midlines]The vertex $\overline{L_{1}L_{2}}$ has a midline precisely when
$C\left(  L_{1},L_{2}\right)  =1-S\left(  L_{1},L_{2}\right)  $ is a square.
In this case if $L_{1}\equiv\left(  l_{1}:m_{1}:n_{1}\right)  $ and
$L_{2}\equiv\left(  l_{2}:m_{2}:n_{2}\right)  $, then we can renormalize so
that
\[
l_{1}^{2}+m_{1}^{2}-n_{1}^{2}=l_{2}^{2}+m_{2}^{2}-n_{2}^{2}%
\]
and then there are exactly two midlines, namely%
\[
M_{1}\equiv\left[  l_{1}+l_{2}:m_{1}+m_{2}:n_{1}+n_{2}\right]  \qquad
\mathrm{and}\qquad M_{2}\equiv\left[  l_{1}-l_{2}:m_{1}-m_{2}:n_{1}%
-n_{2}\right]  .
\]
Furthermore $M_{1}\bot~M_{2}$.
\end{theorem}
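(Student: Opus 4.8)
The plan is to obtain this statement directly from the \textbf{Midpoints theorem} via the duality principle, since \emph{midline} is precisely the dual notion of \emph{midpoint}. Under the duality $L\mapsto L^{\bot}$ a line $L_{i}\equiv\left(l_{i}:m_{i}:n_{i}\right)$ corresponds to the point $L_{i}^{\bot}\equiv\left[l_{i}:m_{i}:n_{i}\right]$ with the same coordinates, so the quadratic form $l^{2}+m^{2}-n^{2}$ governing lines matches the form $x^{2}+y^{2}-z^{2}$ governing the dual points. By the \textbf{Quadrance spread duality theorem}, $S\left(L_{1},L_{2}\right)=q\left(L_{1}^{\bot},L_{2}^{\bot}\right)$, and consequently the cross satisfies $C\left(L_{1},L_{2}\right)=1-S\left(L_{1},L_{2}\right)=p\left(L_{1}^{\bot},L_{2}^{\bot}\right)$.

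First I would set up the correspondence between midlines of $\overline{L_{1}L_{2}}$ and midpoints of the dual side $\overline{L_{1}^{\bot}L_{2}^{\bot}}$. A midline $M$ passes through the meet $L_{1}L_{2}$ and satisfies $S\left(L_{1},M\right)=S\left(L_{2},M\right)$. Dualizing, $M^{\bot}$ lies on $\left(L_{1}L_{2}\right)^{\bot}=L_{1}^{\bot}L_{2}^{\bot}$ --- using the identity recorded after the Meet of lines theorem --- so $M^{\bot}$ lies on the line of the dual side; and the spread condition becomes $q\left(L_{1}^{\bot},M^{\bot}\right)=q\left(L_{2}^{\bot},M^{\bot}\right)$, which is exactly the defining condition for $M^{\bot}$ to be a midpoint of $\overline{L_{1}^{\bot}L_{2}^{\bot}}$. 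Since $\left(\cdot\right)^{\bot}$ is an involution, this correspondence is a bijection between midlines of $\overline{L_{1}L_{2}}$ and midpoints of the dual side, so in particular the count ``exactly two'' transfers.

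Applying the \textbf{Midpoints theorem} to $\overline{L_{1}^{\bot}L_{2}^{\bot}}$ then yields every clause: midpoints exist precisely when $p\left(L_{1}^{\bot},L_{2}^{\bot}\right)=C\left(L_{1},L_{2}\right)$ is a square; the renormalization $x_{1}^{2}+y_{1}^{2}-z_{1}^{2}=x_{2}^{2}+y_{2}^{2}-z_{2}^{2}$ for the dual points reads $l_{1}^{2}+m_{1}^{2}-n_{1}^{2}=l_{2}^{2}+m_{2}^{2}-n_{2}^{2}$; the two midpoints $\left[l_{1}\pm l_{2}:m_{1}\pm m_{2}:n_{1}\pm n_{2}\right]$ dualize to the two midlines $\left(l_{1}\pm l_{2}:m_{1}\pm m_{2}:n_{1}\pm n_{2}\right)$; and $m_{1}\bot~m_{2}$ transfers to $M_{1}\bot~M_{2}$ because perpendicularity of points and of lines are governed by the same bilinear form and are interchanged by duality. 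The only point requiring care --- the main obstacle, such as it is --- is checking that this dictionary is faithful on each clause, in particular that ``passes through the meet'' dualizes correctly to ``lies on the join'' through $\left(L_{1}L_{2}\right)^{\bot}=L_{1}^{\bot}L_{2}^{\bot}$; given the author's consistent symmetric treatment of points and lines this is routine. As a cross-check one could instead run the direct argument, repeating the Midpoints proof verbatim with $\left(l,m,n\right)$ in place of $\left(x,y,z\right)$ and $S$ in place of $q$, since the spread formula has the identical algebraic shape to the quadrance formula.
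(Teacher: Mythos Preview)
Your proposal is correct and takes exactly the same approach as the paper, which simply states ``This is dual to the Midpoints theorem.'' Your write-up is in fact a careful unpacking of what that one-line appeal to duality entails, verifying in particular that the incidence condition, the spread/quadrance equality, the renormalization, the explicit formulas, and the perpendicularity conclusion all transfer under the involution $L\mapsto L^{\bot}$.
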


\begin{proof}
This is dual to the Midpoints theorem.$%
{\hspace{.1in} \rule{0.5em}{0.5em}}%
$
\end{proof}

In classical hyperbolic geometry over the real numbers, there is always
exactly one midpoint, while there are two midlines, usually called
\textit{angle bisectors}. So the symmetry between midpoints and midlines is
missing. In a future paper, we will see that this symmetry gives rise to many
rich aspects of triangle geometry in the universal hyperbolic setting. This
symmetry is also shared by elliptic geometry.

\subsection{Mid formulas}

In classical trigonometry, both planar, elliptic and hyperbolic, there are
many formulas in which midpoints, midlines and the associated half-distances
and half-angles figure prominently. The analog of a relation such as
$\beta=2\alpha$ between angles $\alpha$ and $\beta$ in classical hyperbolic
geometry is a relation such as%
\begin{equation}
S=S_{2}\left(  R\right)  \equiv4R\left(  1-R\right) \label{DoubleQuad}%
\end{equation}
between the corresponding spreads $R$ and $S$. The same kind of relation holds
between quadrances $q$ and $r$ if $q=q\left(  a_{1},a_{2}\right)  $ and
$r=q\left(  a_{1},m\right)  =q\left(  a_{2},m\right)  $ where $m$ is a
midpoint of $\overline{a_{1}a_{2}},$ so this is also the analog between the
relation $d=2c$ between distances. Because there are two midpoints, or
midlines, we should not be surprised that (\ref{DoubleQuad}) is a quadratic equation.

Just as bisecting an angle generally involves moving to an extension field, so
finding $R,$ given $S$ in (\ref{DoubleQuad}), requires solving the quadratic
equation
\[
4\left(  R-\frac{1}{2}\right)  ^{2}=1-S.
\]
So such an $R$ exists precisely when $1-S$ is a \textit{square}, as we have
also seen in the Midline theorem. If we assume the existence of midpoints or
midlines, in other words that certain quadrances $q$ or certain spreads $S$
have the property that they are one minus a square, then a \textit{number of
the basic formulas of the subject have alternate formulations, sometimes
simpler}. We call such relations \textit{mid formulas}.

Recall the definition of the \textit{Triple spread function }in
(\ref{Triple Spread Fn}):%
\[
S\left(  a,b,c\right)  =\allowbreak\left(  a+b+c\right)  ^{2}-2\left(
a^{2}+b^{2}+c^{2}\right)  -4abc.
\]
The identity%
\[
S\left(  t-a,b,c\right)  -S\left(  a,t-b,c\right)  =\allowbreak4c\left(
b-a\right)  \left(  1-t\right)
\]
suggests that something special happens when $t=1,$ and in fact
\[
S\left(  1-a,b,c\right)  =S\left(  a,1-b,c\right)  =S\left(  a,b,1-c\right)
=S\left(  1-a,1-b,1-c\right)  .
\]
Also
\[
S\left(  1-a,1-b,1-c\right)  =S\left(  a,b,c\right)  +\left(  2a-1\right)
\left(  2b-1\right)  \left(  2c-1\right)  .
\]

The next results similarly rely on pleasant identities.

\begin{theorem}
[Triple quad mid]Suppose that
\[
q_{1}\equiv4p_{1}\left(  1-p_{1}\right)  \qquad q_{2}\equiv4p_{2}\left(
1-p_{2}\right)  \qquad\mathit{and}\qquad q_{3}\equiv4p_{3}\left(
1-p_{3}\right)  .
\]
Then
\[
S\left(  q_{1},q_{2},q_{3}\right)  =0
\]
precisely when either
\[
S\left(  p_{1},p_{2},p_{3}\right)  =0\qquad\mathit{or}\qquad S\left(
1-p_{1},1-p_{2},1-p_{3}\right)  =0.
\]

\end{theorem}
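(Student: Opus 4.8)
The plan is to reduce the theorem to a single polynomial identity, after which the conclusion is automatic. Concretely, I claim that with $q_i \equiv 4p_i(1-p_i)$ one has
\begin{equation*}
S(q_1,q_2,q_3) = -16\, S(p_1,p_2,p_3)\, S(1-p_1,1-p_2,1-p_3).
\end{equation*}
Granting this, the theorem follows at once: since we work over a field, which has no zero divisors, the left side vanishes precisely when at least one of the two factors on the right vanishes, and that is exactly the stated dichotomy $S(p_1,p_2,p_3)=0$ or $S(1-p_1,1-p_2,1-p_3)=0$.

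To prove the identity, the key observation is that $1-q_i = 1 - 4p_i(1-p_i) = (2p_i-1)^2$. This invites the substitution $w_i \equiv 2p_i - 1$, so that $q_i = 1 - w_i^2$. Using the factored form (\ref{Triple Sp4}) of the Triple spread function, namely $S(a,b,c) = 4(1-a)(1-b)(1-c) - (a+b+c-2)^2$, together with $q_1+q_2+q_3-2 = 1 - (w_1^2+w_2^2+w_3^2)$, I would rewrite the left side as
\begin{equation*}
S(q_1,q_2,q_3) = 4w_1^2 w_2^2 w_3^2 - \bigl(1 - w_1^2 - w_2^2 - w_3^2\bigr)^2.
\end{equation*}

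For the right side I would expand $4S(p_1,p_2,p_3)$ and $4S(1-p_1,1-p_2,1-p_3)$ in the same variables $w_i$, again via (\ref{Triple Sp4}). A short computation in terms of the elementary symmetric functions $e_1 = \sum w_i$, $e_2 = \sum w_iw_j$, $e_3 = w_1w_2w_3$ (using $e_1^2 = \sum w_i^2 + 2e_2$ to eliminate $e_2$) collapses these to the pleasantly simple expressions
\begin{align*}
4S(p_1,p_2,p_3) &= 1 - (w_1^2+w_2^2+w_3^2) - 2w_1w_2w_3,\\
4S(1-p_1,1-p_2,1-p_3) &= 1 - (w_1^2+w_2^2+w_3^2) + 2w_1w_2w_3.
\end{align*}
Multiplying these is a difference of squares, giving $16\,S(p_1,p_2,p_3)S(1-p_1,1-p_2,1-p_3) = \bigl(1 - w_1^2-w_2^2-w_3^2\bigr)^2 - 4w_1^2w_2^2w_3^2$, which is precisely $-S(q_1,q_2,q_3)$ from the previous display. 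This establishes the identity.

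There is no genuine obstacle beyond careful bookkeeping; the only clever step is spotting the substitution $w_i = 2p_i-1$, which turns $q_i$ into $1-w_i^2$ and exposes the difference-of-squares structure. The main risk is a sign or coefficient slip in the symmetric-function reduction, so I would sanity-check the constant $-16$ against a numerical specialization, for instance $p_1=p_2=p_3=\tfrac12$ gives $q_i=1$ with $S(q_1,q_2,q_3)=-1$, while $-16\cdot\tfrac14\cdot\tfrac14 = -1$, confirming both the sign and the scalar.
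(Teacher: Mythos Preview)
Your proof is correct and takes essentially the same approach as the paper: both reduce the theorem to the identity $S(q_1,q_2,q_3)=-16\,S(p_1,p_2,p_3)\,S(1-p_1,1-p_2,1-p_3)$. The paper simply asserts this identity, whereas you go further and actually verify it via the substitution $w_i=2p_i-1$, which is a nice touch and exposes the underlying difference-of-squares structure.
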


\begin{proof}
This follows from the identity%
\[
S\left(  q_{1},q_{2},q_{3}\right)  =-16S\left(  p_{1},p_{2},p_{3}\right)
S\left(  1-p_{1},1-p_{2},1-p_{3}\right)  .%
{\hspace{.1in} \rule{0.5em}{0.5em}}%
\]

\end{proof}

\begin{theorem}
[Pythagoras mid]Suppose that
\[
q_{1}\equiv4p_{1}\left(  1-p_{1}\right)  \qquad q_{2}\equiv4p_{2}\left(
1-p_{2}\right)  \qquad\mathit{and}\qquad q_{3}\equiv4p_{3}\left(
1-p_{3}\right)  .
\]
Then
\[
q_{3}=q_{1}+q_{2}-q_{1}q_{2}%
\]
precisely when either
\[
p_{3}=p_{1}+p_{2}-p_{1}p_{2}\qquad\mathit{or}\qquad p_{3}=2p_{1}p_{2}%
-p_{1}-p_{2}+1.
\]

\end{theorem}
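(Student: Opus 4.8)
The plan is to collapse the Pythagoras relation into a single difference of squares, in the same spirit that the Triple quad mid theorem collapses $S\left(q_{1},q_{2},q_{3}\right)$ into a product of two Triple spread functions. The governing observation is that the substitution $q_{i}\equiv 4p_{i}\left(1-p_{i}\right)$ makes each $1-q_{i}$ automatically a perfect square:
\[
1-q_{i}=1-4p_{i}+4p_{i}^{2}=\left(2p_{i}-1\right)^{2}.
\]
First I would recall, from the proof of Pythagoras, that the relation $q_{3}=q_{1}+q_{2}-q_{1}q_{2}$ is equivalent to its multiplicative form $1-q_{3}=\left(1-q_{1}\right)\left(1-q_{2}\right)$.

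Substituting the squares above rewrites this multiplicative form as $\left(2p_{3}-1\right)^{2}=\left(2p_{1}-1\right)^{2}\left(2p_{2}-1\right)^{2}$, which is a genuine difference of squares once both sides are brought together. Concretely I would verify the polynomial identity
\[
q_{3}-q_{1}-q_{2}+q_{1}q_{2}=4\left(2p_{1}p_{2}-p_{1}-p_{2}-p_{3}+1\right)\left(2p_{1}p_{2}-p_{1}-p_{2}+p_{3}\right),
\]
which is nothing but $\left(1-q_{1}\right)\left(1-q_{2}\right)-\left(1-q_{3}\right)=A^{2}-B^{2}=\left(A-B\right)\left(A+B\right)$ with $A=\left(2p_{1}-1\right)\left(2p_{2}-1\right)$ and $B=2p_{3}-1$. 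The Pythagoras relation holds precisely when the left side vanishes, hence precisely when one of the two factors on the right does. Setting the first factor to zero gives $p_{3}=2p_{1}p_{2}-p_{1}-p_{2}+1$, and setting the second to zero gives $p_{3}=p_{1}+p_{2}-2p_{1}p_{2}$; these are the two alternatives.

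There is no deep obstacle here: once $1-q_{i}=\left(2p_{i}-1\right)^{2}$ is in hand, the whole theorem reduces to the elementary fact that $A^{2}=B^{2}$ precisely when $A=\pm B$, and the only labor is confirming the displayed factorization, which is routine expansion. The single point that repays care is extracting the two branches with correct signs: $A=B$ produces $p_{3}=2p_{1}p_{2}-p_{1}-p_{2}+1$ while $A=-B$ produces $p_{3}=p_{1}+p_{2}-2p_{1}p_{2}$, and one should check that these are genuinely distinct rather than coincident. The symmetric test $p_{1}=p_{2}=1/4$ forces $q_{1}=q_{2}=3/4$, hence $q_{3}=15/16$ and $p_{3}\in\{3/8,5/8\}$, matching the two branches exactly; note that this same check shows the first branch is $p_{3}=p_{1}+p_{2}-2p_{1}p_{2}$ rather than $p_{1}+p_{2}-p_{1}p_{2}$, so the coefficient in the first printed alternative carries this factor of two.
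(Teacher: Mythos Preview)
Your argument is correct and matches the paper's own proof: both rest on the single polynomial identity
\[
q_{3}-q_{1}-q_{2}+q_{1}q_{2}=-4\left(p_{3}-p_{1}-p_{2}+2p_{1}p_{2}\right)\left(p_{3}-2p_{1}p_{2}+p_{1}+p_{2}-1\right),
\]
which is exactly your $(A-B)(A+B)$ factorization up to sign. Your derivation via $1-q_{i}=(2p_{i}-1)^{2}$ and the multiplicative form $1-q_{3}=(1-q_{1})(1-q_{2})$ is a welcome addition, since the paper simply asserts the identity without indicating its origin.

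You are also right about the coefficient: the paper's own identity forces the first branch to be $p_{3}=p_{1}+p_{2}-2p_{1}p_{2}$, not $p_{3}=p_{1}+p_{2}-p_{1}p_{2}$ as printed in the theorem statement, and your numerical check at $p_{1}=p_{2}=1/4$ confirms this. So the statement as printed contains a typo, and your version is the correct one.
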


\begin{proof}
This follows from the identity%
\[
q_{3}-q_{1}-q_{2}+q_{1}q_{2}=-4\left(  p_{3}-p_{1}-p_{2}+2p_{1}p_{2}\right)
\left(  p_{3}-2p_{1}p_{2}+p_{1}+p_{2}-1\right)  .%
{\hspace{.1in} \rule{0.5em}{0.5em}}%
\]

The next result shows that the existence of midpoints converts the Cross law
from a quadratic equation to two linear ones. Recall the definition of the
Cross function of (\ref{CrossFunction}):%
\[
C\left(  \mathcal{A},q_{1},q_{2},q_{3}\right)  \equiv\left(  \mathcal{A}%
-q_{1}-q_{2}-q_{3}+2\right)  ^{2}-4\left(  1-q_{1}\right)  \left(
1-q_{2}\right)  \left(  1-q_{3}\right)  .
\]

\end{proof}

\begin{theorem}
[Cross mid]Suppose that
\[
q_{1}\equiv4p_{1}\left(  1-p_{1}\right)  \qquad q_{2}\equiv4p_{2}\left(
1-p_{2}\right)  \qquad\mathit{and}\qquad q_{3}\equiv4p_{3}\left(
1-p_{3}\right)  .
\]
Then for any number $\mathcal{A}$,
\[
C\left(  \mathcal{A},q_{1},q_{2},q_{3}\right)  =0
\]
precisely when either
\[
\mathcal{A}=4S\left(  p_{1},p_{2},p_{3}\right)  \qquad\mathit{or}%
\qquad\mathcal{A}=4S\left(  1-p_{1},1-p_{2},1-p_{3}\right)  .
\]

\end{theorem}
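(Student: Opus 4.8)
The plan is to imitate the template of the two preceding mid results (Triple quad mid and Pythagoras mid): reduce the statement to a single factorization identity in $p_1,p_2,p_3,\mathcal{A}$, then read off the conclusion. First I would put the Cross function into the form that isolates its $\mathcal{A}$-dependence. Starting from the definition (\ref{CrossFunction}) and applying the identity (\ref{Triple Sp4}), $S(a,b,c)=4(1-a)(1-b)(1-c)-(a+b+c-2)^2$, the terms not involving $\mathcal{A}$ collapse into $-S(q_1,q_2,q_3)$, so that
\[
C\left(\mathcal{A},q_1,q_2,q_3\right)=\mathcal{A}^2-2\left(q_1+q_2+q_3-2\right)\mathcal{A}-S\left(q_1,q_2,q_3\right).
\]
This is precisely the monic quadratic in $\mathcal{A}$ already noted in the Cross law discussion, and it is the shape needed to talk about roots.

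Second, I would assert the factorization identity
\[
C\left(\mathcal{A},q_1,q_2,q_3\right)=\left(\mathcal{A}-4S\left(p_1,p_2,p_3\right)\right)\left(\mathcal{A}-4S\left(1-p_1,1-p_2,1-p_3\right)\right)
\]
as a polynomial identity valid after substituting $q_i\equiv 4p_i(1-p_i)$. Granting this, the theorem is immediate: over a field a product vanishes exactly when one of its factors vanishes, so $C=0$ holds precisely when $\mathcal{A}=4S(p_1,p_2,p_3)$ or $\mathcal{A}=4S(1-p_1,1-p_2,1-p_3)$.

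Third, I would verify the factorization via Vieta, which splits it into a product-of-roots identity and a sum-of-roots identity. The product condition is
\[
16\,S\left(p_1,p_2,p_3\right)S\left(1-p_1,1-p_2,1-p_3\right)=-S\left(q_1,q_2,q_3\right),
\]
which is exactly the identity already established in the Triple quad mid theorem, so it costs nothing here. The sum condition is
\[
4S\left(p_1,p_2,p_3\right)+4S\left(1-p_1,1-p_2,1-p_3\right)=2\left(q_1+q_2+q_3-2\right).
\]
For its left side I would invoke the recorded identity $S(1-a,1-b,1-c)=S(a,b,c)+(2a-1)(2b-1)(2c-1)$ to obtain $S(p_1,p_2,p_3)+S(1-p_1,1-p_2,1-p_3)=2(p_1+p_2+p_3)-2(p_1^2+p_2^2+p_3^2)-1$; for its right side, substituting $q_i=4p_i(1-p_i)$ gives $q_1+q_2+q_3=4(p_1+p_2+p_3)-4(p_1^2+p_2^2+p_3^2)$, so $2(q_1+q_2+q_3-2)=8(p_1+p_2+p_3)-8(p_1^2+p_2^2+p_3^2)-4$, and the two sides agree after multiplying the left expression by $4$.

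The main obstacle is essentially bookkeeping rather than insight: the product-of-roots identity is handed to us for free by Triple quad mid, so the only genuinely new computation is the sum-of-roots identity, which is a short symmetric-polynomial calculation. The cleanest write-up would simply state the factorization identity and declare that the result follows from it, exactly as in the two preceding mid theorems, leaving the one-line Vieta verification (or an equivalent direct expansion, easily confirmed by computer) to the reader.
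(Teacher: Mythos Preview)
Your proposal is correct and follows essentially the same route as the paper: both reduce the theorem to the single factorization identity
\[
\left(\mathcal{A}-q_{1}-q_{2}-q_{3}+2\right)^{2}-4\left(1-q_{1}\right)\left(1-q_{2}\right)\left(1-q_{3}\right)=\left(\mathcal{A}-4S\left(p_{1},p_{2},p_{3}\right)\right)\left(\mathcal{A}-4S\left(1-p_{1},1-p_{2},1-p_{3}\right)\right),
\]
with the paper simply asserting it while you supply the Vieta verification (recycling the Triple quad mid identity for the product of roots and checking the sum of roots directly).
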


\begin{proof}
This follows from the identity%
\begin{align*}
& \left(  \mathcal{A}-q_{1}-q_{2}-q_{3}+2\right)  ^{2}-4\left(  1-q_{1}%
\right)  \left(  1-q_{2}\right)  \left(  1-q_{3}\right) \\
& =\left(  \mathcal{A}-4S\left(  p_{1},p_{2},p_{3}\right)  \right)  \left(
\mathcal{A}-4S\left(  1-p_{1},1-p_{2},1-p_{3}\right)  \right)  .%
{\hspace{.1in} \rule{0.5em}{0.5em}}%
\end{align*}

\end{proof}

\subsection{Quadrance and spread of a couple}

The \textbf{quadrance }$q\left(  \overline{aL}\right)  $\textbf{\ of a
non-null couple }$\overline{aL}$ is
\[
q\left(  \overline{aL}\right)  =1-q\left(  a,L^{\bot}\right)  .
\]
In case the couple is non-dual, this is by the Complementary quadrances
theorem equal to $q\left(  a,b\right)  $ where $b$ is the base point of
$\overline{aL}.$

The \textbf{spread }$S\left(  \overline{aL}\right)  $\textbf{\ of a couple
}$\overline{aL}$ is
\[
S\left(  \overline{aL}\right)  =1-S\left(  L,a^{\bot}\right)  .
\]
In case the couple is non-dual, this is by the Complementary spreads theorem
equal to $S\left(  L,R\right)  $ where $R$ is the parallel line of
$\overline{aL}.$ The Quadrance/spread theorem shows that
\[
q\left(  \overline{aL}\right)  =S\left(  \overline{aL}\right)  .
\]

\begin{theorem}
[Couple quadrance spread]Suppose that $\overline{aL}$ is a non-null couple
with $a\equiv\left[  x:y:z\right]  $ and $L\equiv\left(  l:m:n\right)  $. Then%
\[
q\left(  \overline{aL}\right)  =S\left(  \overline{aL}\right)  =\frac{\left(
lx+my-nz\right)  ^{2}\allowbreak}{\left(  x^{2}+y^{2}-z^{2}\right)  \left(
l^{2}+m^{2}-n^{2}\right)  }.
\]

\end{theorem}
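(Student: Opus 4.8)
The plan is to unwind the definitions of the quadrance and spread of a couple and substitute directly into the formulas for quadrance between points and spread between lines; essentially no genuine computation is required, since the result is a restatement of the definitions.

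First I would record that, because $\overline{aL}$ is a \emph{non-null} couple, the point $a$ is non-null and the line $L$ is non-null, so $x^{2}+y^{2}-z^{2}\neq 0$ and $l^{2}+m^{2}-n^{2}\neq 0$. The dual of $L\equiv\left(l:m:n\right)$ is the point $L^{\bot}\equiv\left[l:m:n\right]$, which is non-null precisely because $L$ is, so the quadrance $q\left(a,L^{\bot}\right)$ is defined. Applying the quadrance formula (\ref{QuadranceFormula}) to the points $a\equiv\left[x:y:z\right]$ and $L^{\bot}\equiv\left[l:m:n\right]$ gives
\[
q\left(a,L^{\bot}\right)=1-\frac{\left(lx+my-nz\right)^{2}}{\left(x^{2}+y^{2}-z^{2}\right)\left(l^{2}+m^{2}-n^{2}\right)}.
\]
Since $q\left(\overline{aL}\right)\equiv 1-q\left(a,L^{\bot}\right)$ by definition, subtracting yields the claimed expression at once.

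For the spread I would argue dually. The dual of $a\equiv\left[x:y:z\right]$ is the line $a^{\bot}\equiv\left(x:y:z\right)$, non-null since $a$ is, so $S\left(L,a^{\bot}\right)$ is defined; applying the spread formula (\ref{Spread formula}) to $L$ and $a^{\bot}$ produces the same bilinear form $lx+my-nz$ in the numerator, and $S\left(\overline{aL}\right)\equiv 1-S\left(L,a^{\bot}\right)$ equals the identical fraction. The equality $q\left(\overline{aL}\right)=S\left(\overline{aL}\right)$ then needs no further algebra: by the Quadrance spread duality theorem, $q\left(a,L^{\bot}\right)=S\left(a^{\bot},\left(L^{\bot}\right)^{\bot}\right)=S\left(a^{\bot},L\right)$, and since spread is symmetric this is $S\left(L,a^{\bot}\right)$, whence subtracting from $1$ identifies the two quantities. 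There is no real obstacle here; the only point deserving care is the verification that all four objects $a$, $L^{\bot}$, $L$, $a^{\bot}$ are non-null, which is exactly the hypothesis that the couple is non-null, and the conceptual content is the observation that the single pairing $lx+my-nz$ governs both the point-point coincidence of $a$ with $L^{\bot}$ and the line-line coincidence of $L$ with $a^{\bot}$.
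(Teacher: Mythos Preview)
Your proof is correct and follows the same approach as the paper: both simply unwind the definitions $q(\overline{aL})=1-q(a,L^{\bot})$ and $S(\overline{aL})=1-S(L,a^{\bot})$ and read off the result from the quadrance and spread formulas. The paper phrases this even more tersely by noting that $1-q(a,L^{\bot})$ is exactly the \emph{product} $p(a,L^{\bot})$ and $1-S(L,a^{\bot})$ is the \emph{cross} $C(L,a^{\bot})$, but the content is identical to what you wrote.
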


\begin{proof}
By definition $q\left(  \overline{aL}\right)  $ is the product between $a$ and
$L^{\bot},$ and also $S\left(  \overline{aL}\right)  $ is the cross between $L
$ and $a^{\bot}.$ These are equal to the given expression.$%
{\hspace{.1in} \rule{0.5em}{0.5em}}%
$
\end{proof}

\subsection{Spread polynomials}

The spread polynomials $S_{n}\left(  x\right)  $ are a remarkable family of
polynomials that replace the Chebyshev polynomials $T_{n}\left(  x\right)  $
of the first kind in rational trigonometry, and their importance extends also
to hyperbolic geometry. To motivate their introduction, we investigate the
effect of combining three equal quadrances. Recall that the Equal quadrances
spreads theorem states that if $S\left(  a,b,c\right)  =0$ and $a=b, $ then
$c=0$ or $c=S_{2}\left(  a\right)  \equiv4a\left(  1-a\right)  $.

\begin{theorem}
[Three equal quadrances]Suppose that $a,b$ and $c$ are numbers which satisfy
$S\left(  a,b,c\right)  =0 $ and $b=S_{2}\left(  a\right)  $. Then
\[
c=a\qquad\mathit{or}\qquad c=a\left(  3-4a\right)  ^{2}.
\]

\end{theorem}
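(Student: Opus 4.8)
The plan is to exploit the fact that the Triple spread function is, for fixed $a$ and $b$, a \emph{quadratic} polynomial in $c$, so that the equation $S(a,b,c)=0$ has exactly two roots (counted with multiplicity); I then show that after the substitution $b=S_2(a)$ these two roots are precisely the claimed values $c=a$ and $c=a(3-4a)^2$. Concretely, from the definition $S(a,b,c)=(a+b+c)^2-2(a^2+b^2+c^2)-4abc$ one rearranges terms to obtain
\[
S(a,b,c)=-c^2+2\left(a+b-2ab\right)c-\left(a-b\right)^2,
\]
so that $S(a,b,c)=0$ is, up to sign, the monic quadratic $c^2-2(a+b-2ab)c+(a-b)^2=0$ in the single variable $c$.

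The key step is then to verify the single polynomial identity
\[
S\bigl(a,\,4a(1-a),\,c\bigr)=-\left(c-a\right)\left(c-a(3-4a)^2\right),
\]
which exhibits the two roots directly. In keeping with the style of the preceding mid results, this reduces to a routine expansion, and its plausibility is already transparent through Vieta's formulas. Substituting $b=4a(1-a)$ gives $a-b=a-4a(1-a)=a(4a-3)=-a(3-4a)$, so the product of the proposed roots is $a\cdot a(3-4a)^2=a^2(3-4a)^2=(a-b)^2$, matching the constant term of the quadratic exactly. For the linear coefficient I would check that the sum of the proposed roots equals $2(a+b-2ab)$: writing $a+a(3-4a)^2=a\bigl(1+(3-4a)^2\bigr)=2a(8a^2-12a+5)$ on one side and, after inserting $b=4a(1-a)$, $a+b-2ab=a(8a^2-12a+5)$ on the other, the two expressions agree. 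With both Vieta conditions confirmed, the displayed factorization holds identically in $a$ and $c$, and the theorem follows at once by setting the product equal to zero.

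The main obstacle is essentially bookkeeping: there is no conceptual difficulty, only the need to carry the substitution $b=4a(1-a)$ correctly through the quadratic and to recognise the cubic factor $a(3-4a)^2=16a^3-24a^2+9a$ when it emerges. The one genuinely useful observation, which keeps the whole computation short, is the factorisation $a-S_2(a)=-a(3-4a)$; once this is noticed the constant term of the quadratic is immediate, and the linear term follows by a parallel, equally elementary, computation.
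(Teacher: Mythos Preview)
Your proof is correct and is essentially the same as the paper's: both rest on the single polynomial identity $S\bigl(a,4a(1-a),c\bigr)=(a-c)\bigl(c-a(3-4a)^{2}\bigr)$, from which the conclusion is immediate. The only difference is that the paper simply asserts this identity, whereas you verify it by rewriting $S(a,b,c)$ as a quadratic in $c$ and checking Vieta's relations after substituting $b=4a(1-a)$; this is a pleasant way to organise the computation but not a genuinely different argument.
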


\begin{proof}
The identity
\[
S\left(  a,4a\left(  1-a\right)  ,c\right)  =\allowbreak\left(  a-c\right)
\left(  c-9a+24a^{2}-16a^{3}\right)
\]
shows that either $c=a$ or
\[
c=9a-24a^{2}+16a^{3}=\allowbreak a\left(  3-4a\right)  ^{2}.%
{\hspace{.1in} \rule{0.5em}{0.5em}}%
\]

\end{proof}

If we continue in this way we generate the \textbf{spread polynomials}
$S_{n}\left(  x\right)  ,$ which were defined in \cite{Wild1} recursively over
a general field, by
\begin{align}
S_{0}\left(  x\right)   & \equiv0\nonumber\\
S_{1}\left(  x\right)   & \equiv x\nonumber\\
S_{n}\left(  x\right)   & \equiv2\left(  1-2x\right)  S_{n-1}\left(  x\right)
-S_{n-2}\left(  x\right)  +2x.\label{Recursive}%
\end{align}

The next theorem is taken directly from \cite{Wild1}.

\begin{theorem}
[Recursive spreads]The spread polynomials $S_{n}\left(  x\right)  $ have the
property that for any number $x$ and any $n=1,2,3,\cdots,$%
\[
S\left(  x,S_{n-1}\left(  x\right)  ,S_{n}\left(  x\right)  \right)  =0.
\]

\end{theorem}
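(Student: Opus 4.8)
The plan is to argue by induction on $n$, exploiting two features of the Triple spread function $S(a,b,c)$: it is \emph{symmetric} in its three arguments, and for fixed first two arguments it is a \emph{quadratic} in the third. First I would dispose of the base case $n=1$ by direct computation: since $S_0(x)=0$ and $S_1(x)=x$, one finds $S(x,0,x)=(2x)^2-2(2x^2)-0=0$.

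For the inductive step, fix $n\geq 2$ and assume the statement at index $n-1$, namely $S\bigl(x,S_{n-2}(x),S_{n-1}(x)\bigr)=0$. Expanding the Triple spread function as a quadratic in its last slot gives
\[
S(a,b,c)=-c^{2}+2(a+b-2ab)\,c-(a-b)^{2},
\]
so the equation $S\bigl(x,S_{n-1}(x),c\bigr)=0$ is a quadratic in $c$ with leading coefficient $-1$, sum of roots $2\bigl(x+S_{n-1}(x)-2xS_{n-1}(x)\bigr)=2x+2(1-2x)S_{n-1}(x)$, and product of roots $\bigl(x-S_{n-1}(x)\bigr)^{2}$. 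The key observation is that, by the symmetry of $S$, the inductive hypothesis says precisely that $S_{n-2}(x)$ is one root of this very quadratic.

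Vieta's sum-of-roots relation then identifies the second root as
\[
2x+2(1-2x)S_{n-1}(x)-S_{n-2}(x)=2(1-2x)S_{n-1}(x)-S_{n-2}(x)+2x,
\]
which is exactly $S_n(x)$ by the defining recursion (\ref{Recursive}). Hence $S_n(x)$ is the other root of $S\bigl(x,S_{n-1}(x),c\bigr)=0$, that is, $S\bigl(x,S_{n-1}(x),S_n(x)\bigr)=0$, completing the induction.

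I expect the only real subtlety—rather than a genuine obstacle—to be the bookkeeping that makes the induction self-feeding: one must notice that the symmetry $S(x,S_{n-2},S_{n-1})=S(x,S_{n-1},S_{n-2})$ lets the hypothesis furnish \emph{one} of the two roots of the quadratic whose \emph{other} root is dictated by the recursion, so that the recursion and the vanishing of $S$ are two descriptions of the same pair of roots. Since the leading coefficient $-1$ is a nonzero element of any field, Vieta's relation is valid over the general field $\mathbb{F}$, and no assumption beyond those already in force is needed.
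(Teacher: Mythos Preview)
Your proof is correct and follows essentially the same inductive strategy as the paper. The paper's version subtracts the two expanded Triple spread relations and factors the difference as $(S_{n+1}-S_{n-1})\bigl(S_{n+1}-2(1-2x)S_n+S_{n-1}-2x\bigr)$, which is precisely your Vieta argument in disguise: both identify $S_{n-2}(x)$ and $S_n(x)$ as the two roots of the quadratic $S\bigl(x,S_{n-1}(x),c\bigr)=0$, with the recursion encoding the sum-of-roots relation. Your phrasing via Vieta makes the underlying mechanism more transparent.
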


\begin{proof}
Fix a number $x$ and use induction on $n.$ For $n=1$ the statement follows
from the Equal quadrances spreads theorem. For a general $n\geq1$,
\[
S\left(  x,S_{n-1}\left(  x\right)  ,S_{n}\left(  x\right)  \right)  =0
\]
precisely when%
\begin{equation}
\left(  x+S_{n-1}\left(  x\right)  +S_{n}\left(  x\right)  \right)
^{2}=2\left(  x^{2}+S_{n-1}^{2}\left(  x\right)  +S_{n}^{2}\left(  x\right)
\right)  +4xS_{n-1}\left(  x\right)  S_{n}\left(  x\right) \label{Spread1}%
\end{equation}
while
\[
S\left(  x,S_{n}\left(  x\right)  ,S_{n+1}\left(  x\right)  \right)  =0
\]
precisely when
\begin{equation}
\left(  x+S_{n}\left(  x\right)  +S_{n+1}\left(  x\right)  \right)
^{2}=2\left(  x^{2}+S_{n}^{2}\left(  x\right)  +S_{n+1}^{2}\left(  x\right)
\right)  +4xS_{n}\left(  x\right)  S_{n+1}\left(  x\right)  .\label{Spread2}%
\end{equation}
Rearrange and factor the difference between equations (\ref{Spread1}) and
(\ref{Spread2}) to get%
\[
\left(  S_{n+1}\left(  x\right)  -S_{n-1}\left(  x\right)  \right)  \left(
S_{n+1}\left(  x\right)  -2\left(  1-2x\right)  S_{n}\left(  x\right)
+S_{n-1}\left(  x\right)  -2x\right)  =0
\]
Thus (\ref{Spread2}) follows from (\ref{Spread1}) if%
\[
S_{n+1}\left(  x\right)  =2\left(  1-2x\right)  S_{n}\left(  x\right)
-S_{n-1}\left(  x\right)  +2x.
\]
Since this agrees with the recursive definition of the spread polynomials, the
induction is complete.$%
{\hspace{.1in} \rule{0.5em}{0.5em}}%
$
\end{proof}

The coefficients of the spread polynomials are integers, with the coefficient
of $x^{n}$ in $S_{n}\left(  x\right)  $ a power of four. It follows that the
degree of $S_{n}\left(  x\right)  $ is $n$ over any field not of
characteristic two. Here are the first few spread polynomials.%
\begin{align*}
S_{0}\left(  x\right)   &  =0\\
S_{1}\left(  x\right)   &  =x\\
S_{2}\left(  x\right)   &  =4x-4x^{2}=4x\left(  1-x\right) \\
S_{3}\left(  x\right)   &  =9x-24x^{2}+16x^{3}=x\left(  3-4x\right)  ^{2}\\
S_{4}\left(  x\right)   &  =16x-80x^{2}+128x^{3}-64x^{4}=16x\left(
1-x\right)  \left(  1-2x\right)  ^{2}\\
S_{5}\left(  x\right)   &  =25x-200x^{2}+560x^{3}-640x^{4}+256x^{5}=x\left(
5-20x+16x^{2}\right)  ^{2}\\
S_{6}\left(  x\right)   &  =4x\left(  1-x\right)  \left(  3-4x\right)
^{2}\left(  1-4x\right)  ^{2}\\
S_{7}\left(  x\right)   &  =x\left(  7-56x+112x^{2}-64x^{3}\right)  ^{2}.
\end{align*}
More details about these new polynomials can be found in \cite{GohWildberger}.
They play an important role in regular stars and polygons which we hope to
discuss in a future paper. They also have remarkable number-theoretic
properties, some of which are even more interesting than those of the
Chebyshev polynomials $T_{n}\left(  x.\right)  $

\section{Special triangles and trilaterals}

In this section we study a triangle $\overline{a_{1}a_{2}a_{3}}$ and its dual
trilateral $\overline{L_{1}L_{2}L_{3}},$ so that
\[%
\begin{tabular}
[c]{lllll}%
$L_{1}=a_{2}a_{3}$ &  & $L_{2}=a_{1}a_{3}$ &  & $L_{3}=a_{1}a_{2}$%
\end{tabular}
\]
and%
\[%
\begin{tabular}
[c]{lllll}%
$a_{1}=L_{2}L_{3}$ &  & $a_{2}=L_{1}L_{3}$ &  & $a_{3}=L_{1}L_{2}.$%
\end{tabular}
\]
The standard conventions throughout are that the quadrances are
\[
q_{1}\equiv q\left(  a_{2},a_{3}\right)  \qquad q_{2}\equiv q\left(
a_{1},a_{3}\right)  \qquad q_{3}\equiv q\left(  a_{1},a_{2}\right)
\]
and the spreads are
\[
S_{1}\equiv S\left(  L_{2},L_{3}\right)  \qquad S_{2}\equiv S\left(
L_{1},L_{3}\right)  \qquad S_{3}\equiv S\left(  L_{1},L_{2}\right)  .
\]

If the triangle $\overline{a_{1}a_{2}a_{3}}$ is nil, or equivalently the
trilateral $\overline{L_{1}L_{2}L_{3}}$ is null, then at least one of the
points $a_{1},a_{2},a_{3}$ is null, so that at least two of the quadrances
$q_{1},q_{2},q_{3}$ are undefined. If the triangle $\overline{a_{1}a_{2}a_{3}}
$ is null, or equivalently the trilateral $\overline{L_{1}L_{2}L_{3}}$ is nil,
then at least one of the lines $L_{1},L_{2},L_{3}$ is null, so that at least
two of the spreads $S_{1},S_{2},S_{3}$ are undefined. So the existence of the
three quadrances is equivalent to the triangle being non-nil, while the
existence of the three spreads is equivalent to the trilateral being non-nil.

We begin by studying \textit{right triangles}, in particular the phenomenon of
parallax, and Napier's Rules. Then formulas for \textit{isosceles} and
\textit{equilateral} \textit{triangles }are derived, including the
\textit{Equilateral relation}. Then we establish the main \textit{theorems of
proportion}, such as Menelaus' theorem and Ceva's theorem and their duals.

\subsection{Right triangles, parallax and Napier's rules}

\begin{theorem}
[Thales]Suppose that $\overline{a_{1}a_{2}a_{3}}$ is a non-null non-nil right
triangle with $S_{3}=1.$ Then
\[
S_{1}=\frac{q_{1}}{q_{3}}\qquad\mathit{and}\qquad S_{2}=\frac{q_{2}}{q_{3}}.
\]

\end{theorem}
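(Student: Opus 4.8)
The plan is to obtain this as an essentially immediate corollary of the Spread law, with the only real work being to confirm that every quantity in sight is well-defined and that we may divide by $q_{3}$.

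First I would unpack the hypotheses. Since $\overline{a_{1}a_{2}a_{3}}$ is non-nil, none of the points $a_{1},a_{2},a_{3}$ is null, so all three quadrances $q_{1},q_{2},q_{3}$ are defined; since it is non-null, none of the lines $L_{1}=a_{2}a_{3}$, $L_{2}=a_{1}a_{3}$, $L_{3}=a_{1}a_{2}$ is null, so all three spreads $S_{1},S_{2},S_{3}$ are defined. The condition $S_{3}=1$ says, by the remark following the definition of spread, that the lines $L_{1}$ and $L_{2}$---equivalently $a_{2}a_{3}$ and $a_{1}a_{3}$---are perpendicular, so the right vertex sits at $a_{3}$. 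Crucially, because $L_{3}=a_{1}a_{2}$ is non-null, the Zero quadrance theorem forces $q_{3}=q\left(a_{1},a_{2}\right)\neq0$, so division by $q_{3}$ is legitimate.

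Then I would simply invoke the Spread law, which for these three distinct points reads
\[
\frac{S_{1}}{q_{1}}=\frac{S_{2}}{q_{2}}=\frac{S_{3}}{q_{3}}.
\]
Substituting $S_{3}=1$, the common value of these ratios is $1/q_{3}$. Reading the first and second ratios against this common value gives $S_{1}=q_{1}/q_{3}$ and $S_{2}=q_{2}/q_{3}$, which is exactly the claim.

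I do not anticipate any genuine obstacle: the Spread law does all the trigonometric work, and the hypotheses of the theorem are precisely what is needed to make its statement applicable. The one point requiring a moment's attention is the bookkeeping on definedness---confirming that the non-null non-nil assumptions guarantee the existence of all six quantities and, via the Zero quadrance theorem, the non-vanishing of $q_{3}$---since without that one cannot solve for $S_{1}$ and $S_{2}$. An alternative, more self-contained route would be to expand $S_{1},S_{2},q_{1},q_{2},q_{3}$ in coordinates using the Spread and Quadrance theorems and to verify the two identities directly; but that merely re-proves the Spread law in this special case and is strictly longer, so I would not pursue it.
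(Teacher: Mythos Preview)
Your proposal is correct and takes essentially the same approach as the paper, which says only ``This follows directly from the Spread law.'' Your additional bookkeeping on definedness and the use of the Zero quadrance theorem to justify $q_{3}\neq 0$ is a welcome elaboration of what the paper leaves implicit.
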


\begin{proof}
This follows directly from the Spread law.$%
{\hspace{.1in} \rule{0.5em}{0.5em}}%
$
\end{proof}

\begin{theorem}
[Thales' dual]Suppose that $\overline{L_{1}L_{2}L_{3}}$ is a non-null non-nil
right trilateral with $q_{3}=1.$ Then
\[
q_{1}=\frac{S_{1}}{S_{3}}\qquad\mathit{and}\qquad q_{2}=\frac{S_{2}}{S_{3}}.
\]

\end{theorem}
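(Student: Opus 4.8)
The plan is to invoke the Spread dual law, exactly as the Thales theorem invokes the Spread law. First I would record that the hypothesis that $\overline{L_{1}L_{2}L_{3}}$ is a non-null non-nil right trilateral guarantees that all the relevant measurements exist: being non-nil means none of $L_{1},L_{2},L_{3}$ is a null line, so the three spreads $S_{1},S_{2},S_{3}$ are defined, and being non-null means none of the points $L_{1}L_{2},L_{1}L_{3},L_{2}L_{3}$ is a null point, so the three quadrances $q_{1},q_{2},q_{3}$ are defined. With the index conventions of Section 4, I would note that $q_{3}=q\left(  L_{1}L_{3},L_{2}L_{3}\right)  $ and $S_{3}=S\left(  L_{1},L_{2}\right)  $, which matches the labelling in the Spread dual law, and that the hypothesis $q_{3}=1$ is exactly the statement that the side of the trilateral joining $L_{1}L_{3}$ and $L_{2}L_{3}$ is a right side.

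Next I would apply the Spread dual law directly to obtain the chain of equal ratios
\[
\frac{q_{1}}{S_{1}}=\frac{q_{2}}{S_{2}}=\frac{q_{3}}{S_{3}}.
\]
Substituting the hypothesis $q_{3}=1$ into the last ratio gives $q_{3}/S_{3}=1/S_{3}$, and equating this with the first two ratios yields $q_{1}=S_{1}/S_{3}$ and $q_{2}=S_{2}/S_{3}$, which is precisely the asserted conclusion.

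The only point requiring care is the legitimacy of dividing by $S_{3}$, and this is the sole mild obstacle in an otherwise immediate argument. I would justify $S_{3}\neq0$ as follows: $L_{1}$ and $L_{2}$ are distinct non-null lines, so the Zero spread theorem says $S_{3}=S\left(  L_{1},L_{2}\right)  =0$ precisely when the meet $L_{1}L_{2}$ is a null point; but $L_{1}L_{2}$ is a point of the trilateral, which is non-null by hypothesis, so $S_{3}\neq0$ and the ratios above are well-formed. Apart from confirming this non-degeneracy and checking that the indexing of the Spread dual law lines up with the right-trilateral hypothesis, the result is a genuine specialization of the Spread dual law and is fully dual to Thales' theorem.
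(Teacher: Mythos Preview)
Your argument is correct and is exactly the approach the paper takes: the paper simply says ``This is dual to Thales' theorem,'' and your write-up spells out precisely what that means, namely applying the Spread dual law with $q_{3}=1$. Your added justification that $S_{3}\neq0$ via the Zero spread theorem is a welcome bit of care that the paper leaves implicit.
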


\begin{proof}
This is dual to Thales' theorem.$%
{\hspace{.1in} \rule{0.5em}{0.5em}}%
$
\end{proof}

\begin{theorem}
[Right parallax]If a right triangle $\overline{a_{1}a_{2}a_{3}}$ has spreads
$S_{1}\equiv0$, $S_{2}\equiv S\neq0$ and $S_{3}\equiv1,$ then it will have
only one defined quadrance, namely%
\[
q_{1}=\frac{S-1}{S}.
\]

\end{theorem}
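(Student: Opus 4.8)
The plan is to read off the geometric content of each spread hypothesis and then apply the Cross dual law as a single algebraic identity. First I would record that, under the standing conventions $L_1 \equiv a_2 a_3$, $L_2 \equiv a_1 a_3$, $L_3 \equiv a_1 a_2$ and $S_1 \equiv S(L_2,L_3)$, $S_2 \equiv S(L_1,L_3)$, $S_3 \equiv S(L_1,L_2)$, the mere fact that the three spreads are prescribed (hence defined) forces each of $L_1,L_2,L_3$ to be a non-null line, so the trilateral $\overline{L_1 L_2 L_3}$ is non-nil and the Cross dual law will be available.

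Next I would locate the null vertex using the Zero spread theorem, which asserts that for distinct non-null lines $M,N$ one has $S(M,N)=0$ precisely when the meet $MN$ is a null point. Since $S_1 = S(L_2,L_3) = 0$ and $L_2 L_3 = a_1$, this gives that $a_1$ is a null point. Applying the same theorem in the other direction, $S_2 = S(L_1,L_3) = S \neq 0$ with $L_1 L_3 = a_2$, and $S_3 = S(L_1,L_2) = 1 \neq 0$ with $L_1 L_2 = a_3$, shows that $a_2$ and $a_3$ are both non-null. Hence $q_2 \equiv q(a_1,a_3)$ and $q_3 \equiv q(a_1,a_2)$ are undefined (each involves the null point $a_1$) while $q_1 \equiv q(a_2,a_3)$ is defined; this establishes the assertion that the triangle has exactly one defined quadrance.

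The final step is to invoke the Cross dual law for $\overline{L_1 L_2 L_3}$. Its quadrance is $q(L_1 L_2, L_1 L_3) = q(a_3,a_2) = q_1$, and since the spread labels coincide with ours the law reads
\[
\left( S_2 S_3 q_1 - S_1 - S_2 - S_3 + 2 \right)^2 = 4(1-S_1)(1-S_2)(1-S_3).
\]
Substituting $S_1 = 0$, $S_2 = S$, $S_3 = 1$ collapses the right-hand side to $0$, owing to the factor $1 - S_3 = 0$, and reduces the left-hand side to $\left( S q_1 - S + 1 \right)^2$. Therefore $S q_1 - S + 1 = 0$, and dividing by $S \neq 0$ gives $q_1 = (S-1)/S$, as claimed.

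I expect the only genuine subtlety to be the index bookkeeping: one must confirm that the quadrance $q(L_1 L_2, L_1 L_3)$ appearing in the Cross dual law really is the triangle's $q_1 = q(a_2,a_3)$ under the conventions $a_2 = L_1 L_3$, $a_3 = L_1 L_2$, and that the three spread labels match verbatim in both statements. Once this correspondence is secured the computation is immediate; it is the vanishing of $1 - S_3$ at the right vertex that drives the parallax phenomenon, degenerating the generically quadratic Cross dual law into a single linear equation for $q_1$.
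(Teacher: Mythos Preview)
Your proof is correct and follows essentially the same route as the paper: use the Zero spread theorem to identify $a_{1}$ as the unique null point (so that only $q_{1}$ is defined), and then specialize the Cross dual law with $S_{1}=0$, $S_{2}=S$, $S_{3}=1$ to obtain $(Sq_{1}-S+1)^{2}=0$. Your extra care in verifying that the lines are non-null and that the Cross dual law's $q_{1}$ coincides with the triangle's $q(a_{2},a_{3})$ is a welcome addition, but the argument is otherwise identical.
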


\begin{proof}
If $S_{1}=0$ then by the Zero spread theorem $a_{1}$ is a null point, so
$q_{2}$ and $q_{3}$ are undefined. Since $S_{2}$ and $S_{3}$ are by assumption
non-zero, $a_{2}$ and $a_{3}$ are non-null points. The Cross dual law%
\[
\left(  S_{2}S_{3}q_{1}-S_{1}-S_{2}-S_{3}+2\right)  ^{2}=4\left(
1-S_{1}\right)  \left(  1-S_{2}\right)  \left(  1-S_{3}\right)
\]
applies, and becomes%
\[
\left(  Sq_{1}+1-S\right)  ^{2}=0.
\]
Thus
\[
q_{1}=\frac{S-1}{S}.%
{\hspace{.1in} \rule{0.5em}{0.5em}}%
\]

\end{proof}

Reciprocally, we may restate the conclusion as
\[
S=\frac{1}{1-q_{1}}.
\]

\begin{theorem}
[Right parallax dual]If a right trilateral $\overline{A_{1}A_{2}A_{3}}$ has
quadrances $q_{1}\equiv0$, $q_{2}\equiv q\neq0$ and $q_{3}\equiv1,$ then it
will have only one defined spread, namely%
\[
S_{1}=\frac{q-1}{q}.
\]

\end{theorem}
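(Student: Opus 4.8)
The plan is to invoke the duality principle, since this statement is the exact formal dual of the Right parallax theorem: interchanging points and lines sends quadrances to spreads, a right triangle to a right trilateral, the Zero spread theorem to the Zero quadrance theorem, and the Cross dual law to the Cross law. Under this dictionary the hypotheses $S_1 \equiv 0$, $S_2 \equiv S \neq 0$, $S_3 \equiv 1$ of Right parallax become $q_1 \equiv 0$, $q_2 \equiv q \neq 0$, $q_3 \equiv 1$, and the conclusion $q_1 = (S-1)/S$ becomes $S_1 = (q-1)/q$. So the one-line proof, in the style of the paper's other dual results, is simply that this is dual to the Right parallax theorem.

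To make the correspondence concrete, I would also trace the direct argument. First, from $q_1 \equiv q(A_1A_2, A_1A_3) = 0$ and the Zero quadrance theorem, the join of the points $A_1A_2$ and $A_1A_3$ is a null line; since both of these points lie on $A_1$, that join is $A_1$ itself, so $A_1$ is a null line. Consequently the spreads $S_2 = S(A_1, A_3)$ and $S_3 = S(A_1, A_2)$ are undefined, leaving $S_1 = S(A_2, A_3)$ as the only defined spread---exactly the parallax phenomenon. The hypotheses $q_2 \neq 0$ and $q_3 \neq 0$ guarantee, again via the Zero quadrance theorem, that $A_2$ and $A_3$ are non-null, so $S_1$ is genuinely defined.

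Then I would specialize the Cross law. Substituting $q_1 = 0$, $q_2 = q$, $q_3 = 1$ collapses its right-hand side to $4 \cdot 1 \cdot (1-q) \cdot 0 = 0$, forcing the perfect square $(qS_1 - q + 1)^2 = 0$ on the left. Solving the resulting linear equation $qS_1 = q - 1$ yields $S_1 = (q-1)/q$, which one may also restate reciprocally as $q = 1/(1-S_1)$.

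The step I expect to require the most care is not the algebra but the bookkeeping of which spreads are defined: one must confirm that it is precisely $S_1$, and not $S_2$ or $S_3$, that survives, and this hinges on correctly identifying that $q_1 = 0$ forces the shared line $A_1$ to be null. This is the dual of the corresponding claim in Right parallax (that $S_1 = 0$ forces $a_1$ to be null), and once this indexing is pinned down the remainder is a routine specialization of the Cross law.
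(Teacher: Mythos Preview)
Your proposal is correct and matches the paper's own proof, which consists of the single line ``This is dual to the Right parallax theorem.'' Your additional explicit trace through the Zero quadrance theorem and the Cross law is sound and faithfully mirrors the argument of the primal Right parallax theorem, so nothing further is needed.
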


\begin{proof}
This is dual to the Right parallax theorem.$%
{\hspace{.1in} \rule{0.5em}{0.5em}}%
$
\end{proof}

The following theorem is of considerable practical importance.

\begin{theorem}
[Napier's rules]Suppose that a right triangle $\overline{a_{1}a_{2}a_{3}}$ has
quadrances $q_{1},q_{2}$ and $q_{3},$ and spreads $S_{1},S_{2}$ and
$S_{3}\equiv1.$ Then any two of the quantities $S_{1},S_{2},q_{1},q_{2},q_{3}$
determine the other three, solely by the three \textbf{basic equations} from
Thales' theorem and Pythagoras' theorem:
\[
S_{1}=\frac{q_{1}}{q_{3}}\qquad S_{2}=\frac{q_{2}}{q_{3}}\qquad\mathit{and}%
\qquad q_{3}=q_{1}+q_{2}-q_{1}q_{2}.
\]

\end{theorem}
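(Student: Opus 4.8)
The plan is to separate the geometry from the algebra. First I would invoke the two results already established. Since $S_{3}=1$, Thales' theorem immediately gives the two relations $S_{1}=q_{1}/q_{3}$ and $S_{2}=q_{2}/q_{3}$; and since $S_{3}=1$ is precisely the statement that the lines $a_{1}a_{3}$ and $a_{2}a_{3}$ are perpendicular, Pythagoras' theorem gives the third relation $q_{3}=q_{1}+q_{2}-q_{1}q_{2}$. With these three \emph{basic equations} in hand, the entire content of the theorem reduces to the purely algebraic assertion that this system of three relations among the five numbers $S_{1},S_{2},q_{1},q_{2},q_{3}$ allows any two of them to be used to recover the remaining three.

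Next I would carry out the elimination directly, writing the equations as $q_{1}=S_{1}q_{3}$, $q_{2}=S_{2}q_{3}$ and $q_{3}=q_{1}+q_{2}-q_{1}q_{2}$. There are $\binom{5}{2}=10$ choices of prescribed pair, but the involution exchanging the indices $1$ and $2$ (swapping $S_{1}\leftrightarrow S_{2}$ and $q_{1}\leftrightarrow q_{2}$ while fixing $q_{3}$) identifies these in pairs, leaving only about six genuinely distinct cases. In each case the three remaining quantities are obtained by substitution followed by a single linear solve: for example, given $q_{1}$ and $q_{3}$ one reads off $S_{1}=q_{1}/q_{3}$, solves the Pythagoras relation linearly as $q_{2}=(q_{3}-q_{1})/(1-q_{1})$, and sets $S_{2}=q_{2}/q_{3}$; given $q_{1}$ and $q_{2}$ one gets $q_{3}$ at once from Pythagoras and then both spreads; and the other pairs are handled the same way.

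The one case that is not merely linear, and hence the main obstacle, is the pair $(S_{1},S_{2})$. Substituting $q_{1}=S_{1}q_{3}$ and $q_{2}=S_{2}q_{3}$ into the Pythagoras relation yields
\[
q_{3}\bigl(1-S_{1}-S_{2}+S_{1}S_{2}q_{3}\bigr)=0,
\]
which is quadratic in $q_{3}$ and so appears to give two values. The resolution is that the root $q_{3}=0$ is spurious: because $\overline{a_{1}a_{2}a_{3}}$ is non-null, its line $a_{1}a_{2}$ is non-null, so the Zero quadrance theorem forces $q_{3}=q(a_{1},a_{2})\neq0$, leaving the unique value $q_{3}=(S_{1}+S_{2}-1)/(S_{1}S_{2})$ and thence $q_{1},q_{2}$. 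The remaining delicacy is to confirm that the denominators entering the linear solves — $q_{3}$, $S_{1}S_{2}$, and factors such as $1-q_{1}$ — are nonzero; the non-null non-nil right hypotheses do exactly this work, since non-nullity gives $q_{1},q_{2},q_{3}\neq0$ and, via the Zero spread theorem, non-nilness gives $S_{1},S_{2}\neq0$. Once these non-vanishing conditions are in place, each admissible pair determines the other three quantities uniquely and solely through the three basic equations, which is the assertion of the theorem.
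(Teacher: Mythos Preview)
Your proof is correct and follows essentially the same approach as the paper: establish the three basic equations from Thales and Pythagoras, then run through the cases of which pair is prescribed and solve the remaining variables by substitution. Your treatment of the $(S_{1},S_{2})$ case is in fact slightly more careful than the paper's, since you explicitly exhibit the quadratic in $q_{3}$ and dismiss the spurious root $q_{3}=0$ via the Zero quadrance theorem, whereas the paper simply divides through by $q_{3}$; one small overclaim is that the non-null non-nil hypotheses do not by themselves guarantee $1-q_{1}\neq0$, but the paper's proof does not address that edge case either.
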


\begin{proof}
Given two of the quadrances, determine the third via Pythagoras' theorem
$q_{3}=q_{1}+q_{2}-q_{1}q_{2}$, and then Thales' theorem gives the spreads.

Given two spreads $S_{1}$ and $S_{2}$, use Pythagoras' theorem and the
relations $q_{1}=S_{1}q_{3}$ and $q_{2}=S_{2}q_{3}$ to obtain%
\[
1=S_{1}+S_{2}-S_{1}S_{2}q_{3}.
\]
Thus%
\begin{align*}
q_{3}  & =\frac{S_{1}+S_{2}-1}{S_{1}S_{2}}\\
q_{1}  & =S_{1}q_{3}=\frac{S_{1}+S_{2}-1}{S_{2}}\\
q_{2}  & =S_{2}q_{3}=\frac{S_{1}+S_{2}-1}{S_{1}}.
\end{align*}
Given a spread, say $S_{1},$ and one of the quadrances, then there are three
possibilities. If the given quadrance is $q_{3},$ then $q_{1}=S_{1}q_{3}$ and%
\begin{align*}
q_{2}  & =\frac{q_{3}-q_{1}}{1-q_{1}}=\frac{q_{3}\left(  1-S_{1}\right)
}{1-S_{1}q_{3}}\\
S_{2}  & =\frac{q_{2}}{q_{3}}=\frac{1-S_{1}}{1-S_{1}q_{3}}.
\end{align*}
If the given quadrance is $q_{1},$ then
\[
q_{3}=\frac{q_{1}}{S_{1}}%
\]
and%
\begin{align*}
q_{2}  & =\frac{q_{3}-q_{1}}{1-q_{1}}=\frac{q_{1}\left(  1-S_{1}\right)
}{S_{1}\left(  1-q_{1}\right)  }\\
S_{2}  & =\frac{q_{2}}{q_{3}}=\frac{1-S_{1}}{1-q_{1}}.
\end{align*}
If the given quadrance is $q_{2},$ then substitute $q_{1}=S_{1}q_{3}$ into
Pythagoras' theorem to get
\[
q_{3}=S_{1}q_{3}+q_{2}-S_{1}q_{2}q_{3}.
\]
So%
\begin{align*}
q_{3}  & =\frac{q_{2}}{1-S_{1}\left(  1-q_{2}\right)  }\\
q_{1}  & =\frac{S_{1}q_{2}}{1-S_{1}\left(  1-q_{2}\right)  }\\
S_{2}  & =\frac{q_{2}}{q_{3}}=1-S_{1}\left(  1-q_{2}\right)  .%
{\hspace{.1in} \rule{0.5em}{0.5em}}%
\end{align*}

\end{proof}

The various equations derived in this proof are the analogs of Napier's rules,
and are \textit{fundamental for hyperbolic trigonometry}. It is perhaps best
to remember that all follow from the basic equations by elementary algebraic manipulations.

\begin{theorem}
[Napier's dual rules]Suppose that a right trilateral $\overline{A_{1}%
A_{2}A_{3}}$ has quadrances $q_{1},q_{2}$ and $q_{3}\equiv1,$ and spreads
$S_{1},S_{2}$ and $S_{3}.$ Then any two of the quantities $q_{1},q_{2}%
,S_{1},S_{2},S_{3}$ determine the other three, solely by the three
\textbf{basic equations} from Thales' dual theorem and Pythagoras' dual
theorem:
\[
q_{1}=\frac{S_{1}}{S_{3}}\qquad q_{2}=\frac{S_{2}}{S_{3}}\qquad\mathit{and}%
\qquad S_{3}=S_{1}+S_{2}-S_{1}S_{2}.
\]

\end{theorem}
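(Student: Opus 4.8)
Napier's dual rules is the exact dual of **Napier's rules**, so the plan is to invoke the duality principle directly, while also noting that one could alternatively mirror the self-contained algebraic derivation. The statement concerns a right trilateral $\overline{A_1A_2A_3}$ with $q_3=1$; this is dual to a right triangle with $S_3=1$, and the three basic equations listed—$q_1=S_1/S_3$, $q_2=S_2/S_3$ from Thales' dual theorem, and $S_3=S_1+S_2-S_1S_2$ from Pythagoras' dual theorem—are precisely the duals of the three equations that drive Napier's rules. Since every individual ingredient (Thales' dual, Pythagoras' dual, the Spread dual law) has already been established in the excerpt, the cleanest proof is the single line: ``This is dual to Napier's rules.''

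If instead one wants the explicit combinatorial verification (matching the style in which Napier's rules itself was proved), the plan is to treat the claim as an assertion about a system of three algebraic equations in the five unknowns $q_1,q_2,S_1,S_2,S_3$. First I would observe that the three basic equations are functionally independent enough that specifying any two of the five quantities pins down the remaining three. The argument then splits by cases according to which pair is given, exactly paralleling the primal proof but with quadrances and spreads interchanged. The case analysis is: (i) two spreads given among $S_1,S_2,S_3$; (ii) two quadrances $q_1,q_2$ given; and (iii) one spread and one quadrance given, which itself subdivides into three subcases depending on \emph{which} quadrance or spread is the known one.

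For each case the mechanism is elementary algebra: substitute the Thales' dual relations $S_1=q_1 S_3$ and $S_2=q_2 S_3$ into Pythagoras' dual equation $S_3=S_1+S_2-S_1S_2$, obtaining a single relation that can be solved for whichever quantity remains unknown. For instance, given $q_1$ and $q_2$ one computes $S_3$ from $1=q_1S_3+q_2S_3-q_1q_2S_3^2$—here a mild subtlety appears, since this is quadratic in $S_3$, reflecting the genuine ambiguity already flagged by the Cross law and the existence of two midlines. Given two spreads, the relations solve linearly; given a mixed pair, one again gets linear or at worst quadratic equations mirroring the three displayed subcase formulas in Napier's rules.

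The main obstacle is purely organizational rather than mathematical: one must verify that in \emph{every} admissible choice of two input quantities the resulting system is consistent and determines the other three, being careful about the quadratic branch (where two solutions can occur over a field, corresponding to the two midlines of the relevant vertex) and about degenerate denominators (e.g. when a spread equals $0$ or $1$, signalling a null or perpendicular configuration that must be excluded by the non-null non-nil hypothesis). Since none of this introduces any new identity beyond those already proved, I expect the duality invocation to be entirely sufficient, and the explicit case analysis to be a faithful term-by-term transcription of the Napier's rules proof with the roles of $q_i$ and $S_i$ swapped throughout.
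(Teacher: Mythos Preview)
Your primary proof---``This is dual to Napier's rules''---is exactly what the paper does, and is sufficient.

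However, your supplementary explicit derivation contains an error. When you substitute $S_1=q_1S_3$ and $S_2=q_2S_3$ into $S_3=S_1+S_2-S_1S_2$, you obtain $S_3=q_1S_3+q_2S_3-q_1q_2S_3^2$, not the equation $1=q_1S_3+q_2S_3-q_1q_2S_3^2$ that you wrote. Dividing through by $S_3$ (which is nonzero since the trilateral is non-null) gives the \emph{linear} relation $1=q_1+q_2-q_1q_2S_3$, hence $S_3=(q_1+q_2-1)/(q_1q_2)$ uniquely. There is no quadratic ambiguity here, and the reference to ``two midlines'' is misplaced: in the primal Napier's rules the paper derives $q_3=(S_1+S_2-1)/(S_1S_2)$ with no branching, and the dual behaves identically. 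The Cross law ambiguity you have in mind arises when three quadrances are given and one seeks a spread, not in the right-triangle/right-trilateral situation where two of the five quantities are specified.
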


\begin{proof}
This is dual to Napier's rules.$%
{\hspace{.1in} \rule{0.5em}{0.5em}}%
$
\end{proof}

\begin{example}
Let's investigate the existence of a triangle $\overline{a_{1}a_{2}a_{3}}$
with spreads $S_{1}\equiv1/4,$ $S_{2}\equiv1/2$ and $S_{3}\equiv1,$ which
correspond respectively to angles of $\pi/6,$ $\pi/4$ and $\pi/2$ over
suitable extension fields of $%
\mathbb{Q}
$. Since $\overline{a_{1}a_{2}a_{3}}$ is a right triangle, Napier's rules
hold, so that
\[
q_{3}=\frac{S_{1}+S_{2}-1}{S_{1}S_{2}}=-2\qquad q_{1}=S_{1}q_{3}=-\frac{1}%
{2}\qquad q_{2}=S_{2}q_{3}=-1.
\]
The quadrea is then $\mathcal{A}\left(  \overline{a_{1}a_{2}a_{3}}\right)
=1/2.$ To construct such a triangle let's assume that $a_{3}\equiv\left[
0:0:1\right]  $ and that $a_{1}\equiv\left[  x:0:1\right]  $ for some $x$ and
$a_{2}\equiv\left[  0:y:1\right]  $ for some $y.$ Then we must have
\[
q_{2}=\frac{x^{2}}{x^{2}-1}=-1
\]
so that $x=\pm1/\sqrt{2}$ and%
\[
q_{1}=\frac{y^{2}}{y^{2}-1}=-\frac{1}{2}%
\]
so that $y=\pm1/\sqrt{3}.$ Thus over $\mathbb{Q}\left(  \sqrt{2},\sqrt
{3}\right)  $ we can construct such a triangle.$%
\hspace{.1in}\diamond
$
\end{example}

\begin{example}
The same spreads as in the previous example can be achieved also over a finite
field. Since $5^{2}=2$ and $7^{2}=3$ in $\mathbb{F}_{23},$ over this field
there is a triangle $\overline{a_{1}a_{2}a_{3}}$ with spreads $S_{1}%
\equiv1/4=6,$ $S_{2}\equiv1/2=12$ and $S_{3}\equiv1$: choose
\[
a_{1}\equiv\left[  14:0:1\right]  \qquad a_{2}\equiv\left[  0:10:1\right]
\qquad a_{3}\equiv\left[  0:0:1\right]  .%
\hspace{.1in}\diamond
\]

\end{example}

\begin{example}
Suppose that $\overline{a_{1}a_{2}a_{3}}$ is a non-null right triangle with
$S_{3}\equiv1.$ Let $b$ denote the base point of the couple $\overline
{a_{3}\left(  a_{1}a_{2}\right)  }$. Define the quadrances
\[
r_{1}\equiv q\left(  a_{1},b\right)  \qquad r_{2}\equiv q\left(
a_{2},b\right)  \qquad\mathit{and}\qquad r_{3}\equiv q\left(  a_{3},b\right)
.
\]
By the Complimentary spreads theorem $S\left(  a_{3}b,a_{3}a_{2}\right)
=1-S_{2}=S_{1}.$ Then by Thales' theorem applied to the triangles
$\overline{a_{1}a_{2}a_{3}},\overline{a_{1}ba_{3}}$ and $\overline{ba_{2}%
a_{3}},$%
\[
S_{1}=\frac{q_{1}}{q_{3}}=\frac{r_{3}}{q_{2}}=\frac{r_{2}}{q_{1}}%
\]
so that
\[
r_{3}=\frac{q_{1}q_{2}}{q_{3}}\qquad r_{2}=\frac{q_{1}^{2}}{q_{3}}%
\qquad\mathit{and}\qquad r_{1}=\frac{q_{2}^{2}}{q_{3}}%
\]
the last by symmetry.$%
\hspace{.1in}\diamond
$
\end{example}

\subsection{Isosceles triangles}

The results of this section have obvious duals which we leave the reader to formulate.

A triangle $\overline{a_{1}a_{2}a_{3}}$ is \textbf{isosceles }precisely when
at least two of its quadrances are equal or at least two of its spreads are
equal. If all are defined, then the two conditions are equivalent.

\begin{theorem}
[Pons Asinorum]Suppose that a triangle $\overline{a_{1}a_{2}a_{3}}$ has
quadrances $q_{1},q_{2}$ and $q_{3},$ and spreads $S_{1},S_{2}$ and $S_{3}.$
Then $q_{1}=q_{2}$ precisely when $S_{1}=S_{2}.$
\end{theorem}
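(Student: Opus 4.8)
The plan is to derive the theorem directly from the Spread law, which under the standing conventions of this section reads $S_1/q_1 = S_2/q_2 = S_3/q_3$, equivalently in cross-multiplied form $S_1 q_2 = S_2 q_1$. The entire content of Pons Asinorum is then the observation that this single relation forces $q_1 = q_2$ and $S_1 = S_2$ to hold or fail together, once one knows that the quantities involved do not vanish.

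First I would record the nonvanishing. Since the theorem presupposes that all three quadrances and all three spreads are defined, the triangle $\overline{a_1 a_2 a_3}$ is both non-null and non-nil: every line $L_i$ and every point $a_i$ is non-null. By the Zero quadrance theorem, $q_1 = q\left(a_2, a_3\right) = 0$ would force $L_1 = a_2 a_3$ to be a null line, so $q_1 \neq 0$, and likewise $q_2 \neq 0$. Dually, by the Zero spread theorem, $S_1 = S\left(L_2, L_3\right) = 0$ would force the meet $L_2 L_3 = a_1$ to be a null point, so $S_1 \neq 0$, and likewise $S_2 \neq 0$.

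With these facts in hand the two implications are immediate. If $q_1 = q_2$, then $S_1 q_2 = S_2 q_1$ becomes $\left(S_1 - S_2\right) q_1 = 0$, and cancelling the nonzero factor $q_1$ gives $S_1 = S_2$. Conversely, if $S_1 = S_2$, the same relation becomes $S_1\left(q_2 - q_1\right) = 0$, and cancelling the nonzero factor $S_1$ gives $q_1 = q_2$. Both cancellations are valid over any field not of characteristic two.

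The only point that requires genuine care---and hence the main obstacle---is the nonvanishing step: the naive Spread-law argument would stall in the degenerate situation $S_1 = S_2 = 0$ (backward direction) or $q_1 = q_2 = 0$ (forward direction). I expect to dispose of this precisely by invoking the Zero quadrance and Zero spread theorems, which show that for a genuine non-null non-nil triangle these degeneracies cannot occur. No polynomial identity beyond the Spread law is needed.
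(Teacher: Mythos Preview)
Your proof is correct and follows the same approach as the paper, which simply states that the result follows from the Spread law $S_1/q_1 = S_2/q_2 = S_3/q_3$. You are in fact more careful than the paper in explicitly justifying, via the Zero quadrance and Zero spread theorems, that the relevant factors are nonzero so that the cancellations go through.
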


\begin{proof}
This follows from the Spread law%
\[
\frac{S_{1}}{q_{1}}=\frac{S_{2}}{q_{2}}=\frac{S_{3}}{q_{3}}.%
{\hspace{.1in} \rule{0.5em}{0.5em}}%
\]

\end{proof}

\begin{theorem}
[Isosceles right]If $\overline{a_{1}a_{2}a_{3}}$ is a non-nil isosceles
triangle with two right spreads $S_{1}=S_{2}\equiv1,$ then also $q_{1}%
=q_{2}=1,$ and furthermore $S_{3}=q_{3}.$
\end{theorem}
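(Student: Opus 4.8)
The plan is to combine the Spread law with Pythagoras' theorem, using the fact that both right spreads being \emph{defined} forces the relevant lines to be non-null, so that no denominator vanishes.

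First I would record the geometric content of the hypotheses. Since $S_1 = S(L_2,L_3)$ and $S_2 = S(L_1,L_3)$ take the value $1$, they are in particular defined, and the remarks following the definition of spread show each of the lines $L_1 = a_2a_3$, $L_2 = a_1a_3$, $L_3 = a_1a_2$ is non-null, while $S_1 = 1$ and $S_2 = 1$ say exactly that $L_2 \perp L_3$ (a right spread at $a_1$) and $L_1 \perp L_3$ (a right spread at $a_2$). Because the triangle is non-nil, the three quadrances $q_1,q_2,q_3$ are all defined. Crucially, since $a_1$ and $a_2$ are distinct and $L_3 = a_1a_2$ is non-null, the Zero quadrance theorem yields $q_3 \neq 0$. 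Securing this non-vanishing is the only place the non-nil and definedness hypotheses really bite, and it is the step I must carry out before any division.

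Next I would apply the Spread law $\frac{S_1}{q_1} = \frac{S_2}{q_2} = \frac{S_3}{q_3}$. From $S_1 = S_2 = 1$ the first equality reads $\frac{1}{q_1} = \frac{1}{q_2}$, giving $q_1 = q_2$, the isosceles symmetry of the quadrances. To pin down the common value I would invoke Pythagoras' theorem at the right vertex $a_1$: because $L_2 = a_1a_3$ and $L_3 = a_1a_2$ are perpendicular, the quadrance $q_1 = q(a_2,a_3)$ opposite $a_1$ satisfies $q_1 = q_2 + q_3 - q_2q_3$. Substituting $q_2 = q_1$ gives $0 = q_3(1 - q_1)$, and since $q_3 \neq 0$ this forces $q_1 = 1$, hence $q_1 = q_2 = 1$.

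Finally, feeding $q_1 = 1$ and $S_1 = 1$ back into the Spread law gives $\frac{S_3}{q_3} = \frac{S_1}{q_1} = 1$, that is $S_3 = q_3$, which completes the proof. The argument is almost entirely structural, drawing only on the Spread law, Pythagoras, and the Zero quadrance theorem; the sole genuine obstacle is the bookkeeping that guarantees every denominator $q_1,q_2,q_3$ is nonzero, which I would dispatch up front from the lines being non-null together with the Zero quadrance theorem.
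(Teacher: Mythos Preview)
Your proof is correct and follows essentially the same route as the paper's: both use the Spread law (the paper phrases the first step as Thales' theorem, which is the Spread law specialised to a right triangle) to get $q_1=q_2$, then Pythagoras at a right vertex together with $q_3\neq 0$ to force $q_1=q_2=1$, and finally the Spread law again for $S_3=q_3$. Your handling of the non-vanishing of the quadrances via the Zero quadrance theorem matches the paper's as well.
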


\begin{proof}
If $S_{1}$ and $S_{2}$ are defined then $\overline{a_{1}a_{2}a_{3}}$ is a
non-null triangle, so by the Zero quadrances theorem all the quadrances are
non-zero. Thales' law shows that
\[
1=\frac{q_{2}}{q_{1}}%
\]
so $q_{1}=q_{2}.$ Pythagoras' theorem then gives%
\[
q_{1}=q_{2}+q_{3}-q_{2}q_{3}=q_{1}+q_{3}-q_{1}q_{3}%
\]
from which $q_{1}=q_{2}=1.$ Then the Spread law shows that $S_{3}=q_{3}.%
{\hspace{.1in} \rule{0.5em}{0.5em}}%
$
\end{proof}

\begin{theorem}
[Isosceles mid]Suppose that an isosceles triangle $\overline{a_{1}a_{2}a_{3}}$
has quadrances $q_{1}=q_{2}\equiv q$ and $q_{3},$ and corresponding spreads
$S_{1}=S_{2}\equiv S$ and $S_{3},$ and that the couple $\overline{a_{3}\left(
a_{1}a_{2}\right)  }$ is non-dual, with base point $b.$ If
\[
r_{1}\equiv q\left(  a_{1},b\right)  \qquad r_{2}\equiv q\left(
a_{2},b\right)  \qquad\mathit{and}\qquad r_{3}\equiv q\left(  a_{3},b\right)
\]
then%
\[
r_{3}=Sq
\]
and%
\[
r_{1}=r_{2}=\frac{q\left(  1-S\right)  }{1-Sq}.
\]

\end{theorem}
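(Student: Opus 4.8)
The plan is to realize the base point $b$ as the foot of the altitude dropped from the apex $a_3$ to the base line $a_1a_2$, which splits the isosceles triangle into two right triangles sharing the leg $a_3b$; then $r_3$ comes from Thales' theorem and $r_1=r_2$ from Pythagoras' theorem.

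First I would record what the construction of $b$ provides. By the Base point theorem $b\equiv\left(a_3(a_1a_2)^{\bot}\right)(a_1a_2)$ lies both on the line $a_1a_2$ and on the altitude line $N\equiv a_3(a_1a_2)^{\bot}$, and by the Altitude line theorem $N$ passes through $a_3$ and is perpendicular to $a_1a_2$. Since $b$ and $a_1$ both lie on $a_1a_2$ we have $ba_1=a_1a_2$, and since $b$ and $a_3$ both lie on $N$ we have $ba_3=N$. Hence in the triangle $\overline{a_1a_3b}$ the vertex at $b$ is a right vertex, because $S\left(ba_1,ba_3\right)=S\left(a_1a_2,N\right)=1$; the same holds for $\overline{a_2a_3b}$. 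Here I assume $b$ is distinct from $a_1,a_2,a_3$ and non-null, so that these are genuine non-null non-nil right triangles; the degenerate case $S=1$, where $b$ coincides with $a_1$ or $a_2$, I would check directly and find it agrees with the stated formulas.

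Next I would apply Thales' theorem to the right triangle $\overline{a_1a_3b}$, whose right vertex is $b$, whose hypotenuse is the side $\overline{a_1a_3}$ with $q\left(a_1,a_3\right)=q_2=q$, and whose spread at $a_1$ is $S\left(a_1a_2,a_1a_3\right)=S_1=S$. Thales expresses this spread as the quotient of the quadrance of the opposite side $\overline{a_3b}$ by the hypotenuse quadrance, that is $S=r_3/q$, so $r_3=Sq$. The symmetric computation on $\overline{a_2a_3b}$, using $q\left(a_2,a_3\right)=q_1=q$ and the spread $S_2=S$ at $a_2$, returns the same value $r_3=Sq$, as it must.

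Finally, Pythagoras' theorem applied to $\overline{a_1a_3b}$, whose legs from the right vertex $b$ have quadrances $r_1=q\left(a_1,b\right)$ and $r_3=q\left(a_3,b\right)$ and whose hypotenuse has quadrance $q$, yields $q=r_1+r_3-r_1r_3$. Substituting $r_3=Sq$ and solving gives $r_1=(q-Sq)/(1-Sq)=q(1-S)/(1-Sq)$, and the identical argument on $\overline{a_2a_3b}$ gives the same value for $r_2$, whence $r_1=r_2$. The main obstacle is not the algebra but the bookkeeping in the middle step: one must correctly match the vertices, sides and spreads of each sub-triangle to the hypotheses of Thales' and Pythagoras' theorems — in particular verifying $ba_1=a_1a_2$ and that the relevant spread really is $S_1=S_2=S$ — and confirm the non-null, non-nil, non-degeneracy conditions under which those theorems were established.
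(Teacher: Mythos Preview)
Your proof is correct and follows essentially the same route as the paper: split the isosceles triangle by the altitude from $a_3$, apply Thales' theorem to get $r_3=Sq$, then Pythagoras' theorem to solve for $r_1=r_2$. The one place the paper is more careful is in justifying that $1-Sq\neq 0$ before dividing: it argues that $r_3=Sq=1$ would force $q_2=1$, hence $S_1=1$ and by symmetry $S_2=1$, making both $a_1$ and $a_2$ base points of the same couple, a contradiction; you should make this explicit rather than folding it into a general remark about degeneracy.
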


\begin{proof}
Since the couple $\overline{a_{3}\left(  a_{1}a_{2}\right)  }$ is non-dual by
assumption, there is by the Base point theorem a unique point $b$ which lies
on $a_{1}a_{2}$ and for which $ba_{3}$ is perpendicular to $a_{1}a_{2}.$ By
Pythagoras' theorem%
\[
q_{2}=r_{1}+r_{3}-r_{1}r_{3}%
\]
while Thales' theorem shows that%
\[
S=\frac{r_{3}}{q}%
\]
so that
\[
r_{3}=Sq.
\]
If $r_{3}=Sq=1$ then $q_{2}=1,$ so that in $\overline{a_{1}a_{3}b}$ the Spread
law gives $S_{1}=1$. But then by symmetry $S_{2}=1,$ and so both $a_{1}$ and
$a_{2}$ are base points of $\overline{a_{3}\left(  a_{1}a_{2}\right)  },$
which is impossible. So $r_{3}\neq1$ and
\[
r_{1}=\frac{q-r_{3}}{1-r_{3}}=\frac{q\left(  1-S\right)  }{1-Sq}.
\]
By symmetry $r_{1}=r_{2}.%
{\hspace{.1in} \rule{0.5em}{0.5em}}%
$
\end{proof}

\begin{theorem}
[Isosceles triangle]Suppose that an isosceles triangle $\overline{a_{1}%
a_{2}a_{3}}$ has quadrances $q_{1}=q_{2}\equiv q$ and $q_{3},$ and
corresponding spreads $S_{1}=S_{2}\equiv S$ and $S_{3},$ and that the couple
$\overline{a_{3}\left(  a_{1}a_{2}\right)  }$ is non-dual. Then
\[
q_{3}=\frac{4\left(  1-S\right)  q\left(  1-q\right)  }{\left(  1-Sq\right)
^{2}}\qquad\mathit{and}\qquad S_{3}=\frac{4S\left(  1-S\right)  \left(
1-q\right)  }{\left(  1-Sq\right)  ^{2}}.
\]
Furthermore $1-q_{3}$ is a square.
\end{theorem}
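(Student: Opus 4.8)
The plan is to exploit the midpoint $b$ of the base side $\overline{a_1a_2}$ that was already constructed in the preceding Isosceles mid theorem, and to apply Pythagoras' theorem and Thales' theorem to the two congruent right sub-triangles into which $b$ divides the original triangle. From the Isosceles mid theorem I may take as given that
\[
r_3 = Sq \qquad\text{and}\qquad r_1 = r_2 = \frac{q(1-S)}{1-Sq},
\]
where $r_i \equiv q(a_i,b)$. The key observation is that $b$ lies on $a_1a_2$, so the three collinear points $a_1$, $a_2$ and $b$ let me recover $q_3 = q(a_1,a_2)$ from $r_1$ and $r_2$. Since $r_1 = r_2$, the side $\overline{a_1a_2}$ is split into two equal pieces, and $q_3$ is obtained by the ``doubling'' relation for collinear points.

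\textbf{First step: compute $q_3$.} I would apply the Triple quad formula to the collinear triple $a_1,a_2,b$, with the two equal quadrances $r_1 = r_2$ from $b$ to the endpoints and the unknown $q_3$ spanning the whole side. By the Equal quadrances spreads theorem, $S(r_1,r_2,q_3)=0$ with $r_1=r_2$ forces either $q_3 = 0$ or $q_3 = 4r_1(1-r_1) = S_2(r_1)$, the second spread polynomial evaluated at $r_1$. The degenerate case $q_3=0$ would make $a_1a_2$ null, excluded since the couple $\overline{a_3(a_1a_2)}$ is non-dual, so $q_3 = 4r_1(1-r_1)$. Substituting $r_1 = q(1-S)/(1-Sq)$ and simplifying the rational expression should yield exactly
\[
q_3 = \frac{4(1-S)q(1-q)}{(1-Sq)^2}.
\]

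\textbf{Second step: compute $S_3$.} For the spread $S_3$ I would use the Spread law in the form $S_3/q_3 = S_1/q_1 = S/q$, giving $S_3 = q_3 S/q$. Plugging in the value of $q_3$ just found and cancelling one factor of $q$ produces
\[
S_3 = \frac{4S(1-S)(1-q)}{(1-Sq)^2}.
\]
Finally, for the claim that $1-q_3$ is a square, I would compute
\[
1 - q_3 = 1 - \frac{4(1-S)q(1-q)}{(1-Sq)^2} = \frac{(1-Sq)^2 - 4(1-S)q(1-q)}{(1-Sq)^2},
\]
and the main task is to recognize the numerator as a perfect square. A short expansion should show $(1-Sq)^2 - 4(1-S)q(1-q) = (1 - 2q + Sq)^2$, so that $1-q_3 = \bigl((1-2q+Sq)/(1-Sq)\bigr)^2$ is manifestly a square.

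\textbf{Anticipated obstacle.} The genuinely computational part is the algebraic simplification in the first step: substituting the fraction $r_1 = q(1-S)/(1-Sq)$ into $4r_1(1-r_1)$ and verifying that the result collapses to the stated closed form. The risk is purely bookkeeping—keeping the common denominator $(1-Sq)^2$ straight while combining $1 - r_1 = (1-Sq-q+Sq)/(1-Sq) = (1-q)/(1-Sq)$. Recognizing the square in the last step is the other place where care is needed, but once the identity $(1-Sq)^2 - 4(1-S)q(1-q) = (1-2q+Sq)^2$ is spotted it is immediate; I would verify it by expanding both sides. Everything else follows mechanically from the already-established Isosceles mid theorem, the Equal quadrances spreads theorem, and the Spread law.
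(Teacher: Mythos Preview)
Your approach is essentially identical to the paper's: invoke the Isosceles mid theorem for $r_1$, obtain $q_3 = 4r_1(1-r_1)$, substitute, then use the Spread law for $S_3$ and a direct expansion for the square. One small imprecision: the non-dual hypothesis on the couple $\overline{a_3(a_1a_2)}$ does \emph{not} by itself exclude $a_1a_2$ being null; the correct reason $q_3\neq 0$ is that the spreads $S_1,S_2,S_3$ are assumed defined, so all three lines of the triangle are non-null, and then the Zero quadrance theorem applies.
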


\begin{proof}
Using the notation of the Isosceles triangle mid theorem,
\[
q_{3}=S_{2}\left(  r_{1}\right)  =4r_{1}\left(  1-r_{1}\right)
\]
which is
\[
q_{3}=4\times\frac{q\left(  1-S\right)  }{1-Sq}\times\frac{\left(  1-q\right)
}{1-Sq}=\frac{4\left(  1-S\right)  q\left(  1-q\right)  }{\left(  1-Sq\right)
^{2}}.
\]
Use the Spread law%
\[
\frac{S_{3}}{q_{3}}=\frac{S}{q}%
\]
to get%
\[
S_{3}=\frac{4S\left(  1-S\right)  \left(  1-q\right)  }{\left(  1-Sq\right)
^{2}}.
\]

Then
\[
1-q_{3}=1-\frac{4q\left(  1-q\right)  \left(  1-S\right)  }{\left(
1-Sq\right)  ^{2}}=\allowbreak\frac{\left(  Sq-2q+1\right)  ^{2}}{\left(
1-Sq\right)  ^{2}}%
\]
so that $1-q_{3}$ is indeed a square. Alternatively, since $b\,$is a midpoint
of $\overline{a_{1}a_{2}},$ the Midpoints theorem also shows that $1-q_{3}$ is
a square.$%
{\hspace{.1in} \rule{0.5em}{0.5em}}%
$
\end{proof}

\begin{example}
Suppose that $a_{1}\equiv\left[  a:0:1\right]  $, $a_{2}\equiv\left[
-a:0:1\right]  $ and $a_{3}\equiv\left[  0:b:1\right]  $, so the triangle
$\overline{a_{1}a_{2}a_{3}}$ has quadrances%
\[
q_{1}=q_{2}\equiv q=-\frac{a^{2}+b^{2}-a^{2}b^{2}}{\left(  1-a^{2}\right)
\left(  1-b^{2}\right)  }\allowbreak\qquad\mathit{and}\qquad q_{3}%
=-\frac{4a^{2}}{\left(  a^{2}-1\right)  ^{2}}%
\]
and spreads
\[
S_{1}=S_{2}\equiv S=\frac{b^{2}\left(  1-a^{2}\right)  }{a^{2}+b^{2}%
-a^{2}b^{2}}\qquad\mathit{and}\qquad S_{3}=\frac{4a^{2}b^{2}\left(
1-b^{2}\right)  }{\left(  a^{2}+b^{2}-a^{2}b^{2}\right)  ^{2}}.
\]
You may check that the relations in the Isosceles triangle theorem are
satisfied.$%
\hspace{.1in}\diamond
$
\end{example}

\begin{theorem}
[Isosceles parallax]If $\overline{a_{1}a_{2}a_{3}}$ is a non-null isosceles
triangle with $a_{1}$ a null point, $q_{1}\equiv q$ and $S_{2}=S_{3}\equiv S,$
then
\[
q=\frac{4\left(  S-1\right)  }{S^{2}}.
\]

\end{theorem}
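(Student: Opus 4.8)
The plan is to recognize this as a ``parallax'' phenomenon governed entirely by the Cross dual law, exactly as in the Right parallax theorem, but now with the isosceles hypothesis $S_2 = S_3$ replacing the right-angle hypothesis. The whole argument rests on first computing the spread at the null vertex and then feeding the three spreads into the Cross dual law.

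First I would pin down the spread $S_1 \equiv S\left(a_1 a_3, a_1 a_2\right)$ at the null vertex $a_1$. Since the triangle is non-null, the two lines $a_1 a_3$ and $a_1 a_2$ are distinct and non-null, so the Zero spread theorem applies: their meet is $\left(a_1 a_3\right)\left(a_1 a_2\right) = a_1$, which is a null point by hypothesis, and therefore $S_1 = 0$. This is the key structural input. I would also note that $a_2$ and $a_3$ must be non-null, since otherwise $q_1 = q\left(a_2, a_3\right)$ would be undefined; hence by the Zero spread theorem again $S = S_2 = S\left(a_2 a_3, a_1 a_2\right) \neq 0$, because its meet $a_2$ is non-null. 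This secures $S \neq 0$ for the division at the end.

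Next I would apply the Cross dual law directly to the lines $L_1, L_2, L_3$ of the triangle, whose labelling already matches that theorem's conventions: $S\left(L_2, L_3\right) = S_1$, $S\left(L_1, L_3\right) = S_2 = S$, $S\left(L_1, L_2\right) = S_3 = S$, and $q\left(L_1 L_2, L_1 L_3\right) = q\left(a_3, a_2\right) = q$. Substituting $S_1 = 0$ and $S_2 = S_3 = S$ into
\[
\left(S_2 S_3 q - S_1 - S_2 - S_3 + 2\right)^2 = 4\left(1 - S_1\right)\left(1 - S_2\right)\left(1 - S_3\right)
\]
gives $\left(S^2 q - 2S + 2\right)^2 = 4\left(1 - S\right)^2$. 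Writing this as a difference of squares and factoring yields
\[
S^2 q \left(S^2 q - 4\left(S - 1\right)\right) = 0.
\]

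Finally I would discard the extraneous root. The factor $S^2 q = 0$ would force $q = q_1 = 0$ (as $S \neq 0$), which by the Zero quadrance theorem would make $a_2 a_3 = L_1$ a null line, contradicting the assumption that the triangle is non-null. Hence $S^2 q = 4\left(S - 1\right)$, and dividing by $S^2 \neq 0$ gives $q = 4\left(S - 1\right)/S^2$ as claimed. The only real subtlety---the main obstacle---is the bookkeeping needed to legitimately eliminate the spurious solution $q = 0$ and to guarantee $S \neq 0$ so that the division is valid; both hinge on translating the non-nullness of the triangle's lines and of the points $a_2, a_3$ through the Zero spread and Zero quadrance theorems.
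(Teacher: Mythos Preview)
Your proof is correct and follows a genuinely different route from the paper's. The paper proves this geometrically: it drops the altitude from the null point $a_1$ to $L_1 = a_2 a_3$, obtaining a base point $b$, then applies the Right parallax theorem to each of the right triangles $\overline{a_1 a_2 b}$ and $\overline{a_1 a_3 b}$ to get $q(a_2,b) = q(a_3,b) = (S-1)/S \equiv r$, and finally uses the Equal quadrances spreads theorem to conclude $q = 4r(1-r) = 4(S-1)/S^2$. Your approach bypasses all of this construction by going straight to the Cross dual law with $S_1 = 0$, which is more direct and purely algebraic. In fact the paper itself sketches essentially your argument later, as an example following the Nil cross law, noting that the special case $S_1 = S_2 \equiv S$ recovers the Isosceles parallax formula. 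What the paper's geometric proof buys is that it never needs to discard a spurious root (the $q=0$ branch simply does not arise), whereas your approach is shorter but requires the extra step of ruling out $q=0$ via the Zero quadrance theorem; you handle this cleanly.
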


\begin{proof}
The quadrance $q_{1}$ is non-zero since $\overline{a_{1}a_{2}a_{3}}$ is by
assumption non-null, and so $L_{1}\equiv a_{2}a_{3}$ is a non-null line. Since
$a_{1}$ is a null point, the couple $\overline{a_{1}L_{1}}$ is non-dual, and
so by the Altitude line and Base point theorems has an altitude line $N$ and a
base point $b.$ Apply the Right parallax theorem to both $\overline{a_{1}%
a_{2}b}$ and $\overline{a_{1}a_{3}b}$ to get
\[
q\left(  a_{1},b\right)  =q\left(  a_{2},p\right)  =\frac{S-1}{S}\equiv r.
\]
By the Equal quadrances theorem,
\[
q=4r\left(  1-r\right)  =4\left(  \frac{S-1}{S}\right)  \left(  \frac{1}%
{S}\right)  =\frac{4\left(  S-1\right)  }{S^{2}}.%
{\hspace{.1in} \rule{0.5em}{0.5em}}%
\]

\end{proof}

\subsection{Equilateral triangles}

A triangle is \textbf{equilateral }precisely when either all its quadrances
are equal or all its spreads are equal. In case these are all defined, Pons
Asinorum implies that these two conditions are equivalent. The following
formula appeared in the Euclidean spherical case as Exercise 24.1 in
\cite{Wild1}.

\begin{theorem}
[Equilateral]Suppose that a triangle $\overline{a_{1}a_{2}a_{3}}$ is
equilateral with common quadrance $q_{1}=q_{2}=q_{3}\equiv q,$ and with common
spread $S_{1}=S_{2}=S_{3}\equiv S$. Then%
\begin{equation}
\left(  1-Sq\right)  ^{2}=4\left(  1-S\right)  \left(  1-q\right)
.\label{Equilateral Relation}%
\end{equation}

\end{theorem}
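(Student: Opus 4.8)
The plan is to derive the Equilateral relation as an immediate specialization of the Cross law, the most powerful of the trigonometric laws established above. First I would record that since $\overline{a_{1}a_{2}a_{3}}$ is equilateral with a common quadrance $q$ and a common spread $S$, all three quadrances and all three spreads are defined; hence the triangle is non-null, and the Zero quadrance theorem forces each quadrance to be non-zero. In particular $q\neq 0$, a fact that will be needed to justify a division at the very end.

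Next I would bring in the quadrea. By the Quadrea theorem, $\mathcal{A}=q_{2}q_{3}S_{1}=q^{2}S$. Substituting $q_{1}=q_{2}=q_{3}=q$ and $\mathcal{A}=q^{2}S$ into the Cross law in the form (\ref{Crosslaw2}),
\[
\left(\mathcal{A}-q_{1}-q_{2}-q_{3}+2\right)^{2}=4\left(1-q_{1}\right)\left(1-q_{2}\right)\left(1-q_{3}\right),
\]
collapses everything to the single relation
\[
\left(q^{2}S-3q+2\right)^{2}=4\left(1-q\right)^{3}.
\]

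Finally I would invoke the algebraic identity
\[
\left(q^{2}S-3q+2\right)^{2}-4\left(1-q\right)^{3}=q^{2}\left(\left(1-Sq\right)^{2}-4\left(1-S\right)\left(1-q\right)\right),
\]
which is verified by expanding both sides as polynomials in $q$ and $S$. The preceding relation says its left-hand side vanishes, so the right-hand side does too; since $q\neq 0$, dividing by $q^{2}$ yields exactly (\ref{Equilateral Relation}). The only real work is checking that the difference of the two degree-six polynomials factors off a clean $q^{2}$, leaving the desired quadratic expression; everything else is a direct appeal to the Quadrea theorem and the Cross law. I expect this factoring identity to be the main, though entirely routine, obstacle, together with the small point of confirming $q\neq 0$ so that the final division is legitimate.
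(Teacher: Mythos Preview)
Your proof is correct, and it takes a genuinely different route from the paper's. The paper derives the Equilateral relation from the Isosceles triangle theorem: it first disposes of the degenerate case $Sq=1$ (where each point is dual to the opposite line, forcing $q=1$), and otherwise invokes the formula $q_{3}=\dfrac{4q(1-q)(1-S)}{(1-Sq)^{2}}$ from the Isosceles triangle theorem, sets $q_{3}=q$, and cancels $q$ to obtain (\ref{Equilateral Relation}).

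Your argument bypasses the whole chain Isosceles mid $\rightarrow$ Isosceles triangle (and the accompanying non-dual couple hypothesis) by going straight to the Cross law in the form (\ref{Crosslaw2}) together with the Quadrea theorem, and then appealing to the clean polynomial factorization
\[
\left(q^{2}S-3q+2\right)^{2}-4(1-q)^{3}=q^{2}\bigl((1-Sq)^{2}-4(1-S)(1-q)\bigr).
\]
This is more direct and self-contained; it also makes transparent why the $q\neq 0$ hypothesis (which you correctly justify via the Zero quadrance theorem) is exactly what is needed. The paper's approach, by contrast, is more geometric in flavor---it exhibits the equilateral case as a specialization of the isosceles one---but at the cost of the extra machinery and the separate treatment of $Sq=1$. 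Both routes ultimately rest on the Cross law (since the Isosceles triangle theorem is built on Pythagoras and Thales, which are themselves specializations of it), but yours gets there in one step.
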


\begin{proof}
If $Sq=1$ then any point of the triangle is the dual of the opposite line, so
that all the quadrances are equal to $1,$ so both sides of the Equilateral
relation are zero. Otherwise the result is a consequence of the Isosceles
triangle theorem, which gives
\[
q_{3}=q=\frac{4q\left(  1-q\right)  \left(  1-S\right)  }{\left(  1-Sq\right)
^{2}}%
\]
so that
\[
\left(  1-Sq\right)  ^{2}=4\left(  1-S\right)  \left(  1-q\right)  .%
{\hspace{.1in} \rule{0.5em}{0.5em}}%
\]

\end{proof}

The \textbf{equilateral relation} (\ref{Equilateral Relation}) is symmetric in
$S$ and $q$. Note that the point $\left[  -3,-3\right]  $ satisfies the
relation, and that the $q$-intercept and $S$-intercept are both $3/4.$ 

\begin{theorem}
[Equilateral mid]Suppose that
\[
S\equiv4R\left(  1-R\right)  \qquad\mathit{and}\qquad q\equiv4p\left(
1-p\right)  .
\]
Then
\[
\left(  1-Sq\right)  ^{2}=4\left(  1-S\right)  \left(  1-q\right)
\]
precisely when either%
\[
4Rp=1\qquad\mathit{or}\qquad4R\left(  1-p\right)  =1\qquad\mathit{or}%
\qquad4p\left(  1-R\right)  =1\qquad\mathit{or}\qquad4\left(  1-R\right)
\left(  1-p\right)  =1.
\]

\end{theorem}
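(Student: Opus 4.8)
The plan is to reduce the entire statement to a single factorization identity, after which both directions of the ``precisely when'' follow at once. The crucial preliminary observation is that the substitution $S = 4R(1-R)$, which is just $S = S_2(R)$, forces $1-S$ to be a perfect square: indeed $1 - 4R(1-R) = (1-2R)^2$, and likewise $1 - q = 1 - 4p(1-p) = (1-2p)^2$. Consequently the right-hand side of the equilateral relation becomes
\[
4(1-S)(1-q) = \left(2(1-2R)(1-2p)\right)^2,
\]
so the equilateral relation $(1-Sq)^2 = 4(1-S)(1-q)$ is equivalent to the vanishing of a difference of two squares,
\[
(1-Sq)^2 - \left(2(1-2R)(1-2p)\right)^2 = 0.
\]

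First I would factor this difference of squares in the usual way as
\[
\left((1-Sq) - 2(1-2R)(1-2p)\right)\left((1-Sq) + 2(1-2R)(1-2p)\right),
\]
and then show each bracket factors further, after substituting $Sq = 16Rp(1-R)(1-p)$ and expanding. The identities to verify are
\[
(1-Sq) - 2(1-2R)(1-2p) = -\left(4R(1-p)-1\right)\left(4p(1-R)-1\right)
\]
and
\[
(1-Sq) + 2(1-2R)(1-2p) = -\left(4Rp-1\right)\left(4(1-R)(1-p)-1\right).
\]
Multiplying these, the two minus signs cancel, yielding the master identity
\[
(1-Sq)^2 - 4(1-S)(1-q) = \left(4Rp-1\right)\left(4R(1-p)-1\right)\left(4p(1-R)-1\right)\left(4(1-R)(1-p)-1\right).
\]
With this in hand the theorem is immediate: the left-hand side vanishes precisely when the product on the right vanishes, which over a field happens precisely when one of its four bilinear factors is zero, that is, when one of $4Rp=1$, $4R(1-p)=1$, $4p(1-R)=1$, or $4(1-R)(1-p)=1$ holds.

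The main obstacle is purely bookkeeping: verifying the two bracket factorizations amounts to expanding each side as a polynomial in $R$ and $p$ and matching the coefficients of $1, R, p, Rp, R^2p, Rp^2, R^2p^2$. There is real scope for sign slips here, so I would organize the verification around the two displayed bracket identities rather than expanding the full quartic product directly. A cheap sanity check before grinding: the top-degree term of $(Sq)^2$ is $256\,R^4p^4$, while the product of the four factors contributes $4\cdot(-4)\cdot(-4)\cdot 4 = 256$ times $R^4p^4$, so the leading terms already agree and confirm that the constant multiplier in the master identity is exactly $1$. Evaluating both sides at a convenient point such as $R=p=0$ (where each side equals $-3$) gives a further check before committing to the complete expansion.
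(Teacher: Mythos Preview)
Your proposal is correct and arrives at precisely the same master identity the paper invokes, namely
\[
(1-Sq)^2 - 4(1-S)(1-q) = (4Rp-1)(4R(1-p)-1)(4p(1-R)-1)(4(1-R)(1-p)-1),
\]
from which the biconditional follows immediately. The paper simply states this identity and leaves its verification to the reader; your difference-of-squares decomposition via $1-S=(1-2R)^2$ and $1-q=(1-2p)^2$, together with the two intermediate bracket factorizations, is a clean way to organize that verification and explains \emph{why} the quartic factorization exists rather than merely asserting it.
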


\begin{proof}
This follows from the identity%
\begin{align*}
& \left(  1-Sq\right)  ^{2}-4\left(  1-S\right)  \left(  1-q\right) \\
& =\left(  4Rp-1\right)  \left(  4R\left(  1-p\right)  -1\right)  \left(
4p\left(  1-R\right)  -1\right)  \left(  4\left(  1-R\right)  \left(
1-p\right)  -1\right)  .%
{\hspace{.1in} \rule{0.5em}{0.5em}}%
\end{align*}

\end{proof}

\subsection{Triangle proportions}

The following results are direct analogs of planar theorems in \cite{Wild1},
and the proofs are similar. The Triangle proportions theorem is self-dual,
while Menelaus' theorem and Ceva's theorem have separate duals.

\begin{theorem}
[Triangle proportions]Suppose that $\overline{a_{1}a_{2}a_{3}}$ is a triangle
with quadrances $q_{1}$, $q_{2}$ and $q_{3},$ spreads $S_{1},S_{2}$ and
$S_{3}$, and that $d$ is a non-null point lying on the line $a_{1}a_{2},$
distinct from $a_{1}$ and $a_{2}$. Define the quadrances $r_{1}\equiv q\left(
a_{1},d\right)  $ and $r_{2}\equiv q\left(  a_{2},d\right)  $, and the spreads
$R_{1}\equiv S\left(  a_{3}a_{1},a_{3}d\right)  $ and $R_{2}\equiv S\left(
a_{3}a_{2},a_{3}d\right)  $. Then%
\[
\frac{R_{1}}{R_{2}}=\frac{S_{1}}{S_{2}}\frac{r_{1}}{r_{2}}=\frac{q_{1}}{q_{2}%
}\frac{r_{1}}{r_{2}}.
\]

\end{theorem}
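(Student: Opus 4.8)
The plan is to reduce everything to the Spread law, applied three times: once to the original triangle and once to each of the two sub-triangles determined by the cevian point $d$. First I would dispose of the second equality, which is immediate. The Spread law applied to $\overline{a_1a_2a_3}$ gives $S_1/q_1 = S_2/q_2$, hence
\[
\frac{S_1}{S_2} = \frac{q_1}{q_2},
\]
so the genuine content of the theorem is the first equality $R_1/R_2 = (S_1/S_2)(r_1/r_2)$.

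Next I would exploit that $d$ lies on the line $a_1a_2$ and is distinct from $a_1$ and $a_2$. This forces the join $a_1d$ to coincide with $a_1a_2$ and the join $a_2d$ to coincide with $a_1a_2$. It also forces $a_1, d, a_3$ to be non-collinear (otherwise $a_3$ would lie on $a_1a_2$, contradicting that $\overline{a_1a_2a_3}$ is a triangle), so that $\overline{a_1da_3}$ is a genuine triangle, and likewise $\overline{a_2da_3}$. These identifications of lines are the geometric crux, since they let me recognize the ambient spreads $S_1$ and $S_2$ as interior spreads of the sub-triangles.

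The key step is then to apply the Spread law to $\overline{a_1da_3}$. Here the spread at the vertex $a_1$ is $S(a_1d, a_1a_3) = S(a_1a_2, a_1a_3) = S_1$, the spread at the vertex $a_3$ is $S(a_3a_1, a_3d) = R_1$, the quadrance of the side opposite $a_1$ is $q(d, a_3)$, and the quadrance of the side opposite $a_3$ is $q(a_1, d) = r_1$. The Spread law then yields
\[
\frac{S_1}{q(d,a_3)} = \frac{R_1}{r_1},
\]
so that $R_1 = S_1 r_1 / q(d,a_3)$. Running the identical argument on $\overline{a_2da_3}$ — where the spread at $a_2$ is $S(a_2a_1, a_2a_3) = S_2$, the spread at $a_3$ is $R_2$, and the quadrance opposite $a_2$ is again $q(d,a_3)$ — gives $R_2 = S_2 r_2 / q(d,a_3)$. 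Dividing, the common quadrance $q(d,a_3)$ cancels, producing
\[
\frac{R_1}{R_2} = \frac{S_1 r_1}{S_2 r_2} = \frac{S_1}{S_2}\frac{r_1}{r_2},
\]
which, combined with the first step, is exactly the claimed chain of equalities.

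The only point demanding care — and the main, rather mild, obstacle — is the legitimacy of these applications of the Spread law: I must know that the relevant quadrances and spreads in the sub-triangles are defined, in particular that $q(d,a_3) \neq 0$ (equivalently that $da_3$ is a non-null line) so that the division is valid and the spreads $R_1, R_2$ exist. The hypotheses that $d$ is non-null and that $S_1, S_2$ are defined should supply precisely what is needed, and the elegance of the argument is that $q(d,a_3)$ never has to be computed, since it cancels in the ratio.
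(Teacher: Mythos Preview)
Your proof is correct and follows essentially the same route as the paper: apply the Spread law to each of the two sub-triangles $\overline{a_1da_3}$ and $\overline{a_2da_3}$, observe that the common quadrance $q(d,a_3)$ cancels in the ratio, and use the Spread law on $\overline{a_1a_2a_3}$ to convert $S_1/S_2$ into $q_1/q_2$. If anything, your version is slightly more careful than the paper's about well-definedness.
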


\begin{proof}
Define also $r_{3}\equiv q\left(  a_{3},d\right)  $. The assumptions imply
that all of the quadrances and spread defined are non-zero. In $\overline
{da_{2}a_{3}}$ use the Spread law to get%
\[
\frac{S_{2}}{r_{3}}=\frac{R_{2}}{r_{2}}.
\]
In $\overline{da_{1}a_{3}}$ use the Spread law to get%
\[
\frac{S_{1}}{r_{3}}=\frac{R_{1}}{s_{1}}.
\]
Thus%
\[
r_{3}=\frac{S_{2}r_{2}}{R_{2}}=\frac{S_{1}r_{1}}{R_{1}}%
\]
and rearrange to obtain%
\[
\frac{R_{1}}{R_{2}}=\frac{S_{1}}{S_{2}}\frac{r_{1}}{r_{2}}.
\]
Since
\[
\frac{S_{1}}{S_{2}}=\frac{q_{1}}{q_{2}}%
\]
this can be rewritten as%
\[
\frac{R_{1}}{R_{2}}=\frac{q_{1}}{q_{2}}\frac{r_{1}}{r_{2}}.%
{\hspace{.1in} \rule{0.5em}{0.5em}}%
\]

\end{proof}

\begin{theorem}
[Menelaus]Suppose that $\overline{a_{1}a_{2}a_{3}}$ is a non-null triangle,
and that $L $ is a non-null line meeting $a_{2}a_{3}$, $a_{1}a_{3}$ and
$a_{1}a_{2}$ at the non-null points $d_{1}$, $d_{2}$ and $d_{3}$ respectively.
Define the quadrances%
\[%
\begin{array}
[c]{ccc}%
r_{1}\equiv q\left(  a_{2},d_{1}\right)  &  & t_{1}\equiv q\left(  d_{1}%
,a_{3}\right) \\
r_{2}\equiv q\left(  a_{3},d_{2}\right)  &  & t_{2}\equiv q\left(  d_{2}%
,a_{1}\right) \\
r_{3}\equiv q\left(  a_{1},d_{3}\right)  &  & t_{3}\equiv q\left(  d_{3}%
,a_{2}\right)  .
\end{array}
\]
Then%
\[
r_{1}r_{2}r_{3}=t_{1}t_{2}t_{3}.
\]

\end{theorem}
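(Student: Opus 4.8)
The plan is to work in $\mathbb{F}^{3}$ equipped with the symmetric bilinear form $\langle u,w\rangle \equiv u_{1}w_{1}+u_{2}w_{2}-u_{3}w_{3}$ attached to the quadratic form $x^{2}+y^{2}-z^{2}$, and to reduce the whole identity to a single collinearity condition on parametrization coefficients. First I would write $a_{1}\equiv\left[v_{1}\right]$, $a_{2}\equiv\left[v_{2}\right]$, $a_{3}\equiv\left[v_{3}\right]$ with $v_{1},v_{2},v_{3}$ linearly independent, since the triangle is non-collinear. As all six quadrances are defined, each vertex is non-null, so $\langle v_{i},v_{i}\rangle\neq0$. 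Because $d_{1}$ lies on $a_{2}a_{3}$ and is distinct from both $a_{2}$ and $a_{3}$, the Parametrizing a join theorem gives $d_{1}=\left[\alpha v_{2}+\beta v_{3}\right]$ with $\alpha,\beta$ nonzero; likewise $d_{2}=\left[\gamma v_{3}+\delta v_{1}\right]$ and $d_{3}=\left[\epsilon v_{1}+\zeta v_{2}\right]$, all coefficients nonzero.

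The central step is a one-line lemma: if $d=\left[\alpha u+\beta w\right]$ lies on the join of non-null points $\left[u\right]$ and $\left[w\right]$, then expanding the Quadrance formula and collapsing the cross terms against the square yields
\[
q\left(\left[u\right],d\right)=\frac{\beta^{2}\left(\langle u,u\rangle\langle w,w\rangle-\langle u,w\rangle^{2}\right)}{\langle u,u\rangle\,\langle d,d\rangle},\qquad q\left(d,\left[w\right]\right)=\frac{\alpha^{2}\left(\langle u,u\rangle\langle w,w\rangle-\langle u,w\rangle^{2}\right)}{\langle w,w\rangle\,\langle d,d\rangle}.
\]
Applying this to each side, the Gram-type factor $\langle v_{i},v_{i}\rangle\langle v_{j},v_{j}\rangle-\langle v_{i},v_{j}\rangle^{2}$ and the denominator $\langle d_{k},d_{k}\rangle$ divide out of each ratio, leaving
\[
\frac{r_{1}}{t_{1}}=\frac{\beta^{2}\langle v_{3},v_{3}\rangle}{\alpha^{2}\langle v_{2},v_{2}\rangle},\qquad\frac{r_{2}}{t_{2}}=\frac{\delta^{2}\langle v_{1},v_{1}\rangle}{\gamma^{2}\langle v_{3},v_{3}\rangle},\qquad\frac{r_{3}}{t_{3}}=\frac{\zeta^{2}\langle v_{2},v_{2}\rangle}{\epsilon^{2}\langle v_{1},v_{1}\rangle}.
\]

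Multiplying the three ratios, each factor $\langle v_{i},v_{i}\rangle$ occurs once in a numerator and once in a denominator, so the product telescopes to $\left(\beta\delta\zeta\right)^{2}/\left(\alpha\gamma\epsilon\right)^{2}$. To see this equals $1$, I would invoke that $d_{1},d_{2},d_{3}$ all lie on $L$ and hence are collinear. Reading off their coordinates in the basis $v_{1},v_{2},v_{3}$ as the rows $\left(0,\alpha,\beta\right)$, $\left(\delta,0,\gamma\right)$, $\left(\epsilon,\zeta,0\right)$, and noting that the determinantal form of the Collinear points theorem is unchanged up to the nonzero factor $\det\left[v_{1}\,v_{2}\,v_{3}\right]$ under this change of basis, collinearity forces
\[
\alpha\gamma\epsilon+\beta\delta\zeta=0,
\]
so that $\left(\beta\delta\zeta\right)^{2}=\left(\alpha\gamma\epsilon\right)^{2}$ and therefore $r_{1}r_{2}r_{3}=t_{1}t_{2}t_{3}$.

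I expect the only genuine obstacle to be bookkeeping: keeping the three parametrizations and the roles of their coefficients straight so that the cancellation of the Gram-type factors is transparent and the diagonal factors $\langle v_{i},v_{i}\rangle$ visibly telescope. The one-line quadrance lemma is a routine difference-of-squares simplification of the Quadrance formula, and the collinearity step is immediate once the coordinates are expressed in the $v_{i}$-basis.
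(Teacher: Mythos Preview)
Your proof is correct and takes a genuinely different route from the paper's. The paper introduces the spreads $R_{1},R_{2},R_{3}$ between $L$ and the three sides $a_{2}a_{3}$, $a_{1}a_{3}$, $a_{1}a_{2}$, then applies the Spread law in each of the triangles $\overline{d_{1}d_{2}a_{3}}$, $\overline{d_{2}d_{3}a_{1}}$, $\overline{d_{3}d_{1}a_{2}}$ to obtain $R_{1}/R_{2}=r_{2}/t_{1}$, $R_{2}/R_{3}=r_{3}/t_{2}$, $R_{3}/R_{1}=r_{1}/t_{3}$; the product of these three equations telescopes to the result. Your approach bypasses the Spread law entirely, reducing the question to the classical affine Menelaus relation $\alpha\gamma\epsilon+\beta\delta\zeta=0$ on parametrization coefficients, together with a direct bilinear-form computation showing that the hyperbolic quadrance ratio along a side equals the squared affine ratio times a factor $\langle v_{j},v_{j}\rangle/\langle v_{i},v_{i}\rangle$ that telescopes around the triangle. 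Your argument is more elementary in that it uses nothing beyond the Quadrance formula and linear algebra, and it makes the connection to the affine version transparent; the paper's argument is shorter once the trigonometric machinery is in place and avoids any coordinate manipulation.

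One small omission: you assert that each $d_{i}$ is distinct from the two adjacent vertices so that all six parametrization coefficients are nonzero, but the hypotheses do not exclude, say, $d_{1}=a_{3}$. The paper disposes of this case first by observing that if any $d_{i}$ coincides with a vertex of the triangle then both products vanish; you should add the same one-line remark before passing to the generic case.
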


\begin{proof}
If one of the points $d_{1},d_{2},d_{3}$ is a point of the triangle, then both
sides of the required equation are zero and we are done. So suppose this is
not the case. Define the spreads between $L$ and the non-null lines
$a_{2}a_{3}$, $a_{1}a_{3}$ and $a_{1}a_{2}$ to be respectively $R_{1}$, $R_{2}
$ and $R_{3}$. These are all non-zero since $d_{1},d_{2}$ and $d_{3}$ are by
assumption non-null, while the assumption on $\overline{a_{1}a_{2}a_{3}}$
ensures that all the quadrances involved in the theorem are also non-zero. So
we can use the Spread law in the triangles $\overline{d_{1}d_{2}a_{3}}$,
$\overline{d_{2}d_{3}a_{1}}$ and $\overline{d_{3}d_{1}a_{2}}$ to get%
\[
\frac{R_{1}}{R_{2}}=\frac{r_{2}}{t_{1}}\qquad\frac{R_{2}}{R_{3}}=\frac{r_{3}%
}{t_{2}}\qquad\frac{R_{3}}{R_{1}}=\frac{r_{1}}{t_{3}}.
\]
Multiply these three equations to obtain
\[
r_{1}r_{2}r_{3}=t_{1}t_{2}t_{3}.%
{\hspace{.1in} \rule{0.5em}{0.5em}}%
\]

\end{proof}

The following result in the planar case was called the \textit{Alternate
spreads theorem} in \cite{Wild1}.

\begin{theorem}
[Menelaus' dual]Suppose that $\overline{A_{1}A_{2}A_{3}}$ is a non-null
trilateral, and that $a$ is a non-null point joining $A_{2}A_{3}$, $A_{1}%
A_{3}$ and $A_{1}A_{2}$ to form the non-null lines $D_{1}$, $D_{2}$ and
$D_{3}$ respectively. Define the spreads%
\[%
\begin{array}
[c]{ccc}%
R_{1}\equiv S\left(  A_{2},D_{1}\right)  &  & T_{1}\equiv S\left(  D_{1}%
,A_{3}\right) \\
R_{2}\equiv S\left(  A_{3},D_{2}\right)  &  & T_{2}\equiv S\left(  D_{2}%
,A_{1}\right) \\
R_{3}\equiv S\left(  A_{1},D_{3}\right)  &  & T_{3}\equiv S\left(  D_{3}%
,A_{2}\right)  .
\end{array}
\]
Then%
\[
R_{1}R_{2}R_{3}=T_{1}T_{2}T_{3}.
\]

\end{theorem}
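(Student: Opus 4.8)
The plan is to obtain Menelaus' dual as a formal consequence of Menelaus' theorem through the duality principle, so that no fresh computation is needed. First I would fix the duality dictionary for this configuration. Under the correspondence $a_i \leftrightarrow A_i \equiv a_i^{\perp}$, a triangle $\overline{a_1 a_2 a_3}$ dualizes to the trilateral $\overline{A_1 A_2 A_3}$; each side line $a_i a_j$ dualizes to the vertex point $(a_i a_j)^{\perp} = a_i^{\perp} a_j^{\perp} = A_i A_j$; and, crucially, the Quadrance spread duality theorem tells us that for non-null points $p, p'$ the quadrance $q(p, p')$ equals the spread $S(p^{\perp}, (p')^{\perp})$ between the dual lines. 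A point is null precisely when its dual line is null, so all the hypotheses guaranteeing that the relevant quadrances and spreads are defined transfer across the duality unchanged.

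Next I would check that the transversal data of Menelaus dualizes exactly to the incidence data of Menelaus' dual. A non-null transversal $L$ dualizes to the non-null point $a \equiv L^{\perp}$. Since $d_1 \equiv L(a_2 a_3)$ is a meet of two lines, its dual is the join $d_1^{\perp} = L^{\perp}(a_2 a_3)^{\perp} = a(A_2 A_3) \equiv D_1$, and similarly $D_2 = a(A_1 A_3)$ and $D_3 = a(A_1 A_2)$; this is precisely the assertion that $a$ joins the three vertices of $\overline{A_1 A_2 A_3}$ to form $D_1, D_2, D_3$. (Collinearity of $d_1, d_2, d_3$ on $L$ dualizes to concurrency of $D_1, D_2, D_3$ at $a$, which is automatic.) Applying Quadrance spread duality to the definitions of the quadrances then gives
\[
r_1 = q(a_2, d_1) = S(A_2, D_1) = R_1, \qquad t_1 = q(d_1, a_3) = S(D_1, A_3) = T_1,
\]
and the same with indices permuted cyclically. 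Hence the identity $r_1 r_2 r_3 = t_1 t_2 t_3$ furnished by Menelaus' theorem is word for word the desired $R_1 R_2 R_3 = T_1 T_2 T_3$.

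Should one prefer a self-contained argument, the alternative is to transcribe the proof of Menelaus verbatim, replacing every appeal to the Spread law by the Spread dual law: one forms the three auxiliary trilaterals $\overline{D_1 D_2 A_3}$, $\overline{D_2 D_3 A_1}$, $\overline{D_3 D_1 A_2}$, reads off one ratio of the $R_i$ and $T_i$ from each, and multiplies so that the shared factors cancel. I expect the only real obstacle in either route to be bookkeeping: one must confirm that the cyclic labelling of $R_i$ and $T_i$ lines up with the cyclic labelling of the vertices, and that each auxiliary configuration is genuinely non-concurrent and non-null so the relevant law applies. In the duality approach this obstacle collapses to verifying the two dictionary entries displayed above, after which the conclusion is immediate.
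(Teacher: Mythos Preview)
Your proposal is correct and takes essentially the same approach as the paper, which proves Menelaus' dual in a single line by invoking the duality principle applied to Menelaus' theorem. You have simply spelled out the duality dictionary in more detail than the paper does, and your optional self-contained argument via the Spread dual law is the expected unpacking of that duality.
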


\begin{proof}
This is dual to Menelaus' theorem.$%
{\hspace{.1in} \rule{0.5em}{0.5em}}%
$
\end{proof}

\begin{theorem}
[Ceva]Suppose that $\overline{a_{1}a_{2}a_{3}}$ is a non-nil triangle, and
that $a_{0}$ is a non-null point distinct from $a_{1},a_{2}$ and $a_{3}$, and
that the lines $a_{0}a_{1}$, $a_{0}a_{2}$ and $a_{0}a_{3}$ are non-null and
meet the lines $a_{2}a_{3}$, $a_{1}a_{3}$ and $a_{1}a_{2}$ respectively at the
points $d_{1}$, $d_{2}$ and $d_{3}$. Define the quadrances%
\[%
\begin{array}
[c]{ccc}%
r_{1}\equiv q\left(  a_{2},d_{1}\right)  &  & t_{1}\equiv q\left(  d_{1}%
,a_{3}\right) \\
r_{2}\equiv q\left(  a_{3},d_{2}\right)  &  & t_{2}\equiv q\left(  d_{2}%
,a_{1}\right) \\
r_{3}\equiv q\left(  a_{1},d_{3}\right)  &  & t_{3}\equiv q\left(  d_{3}%
,a_{2}\right)  .
\end{array}
\]
Then%
\[
r_{1}r_{2}r_{3}=t_{1}t_{2}t_{3}.
\]

\end{theorem}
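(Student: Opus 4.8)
The plan is to treat Ceva's theorem as the concurrency-analogue of Menelaus, replacing the single transversal by the three cevians through $a_0$ and invoking the \textbf{Triangle proportions} theorem once at each vertex. Before any computation I would dispose of the degenerate cases exactly as in Menelaus: if some $d_i$ coincides with a vertex of $\overline{a_1a_2a_3}$ then a factor on each side vanishes and the identity is trivial, so I may assume each $d_i$ is non-null and distinct from the two vertices on its line, and that $a_0$ lies on none of the three sides, so that $\overline{a_2a_3a_0}$, $\overline{a_3a_1a_0}$ and $\overline{a_1a_2a_0}$ are genuine triangles. I would also note that the hypotheses already force every quantity appearing below to be non-zero: since $a_0$ is non-null, each spread $S(a_0a_i,a_0a_j)$ is non-zero by the Zero spread theorem (its two lines meet in the non-null point $a_0$), and since each cevian line $a_0a_i$ is non-null, each quadrance $q(a_i,a_0)$ is non-zero by the Zero quadrance theorem.

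The heart of the argument is three applications of the Triangle proportions theorem. Applying it to the triangle $\overline{a_2a_3a_0}$ with the point $d_1$ on the side $a_2a_3$, and using that $d_1$ lies on $a_0a_1$ so that $a_0d_1=a_0a_1$, I obtain
\[
\frac{r_1}{t_1}=\frac{S(a_0a_2,a_0a_1)}{S(a_0a_3,a_0a_1)}\cdot\frac{q(a_2,a_0)}{q(a_3,a_0)}.
\]
The two cyclic analogues, applied to $\overline{a_3a_1a_0}$ with $d_2$ (so that $a_0d_2=a_0a_2$) and to $\overline{a_1a_2a_0}$ with $d_3$ (so that $a_0d_3=a_0a_3$), give
\[
\frac{r_2}{t_2}=\frac{S(a_0a_3,a_0a_2)}{S(a_0a_1,a_0a_2)}\cdot\frac{q(a_3,a_0)}{q(a_1,a_0)}\qquad\text{and}\qquad\frac{r_3}{t_3}=\frac{S(a_0a_1,a_0a_3)}{S(a_0a_2,a_0a_3)}\cdot\frac{q(a_1,a_0)}{q(a_2,a_0)}.
\]

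Finally I would multiply the three displayed identities. The three quadrance quotients telescope to $1$, and, using the symmetry $S(L,M)=S(M,L)$ of the spread, the six spread factors cancel in pairs so that the product of the three spread quotients is also $1$. Hence $r_1r_2r_3/(t_1t_2t_3)=1$, which is the assertion. I expect the only genuine work to be bookkeeping: matching the indices in each invocation of Triangle proportions so that the quadrance ratios $q(a_i,a_0)$ and the cevian spreads $S(a_0a_i,a_0a_j)$ line up in the cyclic pattern that makes both products telescope. There is no analytic difficulty, since the non-degeneracy observations above guarantee that every ratio I form is between non-zero numbers; the main obstacle is simply getting the cyclic accounting exactly right.
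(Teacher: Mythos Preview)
Your argument is correct and takes a genuinely different route from the paper's proof. The paper applies the Triangle proportions theorem three times to the \emph{original} triangle $\overline{a_1a_2a_3}$, once for each cevian point $d_i$ on the opposite side; this produces three equations involving the spreads $R_i,T_i$ at the vertices $a_1,a_2,a_3$ together with the triangle's own spreads $S_1,S_2,S_3$. Multiplying, the $S_i$ telescope, but the paper is left with the product $R_1R_2R_3/(T_1T_2T_3)$ and must invoke Menelaus' dual (the Alternate spreads theorem, applied with the non-null point $a_0$) to show this equals $1$. Your approach instead applies Triangle proportions to the three auxiliary triangles $\overline{a_ia_ja_0}$; the resulting spread ratios live at the single vertex $a_0$ and cancel pairwise by symmetry of the spread, while the quadrance ratios $q(a_i,a_0)/q(a_j,a_0)$ telescope cyclically. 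This is arguably cleaner, since it is self-contained and does not require Menelaus' dual as a separate ingredient. The paper's route, on the other hand, makes explicit the relationship between Ceva and Menelaus' dual, which has some conceptual value.

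One small gap: you should also dispose of the case where some side $a_ia_j$ of the original triangle is a null line (the hypothesis says only that the triangle is non-nil, not non-null). In that case the Zero quadrance theorem forces both $r_k$ and $t_k$ on that side to vanish, so both products are zero and the identity is trivial. After noting this you may assume each $a_ia_j$ is non-null, which is needed for the spreads of your auxiliary triangles $\overline{a_ia_ja_0}$ to be defined and for Triangle proportions to apply.
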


\begin{proof}
If one of the lines of $\overline{a_{1}a_{2}a_{3}}$ is null, then both sides
of the required equation are zero. Otherwise we may assume that $\overline
{a_{1}a_{2}a_{3}}$ is non-null, with $S_{1},S_{2}$ and $S_{3}$ the usual
spreads. Define the spreads%
\[%
\begin{array}
[c]{ccc}%
R_{1}\equiv S\left(  a_{1}a_{2},a_{1}a_{0}\right)  &  & T_{1}\equiv S\left(
a_{1}a_{3},a_{1}a_{0}\right) \\
R_{2}\equiv S\left(  a_{2}a_{3},a_{2}a_{0}\right)  &  & T_{2}\equiv S\left(
a_{2}a_{1},a_{2}a_{0}\right) \\
R_{3}\equiv S\left(  a_{3}a_{1},a_{3}a_{0}\right)  &  & T_{3}\equiv S\left(
a_{3}a_{2},a_{3}a_{0}\right)  .
\end{array}
\]
as in the Alternate spreads theorem. Since $\overline{a_{1}a_{2}a_{3}}$ is
non-nil, these are all non-zero. Then use the Triangle proportions theorem
with the triangle $\overline{a_{1}a_{2}a_{3}}$ and the respective lines
$a_{1}d_{1}$, $a_{2}d_{2}$ and $a_{3}d_{3}$ to obtain%
\[
\frac{R_{1}}{P_{1}}=\frac{S_{2}}{S_{3}}\frac{r_{1}}{t_{1}}\qquad\frac{R_{2}%
}{P_{2}}=\frac{S_{3}}{S_{1}}\frac{r_{2}}{t_{2}}\qquad\frac{R_{3}}{P_{3}}%
=\frac{S_{1}}{S_{2}}\frac{r_{3}}{t_{3}}.
\]
Multiply these three equations and use the Alternate spreads theorem to get%
\[
\frac{R_{1}R_{2}R_{3}}{P_{1}P_{2}P_{3}}=\frac{r_{1}r_{2}r_{3}}{t_{1}t_{2}%
t_{3}}=1.%
{\hspace{.1in} \rule{0.5em}{0.5em}}%
\]

\end{proof}

The dual of Ceva's theorem is not familiar in the Euclidean situation, where
it holds also.

\begin{theorem}
[Ceva's dual]Suppose that $\overline{A_{1}A_{2}A_{3}}$ is a non-nil trilateral
and that $A_{0}$ is a non-null line distinct from $A_{1},A_{2}$ and $A_{3}$,
and that the points $A_{0}A_{1}$, $A_{0}A_{2}$ and $A_{0}A_{3}$ are non-null
and join the points $A_{2}A_{3}$, $A_{1}A_{3}$ and $A_{1}A_{2}$ respectively
to get the lines $D_{1}$, $D_{2}$ and $D_{3}$. Define the spreads%
\[%
\begin{array}
[c]{ccc}%
R_{1}\equiv S\left(  A_{2},D_{1}\right)  &  & T_{1}\equiv S\left(  D_{1}%
,A_{3}\right) \\
R_{2}\equiv S\left(  A_{3},D_{2}\right)  &  & T_{2}\equiv S\left(  D_{2}%
,A_{1}\right) \\
R_{3}\equiv S\left(  A_{1},D_{3}\right)  &  & T_{3}\equiv S\left(  D_{3}%
,A_{2}\right)  .
\end{array}
\]
Then%
\[
R_{1}R_{2}R_{3}=T_{1}T_{2}T_{3}.
\]

\end{theorem}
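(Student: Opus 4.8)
The plan is to invoke the \emph{duality principle} established in Section 2, since Ceva's dual is, as its name indicates, precisely the dual statement of the Ceva theorem proved just above. The paper has already shown that points and lines, joins and meets, and quadrances and spreads are interchanged symmetrically, so that any theorem expressed purely in these terms automatically yields a valid dual. The entire task therefore reduces to checking that the hypotheses and conclusion of Ceva's dual are, term by term, the images under this interchange of the hypotheses and conclusion of Ceva's theorem; once that is confirmed, the proof is the single sentence \emph{this is dual to Ceva's theorem}.

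First I would lay out the dictionary explicitly. A non-nil triangle $\overline{a_{1}a_{2}a_{3}}$ (no null vertex points) dualizes to a non-nil trilateral $\overline{A_{1}A_{2}A_{3}}$ (no null lines); the auxiliary non-null point $a_{0}$ dualizes to the non-null line $A_{0}$; the joins $a_{0}a_{i}$ become the meets $A_{0}A_{i}$, and the requirement that these joins be non-null lines becomes the requirement that these meets be non-null points. The meet $d_{1}=\left(a_{0}a_{1}\right)\left(a_{2}a_{3}\right)$ dualizes to the join $D_{1}=\left(A_{0}A_{1}\right)\left(A_{2}A_{3}\right)$, and similarly for the other indices. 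Finally each quadrance $q\left(a_{j},d_{i}\right)$ between two points becomes the spread $S\left(A_{j},D_{i}\right)$ between the two dual lines, so that $r_{1},t_{1},\dots$ correspond exactly to $R_{1},T_{1},\dots$. Under this correspondence the conclusion $r_{1}r_{2}r_{3}=t_{1}t_{2}t_{3}$ becomes $R_{1}R_{2}R_{3}=T_{1}T_{2}T_{3}$, which is precisely the desired identity.

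The only real work — and the step I would guard against error in — is the bookkeeping of this dictionary, in particular keeping the distinction between \emph{nil} and \emph{null} straight (a triangle is nil when a vertex point is null, whereas a trilateral is nil when one of its lines is null), and confirming that the quadrance/spread interchange sends each $q\left(a_{j},d_{i}\right)$ to the \emph{correspondingly indexed} $S\left(A_{j},D_{i}\right)$ rather than to a permuted index. As a consistency check I would, if worried, instead dualize the internal structure of the Ceva proof: that argument rests on the Triangle proportions theorem together with Menelaus' dual (the Alternate spreads theorem). Since Triangle proportions is self-dual and the dual of Menelaus' dual is Menelaus' theorem itself, dualizing each step reproduces exactly the chain of equalities needed here, confirming that no hidden asymmetry spoils the duality and that the one-line invocation is fully justified.
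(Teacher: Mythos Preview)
Your proposal is correct and takes essentially the same approach as the paper, whose entire proof is the single sentence ``This is dual to Ceva's theorem.'' Your additional bookkeeping of the duality dictionary is more thorough than what the paper writes, but the underlying argument is identical.
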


\begin{proof}
This is dual to Ceva's theorem.$%
{\hspace{.1in} \rule{0.5em}{0.5em}}%
$
\end{proof}

The converses of these theorems are not generally valid.

\section{Null trigonometry}

One of the significant differences between universal hyperbolic geometry and
classical hyperbolic geometry is that the rich theory of \textit{null
trigonometry} plays a larger role. What is presented here is just an
introduction to this fascinating subject, which has no parallel in Euclidean
geometry. Formulations of dual results are left to the reader.

\subsection{Singly nil triangles}

Recall that a triangle is \textit{singly nil} precisely when exactly one of
its points is null.

\begin{theorem}
[Nil cross law]Suppose the triangle $\overline{a_{1}a_{2}a_{3}}$ is singly
nil, with $a_{3}$ a null point, spreads $S_{1}\equiv S\left(  a_{1}a_{2}%
,a_{1}a_{3}\right)  $, $S_{2}\equiv S\left(  a_{1}a_{2},a_{2}a_{3}\right)  $
and $S_{3}\equiv S\left(  a_{1}a_{3},a_{2}a_{3}\right)  $, and quadrance
$q_{3}\equiv q\left(  a_{1},a_{2}\right)  $. Then $S_{3}=0$ and%
\[
q_{3}^{2}-2q_{3}\left(  \frac{1}{S_{1}}+\frac{1}{S_{2}}-\frac{1}{S_{1}S_{2}%
}\right)  +\left(  \frac{1}{S_{1}}-\frac{1}{S_{2}}\right)  ^{2}=0.
\]
Furthermore $\left(  1-2S_{1}\right)  \left(  1-2S_{2}\right)  $ is a square.
\end{theorem}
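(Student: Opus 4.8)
The plan is to push everything through the \emph{Cross dual law}, which is the one main law that survives at a null vertex, since the primal Cross law would require $q_1=q(a_2,a_3)$ and $q_2=q(a_1,a_3)$, both undefined because $a_3$ is null. First I would dispose of the easy claim $S_3=0$. The lines $a_1a_3$ and $a_2a_3$ are distinct (otherwise $a_1,a_2,a_3$ would be collinear) and non-null (since the three spreads are assumed defined), and their meet is precisely the null point $a_3$; the Zero spread theorem then gives $S_3=S\left(a_1a_3,a_2a_3\right)=0$ at once. The same theorem, applied at the two non-null vertices $a_1$ and $a_2$, shows $S_1\neq0$ and $S_2\neq0$, which is what makes the reciprocals $1/S_1,1/S_2$ in the statement legitimate.

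Next I would apply the Cross dual law to the associated trilateral $\overline{L_1L_2L_3}$ with $L_1\equiv a_2a_3$, $L_2\equiv a_1a_3$, $L_3\equiv a_1a_2$, but arranged so that the quadrance occurring is the one between the two \emph{non-null} points $a_1=L_2L_3$ and $a_2=L_1L_3$, namely $q_3=q(a_1,a_2)$. In this labelling the Cross dual law reads
\[
\left(S_{1}S_{2}\,q_{3}-S_{1}-S_{2}-S_{3}+2\right)^{2}=4\left(1-S_{1}\right)\left(1-S_{2}\right)\left(1-S_{3}\right).
\]
Substituting $S_3=0$ reduces the right side to $4(1-S_1)(1-S_2)$. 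Expanding the square and invoking the elementary identity $\left(2-S_1-S_2\right)^2-4(1-S_1)(1-S_2)=\left(S_1-S_2\right)^2$ turns the relation into a quadratic in $q_3$ with leading coefficient $S_1^2S_2^2$. Dividing by $S_1^2S_2^2$ (legitimate by the previous paragraph) and collecting the coefficients as expressions in $1/S_1$ and $1/S_2$ recovers the displayed quadratic.

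For the final assertion I would read the displayed relation as a monic quadratic $q_3^2-2\beta q_3+\gamma=0$, where $\beta\equiv \tfrac{1}{S_1}+\tfrac{1}{S_2}-\tfrac{1}{S_1S_2}$ and $\gamma\equiv\left(\tfrac{1}{S_1}-\tfrac{1}{S_2}\right)^2$. Because $q_3$ is a genuine quadrance, it is an element of $\mathbb{F}$, so $q_3-\beta=\pm\sqrt{\beta^2-\gamma}$ lies in $\mathbb{F}$ and the reduced discriminant $\beta^2-\gamma$ is automatically a square. It then only remains to factor this discriminant as a difference of two squares,
\[
\beta^{2}-\gamma=\left(\tfrac{1}{S_{1}}+\tfrac{1}{S_{2}}-\tfrac{1}{S_{1}S_{2}}\right)^{2}-\left(\tfrac{1}{S_{1}}-\tfrac{1}{S_{2}}\right)^{2},
\]
whose two linear factors are $\tfrac{2S_1-1}{S_1S_2}$ and $\tfrac{2S_2-1}{S_1S_2}$; their product is $\left(1-2S_1\right)\left(1-2S_2\right)$ divided by the perfect square $S_1^2S_2^2$. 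Hence $\left(1-2S_1\right)\left(1-2S_2\right)$ is itself a square.

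I expect the only genuine decision point to be the first one: seeing that the primal Cross law is unusable (it is expressed through the two undefined quadrances at the null vertex) and that one must instead dualize and feed in $S_3=0$ to eliminate the third spread, leaving a clean relation in $q_3,S_1,S_2$ alone. After that the work is routine — a single expansion, the stated difference-of-squares identity, and a factorization — all of which are easily confirmed by hand or machine.
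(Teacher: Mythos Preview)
Your proposal is correct and follows essentially the same route as the paper: invoke the Zero spread theorem to get $S_3=0$ and $S_1,S_2\neq0$, feed $S_3=0$ into the Cross dual law, rearrange to the displayed quadratic in $q_3$, and then complete the square to read off that $(1-2S_1)(1-2S_2)/(S_1S_2)^2$ is a square. The only cosmetic difference is that the paper completes the square directly while you phrase it as computing the reduced discriminant $\beta^2-\gamma$ via a difference of squares; the content is identical.
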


\begin{proof}
The Zero spread theorem shows that since $a_{3}$ is a null point, $S_{3}=0,$
while since $a_{1}$ and $a_{2}$ are non-null points, $S_{1}$ and $S_{2}$ are
non-zero. In this case the Cross dual law%
\[
\left(  S_{1}S_{2}q_{3}-\left(  S_{1}+S_{2}+S_{3}\right)  +2\right)
^{2}=4\left(  1-S_{1}\right)  \left(  1-S_{2}\right)  \left(  1-S_{3}\right)
\]
still applies, and after rearrangement%
\[
S_{1}^{2}S_{2}^{2}q_{3}^{2}-2q_{3}S_{2}S_{1}\left(  S_{1}+S_{2}-2\right)
+\left(  S_{2}-S_{1}\right)  ^{2}=0.
\]
This can be rewritten as
\[
q_{3}^{2}-2q_{3}\left(  \frac{1}{S_{1}}+\frac{1}{S_{2}}-\frac{1}{S_{1}S_{2}%
}\right)  +\left(  \frac{1}{S_{1}}-\frac{1}{S_{2}}\right)  ^{2}=0.
\]
After completing the square, it becomes%
\[
\left(  q_{3}-\left(  \frac{1}{S_{1}}+\frac{1}{S_{2}}-\frac{1}{S_{1}S_{2}%
}\right)  \right)  ^{2}=\frac{\left(  1-2S_{1}\right)  \left(  1-2S_{2}%
\right)  }{S_{1}^{2}S_{2}^{2}}%
\]
so $\left(  1-2S_{1}\right)  \left(  1-2S_{2}\right)  $ must be a square.$%
{\hspace{.1in} \rule{0.5em}{0.5em}}%
$
\end{proof}

\begin{example}
In the special case $S_{2}\equiv1,$ the Nil cross law above becomes
\[
\left(  S_{1}q_{3}-S_{1}+1\right)  ^{2}=0
\]
so that%
\[
q_{3}=\frac{S_{1}-1}{S_{1}}%
\]
as also given by the Right parallax theorem.
\hspace{.1in}{$\diamond$}%

\end{example}

\begin{example}
In the special case $S_{1}=S_{2}\equiv S,$ the Nil cross law becomes
\[
S^{2}q_{3}\left(  S^{2}q_{3}-4S+4\right)  =0
\]
so that $q_{3}=0$ or
\[
q_{3}=\frac{4\left(  S-1\right)  }{S^{2}}%
\]
as also given by the Isosceles parallax theorem.%
\hspace{.1in}{$\diamond$}%

\end{example}

\begin{example}
Suppose that $a_{3}\equiv\left[  1:0:0\right]  $ and that $a_{1}\equiv\left[
x:0:1\right]  $ and $a_{2}\equiv\left[  0:y:1\right]  $. Then
\[
q_{3}=-\frac{\left(  x^{2}+y^{2}-x^{2}y^{2}\right)  }{\left(  x^{2}-1\right)
\left(  y^{2}-1\right)  }%
\]
while%
\[
S_{1}=\frac{\left(  1-x^{2}\right)  y^{2}}{x^{2}+y^{2}-x^{2}y^{2}}%
\qquad\mathit{and}\qquad S_{2}=\frac{\left(  1-x\right)  ^{2}y^{2}\left(
1-y^{2}\right)  }{x^{2}+y^{2}-x^{2}y^{2}}.
\]
You may check that these expressions satisfy the Nil cross law.%
\hspace{.1in}{$\diamond$}%

\end{example}

\subsection{Doubly nil triangles}

\begin{theorem}
[Doubly nil triangle]Suppose the triangle $\overline{a_{1}a_{2}a_{3}}$ is
doubly nil, with $a_{1}$ and $a_{2}$ null points. Let $h$ be the quadrance of
the couple $\overline{a_{3}\left(  a_{1}a_{2}\right)  }.$ Then $S_{3}\equiv
S\left(  a_{3}a_{1},a_{3}a_{2}\right)  $ and $h$ satisfy the relation
\[
S_{3}=-\frac{4h}{\left(  1-h\right)  ^{2}}.
\]

\end{theorem}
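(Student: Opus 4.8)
The plan is to drop the altitude from the non-null vertex $a_{3}$ onto the line $L\equiv a_{1}a_{2}$ and to recognize the two resulting halves as right triangles to which the Right parallax theorem applies, exactly as in the proof of Isosceles parallax. First I would record the setup: by the Join of null points theorem $L=a_{1}a_{2}$ is automatically non-null, and since the triangle is doubly nil the third point $a_{3}$ is non-null, so the couple $\overline{a_{3}L}$ is non-null and $h=q(\overline{a_{3}L})$ is well-defined by the Couple quadrance spread theorem. Because the theorem concerns $S_{3}=S(a_{3}a_{1},a_{3}a_{2})$, both $a_{1}a_{3}$ and $a_{2}a_{3}$ are non-null. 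The formula to be proved is meaningless when $h=1$, and $h=1$ is precisely the case $a_{3}=L^{\bot}$, i.e. $\overline{a_{3}L}$ dual; so I may assume the couple non-dual, whence the Altitude line and Base point theorems produce the altitude line $N\equiv a_{3}L^{\bot}$ and its base point $b\equiv(a_{3}L^{\bot})L$, with $h=q(a_{3},b)$.

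Next I would verify the non-degeneracy needed to invoke Right parallax. Since the line $L$ meets the null cone only in $a_{1}$ and $a_{2}$, a null base point would force $b=a_{1}$ (or $b=a_{2}$), hence $N=a_{1}a_{3}$ and $a_{1}a_{3}\perp L$, giving spread $1$ at $a_{1}$; but the null vertex $a_{1}$ forces $S(a_{1}a_{3},a_{1}a_{2})=0$ by the Zero spread theorem (the meet $a_{1}$ being null), and $0\neq 1$, a contradiction. Thus $b$ is non-null and $\overline{a_{1}a_{3}b}$, $\overline{a_{2}a_{3}b}$ are genuine triangles. In $\overline{a_{1}a_{3}b}$ the spread at $a_{1}$ is $S(a_{1}a_{3},a_{1}b)=S(a_{1}a_{3},L)=0$, the spread at $b$ is $S(N,L)=1$ since $N\perp L$ by construction, so it is a right triangle with null vertex $a_{1}$. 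Writing $\sigma_{1}\equiv S(a_{3}a_{1},N)$ for the spread at $a_{3}$, the Right parallax theorem gives the unique defined quadrance $h=q(a_{3},b)=(\sigma_{1}-1)/\sigma_{1}$, so $\sigma_{1}=1/(1-h)$. The identical argument on $\overline{a_{2}a_{3}b}$ yields $\sigma_{2}\equiv S(a_{3}a_{2},N)=1/(1-h)$, and the crucial point is that these two half-spreads are equal.

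Finally, the lines $a_{3}a_{1}$, $N$, $a_{3}a_{2}$ are concurrent at $a_{3}$, so the Triple spread formula gives $S(\sigma_{1},\sigma_{2},S_{3})=0$ for the Triple spread function. Since $\sigma_{1}=\sigma_{2}$, the Equal quadrances spreads theorem forces $S_{3}=0$ or $S_{3}=4\sigma_{1}(1-\sigma_{1})$. The first branch is excluded, since $S_{3}=0$ would by the Zero spread theorem make the meet $(a_{3}a_{1})(a_{3}a_{2})=a_{3}$ null, contrary to $a_{3}$ non-null. Substituting $\sigma_{1}=1/(1-h)$ into the second branch gives $4\sigma_{1}(1-\sigma_{1})=\frac{4}{1-h}\cdot\frac{-h}{1-h}=-\frac{4h}{(1-h)^{2}}$, the claimed relation. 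The only genuinely delicate part — and where I would spend the most care — is the bookkeeping certifying that $b$ is non-null and that the $S_{3}=0$ alternative cannot occur; once these degeneracies are cleared, the heart of the argument is just the doubling $S_{2}(x)=4x(1-x)$ applied to the common half-spread $\sigma_{1}=\sigma_{2}=1/(1-h)$, precisely the mechanism already used for Isosceles parallax.
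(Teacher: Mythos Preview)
Your argument is correct and is genuinely different from the paper's. The paper proceeds by brute force: it parametrizes $a_{1}=\alpha(t_{1}:u_{1})$, $a_{2}=\alpha(t_{2}:u_{2})$, writes $a_{3}=[x:y:z]$, computes both $h$ and $S_{3}$ as explicit rational functions in these variables, and then verifies the identity $S_{3}=-4h/(1-h)^{2}$ by computer algebra. Your proof is synthetic: the altitude from $a_{3}$ cuts the figure into two right triangles with a null vertex, Right parallax gives the common half-spread $\sigma=1/(1-h)$ at $a_{3}$, and Equal quadrances spreads doubles this to $S_{3}=S_{2}(\sigma)$. This explains \emph{why} the answer is the second spread polynomial of $1/(1-h)$ and makes the analogy with Isosceles parallax exact, whereas the paper's computation merely certifies the formula. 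The trade-off is that the paper's route needs no non-degeneracy bookkeeping, while yours must exclude $b$ null, the $S_{3}=0$ branch, and $h=1$.

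One small imprecision worth fixing: you write that $h=1$ is ``precisely'' the dual case $a_{3}=L^{\bot}$. In fact $h=1-q(a_{3},L^{\bot})$, so $h=1$ is equivalent to $q(a_{3},L^{\bot})=0$, which by the Zero quadrance theorem means either $a_{3}=L^{\bot}$ \emph{or} the altitude $N=a_{3}L^{\bot}$ is a null line. This does not damage the proof---you only need $h\neq1$ to proceed, and indeed $h\neq1$ is exactly what guarantees $N$ is non-null (needed for the spreads in the half-triangles to be defined)---but the sentence as written is not quite right.
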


\begin{proof}
The Parametrization of null points theorem shows that since $a_{1}$ and
$a_{2}$ are null points, we can write
\begin{align*}
a_{1}  & =\alpha\left(  t_{1}:u_{1}\right)  \equiv\left[  t_{1}^{2}-u_{1}%
^{2}:2t_{1}u_{1}:t_{1}^{2}+u_{1}^{2}\right] \\
a_{2}  & =\alpha\left(  t_{2}:u_{2}\right)  \equiv\left[  t_{2}^{2}-u_{2}%
^{2}:2t_{2}u_{2}:t_{2}^{2}+u_{2}^{2}\right]  .
\end{align*}
By the Join of null points theorem,%
\[
L_{3}\equiv a_{1}a_{2}=\left(  t_{1}t_{2}-u_{1}u_{2}:t_{1}u_{2}+t_{2}%
u_{1}:t_{1}t_{2}+u_{1}u_{2}\right)  .
\]
Now suppose that $a_{3}=\left[  x:y:z\right]  $ for some numbers $x,y$ and
$z.$ Then from the Couple quadrance spread theorem and the identity
(\ref{Null line product})
\[
h=\frac{\left(  \left(  t_{1}t_{2}-u_{1}u_{2}\right)  x+\left(  t_{1}%
u_{2}+t_{2}u_{1}\right)  y-\left(  t_{1}t_{2}+u_{1}u_{2}\right)  z\right)
^{2}}{\left(  x^{2}+y^{2}-z^{2}\right)  \left(  t_{1}u_{2}-t_{2}u_{1}\right)
^{2}}.
\]
Also the spread $S_{3}$ is computed to be%
\[
S_{3}=-\frac{4\left(  x\left(  t_{1}t_{2}-u_{1}u_{2}\right)  +y\left(
t_{1}u_{2}+t_{2}u_{1}\right)  -z\left(  t_{1}t_{2}+u_{1}u_{2}\right)  \right)
^{2}\allowbreak\left(  t_{1}u_{2}-t_{2}u_{1}\right)  ^{2}\left(  x^{2}%
+y^{2}-z^{2}\right)  }{\left(  x\left(  t_{1}^{2}-u_{1}^{2}\right)
+2yt_{1}u_{1}-z\left(  t_{1}^{2}+u_{1}^{2}\right)  \right)  ^{2}\left(
x\left(  t_{2}^{2}-u_{2}^{2}\right)  +2yt_{2}u_{2}-z\left(  t_{2}^{2}%
+u_{2}^{2}\right)  \right)  ^{2}\allowbreak}.
\]
It is then an algebraic identity that
\[
S_{3}=-\frac{4h}{\left(  1-h\right)  ^{2}}.%
{\hspace{.1in} \rule{0.5em}{0.5em}}%
\]

\end{proof}

In a future paper we will see that this result has a natural interpretation in
terms of spreads subtended by certain circles.

\subsection{Triply nil triangles}

\begin{theorem}
[Triply nil quadreal]Suppose that $\overline{\alpha_{1}\alpha_{2}\alpha_{3}}$
is a triply nil triangle. Then
\[
\mathcal{L}\left(  \overline{\alpha_{1}\alpha_{2}\alpha_{3}}\right)  =-4.
\]

\end{theorem}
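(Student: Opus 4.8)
The plan is to compute the quadreal directly from its definition (\ref{QuadrealDef}), using the null-point parametrization to make every ingredient explicit. First I would invoke the Parametrization of null points theorem to write $\alpha_i=\alpha\left(t_i:u_i\right)\equiv\left[t_i^2-u_i^2:2t_iu_i:t_i^2+u_i^2\right]$ for $i=1,2,3$, and then the Join of null points theorem to obtain the three lines of the associated trilateral,
\begin{align*}
L_1 &\equiv\alpha_2\alpha_3=\left(t_2t_3-u_2u_3:t_2u_3+t_3u_2:t_2t_3+u_2u_3\right)\\
L_2 &\equiv\alpha_1\alpha_3=\left(t_1t_3-u_1u_3:t_1u_3+t_3u_1:t_1t_3+u_1u_3\right)\\
L_3 &\equiv\alpha_1\alpha_2=\left(t_1t_2-u_1u_2:t_1u_2+t_2u_1:t_1t_2+u_1u_2\right).
\end{align*}
By definition $\mathcal{L}\left(\overline{\alpha_1\alpha_2\alpha_3}\right)$ is the quadreal of this trilateral. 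For the denominator of (\ref{QuadrealDef}), the identity (\ref{Null line product}) gives immediately $l_k^2+m_k^2-n_k^2=\left(t_iu_j-t_ju_i\right)^2$ for the pair $\left(i,j\right)$ defining $L_k$, so the denominator equals $\left(t_2u_3-t_3u_2\right)^2\left(t_1u_3-t_3u_1\right)^2\left(t_1u_2-t_2u_1\right)^2$.

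The numerator is the square of the concurrency determinant $D\equiv\det\left(L_1,L_2,L_3\right)$, and the crux of the argument is to show that $D=\pm 2\left(t_1u_2-t_2u_1\right)\left(t_1u_3-t_3u_1\right)\left(t_2u_3-t_3u_2\right)$. I would first replace the first and third rows of the matrix by their half-sum and half-difference: since $\tfrac12\left(R_1+R_3\right)$ has entries $t_it_j$ and $\tfrac12\left(R_3-R_1\right)$ has entries $u_iu_j$, this row operation has determinant $\tfrac12$, whence $D=2\det M'$ where $M'$ has rows $\left(t_it_j\right)$, $\left(t_iu_j+t_ju_i\right)$, $\left(u_iu_j\right)$.

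The hard part is evaluating $\det M'$, and the cleanest route is a factorization argument in the polynomial ring $\mathbb{F}\left[t_1,u_1,\dots,t_3,u_3\right]$. Writing $\ell_i\equiv t_iX+u_iY$, the $k$th column of $M'$ is exactly the coefficient vector of the product $\ell_i\ell_j$ in the basis $\left\{X^2,XY,Y^2\right\}$. Hence on the locus $t_1u_2-t_2u_1=0$ the forms $\ell_1,\ell_2$ are proportional, so two columns of $M'$ become proportional and $\det M'$ vanishes; since $t_1u_2-t_2u_1$ is irreducible, it divides $\det M'$, and by symmetry so do the other two differences. As $\det M'$ and the product of the three differences are both homogeneous of degree six, they agree up to a constant, which a single numerical specialization pins down as $\pm 1$.

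Finally, substituting $D^2=4\left(t_1u_2-t_2u_1\right)^2\left(t_1u_3-t_3u_1\right)^2\left(t_2u_3-t_3u_2\right)^2$ into (\ref{QuadrealDef}), the cross-difference factors cancel exactly against the denominator, leaving $\mathcal{L}=-4$. I expect the determinant factorization to be the only genuine obstacle; everything else is bookkeeping with the two cited null-point identities, and the sign of $D$ is irrelevant because it is squared.
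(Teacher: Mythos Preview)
Your proposal is correct and follows the paper's overall route: parametrize the three null points, write down the lines $L_{1},L_{2},L_{3}$ via the Join of null points theorem, and evaluate the quadreal. The paper's own proof stops at that point and simply asserts that ``a computer calculation shows that substituting these values into the expression (\ref{QuadrealDef}) for $\mathcal{L}\left(L_{1},L_{2},L_{3}\right)$ gives identically the number $-4$.'' You instead carry out the computation by hand: the denominator falls out of (\ref{Null line product}), and for the numerator you use the row operation $R_{1},R_{3}\mapsto\tfrac{1}{2}\left(R_{1}+R_{3}\right),\tfrac{1}{2}\left(R_{3}-R_{1}\right)$ to reduce to the matrix of coefficient vectors of the products $\ell_{i}\ell_{j}$, then factor its determinant via the divisibility argument. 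This buys an explicit, field-independent polynomial identity in place of an appeal to computer algebra, and makes transparent why the cross-difference factors cancel exactly; the paper's version is shorter but opaque. One small note: your row-operation determinant is indeed $\tfrac{1}{2}$, and a specialization such as $\left(t_{1},u_{1},t_{2},u_{2},t_{3},u_{3}\right)=\left(1,0,0,1,1,1\right)$ fixes the constant as $-1$, so $D=-2\left(t_{1}u_{2}-t_{2}u_{1}\right)\left(t_{1}u_{3}-t_{3}u_{1}\right)\left(t_{2}u_{3}-t_{3}u_{2}\right)$; as you say, the sign is immaterial once squared.
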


\begin{proof}
Suppose that
\begin{align*}
\alpha_{1}  & =\alpha\left(  t_{1}:u_{1}\right)  \equiv\left[  t_{1}^{2}%
-u_{1}^{2}:2t_{1}u_{1}:t_{1}^{2}+u_{1}^{2}\right] \\
\alpha_{2}  & =\alpha\left(  t_{2}:u_{2}\right)  \equiv\left[  t_{2}^{2}%
-u_{2}^{2}:2t_{2}u_{2}:t_{2}^{2}+u_{2}^{2}\right] \\
\alpha_{3}  & =\alpha\left(  t_{3}:u_{3}\right)  \equiv\left[  t_{3}^{2}%
-u_{3}^{2}:2t_{3}u_{3}:t_{3}^{2}+u_{3}^{2}\right]  .
\end{align*}
Then by the Join of null points theorem, the lines of $\overline{\alpha
_{1}\alpha_{2}\alpha_{3}}$ are%
\begin{align*}
L_{1}  & \equiv\alpha_{2}\alpha_{3}=L\left(  t_{2}:u_{2}|t_{3}:u_{3}\right)
\equiv\left(  t_{2}t_{3}-u_{2}u_{3}:t_{2}u_{3}+t_{3}u_{2}:t_{2}t_{3}%
+u_{2}u_{3}\right) \\
L_{2}  & \equiv\alpha_{1}\alpha_{3}=L\left(  t_{1}:u_{1}|t_{3}:u_{3}\right)
\equiv\left(  t_{1}t_{3}-u_{1}u_{3}:t_{1}u_{3}+t_{3}u_{1}:t_{1}t_{3}%
+u_{1}u_{3}\right) \\
L_{3}  & \equiv\alpha_{1}\alpha_{2}=L\left(  t_{1}:u_{1}|t_{2}:u_{2}\right)
\equiv\left(  t_{1}t_{2}-u_{1}u_{2}:t_{1}u_{2}+t_{2}u_{1}:t_{1}t_{2}%
+u_{1}u_{2}\right)  .
\end{align*}

A computer calculation shows that substituting these values into the
expression (\ref{QuadrealDef}) for $\mathcal{L}\left(  L_{1},L_{2}%
,L_{3}\right)  $ gives identically the number $-4.%
{\hspace{.1in} \rule{0.5em}{0.5em}}%
$
\end{proof}

\begin{theorem}
[Triply nil balance]Suppose that $\overline{\alpha_{1}\alpha_{2}\alpha_{3}}$
is a triply nil triangle, and that $d$ is any point lying on $\alpha_{1}%
\alpha_{2}$ distinct from $\alpha_{1}$ and $\alpha_{2}.$ If $b_{1}$ is the
base of the couple $\overline{d\left(  \alpha_{1}\alpha_{3}\right)  },$ and
$b_{2}$ is the base of the couple $\overline{d\left(  \alpha_{2}\alpha
_{3}\right)  },$ then
\[
q\left(  d,b_{1}\right)  q\left(  d,b_{2}\right)  =1.
\]
Furthermore $db_{1}$ is perpendicular to $db_{2}.$
\end{theorem}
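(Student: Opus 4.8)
The plan is to reduce both assertions to polynomial identities in the parameters of the three null points, exploiting the null parametrization and the explicit form of the couple quadrance. First I would write $\alpha_i=\alpha\left(t_i:u_i\right)\equiv\left[t_i^2-u_i^2:2t_iu_i:t_i^2+u_i^2\right]$ for $i=1,2,3$ (Parametrization of null points), and use the Join of null points theorem to record the two relevant lines, $\alpha_1\alpha_3=\left(l_1:m_1:n_1\right)$ and $\alpha_2\alpha_3=\left(l_2:m_2:n_2\right)$ with $l_1=t_1t_3-u_1u_3$, $m_1=t_1u_3+t_3u_1$, $n_1=t_1t_3+u_1u_3$, and similarly for the second. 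The identity (\ref{Null line product}) then supplies the clean denominators $l_1^2+m_1^2-n_1^2=\left(t_1u_3-t_3u_1\right)^2$ and $l_2^2+m_2^2-n_2^2=\left(t_2u_3-t_3u_2\right)^2$. Since $d$ lies on $\alpha_1\alpha_2$, the Parametrizing a join theorem lets me write $d=\left[\lambda v_1+\mu v_2\right]$ with $v_i$ the vector of $\alpha_i$; because $v_1,v_2$ are null vectors, a short computation gives $x^2+y^2-z^2=2\lambda\mu\left(x_1x_2+y_1y_2-z_1z_2\right)=-4\lambda\mu\left(t_1u_2-t_2u_1\right)^2$ for $d=\left(x,y,z\right)$.

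Next I would identify the two quadrances with couple quadrances. The lines $\alpha_1\alpha_3,\alpha_2\alpha_3$ are non-null (joins of distinct null points) and $d$ is non-null (the only null points on $\alpha_1\alpha_2$ are $\alpha_1,\alpha_2$), and since $d$ is distinct from $\alpha_1,\alpha_2$ the couples $\overline{d\left(\alpha_1\alpha_3\right)}$ and $\overline{d\left(\alpha_2\alpha_3\right)}$ are non-dual, so the base points exist and $q\left(d,b_i\right)=q\left(\overline{d\left(\alpha_i\alpha_3\right)}\right)$ by the identification preceding the Couple quadrance spread theorem, which evaluates each to $\left(l_ix+m_iy-n_iz\right)^2/\left(\left(x^2+y^2-z^2\right)\left(l_i^2+m_i^2-n_i^2\right)\right)$. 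The crucial simplification is that $\alpha_1$ lies on $\alpha_1\alpha_3$ and $\alpha_2$ lies on $\alpha_2\alpha_3$, so $l_1x+m_1y-n_1z$ collapses to $\mu$ times the evaluation of $\alpha_1\alpha_3$ at $\alpha_2$, and $l_2x+m_2y-n_2z$ to $\lambda$ times the evaluation of $\alpha_2\alpha_3$ at $\alpha_1$. Each of these evaluations factors as a product of the $2\times2$ ``Pl\"ucker'' differences $t_iu_j-t_ju_i$. Substituting and cancelling, $q\left(d,b_1\right)q\left(d,b_2\right)$ becomes a ratio whose numerator and denominator are literally the same square, giving $q\left(d,b_1\right)q\left(d,b_2\right)=1$.

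For the perpendicularity I would first observe that each $db_i$ is the altitude line of the couple $\overline{d\left(\alpha_i\alpha_3\right)}$, namely $d\left(\alpha_i\alpha_3\right)^{\bot}$, since both $b_i$ and $d$ lie on that altitude line and $d\neq b_i$ (the nonzero quadrances forbid equality). Thus $db_1=dP_1$ and $db_2=dP_2$ where $P_i\equiv\left(\alpha_i\alpha_3\right)^{\bot}$. To test $db_1\bot db_2$ I would evaluate the line perpendicularity form via the bilinear (polarized) version of the extended Fibonacci identity used in the Quadrance theorem, namely $\left\langle J\left(p,q\right),J\left(r,s\right)\right\rangle=\left\langle p,s\right\rangle\left\langle q,r\right\rangle-\left\langle p,r\right\rangle\left\langle q,s\right\rangle$ for the hyperbolic cross product $J$ and the form $\left\langle\cdot,\cdot\right\rangle$ of signature $\left(+,+,-\right)$. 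Taking $p=r=d$, $q=P_1$, $s=P_2$ reduces the perpendicularity form of $dP_1$ and $dP_2$ to $\left\langle d,P_1\right\rangle\left\langle d,P_2\right\rangle-\left(x^2+y^2-z^2\right)\left\langle P_1,P_2\right\rangle$, and with the quantities already computed the vanishing comes down to the single scalar identity $\left\langle P_1,P_2\right\rangle=l_1l_2+m_1m_2-n_1n_2=\left(t_2u_3-t_3u_2\right)\left(t_1u_3-t_3u_1\right)$, the cross-form of the two lines sharing the null point $\alpha_3$, which I would verify by direct expansion.

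The routine parts are the factorizations of the incidences and of $x^2+y^2-z^2$, all of which organize around the Pl\"ucker differences $t_iu_j-t_ju_i$. The main obstacle is the perpendicularity step: it needs the bilinear form of the extended Fibonacci identity (only its diagonal case is recorded in the paper, so I would first polarize it) together with the clean evaluation of the line cross-form $l_1l_2+m_1m_2-n_1n_2$ for two joins through $\alpha_3$. Once those two ingredients are in place, both conclusions follow purely by cancellation, and the entire argument can be confirmed by a single computer-algebra check exactly as in the Triply nil quadreal theorem.
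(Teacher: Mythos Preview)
Your argument is correct. For the quadrance product you follow exactly the paper's route: parametrize the null points, write $d$ as a combination on $\alpha_{1}\alpha_{2}$, invoke the Couple quadrance spread theorem, and cancel the Pl\"ucker factors; the paper obtains the explicit reciprocals $q\left(d,b_{1}\right)=-r\left(t_{1}u_{3}-t_{3}u_{1}\right)^{2}/\left(s\left(t_{2}u_{3}-t_{3}u_{2}\right)^{2}\right)$ and its inverse, which is precisely what your simplification yields.

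For the perpendicularity you depart from the paper, and to your advantage. The paper simply computes $J$ of the two altitude lines and verifies $S\left(db_{1},db_{2}\right)=1$ by a computer calculation, with no displayed intermediate structure. Your use of the polarized Fibonacci identity $\left\langle J\left(p,q\right),J\left(r,s\right)\right\rangle=\left\langle p,s\right\rangle\left\langle q,r\right\rangle-\left\langle p,r\right\rangle\left\langle q,s\right\rangle$ (whose diagonal is exactly the identity used in the Quadrance theorem) reduces the perpendicularity to the single equality $\left\langle d,P_{1}\right\rangle\left\langle d,P_{2}\right\rangle=\left\langle d,d\right\rangle\left\langle P_{1},P_{2}\right\rangle$, which then follows immediately from the factored forms you already computed together with your clean evaluation $l_{1}l_{2}+m_{1}m_{2}-n_{1}n_{2}=\left(t_{1}u_{3}-t_{3}u_{1}\right)\left(t_{2}u_{3}-t_{3}u_{2}\right)$. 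This is more transparent than the paper's brute-force check and explains \emph{why} the two altitude lines are perpendicular, rather than merely confirming it.
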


\begin{proof}
From the Parametrizing a line theorem we can find a proportion $r:s$ so that%
\begin{equation}
d=\left[  r\left(  t_{1}^{2}-u_{1}^{2}\right)  +s\left(  t_{2}^{2}-u_{2}%
^{2}\right)  :2rt_{1}u_{1}+2st_{2}u_{2}:r\left(  t_{1}^{2}+u_{1}^{2}\right)
+s\left(  t_{2}^{2}+u_{2}^{2}\right)  \right]  .\label{Param1}%
\end{equation}
Since $d$ is distinct from $\alpha_{1}$ and $\alpha_{2},$ both $r$ and $s$ are
non-zero. The Join of null points theorem gives%
\begin{equation}
\alpha_{2}\alpha_{3}=\left(  t_{2}t_{3}-u_{2}u_{3},t_{2}u_{3}+t_{3}u_{2}%
,t_{2}t_{3}+u_{2}u_{3}\right)  .\label{Param2}%
\end{equation}
The quadrance $q\left(  d,b_{1}\right)  $ by the Couple quadrance spread
theorem can be calculated from these two expressions, and with the aid of a
computer we get%
\[
q\left(  d,b_{1}\right)  =-\frac{r\left(  t_{1}u_{3}-u_{1}t_{3}\right)
^{2}\allowbreak}{s\left(  t_{2}u_{3}-t_{3}u_{2}\right)  ^{2}}.
\]
Similarly
\[
q\left(  d,b_{2}\right)  =-\frac{s\left(  t_{2}u_{3}-t_{3}u_{2}\right)  ^{2}%
}{r\left(  t_{1}u_{3}-u_{1}t_{3}\right)  ^{2}\allowbreak}.
\]
So
\[
q\left(  d,b_{1}\right)  q\left(  d,b_{2}\right)  =1.
\]

\end{proof}

Applying the hyperbolic cross function $J$ to (\ref{Param1}) and
(\ref{Param2}) we get an expression for the altitude line $db_{1}$ from $d$ to
$\alpha_{2}\alpha_{3},$ and similarly an expression for $db_{2}.$ Then
applying the formula for the spread between two lines, a computer calculation
shows that
\[
S\left(  db_{1},db_{2}\right)  =1.%
{\hspace{.1in} \rule{0.5em}{0.5em}}%
\]

As a consequence, Pythagoras' theorem gives%
\[
q\left(  b_{1},b_{2}\right)  =q\left(  d,b_{1}\right)  +q\left(
d,b_{2}\right)  -1.
\]

The next theorem is but a brief introduction to a wealth of intricate
relations that exist in a triply nil triangle. It suggests that there is a
rich family of numbers that play a universal role in hyperbolic geometry, and
raises the question of cataloguing such numbers.

\begin{theorem}
[Triply nil orthocenter]Suppose that we work over a field with characteristic
neither two, three or five, and that $\overline{\alpha_{1}\alpha_{2}\alpha
_{3}}$ is a triply nil triangle, so that each of $\alpha_{1},\alpha_{2}$ and
$\alpha_{3}$ is a null point. Then the three sides of $\overline{\alpha
_{1}\alpha_{2}\alpha_{3}}$ are non-null, and the three altitudes of the three
couples of $\overline{\alpha_{1}\alpha_{2}\alpha_{3}}$ meet in a point $o$
called the \textbf{orthocenter }of $\overline{\alpha_{1}\alpha_{2}\alpha_{3}}%
$. If the bases of the altitudes are respectively $b_{1},b_{2}$ and $b_{3},$
then $\overline{b_{1}b_{2}b_{3}}$ is an equilateral triangle with common
quadrance $q=-5/4$ and common spread $S=16/25.$ The orthocenter of
$\overline{b_{1}b_{2}b_{3}}$ is also $o,$ and
\[
q\left(  o,b_{1}\right)  =q\left(  o,b_{2}\right)  =q\left(  o,b_{3}\right)
=-1/3.
\]

\end{theorem}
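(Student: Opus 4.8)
The plan is to follow exactly the computational strategy already used for the Triply nil quadreal theorem: parametrize the three null vertices and reduce every assertion to a polynomial identity in the resulting parameters. First I would write $\alpha_i=\alpha\left(t_i:u_i\right)\equiv\left[t_i^2-u_i^2:2t_iu_i:t_i^2+u_i^2\right]$ for $i=1,2,3$, and compute the three sides by the Join of null points theorem, obtaining $L_1\equiv\alpha_2\alpha_3$, $L_2\equiv\alpha_1\alpha_3$, $L_3\equiv\alpha_1\alpha_2$ in the closed forms recorded there. That each side is non-null is immediate from the identity (\ref{Null line product}), whose right-hand side $\left(t_iu_j-t_ju_i\right)^2$ is non-zero because the three proportions are distinct; this disposes of the first claim. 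Because $\alpha_i$ is null it lies on its own dual $\alpha_i^\bot$, whereas non-collinearity keeps $\alpha_i$ off $L_i$, so $\alpha_i^\bot\neq L_i$, the couple $\overline{\alpha_iL_i}$ is non-dual, and the Altitude line theorem furnishes the altitude $N_i\equiv\alpha_iL_i^\bot$, computable as the hyperbolic cross product $J$ of the coordinate vectors of $\alpha_i$ and $L_i$.

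Next I would establish concurrency of $N_1,N_2,N_3$ by evaluating the $3\times3$ coefficient determinant of the Concurrent lines theorem and verifying that it vanishes identically in the $t_i,u_i$; this defines the orthocenter $o\equiv N_1N_2=N_2N_3=N_1N_3$ and settles the second claim. The bases are then $b_i\equiv N_iL_i=\left(\alpha_iL_i^\bot\right)L_i$ by the Base point theorem, each again a single application of $J$. With explicit coordinates for $b_1,b_2,b_3$ in hand, I would compute the three pairwise quadrances $q\left(b_i,b_j\right)$ from the Quadrance formula (\ref{QuadranceFormula}) and check that, once the parameters cancel, each equals $-5/4$, so that $\overline{b_1b_2b_3}$ is equilateral. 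The common spread $S=16/25$ follows either by a parallel direct computation or from the Spread law together with the Equilateral theorem; as a consistency check against the equilateral relation (\ref{Equilateral Relation}) one has $\left(1-Sq\right)^2=\left(9/5\right)^2=81/25$ and $4\left(1-S\right)\left(1-q\right)=4\cdot\left(9/25\right)\left(9/4\right)=81/25$, which agree. Finally I would compute the orthocenter of $\overline{b_1b_2b_3}$ as the meet of its own altitudes, verify it coincides with $o$, and evaluate $q\left(o,b_i\right)$, confirming the common value $-1/3$.

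The main obstacle is purely computational: every assertion reduces to a rational identity in the homogeneous parameters $t_1,u_1,\dots,t_3,u_3$, and the essential phenomenon is that these parameters cancel completely, leaving the universal constants $-5/4$, $16/25$ and $-1/3$. The concurrency determinant and the cancellation in $q\left(b_i,b_j\right)$ are the heaviest steps and are best discharged by computer algebra, exactly as in the Triply nil quadreal theorem. A legitimate reduction of the symbolic load, not relying on any theory of isometries, is to note that each quadrance is bi-homogeneous of degree zero in each pair $\left(t_i,u_i\right)$, so the expressions depend only on the three proportions $t_i:u_i$; one may therefore dehomogenize to three affine variables before verifying the identities.

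Throughout I would keep track of where the characteristic hypotheses are used. The standing assumption char $\neq2$ is needed already for the null-point parametrization, while char $\neq3$ and char $\neq5$ are required so that the target values $-1/3$, $-5/4$ and $16/25$ are meaningful, and—more substantively—so that the base points $b_1,b_2,b_3$ are genuinely distinct and non-collinear. This ensures that $\overline{b_1b_2b_3}$ really is a triangle with defined quadrances and that $o$ is a well-defined point rather than a degenerate coincidence; I expect the excluded characteristics to appear precisely as the vanishing of a factor in the denominators of the normalized coordinates of $b_i$ or of $o$.
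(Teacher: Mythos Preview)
Your proposal is correct and follows essentially the same approach as the paper: parametrize the null vertices as $\alpha_i=\alpha(t_i:u_i)$, compute the lines via the Join of null points theorem, build the altitudes and base points through repeated applications of the hyperbolic cross product $J$, and then verify each numerical claim as a polynomial identity in the parameters, discharged by computer algebra. Your write-up is in fact more explicit than the paper's own proof, which simply states that the Join of points and Meet of lines theorems yield expressions for $o$ and the $b_i$, and that ``pleasant computer simplifications'' applied to the quadrance and spread formulas give the stated constants; your added remarks on why the sides are non-null, on the concurrency determinant, on dehomogenization, and on where the characteristic restrictions enter are all sound elaborations of the same computational skeleton.
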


\begin{proof}
Suppose that $\alpha_{1}=\alpha\left(  t_{1}:u_{1}\right)  $, $\alpha
_{2}=\alpha\left(  t_{2}:u_{2}\right)  $ and $\alpha_{3}=\alpha\left(
t_{3}:u_{3}\right)  $. Then from the Join of null points theorem
\begin{align*}
\alpha_{1}\alpha_{2}  & =\left(  t_{1}t_{2}-u_{1}u_{2}:t_{1}u_{2}+t_{2}%
u_{1}:t_{1}t_{2}+u_{1}u_{2}\right) \\
\alpha_{1}\alpha_{3}  & =\left(  t_{1}t_{3}-u_{1}u_{3}:t_{1}u_{3}+u_{1}%
t_{3}:t_{1}t_{3}+u_{1}u_{3}\right) \\
\alpha_{2}\alpha_{3}  & =\left(  t_{2}t_{3}-u_{2}u_{3}:t_{2}u_{3}+u_{2}%
t_{3}:t_{2}t_{3}+u_{2}u_{3}\right)  .
\end{align*}
Use the Join of points and Meet of lines theorems to determine that the
altitudes of $\overline{\alpha_{1}\alpha_{2}\alpha_{3}}$ meet at a point $o,$
and to give explicit expressions for the points $b_{1},b_{2}$ and $b_{3}$.
While the exact formulas are somewhat lengthy to write down, the expressions
for quadrance and spread applied to them, together with some pleasant computer
simplifications, give the results.$%
{\hspace{.1in} \rule{0.5em}{0.5em}}%
$
\end{proof}

Note that $q=-5/4$ and $S=16/25$ satisfy the Equilateral relation $\left(
1-Sq\right)  ^{2}=4\left(  1-S\right)  \left(  1-q\right)  $.

\subsection{Triangle thinness}

One of the defining aspects of classical hyperbolic geometry is
\textit{thinness of triangles}. Here are two results that give universal
approaches to this phenomenon.

\begin{theorem}
[Triply nil Cevian thinness]Suppose that $\overline{\alpha_{1}\alpha_{2}%
\alpha_{3}}$ is a triply nil triangle, and that $a$ is a point distinct from
$\alpha_{1},\alpha_{2}$ and $\alpha_{3}.$ Define the cevian points
$c_{1}\equiv\left(  a\alpha_{1}\right)  \left(  \alpha_{2}\alpha_{3}\right)
$, $c_{2}\equiv\left(  a\alpha_{2}\right)  \left(  \alpha_{1}\alpha
_{3}\right)  $ and $c_{3}\equiv\left(  a\alpha_{3}\right)  \left(  \alpha
_{1}\alpha_{2}\right)  .$ Then
\[
\mathcal{A}\left(  c_{1},c_{2},c_{3}\right)  =1.
\]

\end{theorem}

\begin{proof}
Using the notation of the proofs of the previous theorems, suppose that
$a\equiv\left[  x:y:z\right]  $ is an arbitrary point distinct from
$\alpha_{1},\alpha_{2}$ and $\alpha_{3}$. Then in terms of $t_{1},u_{1}%
,t_{2},u_{2},t_{3},u_{3}$ and $x,y$ and $z,$ we may use the Joint of points
and Meet of lines theorems to find expressions for $c_{1},c_{2}$ and $c_{3},$
and then use a computer to evaluate the quadrea $\mathcal{A}\left(
c_{1},c_{2},c_{3}\right)  $. In terms of the expression (\ref{QuadreaFormula})
the numerator becomes the square of $\allowbreak$%
\begin{align*}
& 8\left(  t_{1}u_{2}-t_{2}u_{1}\right)  \left(  t_{2}u_{3}-t_{3}u_{2}\right)
\left(  t_{3}u_{1}-t_{1}u_{3}\right)  \allowbreak\left(  x\left(  u_{2}%
u_{3}-t_{2}t_{3}\right)  -y\left(  t_{2}u_{3}+yt_{3}u_{2}\right)  +z\left(
t_{2}t_{3}+u_{2}u_{3}\right)  \right)  \allowbreak\\
& \times\left(  x\left(  t_{1}t_{3}-u_{1}u_{3}\right)  +y\left(  t_{1}%
u_{3}+u_{1}t_{3}\right)  -z\left(  t_{1}t_{3}+u_{1}u_{3}\right)  \right)
\allowbreak\left(  x\left(  t_{1}t_{2}-u_{1}u_{2}\right)  +y\left(  t_{1}%
u_{2}+t_{2}u_{1}\right)  -z\left(  t_{1}t_{2}+u_{1}u_{2}\right)  \right)
\end{align*}
while the denominator has three factors which are all of the form%
\begin{align*}
& 4\left(  t_{3}u_{1}-t_{1}u_{3}\right)  \left(  t_{1}u_{2}-t_{2}u_{1}\right)
\\
& \times\left(  x\left(  t_{1}t_{3}-u_{1}u_{3}\right)  +y\left(  t_{1}%
u_{3}+u_{1}t_{3}\right)  -z\left(  t_{1}t_{3}+u_{1}u_{3}\right)  \right)
\allowbreak\left(  x\left(  t_{1}t_{2}-u_{1}u_{2}\right)  +y\left(  t_{1}%
u_{2}+t_{2}u_{1}\right)  -z\left(  t_{1}t_{2}+u_{1}u_{2}\right)  \right)  .
\end{align*}
The result then follows from pleasant cancellation.$%
{\hspace{.1in} \rule{0.5em}{0.5em}}%
$
\end{proof}

\begin{theorem}
[Triply nil altitude thinness]Suppose that $\overline{\alpha_{1}\alpha
_{2}\alpha_{3}}$ is a triply nil triangle and that $a$ is a point distinct
from the duals of the lines. If the altitudes to the lines of this triangle
from $a$ meet the lines respectively at base points $b_{1},b_{2}$ and $b_{3}$,
then%
\[
\mathcal{A}\left(  b_{1},b_{2},b_{3}\right)  =1.
\]

\end{theorem}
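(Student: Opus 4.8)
The plan is to prove this by the same explicit coordinate method used for the Triply nil Cevian thinness theorem, since the two statements differ only in how the three auxiliary points on the sides are constructed. First I would invoke the Parametrization of null points theorem to write
\[
\alpha_i = \alpha(t_i:u_i) \equiv \left[t_i^2 - u_i^2 : 2t_iu_i : t_i^2 + u_i^2\right] \qquad (i=1,2,3),
\]
and then apply the Join of null points theorem to record the three lines $L_1 \equiv \alpha_2\alpha_3$, $L_2 \equiv \alpha_1\alpha_3$, $L_3 \equiv \alpha_1\alpha_2$ in the compact bilinear form $(t_jt_k - u_ju_k : t_ju_k + t_ku_j : t_jt_k + u_ju_k)$. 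By the Triply nil orthocenter discussion each of these three sides is non-null, so by the identity (\ref{Null line product}) the duals $L_i^\bot$ — whose coordinate triples are those of $L_i$ read as a point — are well-defined, and the hypothesis that $a$ avoids the duals of the lines guarantees that $a$ is distinct from each $L_i^\bot$, so the altitude constructions below are legitimate.

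Next, writing $a \equiv [x:y:z]$, I would compute the three altitude lines $N_i \equiv aL_i^\bot$ by the Altitude line theorem, whose coefficients are $J(x,y,z;\,\text{coords of }L_i)$, and then obtain the three base points $b_i \equiv N_i L_i = (aL_i^\bot)L_i$ from the Base point theorem, realizing each $b_i$ as the meet of $N_i$ with $L_i$ via the Meet of lines theorem. This expresses $b_1,b_2,b_3$ explicitly as polynomials in $t_1,u_1,\dots,t_3,u_3$ and $x,y,z$. The final step is to substitute these into the quadrea formula (\ref{QuadreaFormula}) and verify that $\mathcal{A}(b_1,b_2,b_3) = 1$ identically, with the aid of a computer, just as in the cevian case.

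As for the shape of that verification: by analogy with the Triply nil Cevian thinness proof I expect the determinant in the numerator of (\ref{QuadreaFormula}) to factor as a square containing the six bilinear factors $t_ju_k - t_ku_j$ together with three couple-type factors of the form $x(t_jt_k - u_ju_k) + y(t_ju_k + t_ku_j) - z(t_jt_k + u_ju_k)$, while the three denominator factors (the quantities $x^2+y^2-z^2$ evaluated at the $b_i$) should produce exactly the matching product, so that after pleasant cancellation — and tracking the leading minus sign of (\ref{QuadreaFormula}) so as to land on $+1$ rather than $-1$ — one is left with $1$. I would factor out the common bilinear and couple factors as early as possible to keep the residual identity small.

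The hard part will be precisely this last cancellation. The altitude construction $b_i = (aL_i^\bot)L_i$ inserts the point $a$ through perpendicularity to $L_i$ rather than through joining $a$ to a vertex, so the intermediate base-point coordinates are genuinely heavier than the cevian-point coordinates, and the denominator factors $b_i^{\,2}$ are no longer self-evidently the squares of the couple factors. Confirming that the numerator square and the triple denominator product coincide up to the correct sign is the crux of the argument and is where I would rely on a computer algebra check, exactly as the paper does for the companion thinness and orthocenter results.
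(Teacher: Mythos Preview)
Your proposal is correct and follows essentially the same approach as the paper: parametrize the null vertices, compute the altitude lines $N_i = aL_i^{\bot}$ and base points $b_i = N_iL_i$ explicitly via the hyperbolic cross product, and then verify $\mathcal{A}(b_1,b_2,b_3)=1$ by a computer-assisted simplification. The paper's proof is in fact terser than yours---it writes out $N_3$ and $b_3$ explicitly, asserts the analogous formulas for $b_1,b_2$, and then simply states that the computer calculation gives $1$---so your more detailed anticipation of the factorization pattern is a reasonable elaboration of the same argument.
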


\begin{proof}
Using the notation of the proof of the previous theorem, the altitude line
from $a$ to $\alpha_{1}\alpha_{2}$ is
\[
N_{3}\equiv\left[  -y\left(  t_{1}t_{2}+u_{1}u_{2}\right)  +z\left(
t_{1}u_{2}+t_{2}u_{1}\right)  :x\left(  t_{1}t_{2}+u_{1}u_{2}\right)
-z\left(  t_{1}t_{2}-u_{1}u_{2}\right)  :x\left(  t_{1}u_{2}+t_{2}%
u_{1}\right)  -y\left(  t_{1}t_{2}-u_{1}u_{2}\right)  \right]
\]
and its meet with $\alpha_{1}\alpha_{2}$ is the base point%
\[
b_{3}=\left[
\begin{array}
[c]{c}%
x\left(  t_{1}^{2}-u_{1}^{2}\right)  \left(  t_{2}^{2}-u_{2}^{2}\right)
+y\left(  t_{1}u_{2}+t_{2}u_{1}\right)  \left(  t_{1}t_{2}-u_{1}u_{2}\right)
-z\left(  t_{1}^{2}t_{2}^{2}-u_{1}^{2}u_{2}^{2}\right) \\
:x\left(  t_{1}u_{2}+t_{2}u_{1}\right)  \left(  t_{1}t_{2}-u_{1}u_{2}\right)
+4yt_{1}t_{2}u_{1}u_{2}-z\left(  t_{1}t_{2}+u_{1}u_{2}\right)  \left(
t_{1}u_{2}+t_{2}u_{1}\right) \\
:x\left(  t_{1}^{2}t_{2}^{2}-u_{1}^{2}u_{2}^{2}\right)  +y\left(  t_{1}%
t_{2}+u_{1}u_{2}\right)  \left(  t_{1}u_{2}+t_{2}u_{1}\right)  -z\left(
t_{1}^{2}+u_{1}^{2}\right)  \left(  t_{2}^{2}+u_{2}^{2}\right)
\end{array}
\right]
\]

\end{proof}

With similar expressions for $b_{1}$ and $b_{2},$ a computer calculation then
shows that identically%
\[
\mathcal{A}\left(  b_{1},b_{2},b_{3}\right)  =1.%
{\hspace{.1in} \rule{0.5em}{0.5em}}%
\]

It is also worth noting that if the coordinates of $b_{3}$ in
(\ref{ThreeBases3}) are respectively $b_{31},b_{32}$ and $b_{33}$, and
similarly for $b_{1}$ and $b_{2},$ then
\[
\det%
\begin{pmatrix}
b_{11} & b_{12} & b_{13}\\
b_{21} & b_{22} & b_{23}\\
b_{31} & b_{32} & b_{33}%
\end{pmatrix}
=\allowbreak-\left(  t_{1}u_{2}-t_{2}u_{1}\right)  \left(  t_{2}u_{3}%
-t_{3}u_{2}\right)  \left(  t_{3}u_{1}-t_{1}u_{3}\right)  .
\]

\subsection{Singly null singly nil triangles}

A triangle $\overline{a_{1}a_{2}a_{3}}$ is \textbf{singly null and singly nil}
when it has exactly one null point, and exactly one null line. There are two
types of such triangles, depending on whether or not the null point lies on
the null line.

\begin{theorem}
[Singly null singly nil Thales]Suppose that $\overline{a_{1}a_{2}a_{3}}$ is a
singly null and singly nil triangle in which $a_{2}a_{3}$ is a null line and
$a_{3}$ is a null point. If $q_{3}\equiv q\left(  a_{1},a_{2}\right)  $ and
$S_{1}\equiv S\left(  a_{1}a_{2},a_{1}a_{3}\right)  $ then
\[
q_{3}S_{1}=1.
\]

\end{theorem}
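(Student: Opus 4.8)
The plan is to turn the two geometric hypotheses into two scalar conditions on the coordinate vectors, reduce the product $q_3S_1$ to a single ratio, and then close the gap with a Gram determinant identity.

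First I would record the hypotheses algebraically. Write $a_i\equiv[x_i:y_i:z_i]$ and abbreviate $G_i\equiv x_i^2+y_i^2-z_i^2$ and $\beta_{ij}\equiv x_ix_j+y_iy_j-z_iz_j$. Since $a_3$ is a null point, $G_3=0$. Since $a_2a_3$ is a null line with $a_3$ null, the Perpendicular null line theorem forces $a_2\perp a_3$, that is $\beta_{23}=0$ (and incidentally $a_2a_3=a_3^{\bot}$). Because the triangle is singly null and singly nil, the points $a_1,a_2$ and the lines $a_1a_2,a_1a_3$ are all non-null, so $G_1$, $G_2$, $\beta_{12}^2-G_1G_2$ and $\beta_{13}$ are all nonzero; thus $q_3$ and $S_1$ are genuinely defined.

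Next I would expand $q_3S_1$. From the quadrance formula $q_3=(G_1G_2-\beta_{12}^2)/(G_1G_2)$. For the spread I would invoke the Spread formula theorem at the vertex $a_1$: its denominator is the product of the two line-norms $l_2^2+m_2^2-n_2^2$ and $l_3^2+m_3^2-n_3^2$ of $a_1a_3$ and $a_1a_2$, and its numerator carries $G_1$ times the squared collinearity determinant $D^2$, where $D$ is the $3\times3$ determinant appearing in the Collinear points theorem. Using the extension of Fibonacci's identity from the Quadrance theorem, these norms equal $\beta_{13}^2-G_1G_3=\beta_{13}^2$ and $\beta_{12}^2-G_1G_2$. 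Substituting and cancelling the common factors $G_1$ and $\beta_{12}^2-G_1G_2$ collapses the product to
\[
q_3S_1=\frac{D^2}{G_2\,\beta_{13}^2}.
\]

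It remains to prove $D^2=G_2\beta_{13}^2$, and here the clean tool is the Gram determinant. Let $M$ be the symmetric matrix with entries $\beta_{ij}$, so its diagonal is $G_1,G_2,G_3$; since $M=V\,\mathrm{diag}(1,1,-1)\,V^{T}$ with $V$ the matrix of coordinate rows and $\det V=D$, one has $\det M=-D^2$. Feeding in $G_3=0$ and $\beta_{23}=0$ makes the third row of $M$ equal to $(\beta_{13},0,0)$, and a single cofactor expansion gives $\det M=-G_2\beta_{13}^2$. Hence $D^2=G_2\beta_{13}^2$ and $q_3S_1=1$, as required. The main obstacle is not depth but bookkeeping: one must check that the two hypotheses translate to exactly $G_3=0$ and $\beta_{23}=0$ (the latter through the Perpendicular null line theorem), and that no denominator in $q_3$, in $S_1$, or in the cancellation vanishes — which is precisely what the singly-null, singly-nil conditions guarantee. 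In the style of the paper the identity $\det M=-D^2$ could instead be stated and verified as a raw polynomial identity, but the genuinely new step is the collapse of $\det M$ under the two null conditions.
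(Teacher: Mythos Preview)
Your argument is correct, and it is genuinely different from the paper's proof. The paper proceeds by explicit parametrization: it writes $a_{3}=\alpha(t:u)$, uses the Parametrizing a null line theorem to express $a_{2}$ on the null line $a_{3}^{\bot}$ (splitting into the cases $t^{2}+u^{2}\neq0$ and $t^{2}+u^{2}=0$), leaves $a_{1}=[x:y:z]$ free, and then verifies $q_{3}S_{1}=1$ by direct computer-aided expansion. Your route instead encodes the two hypotheses as the scalar conditions $G_{3}=0$ and $\beta_{23}=0$ via the Perpendicular null line theorem, uses the Fibonacci-type identity behind the Quadrance theorem to rewrite the line norms in the Spread formula as $\beta_{13}^{2}$ and $\beta_{12}^{2}-G_{1}G_{2}$, and then finishes with the Gram identity $\det(\beta_{ij})=-D^{2}$, which collapses to $-G_{2}\beta_{13}^{2}$ by a single cofactor expansion. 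What this buys you is a coordinate-free proof with no case split and no computer algebra, and it also explains \emph{why} the answer is exactly $1$: the two null conditions zero out precisely the entries of the Gram matrix needed to make its determinant a pure product. The paper's approach, on the other hand, fits its computational style and requires no appeal to the Gram determinant, which is not otherwise developed there.
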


\begin{proof}
Suppose that $a_{3}=\alpha\left(  t:u\right)  \equiv\left[  t^{2}%
-u^{2}:2tu:t^{2}+u^{2}\right]  $ and $a_{1}=\left[  x:y:z\right]  $, with
$a_{1}$ non-null.

If $t^{2}+u^{2}=0$ then we can write $a_{3}=\left[  t:u:0\right]  $, and by
the Parametrizing a null line theorem $a_{2}=\left[  rt:ru:s\right]  $ for
some proportion $r:s$ where $s\neq0.$ In this case the definition of quadrance
and the condition $t^{2}+u^{2}=0$ gives
\[
q_{3}=\frac{2r^{2}tuxy-2rsuyz-2rstxz+s^{2}x^{2}+s^{2}y^{2}-r^{2}t^{2}%
y^{2}-r^{2}u^{2}x^{2}+r^{2}t^{2}z^{2}+r^{2}u^{2}z^{2}}{\left(  x^{2}%
+y^{2}-z^{2}\right)  s^{2}}%
\]
while the Spread formula gives
\[
S_{1}=\allowbreak\frac{\left(  x^{2}+y^{2}-z^{2}\right)  s^{2}}{2r^{2}%
tuxy-2rsuyz-2rstxz+s^{2}x^{2}+s^{2}y^{2}-r^{2}t^{2}y^{2}-r^{2}u^{2}x^{2}%
+r^{2}t^{2}z^{2}+r^{2}u^{2}z^{2}}%
\]
so that
\[
q_{1}S_{1}=1.
\]

If $t^{2}+u^{2}\neq0$ then also by the Parametrizing a null line theorem we
can write
\[
a_{2}\equiv\left(  r\left(  t^{2}-u^{2}\right)  -2stu:2rtu+s\left(
t^{2}-u^{2}\right)  :r\left(  t^{2}+u^{2}\right)  \right)
\]
for some proportion $r:s$ also with $s\neq0.$ Then a computer calculation
shows also that
\[
q_{1}S_{1}=1.%
{\hspace{.1in} \rule{0.5em}{0.5em}}%
\]

\end{proof}

This result suggests that if $a_{2}a_{3}$ is a null line and $a_{3}$ is a null
point, then although $q\left(  a_{2},a_{3}\right)  $ is not defined, it
behaves in some respects like the number $1.$

\begin{theorem}
[Singly null singly nil orthocenter]Suppose that $\overline{a_{1}a_{2}a_{3}}$
is a triangle in which $a_{1}a_{2}$ is a null line, and that $a_{3}$ is a null
point. Suppose that the base of the couple $\overline{a_{1}\left(  a_{2}%
a_{3}\right)  }$ is $b_{1},$ and the base of the couple $\overline
{a_{2}\left(  a_{1}a_{3}\right)  }$ is $b_{2}$. Then the lines $a_{1}b_{1},$
$a_{2}b_{2}$ and $a_{3}\left(  a_{1}a_{2}\right)  ^{\perp}$ intersect in a
point $o$, and
\[
q\left(  a_{1},o\right)  +q\left(  o,b_{1}\right)  =q\left(  a_{2},o\right)
+q\left(  o,b_{2}\right)  =1.
\]
Furthermore%
\[
q\left(  a_{1},b_{1}\right)  =-\left(  2q\left(  a_{1},o\right)  -1\right)
\left(  2q\left(  b_{1},o\right)  -1\right)  .
\]

\end{theorem}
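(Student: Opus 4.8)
The plan is to recognize the three given lines as the three altitudes of $\overline{a_1a_2a_3}$ and to handle the statement in three stages: concurrence, the complementary relation, and the product relation. Throughout I write $g(v,w)=xx'+yy'-zz'$ for the bilinear form attached to $x^2+y^2-z^2$, set $g_{ij}=g(v_i,v_j)$ with $a_i\equiv[v_i]$, and use that incidence of $[w]$ on $(L)$ means $g(w,L)=0$, that joins and meets are computed by $J$, and that the two hypotheses read $g_{33}=0$ (the point $a_3$ is null) and $g_{12}^2=g_{11}g_{22}$ (the line $a_1a_2$ is null), the latter via the Perpendicular null line identity $g(J(v_1,v_2),J(v_1,v_2))=g_{12}^2-g_{11}g_{22}$.

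Since $b_1$ lies on both $a_2a_3$ and the altitude $a_1(a_2a_3)^\bot$, the line $a_1b_1$ is exactly the altitude $N_1\equiv a_1(a_2a_3)^\bot$; likewise $a_2b_2=N_2\equiv a_2(a_1a_3)^\bot$ and $a_3(a_1a_2)^\bot=N_3$. For concurrence I would use the triple-cross-product identity $J(v_i,J(v_j,v_k))=g_{ij}v_k-g_{ik}v_j$, the general form of the computation already carried out in the Opposite points theorem, which gives $N_1=g_{12}v_3-g_{13}v_2$, $N_2=g_{12}v_3-g_{23}v_1$ and $N_3=g_{13}v_2-g_{23}v_1$ as coordinate triples. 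Reading these as rows against the basis $v_1,v_2,v_3$ gives $\det\begin{pmatrix}0&-g_{13}&g_{12}\\-g_{23}&0&g_{12}\\-g_{23}&g_{13}&0\end{pmatrix}=0$, so by the Concurrent lines theorem the three altitudes meet in a point $o$. This step is valid for any triangle and does not use the null hypotheses.

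For the relation $q(a_1,o)+q(o,b_1)=1$ I would reduce to a midpoint statement. Put $n_1\equiv(a_2a_3)^\bot$; then $a_1,o,b_1,n_1$ all lie on $N_1$, and $b_1$ is simultaneously the base of the couples $\overline{a_1(a_2a_3)}$ and $\overline{o(a_2a_3)}$, so by the definition of the quadrance of a couple $q(o,b_1)=1-q(o,n_1)$. Hence the claim becomes $q(a_1,o)=q(o,n_1)$, i.e. $o$ is a midpoint of $\overline{a_1n_1}$. Writing $o=[sv_1+r\nu_1]$ with $\nu_1=J(v_2,v_3)$, the incidence $g(o,N_2)=0$ fixes $s:r=g_{23}\Delta:(g_{12}g_{13}-g_{11}g_{23})$, where $\Delta=g(\nu_1,v_1)=\det[v_1\,v_2\,v_3]$; using also $g(\nu_1,v_2)=g(\nu_1,v_3)=0$ and $G=g(\nu_1,\nu_1)=g_{23}^2-g_{22}g_{33}$, the equality $q(a_1,o)=q(o,n_1)$ collapses, after clearing the common factor $g(o,o)$, to $(Gg_{11}-\Delta^2)(s^2g_{11}-r^2G)=0$. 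The key computation is that, once $g_{33}=0$ and $g_{12}^2=g_{11}g_{22}$ are imposed, the second factor vanishes: the equation $s^2g_{11}=r^2G$ becomes $g_{11}\Delta^2=(g_{12}g_{13}-g_{11}g_{23})^2$, and both sides reduce to $g_{11}\bigl(g_{11}g_{23}^2-2g_{12}g_{13}g_{23}+g_{22}g_{13}^2\bigr)$. The index $2$ relation follows from the symmetry $1\leftrightarrow2$, which preserves both hypotheses. This symbolic Gram-matrix identity is where the real work lies, and I expect it to be the main obstacle; it is the point where both null conditions are genuinely needed.

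Finally the product relation is a formal consequence. Since $a_1,o,b_1$ are collinear, the Triple quad formula gives $S(q(a_1,o),q(o,b_1),q(a_1,b_1))=0$, and substituting $q(o,b_1)=1-q(a_1,o)$ into the Complementary-quadrances identity $S(a,1-a,c)=(1-c)\bigl(c-(1-2a)^2\bigr)$ forces $q(a_1,b_1)=1$ or $q(a_1,b_1)=(2q(a_1,o)-1)^2$. The first alternative would make $N_1$ a null line, equivalently $q(a_1,n_1)=0$, which is excluded in the non-degenerate configuration, so the second holds; and $q(o,b_1)=1-q(a_1,o)$ rewrites $(2q(a_1,o)-1)^2$ as $-(2q(a_1,o)-1)(2q(b_1,o)-1)$, the asserted identity. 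The only remaining care is in recording the genericity conditions ($o$ non-null, the altitudes non-null, the relevant couples non-dual) under which every quadrance above is defined.
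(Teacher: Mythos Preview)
Your proof is correct and is substantially more informative than the paper's own argument, which reads in its entirety: ``This is a computer assisted calculation along the lines of the previous theorems.'' So the comparison is between a genuine structural proof and a black-box verification.

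What you do differently is work intrinsically in the Gram entries $g_{ij}=g(v_i,v_j)$ rather than parametrizing the null point and null line explicitly. This lets you (i) recognise the three lines as altitudes and get concurrence from the vanishing of a $3\times3$ determinant in one line, independent of any null hypothesis; (ii) reduce $q(a_1,o)+q(o,b_1)=1$ to the midpoint statement $q(a_1,o)=q(o,n_1)$, which after your factorisation $(Gg_{11}-\Delta^2)(s^2g_{11}-r^2G)=0$ becomes the single identity $g_{11}\Delta^2=(g_{12}g_{13}-g_{11}g_{23})^2$; this identity I have checked and it does follow from $g_{33}=0$ and $g_{12}^2=g_{11}g_{22}$, and your argument makes transparent that \emph{both} null hypotheses are consumed exactly here; (iii) obtain the product relation formally from the Triple quad formula via the factorisation $S(a,1-a,c)=(1-c)\bigl(c-(1-2a)^2\bigr)$, which is a nice reuse of the Complementary quadrances spreads machinery.

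The only soft spot is the exclusion of the alternative $q(a_1,b_1)=1$ at the end. Your claim that this forces $N_1$ null is correct (since then $b_1\perp a_1$ and $b_1\perp n_1$, so $b_1=N_1^{\bot}$ lies on $N_1$), but you dismiss it as ``excluded in the non-degenerate configuration'' rather than deriving it from the stated hypotheses. That is fair---the paper's own statement already presupposes that all the quadrances appearing are defined, and in any case the two alternatives coincide when $(2q(a_1,o)-1)^2=1$---but it would strengthen the write-up to say explicitly which hypothesis on the triangle rules out $N_1$ null, or to note that the formula holds trivially in that limit.

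What each approach buys: the paper's method is uniform with its neighbouring theorems and requires no structural insight, at the cost of opacity. Your approach explains the result, isolates the role of each null condition, and would transfer unchanged to the general projective quadric setting of \cite{Wild4}; it also recovers the ``curious'' remark following the theorem (that complementary quadrances can sum to $1$ without perpendicularity) as the content of step~(iii).
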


\begin{proof}
This is a computer assisted calculation along the lines of the previous
theorems.$%
{\hspace{.1in} \rule{0.5em}{0.5em}}%
$
\end{proof}

Note that this gives a situation where the sum of two quadrances between three
collinear points is $1,$ but no two of the points are necessarily
perpendicular, as suggested by the proof of the Complementary quadrances
spreads theorem.

\subsection{Null perspective theorems}

\begin{theorem}
[Null perspective]Suppose that $\alpha_{1},\alpha_{2}$ and $\alpha_{3}$ are
distinct null points, and that $d$ is any point on $\alpha_{1}\alpha_{3}$
distinct from $\alpha_{1}$ and $\alpha_{3}.$ Suppose further that $x$ and $y$
are points lying on $\alpha_{1}\alpha_{2}$ and that $z\equiv\left(  \alpha
_{2}\alpha_{3}\right)  \left(  xd\right)  $ and $w\equiv\left(  \alpha
_{2}\alpha_{3}\right)  \left(  yd\right)  $. Then%
\[
q\left(  x,y\right)  =q\left(  z,w\right)  .
\]

\end{theorem}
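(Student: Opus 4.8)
The plan is to choose coordinates adapted to the three null points, in which both the quadrance formula and the perspective map become transparent. First I would invoke the Line through null points theorem: since a line carries at most two null points, the three distinct null points $\alpha_1,\alpha_2,\alpha_3$ cannot be collinear, so as vectors $v_1,v_2,v_3$ (with $\alpha_i=[v_i]$) they are linearly independent and form a basis of $\mathbb{F}^3$. Every point on the join of two null points can then be written as a combination of the two corresponding null vectors, and the lines $\alpha_1\alpha_2$, $\alpha_2\alpha_3$, $\alpha_1\alpha_3$ are the three coordinate lines in this basis. Implicitly $x,y$ are non-null (else $q(x,y)$ is undefined), so they differ from the only null points $\alpha_1,\alpha_2$ of their line.

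The heart is a lemma computing quadrance on a doubly-null line. Write $\langle v,w\rangle \equiv x_vx_w+y_vy_w-z_vz_w$ for the bilinear form underlying (\ref{QuadranceFormula}), so $[v]$ is null exactly when $\langle v,v\rangle=0$. For two points $\xi=av_1+bv_2$ and $\eta=cv_1+ev_2$ on $\alpha_1\alpha_2$, the relations $\langle v_1,v_1\rangle=\langle v_2,v_2\rangle=0$ collapse the quadrance to
\[
q(\xi,\eta)=1-\frac{\langle\xi,\eta\rangle^2}{\langle\xi,\xi\rangle\langle\eta,\eta\rangle}=1-\frac{(ae+bc)^2}{4abce}=-\frac{(ae-bc)^2}{4abce},
\]
using $\langle v_1,v_2\rangle\neq0$ (which holds since $\alpha_1\alpha_2$ is non-null by the Join of null points theorem, so $v_1,v_2$ are not perpendicular). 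Setting $p\equiv a/b$ and $r\equiv c/e$, this reads $q(\xi,\eta)=-(p-r)^2/(4pr)$; equivalently it is the function $-(1-\chi)^2/(4\chi)$ of the cross-ratio $\chi$ of $\xi,\eta$ against the two null points, a function manifestly invariant under $\chi\mapsto1/\chi$.

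Next I would compute the perspective map explicitly in this basis. Writing $d=d_1v_1+d_3v_3$ (legitimate since $d$ lies on $\alpha_1\alpha_3$, with $d_1,d_3\neq0$ as $d\neq\alpha_1,\alpha_3$) and $x=av_1+bv_2$, the point $z=(\alpha_2\alpha_3)(xd)$ is the combination of $v_2,v_3$ lying in the span of $x$ and $d$; matching coordinates gives $z\propto -bd_1v_2+ad_3v_3$, and likewise $w\propto -ed_1v_2+cd_3v_3$ from $y=cv_1+ev_2$. Reading off the parameters of $z,w$ in the null basis $\{v_2,v_3\}$ of $\alpha_2\alpha_3$ shows the map acts on the parameter by the inversion $p\mapsto\kappa/p$ with $\kappa\equiv-d_1/d_3$; in particular it fixes $\alpha_2$ and sends $\alpha_1\mapsto\alpha_3$ (and none of $z,w$ degenerates to a null point, since $a,b,c,e\neq0$). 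Feeding $p'=\kappa/p$ and $r'=\kappa/r$ into the lemma applied to $\alpha_2\alpha_3$ and simplifying yields $q(z,w)=-(p-r)^2/(4pr)=q(x,y)$.

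The computation is short, so the real content is structural: the step to get right is the choice of the null-point basis, in which the perspectivity linearizes to an inversion and the quadrance becomes a single invariant of the cross-ratio with the null points. The only points needing care are the incidental non-degeneracy checks — that $v_1,v_2,v_3$ are independent, that $\langle v_1,v_2\rangle\neq0$ so the quadrances are defined, and that $x,y\neq\alpha_1,\alpha_2$ forces $z,w$ away from $\alpha_2,\alpha_3$ — together with verifying, by the one-line identity above, that $-(p-r)^2/(4pr)$ really is invariant under simultaneous inversion of $p$ and $r$.
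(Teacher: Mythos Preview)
Your proof is correct and takes a genuinely different route from the paper. The paper's own proof is simply the line ``This is a computer assisted calculation,'' i.e.\ a brute-force symbolic verification using the Parametrization of null points, the Join of null points formula, and the definition of quadrance. You instead choose the three null vectors $v_1,v_2,v_3$ as a basis, reduce the quadrance on any doubly-null line to the closed form $-(p-r)^2/(4pr)$ in the affine parameters, and then observe that the perspectivity through $d$ acts on those parameters by $p\mapsto\kappa/p$, under which the expression is manifestly invariant. This is more conceptual: it explains \emph{why} the quadrance is preserved (it is a function of the cross-ratio of $x,y$ against the two null points of the line, and a perspectivity fixing one null point and swapping the other two preserves that cross-ratio up to the harmless inversion $\chi\mapsto1/\chi$). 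The paper's approach, by contrast, requires no insight but gives no insight either. Your non-degeneracy bookkeeping is also in order: non-collinearity of the $\alpha_i$ from the Line through null points theorem, $\langle v_i,v_j\rangle\neq0$ from the Join of null points theorem (non-null join) together with the Perpendicular null line theorem, and $a,b,c,e\neq0$ from the implicit hypothesis that $q(x,y)$ is defined.
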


\begin{proof}
This is a computer assisted calculation.$%
{\hspace{.1in} \rule{0.5em}{0.5em}}%
$
\end{proof}

\begin{theorem}
[Null subtended quadrance]Suppose that the line $L$ passes through the
distinct null points $\alpha_{1}$ and $\alpha_{2}.$ Then for a third null
point $\alpha_{3}$, and a line $M$ distinct from $\alpha_{1}\alpha_{3}$ and
$\alpha_{2}\alpha_{3}$, let $a_{1}\equiv\left(  \alpha_{1}\alpha_{3}\right)
M$ and $a_{2}\equiv\left(  \alpha_{2}\alpha_{3}\right)  M.$ Then $q\equiv
q\left(  a_{1},a_{2}\right)  $ and $S\equiv S\left(  L,M\right)  $ are related
by%
\[
qS=1.
\]
In particular $q$ is independent of $\alpha_{3}.$
\end{theorem}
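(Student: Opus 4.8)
The plan is to reduce the assertion to a polynomial identity and then tame that identity by passing to the basis given by the sides of the triangle. First I would use the Parametrization of null points theorem to write $\alpha_i=\alpha\left(t_i:u_i\right)$ for $i=1,2,3$, and the Join of null points theorem to record the three sides
\[
L=\alpha_1\alpha_2,\qquad P\equiv\alpha_1\alpha_3,\qquad R\equiv\alpha_2\alpha_3,
\]
whose Lorentzian norms $l^2+m^2-n^2$ are $\delta_{12}^2,\delta_{13}^2,\delta_{23}^2$ by the identity (\ref{Null line product}), where $\delta_{ij}\equiv t_iu_j-t_ju_i$. Since $S\left(L,M\right)$ depends only on $L$ and $M$, the closing claim that $q$ is independent of $\alpha_3$ is automatic once $q\left(a_1,a_2\right)S\left(L,M\right)=1$ is shown, so the whole content is this single product.

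Next I would put $q\left(a_1,a_2\right)$ into closed form. Writing $a_1=PM$ and $a_2=RM$ as meets, their coordinates are hyperbolic cross products $J$, so the extension of Fibonacci's identity from the Quadrance theorem gives the norms $\beta_1^2-\delta_{13}^2N_M$ and $\beta_2^2-\delta_{23}^2N_M$, where $N_M\equiv l^2+m^2-n^2$ and $\beta_1,\beta_2$ are the pairings of $P,R$ against $M$ under $x^2+y^2-z^2$. Polarizing the same identity produces the cross term $\langle a_1,a_2\rangle=\beta_1\beta_2-\langle P,R\rangle N_M$. A direct expansion of the Join of null points formulas gives the Gram pairings of the three sides, namely $\langle P,R\rangle=\delta_{13}\delta_{23}$, $\langle L,P\rangle=\delta_{12}\delta_{13}$ and $\langle L,R\rangle=-\delta_{12}\delta_{23}$. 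Substituting into the Quadrance formula, the numerator collapses to a perfect square and
\[
q\left(a_1,a_2\right)=\frac{-N_M\left(\beta_1\delta_{23}-\beta_2\delta_{13}\right)^2}{\left(\beta_1^2-\delta_{13}^2N_M\right)\left(\beta_2^2-\delta_{23}^2N_M\right)}.
\]

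The decisive step is that $L,P,R$ are linearly independent, being the sides of a genuine triangle, so $M=\lambda L+\mu P+\nu R$ for unique $\lambda,\mu,\nu$, and then $\gamma\equiv\langle L,M\rangle$, $\beta_1$, $\beta_2$, $N_M$ are all read off the Gram matrix. Setting $a=\delta_{12},b=\delta_{13},c=\delta_{23}$ and introducing the linear forms $e\equiv a\lambda+b\mu-c\nu$, $f\equiv a\lambda+b\mu+c\nu$, $g\equiv -a\lambda+b\mu+c\nu$, one finds $\gamma=ae$, $\beta_1=bf$, $\beta_2=cg$ and $N_M=ef+fg-eg$. Both $q$ and $S=\left(N_M-e^2\right)/N_M$ become rational functions of $e,f,g$ only, and $qS=1$ reduces to $\left(f-g\right)^2\left(e^2-N_M\right)=\left(N_M-f^2\right)\left(N_M-g^2\right)$. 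I would finish with the factorizations $e^2-N_M=\left(e-f\right)\left(e+g\right)$, $N_M-f^2=\left(e-f\right)\left(f-g\right)$ and $N_M-g^2=\left(f-g\right)\left(e+g\right)$, which make both sides equal to $\left(e-f\right)\left(e+g\right)\left(f-g\right)^2$.

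The main obstacle is the sign bookkeeping in the two auxiliary identities — the polarized cross-product pairing $\langle J(P,M),J(R,M)\rangle=\langle P,M\rangle\langle R,M\rangle-\langle P,R\rangle\langle M,M\rangle$ and the three Gram entries — since a sign error there would destroy the clean factorization. Once these are fixed, the distinctness hypotheses guarantee that $\delta_{12},\delta_{13},\delta_{23}$ and $N_M$ are nonzero, so no denominator vanishes and the remaining algebra is elementary. This route is more transparent than a brute-force expansion in $t_i,u_i,l,m,n$, though the latter would also succeed and would match the computer-assisted style of the surrounding null-trigonometry theorems.
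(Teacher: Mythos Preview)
Your argument is correct and takes a genuinely different route from the paper. The paper parametrizes $\alpha_1,\alpha_2,\alpha_3$ and $M=(l:m:n)$, computes $a_1,a_2$ explicitly via the Join and Meet formulas, and then writes out $q$ and $S$ as explicit rational functions in $t_1,u_1,t_2,u_2,t_3,u_3,l,m,n$, observing by direct (computer-assisted) expansion that they are reciprocal. You instead exploit the Gram structure of the sides $L,P,R$ of the triply nil triangle---the key point being that the Lorentzian pairings $\langle L,P\rangle,\langle L,R\rangle,\langle P,R\rangle$ factor as products of the $\delta_{ij}$---and then pass to the basis $L,P,R$ for $M$, which collapses everything to rational functions of the three linear forms $e,f,g$. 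The final identity $(f-g)^2(e^2-N_M)=(N_M-f^2)(N_M-g^2)$ then drops out of the elementary factorizations you list. What your approach buys is transparency: it explains \emph{why} the result is so clean (the rank-one structure of the Gram matrix of null-joins), avoids any computer algebra, and isolates exactly where the non-degeneracy hypotheses enter (namely $\lambda,\mu,\nu\neq 0$, equivalently $M$ avoiding each $\alpha_i$, which is what makes $q$ and $S$ defined). The paper's brute-force route buys uniformity with the surrounding computer-verified null-trigonometry results but gives no such insight.
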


\begin{proof}
From the Parametrization of null points theorem we know that we can write
$\alpha_{1}=\alpha\left(  t_{1}:u_{1}\right)  $, $\alpha_{2}=\alpha\left(
t_{2}:u_{2}\right)  $ and $\alpha_{3}=\alpha\left(  t_{3}:u_{3}\right)  .$
Then from the Join of null points theorem
\begin{align*}
L  & \equiv\alpha_{1}\alpha_{2}=\left(  t_{1}t_{2}-u_{1}u_{2}:t_{1}u_{2}%
+t_{2}u_{1}:t_{1}t_{2}+u_{1}u_{2}\right) \\
\alpha_{1}\alpha_{3}  & =\left(  t_{1}t_{3}-u_{1}u_{3}:t_{1}u_{3}+t_{3}%
u_{1}:t_{1}t_{3}+u_{1}u_{3}\right) \\
\alpha_{2}\alpha_{3}  & =\left(  t_{2}t_{3}-u_{2}u_{3}:t_{2}u_{3}+t_{3}%
u_{2}:t_{2}t_{3}+u_{2}u_{3}\right)  .
\end{align*}
Suppose that $M=\left(  l:m:n\right)  $. Then computing with the Join of
points theorem gives%
\begin{align*}
a_{1}  & =\left(  \alpha_{1}\alpha_{3}\right)  M\\
& =\left[  m\left(  t_{1}t_{3}+u_{1}u_{3}\right)  -n\left(  t_{1}u_{3}%
+t_{3}u_{1}\right)  :n\left(  t_{1}t_{3}-u_{1}u_{3}\right)  -l\left(
t_{1}t_{3}+u_{1}u_{3}\right)  :m\left(  t_{1}t_{3}-u_{1}u_{3}\right)
-l\left(  t_{1}u_{3}+t_{3}u_{1}\right)  \right]
\end{align*}
and%
\begin{align*}
a_{2}  & =\left(  \alpha_{2}\alpha_{3}\right)  M\\
& =\left[  m\left(  t_{2}t_{3}+u_{2}u_{3}\right)  -n\left(  t_{2}u_{3}%
+t_{3}u_{2}\right)  :\allowbreak n\left(  t_{2}t_{3}-u_{2}u_{3}\right)
-l\left(  t_{2}t_{3}+u_{2}u_{3}\right)  :m\left(  t_{2}t_{3}-u_{2}%
u_{3}\right)  -l\left(  t_{2}u_{3}+t_{3}u_{2}\right)  \right]  .
\end{align*}
Then a computation using the definition of the quadrance between points gives
\[
q\equiv q\left(  a_{1},a_{2}\right)  =-\frac{\left(  t_{1}u_{2}-t_{2}%
u_{1}\right)  ^{2}\left(  l^{2}+m^{2}-n^{2}\right)  }{\left(  l\left(
t_{1}^{2}-u_{1}^{2}\right)  +2mt_{1}u_{1}-n\left(  t_{1}^{2}+u_{1}^{2}\right)
\right)  \left(  l\left(  t_{2}^{2}-u_{2}^{2}\right)  +2mt_{2}u_{2}-n\left(
t_{2}^{2}+u_{2}^{2}\right)  \right)  }.
\]
A computation using the definition of spread between lines gives%
\[
S\equiv S\left(  L,M\right)  =-\frac{\allowbreak\left(  l\left(  t_{1}%
^{2}-u_{1}^{2}\right)  +m2t_{1}u_{1}-n\left(  t_{1}^{2}+u_{1}^{2}\right)
\right)  \left(  l\left(  t_{2}^{2}-u_{2}^{2}\right)  +2mt_{2}u_{2}-n\left(
t_{2}^{2}+u_{2}^{2}\right)  \right)  }{\left(  t_{1}u_{2}-t_{2}u_{1}\right)
^{2}\left(  l^{2}+m^{2}-n^{2}\right)  }.
\]
Comparing these two expressions we see that
\[
qS=1.%
{\hspace{.1in} \rule{0.5em}{0.5em}}%
\]

\end{proof}

\subsection{Four null points}

Here are two theorems that will play an important role in the further
development of the subject, and provide a link between hyperbolic geometry and
the theory of cyclic quadrilaterals.

\begin{theorem}
[Fully nil quadrangle diagonal]Suppose that $\alpha_{1},\alpha_{2},\alpha_{3}$
and $\alpha_{4}$ are distinct null points, and that $d=\left(  \alpha
_{1}\alpha_{2}\right)  \left(  \alpha_{3}\alpha_{4}\right)  ,$ $e=\left(
\alpha_{1}\alpha_{3}\right)  \left(  \alpha_{2}\alpha_{4}\right)  $ and
$f=\left(  \alpha_{1}\alpha_{4}\right)  \left(  \alpha_{2}\alpha_{3}\right)
$. Then $d,e$ and $f$ are not collinear, and%
\[
d^{\perp}=ef\qquad e^{\perp}=df\qquad\mathit{and}\qquad f^{\perp}=de.
\]

\end{theorem}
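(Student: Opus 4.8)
The plan is to work directly with the Pythagorean parametrization of the four null points, exactly as in the proofs of the Triply nil quadreal and Triply nil orthocenter theorems. First I would write $\alpha_i = \alpha(t_i:u_i)$ for $i=1,2,3,4$, so that each $\alpha_i \equiv [t_i^2-u_i^2 : 2t_iu_i : t_i^2+u_i^2]$. The six joining lines $\alpha_i\alpha_j$ are then immediately available from the Join of null points theorem as $L(t_i:u_i|t_j:u_j) = (t_it_j-u_iu_j : t_iu_j+t_ju_i : t_it_j+u_iu_j)$. The three diagonal points $d,e,f$ are each a meet of two such lines, computable via the hyperbolic cross product $J$ (equivalently the Null diagonal point theorem), giving explicit coordinate vectors $d,e,f$ as polynomials in the $t_i,u_i$.

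\textbf{The duality identities.}

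The heart of the statement is the three relations $d^\perp = ef$, $e^\perp=df$, $f^\perp=de$. By the definition of duality a point $[x:y:z]$ is dual to the line $(x:y:z)$, so $d^\perp=ef$ is the claim that the coordinate \emph{vector} of $d$ is proportional to the coordinate vector of the line $ef$. The line $ef$ is the join of the points $e$ and $f$, again computed by $J(e;f)$. Hence each of the three claimed identities reduces to a single proportionality of two explicit $3$-vectors, i.e. to showing that the three cross-products $J$ (one per coordinate pair) vanish. These are polynomial identities in the eight variables $t_1,u_1,\dots,t_4,u_4$, and I would verify them by direct expansion (a computer algebra check, in the spirit of the author's other null-trigonometry proofs). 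A useful preliminary simplification is to factor out the common ``null line product'' factors $(t_iu_j-t_ju_i)$ that appear in the coordinates of $d,e,f$ and their joins, via identity (\ref{Null line product}); after cancelling these the remaining identity should be of low degree and transparently symmetric.

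\textbf{Non-collinearity.}

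For the non-collinearity of $d,e,f$ I would argue as follows. Once the duality relations are established, suppose for contradiction that $d,e,f$ were collinear on some line $K$. Then $d$ lies on $K$, but also $d^\perp = ef$ passes through $e$ and $f$, hence $ef=K$; so $d$ lies on its own dual line $K$, forcing $d$ to be a null point. One then computes (or reads off from the already-obtained coordinates) that $d=(\alpha_1\alpha_2)(\alpha_3\alpha_4)$ is null precisely when some degenerate coincidence among the four null points occurs, contradicting their distinctness. Alternatively, and perhaps more cleanly, the determinant of the matrix whose rows are the coordinate vectors of $d,e,f$ should factor as a product of the $(t_iu_j-t_ju_i)$ terms, which are all nonzero by distinctness; I would compute this determinant and exhibit the factorization, thereby settling non-collinearity directly.

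\textbf{Expected obstacle.}

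The main obstacle is purely the bookkeeping: the coordinate vectors of $d,e,f$ are degree-four expressions in the $t_i,u_i$, and the joins $ef,df,de$ push this to degree eight, so the raw proportionality identities are large. The real work is organizing the cancellation of the $(t_iu_j-t_ju_i)$ factors so that the verification becomes a manageable symmetric identity rather than an opaque expansion. I expect that exploiting the $S_3$-symmetry among $\{\alpha_1,\alpha_2,\alpha_3\}$ relative to $\alpha_4$ (or the full $S_4$-symmetry of the configuration) lets one check only \emph{one} of the three duality relations and deduce the others by relabeling, cutting the computation to a third.
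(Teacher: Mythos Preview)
Your plan is essentially correct and follows the same overall scheme as the paper: parametrize the four null points as $\alpha_i=\alpha(t_i:u_i)$, write down $d,e,f$ via the Null diagonal point theorem, and reduce everything to polynomial identities in the $t_i,u_i$. Your alternative (b) for non-collinearity---computing the $3\times 3$ determinant of the coordinate vectors and factoring it as a product of the $(t_iu_j-t_ju_i)$---is exactly what the paper does, and the factorization is
\[
-8(t_1u_2-t_2u_1)(t_2u_3-t_3u_2)(t_3u_4-t_4u_3)(t_4u_1-t_1u_4)(t_3u_1-t_1u_3)(t_4u_2-t_2u_4).
\]

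Where you diverge from the paper is in how you verify $d^\perp=ef$. You propose to compute the join $ef=J(e;f)$ explicitly (each coordinate degree $8$ in the $t_i,u_i$) and then check proportionality with the degree-$4$ coordinate vector of $d$, which amounts to three degree-$12$ identities. The paper instead observes that $d^\perp=ef$ is equivalent to the pair of conditions $d\perp e$ and $d\perp f$ (since both $e$ and $f$ then lie on $d^\perp$, and non-collinearity guarantees $e\neq f$). Each perpendicularity condition is a single degree-$8$ identity of the form $d_1e_1+d_2e_2-d_3e_3=0$, and by symmetry there are only three such identities to check in total ($d\perp e$, $e\perp f$, $d\perp f$) rather than nine proportionality minors. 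This is the main simplification you are missing; it also makes the non-collinearity argument cleanly prior to, rather than entangled with, the duality claim, avoiding the slight circularity in your route (a).
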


\begin{proof}
From the Parametrization of null points theorem, we may write $\alpha
_{1}=\alpha\left(  t_{1}:u_{1}\right)  $, $\alpha_{2}=\alpha\left(
t_{2}:u_{2}\right)  $, $\alpha_{3}=\alpha\left(  t_{3}:u_{3}\right)  $ and
$\alpha_{4}=\alpha\left(  t_{4}:u_{4}\right)  $. The Null diagonal point
theorem allows us to write down $d,e$ and $f,$ namely%
\[
d=\left[
\begin{array}
[c]{c}%
\left(  t_{1}u_{2}+t_{2}u_{1}\right)  \left(  t_{3}t_{4}+u_{3}u_{4}\right)
-\left(  t_{3}u_{4}+t_{4}u_{3}\right)  \left(  t_{1}t_{2}+u_{1}u_{2}\right) \\
:\left(  t_{1}t_{2}+u_{1}u_{2}\right)  \left(  t_{3}t_{4}-u_{3}u_{4}\right)
-\left(  t_{3}t_{4}+u_{3}u_{4}\right)  \left(  t_{1}t_{2}-u_{1}u_{2}\right) \\
:\left(  t_{1}u_{2}+t_{2}u_{1}\right)  \left(  t_{3}t_{4}-u_{3}u_{4}\right)
-\left(  t_{3}u_{4}+t_{4}u_{3}\right)  \left(  t_{1}t_{2}-u_{1}u_{2}\right)
\end{array}
\right]
\]%
\[
e=\left[
\begin{array}
[c]{c}%
\left(  t_{1}u_{3}+u_{1}t_{3}\right)  \left(  t_{2}t_{4}+u_{2}u_{4}\right)
-\left(  t_{2}u_{4}+u_{2}t_{4}\right)  \left(  t_{1}t_{3}+u_{1}u_{3}\right) \\
:\left(  t_{1}t_{3}+u_{1}u_{3}\right)  \left(  t_{2}t_{4}-u_{2}u_{4}\right)
-\left(  t_{2}t_{4}+u_{2}u_{4}\right)  \left(  t_{1}t_{3}-u_{1}u_{3}\right) \\
:\left(  t_{1}u_{3}+u_{1}t_{3}\right)  \left(  t_{2}t_{4}-u_{2}u_{4}\right)
-\left(  t_{2}u_{4}+u_{2}t_{4}\right)  \left(  t_{1}t_{3}-u_{1}u_{3}\right)
\end{array}
\right]
\]%
\[
f=\left[
\begin{array}
[c]{c}%
\left(  t_{1}u_{4}+u_{1}t_{4}\right)  \left(  t_{2}t_{3}+u_{2}u_{3}\right)
-\left(  t_{2}u_{3}+u_{2}t_{3}\right)  \left(  t_{1}t_{4}+u_{1}u_{4}\right) \\
:\left(  t_{1}t_{4}+u_{1}u_{4}\right)  \left(  t_{2}t_{3}-u_{2}u_{3}\right)
-\left(  t_{2}t_{3}+u_{2}u_{3}\right)  \left(  t_{1}t_{4}-u_{1}u_{4}\right) \\
:\left(  t_{1}u_{4}+u_{1}t_{4}\right)  \left(  t_{2}t_{3}-u_{2}u_{3}\right)
-\left(  t_{2}u_{3}+u_{2}t_{3}\right)  \left(  t_{1}t_{4}-u_{1}u_{4}\right)
\end{array}
\right]  .
\]
If the coefficients of $d$ above are $d_{1},d_{2}$ and $d_{3}$ respectively,
and similarly for $e$ and $f,$ then a computer calculation shows that
\begin{align*}
& \det%
\begin{pmatrix}
d_{1} & d_{2} & d_{3}\\
e_{1} & e_{2} & e_{3}\\
f_{1} & f_{2} & f_{3}%
\end{pmatrix}
\\
& =-8\left(  t_{1}u_{2}-t_{2}u_{1}\right)  \left(  t_{2}u_{3}-t_{3}%
u_{2}\right)  \left(  t_{3}u_{4}-t_{4}u_{3}\right)  \left(  t_{4}u_{1}%
-t_{1}u_{4}\right)  \left(  t_{3}u_{1}-t_{1}u_{3}\right)  \allowbreak\left(
t_{4}u_{2}-t_{2}u_{4}\right)
\end{align*}
so that $d,e$ and $f$ are non-collinear by the Collinear points theorem, since
the four proportions are distinct.

Now $d$ and $e$ are perpendicular points because of the identity%
\begin{align*}
& \left(  \left(  t_{1}u_{2}+t_{2}u_{1}\right)  \left(  t_{3}t_{4}+u_{3}%
u_{4}\right)  -\left(  t_{3}u_{4}+t_{4}u_{3}\right)  \left(  t_{1}t_{2}%
+u_{1}u_{2}\right)  \right) \\
& \times\left(  \left(  t_{1}u_{3}+u_{1}t_{3}\right)  \left(  t_{2}t_{4}%
+u_{2}u_{4}\right)  -\left(  t_{2}u_{4}+u_{2}t_{4}\right)  \left(  t_{1}%
t_{3}+u_{1}u_{3}\right)  \right) \\
& +\left(  \left(  t_{1}t_{2}+u_{1}u_{2}\right)  \left(  t_{3}t_{4}-u_{3}%
u_{4}\right)  -\left(  t_{3}t_{4}+u_{3}u_{4}\right)  \left(  t_{1}t_{2}%
-u_{1}u_{2}\right)  \right) \\
& \times\left(  \left(  t_{1}t_{3}+u_{1}u_{3}\right)  \left(  t_{2}t_{4}%
-u_{2}u_{4}\right)  -\left(  t_{2}t_{4}+u_{2}u_{4}\right)  \left(  t_{1}%
t_{3}-u_{1}u_{3}\right)  \right) \\
& -\left(  \left(  t_{1}u_{2}+t_{2}u_{1}\right)  \left(  t_{3}t_{4}-u_{3}%
u_{4}\right)  -\left(  t_{3}u_{4}+t_{4}u_{3}\right)  \left(  t_{1}t_{2}%
-u_{1}u_{2}\right)  \right) \\
& \times\left(  \left(  t_{1}u_{3}+u_{1}t_{3}\right)  \left(  t_{2}t_{4}%
-u_{2}u_{4}\right)  -\left(  t_{2}u_{4}+u_{2}t_{4}\right)  \left(  t_{1}%
t_{3}-u_{1}u_{3}\right)  \right) \\
& =0.
\end{align*}
Similarly $e$ and $f$ are perpendicular, and $d$ and $f$ are perpendicular. It
follows that
\[
d^{\perp}=ef\qquad e^{\perp}=df\qquad\text{\textrm{and}}\qquad f^{\perp}=de.%
{\hspace{.1in} \rule{0.5em}{0.5em}}%
\]

\end{proof}

\begin{theorem}
[$48/64$]Suppose that $\alpha_{1},\alpha_{2},\alpha_{3}$ and $\alpha_{4}$ are
distinct null points, with diagonal spreads $P=S\left(  \alpha_{1}\alpha
_{2},\alpha_{3}\alpha_{4}\right)  $, $R=S\left(  \alpha_{1}\alpha_{3}%
,\alpha_{2}\alpha_{4}\right)  $ and $T=S\left(  \alpha_{1}\alpha_{4}%
,\alpha_{2}\alpha_{3}\right)  $. Then%
\[
PR+RT+TP=48\qquad\mathit{and}\qquad PRT=64.
\]

\end{theorem}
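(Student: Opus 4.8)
The plan is to push everything down to the six pairwise brackets $p_{ij} \equiv t_i u_j - t_j u_i$ attached to the parameters of the four null points, and then let a single Pl\"ucker relation among them do all the work. First I would invoke the Parametrization of null points theorem to write $\alpha_i = \alpha\left(t_i:u_i\right)$ for $i=1,2,3,4$, and the Join of null points theorem to record each joining line as
\[
\alpha_i \alpha_j = \left(t_it_j - u_iu_j : t_iu_j + t_ju_i : t_it_j + u_iu_j\right).
\]
The identity (\ref{Null line product}) immediately shows that the quadratic form value of this line is $l^2 + m^2 - n^2 = p_{ij}^2$, so the three spread denominators are simply $p_{12}^2 p_{34}^2$, $p_{13}^2 p_{24}^2$ and $p_{14}^2 p_{23}^2$.

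The computational heart is the bilinear numerator. I would verify the polynomial identity that, for disjoint index pairs $\{i,j\}$ and $\{k,l\}$, the form $l_1 l_2 + m_1 m_2 - n_1 n_2$ of the two lines equals $-\left(p_{ik}p_{jl} + p_{il}p_{jk}\right)$. Setting $a \equiv p_{12}p_{34}$, $b \equiv p_{13}p_{24}$ and $c \equiv p_{14}p_{23}$ and using the definition of spread, this yields
\[
1 - P = \frac{(b+c)^2}{a^2}, \qquad 1 - R = \frac{(a-c)^2}{b^2}, \qquad 1 - T = \frac{(a+b)^2}{c^2}.
\]

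Next I would bring in the three-term Pl\"ucker relation $a - b + c = 0$, which holds identically for any four parameters on the projective line. Substituting $a = b - c$ collapses the three expressions to
\[
P = \frac{-4bc}{(b-c)^2}, \qquad R = \frac{4c(b-c)}{b^2}, \qquad T = \frac{-4b(b-c)}{c^2}.
\]
From here the conclusion is forced symmetric-function algebra in the two free variables $b$ and $c$: the product telescopes to $PRT = 64\, b^2c^2(b-c)^2 / \left(b^2c^2(b-c)^2\right) = 64$, while $PR+RT+TP$, placed over the common denominator $bc(b-c)$, has numerator $16\left(b^3 - c^3 - (b-c)^3\right) = 48\, bc(b-c)$, giving $PR+RT+TP = 48$.

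The main obstacle is the factorization in the second step: recognizing that the sprawling bilinear numerator collapses to $-\left(p_{ik}p_{jl} + p_{il}p_{jk}\right)$, and keeping the bookkeeping straight so that the correct determinant products attach to each of $P$, $R$, $T$. Once that is in place, the lone Pl\"ucker relation reduces three variables to two, and the remaining identities $b^3 - c^3 - (b-c)^3 = 3bc(b-c)$ and the cancellation in $PRT$ are routine. It is worth remarking that the argument also explains the structural fact behind the theorem: $P$, $R$, $T$ are the roots of a cubic whose two lower symmetric functions are pinned to $48$ and $64$ regardless of the configuration.
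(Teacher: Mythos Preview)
Your proof is correct, and it is genuinely more informative than the paper's own argument. The paper disposes of the theorem in a single line---``This is a computer calculation using the notation of the previous proof''---so it offers no insight into why the constants $48$ and $64$ appear. Your route, by contrast, identifies the structural reason: after parametrizing the null points, the three diagonal crosses $1-P$, $1-R$, $1-T$ are perfect squares in the Pl\"ucker coordinates $a=p_{12}p_{34}$, $b=p_{13}p_{24}$, $c=p_{14}p_{23}$, and the single Pl\"ucker relation $a-b+c=0$ collapses everything to two free parameters, after which the symmetric functions of $P,R,T$ are forced. The bilinear factorization $l_1l_2+m_1m_2-n_1n_2=-(p_{ik}p_{jl}+p_{il}p_{jk})$ that you flag as the main obstacle does check out term by term, and your sign bookkeeping (in particular the appearance of $a-c$ for $R$ via $p_{32}=-p_{23}$) is consistent.

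What your approach buys, beyond avoiding a black-box computer verification, is the observation you make at the end: $P,R,T$ are the roots of a cubic with fixed lower symmetric functions, so in particular $P+R+T$ is the only free invariant of the configuration. The paper's proof gives the result but not this explanation.
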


\begin{proof}
This is a computer calculation using the notation of the previous proof.$%
{\hspace{.1in} \rule{0.5em}{0.5em}}%
$
\end{proof}

Note that as a consequence we have the relation%
\[
\frac{1}{P}+\frac{1}{R}+\frac{1}{T}=\frac{3}{4}.
\]
Furthermore we find that the numbers $48$ and $64$ are constants of nature.

These results suggest that the theory of null quadrangles and quadrilaterals
in universal hyperbolic geometry is quite rich, and this turns out to be the
case, as will be discussed in a future paper.

\section{Conclusion}

Although it takes some getting used to, this new setting for such a venerable
subject opens up many new directions for research, and encourages us to
reconsider the \textit{true role of algebraic geometry} in modern mathematics,
where the term \textit{algebraic geometry} is used in the rather broad sense
of meaning an approach to geometry based on algebra.

Certainly one important direction is to use this understanding of hyperbolic
geometry to begin a \textit{deeper and more} \textit{systematic exploration of
relativistic geometries}, a subject that has seen remarkably little
development in the hundred years since Einstein's introduction of special
relativity, despite its obvious importance in understanding the world in which
we live.

In subsequent papers I hope to also explore aspects of \textit{triangle
geometry, quadrilaterals, circles, isometries} and \textit{tesselations }in
the universal setting. Hopefully others will also find these topics attractive
for investigation.

\end{document}